%% file: arxiv_main.tex
\documentclass[mnsc,nonblindrev]{informs3}

\OneAndAHalfSpacedXI 

\input{preamble}

\usepackage{cases}

\usepackage{nomencl}
\makenomenclature

\usepackage{algpseudocode}
\usepackage{tikz}
\usepackage[inline]{enumitem} 

\usepackage{natbib}
 \bibpunct[, ]{(}{)}{,}{a}{}{,}%
 \def\bibfont{\small}%
\TheoremsNumberedThrough     
\ECRepeatTheorems

\newtheorem{observation}[theorem]{Observation}

\EquationsNumberedThrough    

\begin{document}


\RUNAUTHOR{Gao et al.}

\RUNTITLE{Sequential Fair Allocation and Routing in Nonprofit Operations}


\TITLE{Sequential Fair Allocation and Routing in Nonprofit Operations}

\ARTICLEAUTHORS{%
\AUTHOR{Haiqing Gao~\textsuperscript{1}, Seyed M.R. Iravani~\textsuperscript{2}, Sean R. Sinclair~\textsuperscript{3}}
\AFF{Department of Industrial Engineering and Management Sciences, Northwestern University, Evanston, IL 60208 \\ \textsuperscript{1} \EMAIL{haiqinggao2027@u.northwestern.edu}; \textsuperscript{2} \EMAIL{s-iravani@northwestern.edu}; \textsuperscript{3} \EMAIL{sean.sinclair@northwestern.edu} }
} 

\ABSTRACT{%
\input{parts/abstract}
}%




\KEYWORDS{Sequential Fair Allocation, Simultaneous Allocation and Routing, Dynamic Programming, Fairness Objectives, Nonprofit Operations} 


\maketitle


\input{parts/introduction}
\input{parts/contributions}
\input{parts/related_work}

\input{parts/model_prelim}

\input{parts/allocation}

\input{parts/routing}

\input{parts/experiments}

\input{parts/conclusions}

\medskip

\noindent\textbf{Acknowledgments.} 
The authors would like to thank Will Ma, Vahideh Manshadi, and Irem Sengul Orgut for their helpful comments on a preliminary version of this manuscript.
The authors would additionally like to thank Dawson Ren for their help in developing the code for the simulations.

\bibliographystyle{informs2014} 
\bibliography{references} 


%
%
%

\makeatletter

\AtBeginEnvironment{APPENDICES}{%
  \renewcommand{\theHsection}{appendix.\Alph{section}}%
  \renewcommand{\theHsubsection}{\theHsection.\arabic{subsection}}%
  \renewcommand{\theHsubsubsection}{\theHsubsection.\arabic{subsubsection}}%
  \renewcommand{\theHfigure}{\theHsection.\arabic{figure}}%
  \renewcommand{\theHtable}{\theHsection.\arabic{table}}%
  \renewcommand{\theHequation}{\theHsection.\arabic{equation}}%
  \renewcommand{\theHtheorem}{\theHsection.\arabic{theorem}}%
}
\makeatother

\newpage

\crefalias{section}{appendix}
\begin{APPENDICES}
\OneAndAHalfSpacedXI 
\input{parts/appendix/dp_properties}
\input{parts/appendix/Threshold_properties}
\input{parts/appendix/decreasing_CV_proofs}
\input{parts/appendix/distirbution_dict}
\end{APPENDICES}








\end{document}

%% file: preamble.tex
\usepackage{amsmath,amsfonts,cancel}
\usepackage[shortlabels,inline]{enumitem}
\usepackage{algorithm}
\usepackage{mathtools}
\usepackage{tcolorbox}
\usepackage{booktabs}
\usepackage{xfrac}
\usepackage{xspace}

\usepackage{subcaption}

\usepackage{tensor}
\usepackage{bm}
\usepackage{tikz}
\usepackage{titlesec}
\usepackage{placeins}
\usepackage{changepage}
\usepackage[colorlinks=true,breaklinks=true,bookmarks=true,urlcolor=magenta,citecolor=magenta,linkcolor=magenta,bookmarksopen=false,draft=false]{hyperref}
\usepackage[capitalize]{cleveref}
\usepackage{pifont}

\newcommand{\Deff}{\Delta_{\text{eff}}\xspace}
\newcommand{\Dfair}{\Delta_{\text{fair}}\xspace}
\newcommand{\DfairEA}{\Dfair^{\text{XA}}}
\newcommand{\DfairEP}{\Dfair^{\text{XP}}}

\newcommand{\Vsig}{\boldsymbol{\sigma}}
\newcommand{\Vpi}{\boldsymbol{\pi}}
\newcommand{\Vmu}{\boldsymbol{\mu}}
\newcommand{\Vu}{\boldsymbol{u}}

\newcommand{\E}{\mathbb{E}}

\newcommand{\D}{\mathcal{D}}
\newcommand{\T}{\mathcal{T}}

\newcommand{\distF}{\overset{\rightarrow}{\mathcal{F}}}

\newcommand{\itaS}{\mathcal{S}}
\newcommand{\permu}[1]{\textsc{Perm}(#1)}

\newcommand{\lb}{d^{\min}}
\newcommand{\ub}{d^{\max}}
\newcommand{\hist}{\beta_{\min}}

\newcommand{\scarce}{\underline{c}}
\newcommand{\abundant}{\overline{c}}

\newcommand{\hqexpost}{\textsc{Forward}\xspace}
\newcommand{\mexpost}{\textsc{Ex-Post}\xspace}
\newcommand{\routing}{\text{routing}\xspace}

\usepackage{multirow}
\usepackage{thm-restate}

\mathchardef\mhyphen="2D 
\ifdefined \argmin
\else \DeclareMathOperator*{\argmax}{arg\,min}
\fi
\ifdefined \argmax
\else \DeclareMathOperator*{\argmax}{arg\,max}
\fi

\let\originalleft\left
\let\originalright\right
\renewcommand{\left}{\mathopen{}\mathclose\bgroup\originalleft}
\renewcommand{\right}{\aftergroup\egroup\originalright}

\renewcommand{\paragraph}[1]{\noindent {\bf #1.}}
\renewcommand{\subparagraph}[1]{\noindent {\em #1.}~~}

\definecolor{edits}{rgb}{1,0,0}

\usepackage{pifont}

\allowdisplaybreaks

\renewenvironment{proof}[1][Proof]{%
  \par\noindent{\itshape #1.} \ignorespaces
}{%
  \hfill\Halmos\par
}

\usepackage{color-edits}
\addauthor{srs}{orange}
\addauthor{hqg}{blue}
\addauthor{djr}{cyan}
\addauthor{si}{teal}
\addauthor{todo}{pink}
\addauthor{done}{violet}



%% file: parts/abstract.tex
We study a dynamic fair sequential allocation problem in which a central planner distributes a divisible resource across multiple locations under demand uncertainty. Motivated by applications such as humanitarian relief and food distribution, we incorporate routing decisions into the planner’s problem and  jointly optimize allocation and visitation order under two max-min fairness objectives, {\em ex-post} and {\em forward}. 
We first reveal an {\em equating property} of the optimal allocation for any fixed visitation order. This property is crucial to establish that the optimal allocation follows a {\em threshold structure}: at each location, demand is fully satisfied when sufficiently low and otherwise met proportionally.
We then characterize the optimal routing policy and show that, under certain conditions, visiting locations in decreasing order of their demand's coefficient of variation (CV) is optimal. Building on this insight, we propose a simple heuristic, PPA-deCV, which closely approximates the fairness-efficiency frontier of the jointly optimal policy. 
Next, through extensive numerical experiments, we compare multiple {\em fairness objectives} across different {\em fairness metrics}, demonstrating that improvements in surrogate max-min objectives do not necessarily translate into improvements in fairness metrics. Furthermore, we identify which objective is aligned with which fairness metric, providing practical guidance on objective selection to achieve fairness-efficiency trade-offs.

%% file: parts/introduction.tex
\section{Introduction}
\label{sec:introduction}
The non-profit sector, including food bank operations, vaccine distribution, and post-disaster relief, has encountered escalating challenges in recent years. For instance, after a decade of progress in reducing food insecurity in the United States, the COVID-19 pandemic reversed this trend. In 2024, 27\% of nonelderly adults reported experiencing food insecurity, a notable increase from 22\% in 2019~\citep{urban2024foodinsecurity}. These services are especially critical in rural areas where food insecurity is disproportionately high, since 84\% of U.S. counties with the highest rates of food-insecure children are rural~\citep{ginsburg2019unreliable,feeding_america}. Similar resource allocation challenges emerged during the early stages of the pandemic. In January 2021, the average weekly number of reported COVID-19 cases in the United States surged to approximately 8 million~\citep{nytimes_covid_data}. In response, the U.S. vaccination program was estimated to have reduced COVID-19 related deaths by a factor of 3.2 and hospitalizations by a factor of 4.9~\citep{commonwealth_vaccine_impact}. However, the Federal Emergency Management Agency (FEMA) faced criticism for its opaque and inconsistent distribution of scarce medical supplies from the Strategic National Stockpile~\citep{washingtonpost_2020}. 

These examples illustrate a common operational dilemma in stochastic resource allocation: how to balance the immediate fulfillment of demand with the need to reserve resources for future needs. While this trade-off is already present in conventional profit-maximizing settings, the non-profit context introduces an additional layer of complexity. Decision-makers must also consider fairness, so the objective becomes striking a balance between {\em efficiency} (maximizing the impact of limited resources) and {\em equity} (treating all recipients fairly)~\citep{varian1973equity}. For instance, a first-come-first-served policy may achieve full efficiency but result in highly inequitable outcomes for later recipients who may receive no resources~\citep{bertsimas2011price, sinclair2022sequential, vardi2024price}.

Stochastic resource allocation problems are often modeled as online sequential allocation problems, where demand is revealed over time, and decisions must be made without knowledge of future requests. Consider, for example, a mobile food pantry with a fixed supply of donated goods visiting a sequence of distribution sites. At each stop, the planner observes the local demand without knowledge of future demands, and must decide how much to allocate before moving on.
Unlike profit objectives, which are typically well-defined, fairness objectives lack a universal standard due to the subjectivity of fairness definitions and the dual goals of efficiency and equity. Prior work has proposed {\em surrogate single-objective functions} that balance these goals and developed heuristic policies to approximately optimize them~\citep{manshadi2021fair, lien2014sequential, ma2022fairness}. However, few studies have systematically compared these objectives or provided guidance for practitioners on selecting the most appropriate one based on their definitions of fairness.

Moreover, a critical operational dimension remains underexplored: the ability of the central planner to determine the \emph{visitation order} of locations. In many real-world operations, planners must simultaneously determine both the quantity of resources to allocate and the sequence of locations to visit. For example, when distribution points are adequately staffed, a driver may have the flexibility to choose the visitation order of sites rather than following a rigid schedule set weeks in advance. Even in cases of limited staffing, it is still possible to optimize a fixed schedule to improve overall fairness and efficiency. For instance, prioritizing a location with high demand variability early in the route can make subsequent demand more predictable, thereby enabling more informed and effective decision-making.

Motivated by the bi-criteria nature of fairness and efficiency, and the challenges of joint allocation and routing, this study seeks to address the following research questions:

\begin{adjustwidth}{2em}{2em}
\begin{center}
    \textit{What is the impact of the choice of a surrogate single objective function on the trade-off between fairness and efficiency? What are the structural properties of the optimal allocation and routing policies under different objective functions?}
\end{center}
\end{adjustwidth}

While our research is motivated by food banks and vaccine distribution, its implications extend to a wide range of domains, including post-disaster resource allocation~\citep{ma2022fairness}, electric vehicle charging~\citep{gerding2019fair}, distributed power systems~\citep{alyami2014adaptive}, refugee assignment~\citep{freund2023group}, hospital operations~\citep{canellas2025granular}, and federated cloud computing~\citep{ghodsi2011dominant}. 

%% file: parts/contributions.tex
\subsection{Main Contributions} \label{susbec: contri}

This paper presents dynamic programming formulations that jointly optimize allocation and routing decisions, and offers a systematic comparison of max-min fairness notions to inform the selection of fairness objectives across diverse operational contexts.
Our contributions are structured to address the research questions outlined earlier:

\paragraph{Impact of Objective} 
We measure the utility of each customer's allocation in terms of their fill rate (percentage of demand satisfied). Since the manager (henceforth ``principal'') aims to ensure an equitable distribution, we quantify fairness as the disparity between the maximum and minimum fill rates across distribution points~\citep{orgut2016modeling}. To balance fairness and efficiency, we compare two surrogate single-objectives, namely \mexpost and \hqexpost, proposed by \citet{manshadi2021fair} and \citet{lien2014sequential}, respectively. Our experimental results show that the \mexpost objective significantly improves ex-post fairness while maintaining nearly the same level of efficiency, whereas \hqexpost exhibits the opposite trade-off on the ex-ante frontier. This insight provides practical guidance that \mexpost is better suited for one-time allocation settings like post-disaster relief, while \hqexpost is more appropriate for recurring distribution contexts such as weekly food bank operations.

\paragraph{Structure of Optimal Allocation Policies} 
Our theoretical analysis establishes structural properties of optimal allocation policies under both fairness objectives. Given a fixed route, we show that the optimal policy follows a {\em threshold structure}: demands below a critical level are fully satisfied, while higher demands are partially satisfied (\Cref{thm:hq_threshold,thm:m_threshold}). Moreover, each objective satisfies an {\em equating property} (\Cref{lemma:hq_equate_property_cont,lemma:m_equate_property}), which equates the current fill rate with the (anticipated) future minimum fill rate. This property prevents overly conservative rationing for the future or overly aggressive allocations in the present.

\paragraph{Structure of Optimal Routing Policy} 
We next characterize routing decisions under the \mexpost objective implemented via the Projected Proportional Allocation (PPA) policy introduced by \cite{manshadi2021fair}. Under PPA, at each node the principal allocates inventory proportionally to the remaining expected demand to equalize ex-post fill rates across realized sample paths. We first show that it is optimal to visit any node with deterministic demand last (\Cref{thm:det_PPA}). We then prove under restrictions on the number of agents and their demand distributions that the optimal static routing policy orders nodes by decreasing coefficient of variation (CV) order under two canonical regimes: (i) when demands share a common mean but differ in variance (\Cref{thm:decreasing_var_two_node}), and (ii) when demands share a common variance but differ in mean (\Cref{thm:increasing_mean_routing_two_node}). In both cases, nodes with higher CV should be visited earlier. Motivated by this insight, we propose a heuristic called PPA-deCV, which combines the PPA allocation policy with the decreasing CV routing policy.

\paragraph{Numerical Simulations} 
We use extensive numerical experiments to distinguish between \emph{fairness objectives} and \emph{fairness metrics} by evaluating optimal allocation policies under different routing schemes. Although joint optimization yields only marginal improvements in objective values, it leads to substantial improvements in both ex-post and ex-ante fairness metrics. A key insight is that these gains are driven primarily by optimizing the routing policy, rather than by the allocation policy alone. We further characterize distinct fairness-efficiency frontiers induced by the \mexpost and \hqexpost objectives, providing actionable guidance for selecting objectives that align with specific operational priorities. Finally, we show that the proposed PPA-deCV heuristic achieves fairness and efficiency outcomes comparable to those of the jointly optimal policy, while significantly reducing computational complexity.

\paragraph{Paper Organization} 
\Cref{sec:model_preliminaries} defines the fairness objectives and presents their dynamic programming formulations. \Cref{sec:allocation,sec:sequencing} analyze the structural properties of optimal allocation and routing policies, respectively. \Cref{sec:experiments} evaluates the performance of different policies across a range of demand distributions and capacity levels. Finally, \Cref{sec:conclusion} concludes and outlines directions for future research.

%% file: parts/related_work.tex
\subsection{Related Work}
\label{sec: literature}
Nonprofit operations and fairness have garnered increasing attention in recent literature. On the application side, work on healthcare scheduling and resource allocation~\citep{shylo2013stochastic, liu2024patient} and humanitarian logistics and rescue operations~\citep{ouyang2024dynamic} highlights the operational challenges faced in nonprofit settings. Complementing this, a growing body of research focuses explicitly on fairness considerations in operations, including algorithmic pricing~\citep{cohen2022price}, disparity in service systems~\citep{cheng2025investigation}, fair allocation through elective information acquisition~\citep{cai2020fair}, sequential search with fairness constraints~\citep{aminian2025markovian}, online combinatorial optimization with group fairness constraints~\citep{golrezaei2024online}, fair scheduling~\citep{tassiulas2002maxmin, doulamis2007fair}, incentive design~\citep{freund2025fair}, and online contention resolution schemes capturing forward-looking fairness~\citep{ma2025forward}. We highlight the most closely related works below, including inventory routing, and static and stochastic fair resource allocation problems, especially as they relate to {\em sequential} fair allocation with the inclusion of {\em routing} decisions.

\paragraph{Inventory Routing Problem (IRP)} 
IRP models study sequential resource allocation in for-profit settings, integrating inventory management, vehicle routing, and delivery scheduling to minimize operational costs by optimizing the distribution process from a supplier to geographically dispersed customers~\citep{coelho2014thirty}.
The origins of IRP trace back to \citet{bell1983improving}, who first merged inventory control with vehicle routing in gas distribution, minimizing transportation costs while maintaining customer inventory levels. Subsequent work expanded IRP to incorporate trade-offs between distribution, inventory, and production setup costs \citep{federgruen1984combined, burns1985distribution, blumenfeld1985analyzing}. Researchers also developed dynamic allocation strategies for stochastic demand, proving the threshold structure of optimal policies \citep{bassok1995dynamic, kumar1995risk, berman2001deliveries}. These models align with our sequential resource allocation problem, where instead of profit-based objectives we consider fairness objectives. Though the objective is fundamentally different (usually fairness objectives are non-additive), we show that the optimal allocation policy preserves the threshold structure. Additionally, 
we do not consider replenishment due to the time-sensitive nature of demand in non-profit contexts, and so our initial supply is fixed.

Recent advances to IRPs leverage machine learning in developing approximation methods to address real-world complexities. Especially for direct delivery (where vehicle capacity restricts visits to one customer per route), researchers apply reinforcement learning to facilitate decision-making \citep{kleywegt2002stochastic, archibald2009indexability, bertazzi2013stochastic, hasturk2025constrainedreinforcementlearningdynamic, ortega2024stochastic}. As the primary focus of this paper is to identify structural properties, we omit detailed discussion on the computational research in IRP.

\paragraph{Deterministic Fair Resource Allocation} Fair allocation of resources has been extensively studied in deterministic settings, where the demands for all customers are fully known in advance, with little consideration for routing.
Seminal work by \citet{bertsimas2011price} systematically characterizes the efficiency-equity trade-off frontier for divisible goods under \textit{envy-free}, \textit{equitable}, \textit{proportional}, and \textit{max-min} fairness. Later \citet{lyu2025price} tightens the upper bound by introducing the worst-case service level. \citet{chaudhury2020efx} proves the existence of EF1 (an analog to envy-freeness for indivisible goods) allocations for three agents under deterministic demand. \citet{grigoryan2021effective} designs a matching algorithm for fair vaccine distribution. Other works address fairness in scheduling, where resources are shared rather than consumed~\citep{vardi2024price}, or study proportional fairness in indivisible allocations \citep{babaioff2022best}. Notably, these studies focus solely on allocation policies, assuming predetermined routes. 
Lastly, \citet{eisenhandler2019humanitarian} integrate routing and allocation decisions in a mixed supplier-customer setting with deterministic demand. Our work generalizes this by introducing stochastic demand and shifting focus from heuristic design to deriving structural properties of optimal policies.

\paragraph{Stochastic Fair Resource Allocation} In contrast to deterministic models, stochastic resource allocation settings assume resources or agents arrive sequentially.  A large body of work focuses on scenarios where resources arrive online while agents and their demands are fixed~\citep{bertsimas2013fairness, kaazempur2024data, celdir2025dynamic}.  More relevant to our work are models where agents and their demand arrive over time.
\citet{donahue2020fairness} consider a model where decisions are made upfront, prior to observing any demand realizations. They formulate a two-stage stochastic program, defining fairness as the probability of receiving a service.

Most relevant to this work are \citet{lien2014sequential, manshadi2021fair, ma2022fairness}.
 \citet{lien2014sequential} analyzes optimal allocation and \routing policies for food pantry operations in a two-node setting with symmetric demand distributions. We relax these conditions, derive broader structural insights, and rigorously compare fairness objectives and their dynamic programming (DP) formulations, where they provided only empirical observations. 
 \citet{manshadi2021fair} study resource allocation with correlated demands, providing competitive ratio performance guarantees of the projected proportional allocation (PPA) heuristic policy. In contrast, we focus on deriving structural results on the \textit{optimal} allocation and integrating \routing into decision-making. We also numerically study the performance of the PPA policy and compare it to our optimal allocation policy. \citet{ma2022fairness} formulate a bi-criteria objective that linearly combines fairness and efficiency. While their analysis centers on max-min fairness using shortfall as the performance metric, their two-threshold structure extends to the fairness definition in \citet{manshadi2021fair}. Although both \citet{ma2022fairness} and our work 
 characterize threshold structures, their discussion emphasizes the role of historical performance, whereas we focus on the influence of demand observations. Furthermore, we explore the fairness-efficiency trade-offs under the \hqexpost objective proposed by \citet{lien2014sequential} and the \mexpost objective proposed by \citet{manshadi2021fair}.

 {\citet{balcik2014multi} and \citet{alkaabneh2023routing} generalize the single-truck framework to multi-truck under max-min fairness objectives. Unlike their two-stage stochastic approach with predetermined routes, we develop a dynamic programming formulation where routing and allocation decisions occur post-demand observation. Furthermore, one can easily extend our work to a multi-truck setting by partitioning agents before observing demand. After the partitioning, the problem reduces to our single truck with a limited capacity setting. }

Beyond max-min fairness, \citet{sinclair2022sequential,banerjee2023online} explores multi-objective allocation based on the Nash Social Welfare (NSW) criterion. However, their solution is scale-invariant to demand and thus not applicable to the fill rate utilities considered in our study. \citet{fadaki2025sequential} also adopts the NSW objective, modeling the allocation problem as a Markov Decision Process (MDP) and solving it using Approximate Dynamic Programming (ADP). In contrast, \citet{si2022enabling} focuses on the $\alpha$-fairness metric within a horizon-fairness framework for sequential allocation. They propose an approximately optimal online algorithm and identify conditions under which performance guarantees can be established.

Our work advances this literature in three key aspects: (i) We provide both theoretical and empirical comparisons of max-min fairness variants, offering a broader perspective than prior studies that focused on single fairness objectives. (ii) We develop a systematic framework that characterizes the complete theoretical structure of optimal policies under different fairness criteria. This yields actionable insights for practitioners when selecting allocation objectives in diverse operational contexts. (iii) We extend the scope of fairness-aware decision-making by incorporating both allocation and routing decisions, which are critical in many real-world systems.

%% file: parts/model_prelim.tex
\section{Model and Preliminaries} \label{sec:model_preliminaries}

\paragraph{Technical Notation}  
For $N \in \mathbb{N}_+$, we let $[N] = \{1, 2, \ldots, N\}$.  We use script letters (e.g. $\D$) to denote distributions, lower-case letters (e.g. $d$) to denote their realizations, and $\E_\D[\cdot]$ to denote the expectation of a random variable when the randomness is dictated by $\D$.  Bold symbols denote vectors, e.g., $\Vpi = (\pi_1, \dots, \pi_N)$. We let $a \land b$ denote the minimum of $a$ and $b$, i.e. $\min\{a, b\}$. Lastly, we use $\permu{N}$ to denote the set of permutations of $[N]$.

\subsection{Problem Set-up} \label{subsec:prob_set_up}
\input{preliminary/prob_set_up}

\subsection{Dynamic Programming Formulations} \label{subsec:DP_formulation}
\input{preliminary/DP_formulation}

\subsection{Dynamic Programming Properties} \label{subsec:DP_properties}

\input{preliminary/DP_properties}

%% file: preliminary/prob_set_up.tex
\paragraph{Model Primitives} We consider a sequential resource allocation problem involving simultaneous allocation and \routing decisions (the selection of the order through which to traverse the different nodes). A truck departs from the depot with $c$ divisible units of a single type of resource\footnote{In practice, our results extend to the multi-resource setting where $c$ instead denotes a vector of $K$ different resources, and locations have {\em independent} demand for each type of resource.}. The goal is to allocate the scarce resource fairly over $N$ distribution points (or nodes). Each node $i \in [N]$ has a demand sampled independently from a known probability distribution $\D_i$ supported on the positive interval $[d_i^{\min}, d_i^{\max}] \subset \mathbb{R}_{>0}$, however, the realized demand is only revealed upon arrival. 
Initially, the driver decides to visit a node $\sigma_1$ in the set of nodes $[N]$, and the remaining unvisited set of nodes is $\itaS_1 := [N] \backslash \{\sigma_1\}$.  Then, over a series of stages $n = 1, \ldots, N$, the following occurs: 
\begin{itemize}
    \item The driver travels to node $\sigma_{n}$,
    and observes demand $d_{n}$ drawn from the probability distribution $\D_{\sigma_{n}}$.
    \item The driver makes an {\em irrevocable} allocation $\pi_{n}$ given the remaining capacity $c_n$ such that $0 \leq \pi_n \leq \min\{c_n, ~d_n\}$ and selects the next node $\sigma_{n+1}$ from the unvisited set of nodes $\itaS_n$.
    \item The remaining capacity is updated to $c_{n+1} = c_n - \pi_n$ and the unvisited set of nodes is updated to $\itaS_{n+1} = \itaS_n \backslash \{\sigma_{n+1}\}$.
\end{itemize}
This process continues until all nodes are visited at stage $n = N$. We denote $\Vsig$ as the \routing policy, $\Vpi$ as the allocation policy, and $\Vmu = (\Vsig, ~\Vpi)$ as the joint allocation and \routing policy.

\paragraph{Assumptions} 
We assume that demand is revealed only upon arrival, reflecting real-world nonprofit operations where volunteer-run services typically observe demand shortly before delivery. In contexts like vaccine distribution, demand fluctuates significantly and is difficult to estimate in advance. Our analysis focuses on a single-truck setting with fixed capacity, motivated by applications requiring immediate distribution without time for replenishment.

\paragraph{Metrics} We measure the utility for each node $i$'s allocation in terms of their {\bf fill-rate} $\beta_i = \frac{\pi_i}{d_i} \wedge 1$. It measures the percentage of node $i$'s demand satisfied under the allocation $\pi_i$. 
The principal's 
overarching goal is to design a sequence of allocation and \routing decisions that jointly optimize both {\em fairness} and {\em efficiency}. We start by measuring (un)fairness or (in)equity (we use these two terms interchangeably) via the maximum disparity in fill-rates across nodes: $\max_{i \in [N]} \beta_i - \min_{i \in [N]} \beta_i$.  
Since this is a random variable, we aggregate it either ex-post or ex-ante, each inducing a distinct notion of fairness:
\begin{definition}[Ex-post Fairness] \label{def:one_time_fair}
Given an initial capacity $c$ and a policy tuple $\Vmu = (\Vsig, ~\Vpi)$, the {\bf ex-post equity (fairness)} is:
\begin{equation} 
\DfairEP \triangleq \E_{\distF}[\max_i \beta_i - \min_i \beta_i],
\end{equation}
where $\distF = \left(\D_{\sigma_{1}}, \dots, \D_{\sigma_{N}} \right)$ denotes the joint demand distribution 
under the \routing policy $\Vsig = (\sigma_1, \dots, \sigma_{N})$, and $d_n \sim \D_{\sigma_{n}}$.
\end{definition}
This metric captures {\em short-term fairness}, as it evaluates the disparity in each realization of demand. In contrast, ex-ante or {\em long-term fairness} considers the disparity in expected fill rates across nodes:
\begin{definition}[Ex-ante Fairness] \label{def:long_term_fair}
Given an initial capacity $c$ and a policy tuple $\Vmu = (\Vsig, ~\Vpi)$, the {\bf ex-ante equity (fairness)} is:
\begin{equation} 
    \DfairEA \triangleq \max_i \E_{\distF}[\beta_i] - \min_i \E_{\distF}[\beta_i].
\end{equation}
\end{definition}
In applications such as post-disaster relief where resource distribution is a one-time event, ex-post fairness is more appropriate, as it captures fairness across all possible demand realizations. Conversely, in recurring distribution settings such as food bank operations, ex-ante fairness is more relevant, since stakeholders have a long relationship and are more concerned with average resource access over time rather than fairness in any single instance.
Next, we define the expected efficiency.
\begin{definition}[Efficiency] \label{def:efficiency}
Given an initial capacity $c$ and a policy tuple $\Vmu = (\Vsig, ~\Vpi)$, the expected {\bf efficiency} is defined as \begin{equation}
    \Deff = \dfrac{\E_{\distF} \left[ \sum \limits_{i = 1}^N  \pi_i \right]}{c}.
\end{equation}
\end{definition}

\paragraph{Objectives} 
 As written, this is a multi-objective optimization problem, trying to balance between fairness and efficiency. A common approach in the literature is to select a single objective that encompasses both terms simultaneously, such as \textit{egalitarian social welfare} (or \textit{Rawlsian fairness}), which maximizes the minimum utility among agents, namely $\min_i \beta_i$. Maximizing this quantity not only improves fairness (by narrowing the gap between fill rates), but also tends to allocate more resources, thereby reducing waste.  

For the principal we are interested in two objectives, the \mexpost objective in \citet{manshadi2021fair} and the \hqexpost objective in \citet{lien2014sequential}. Later in \Cref{sec:experiments}, we show that though these two objectives look similar, their equity-efficiency frontiers are fundamentally different. Note that the two objectives differ in how they handle the minimum fill rate among previous agents: the \mexpost objective tracks this quantity, whereas the \hqexpost objective does not. 

The first objective, \mexpost, incorporates the minimum fill rate over visited nodes into the decision-making process.
Given a policy tuple $\Vmu = (\Vsig, ~\Vpi)$, where $\Vsig$ is the \routing policy and $\Vpi$ is the allocation policy, it maximizes the expected minimum fill rate over sample paths, i.e.,
\begin{equation} \label{obj:m} 
\tag{\mexpost} 
    W_0^{\Vmu} (c) = \E_{\distF} \left[\min \limits_{n \in [N]} \frac{\pi_{n}}{d_{n}} \right].
\end{equation}
By maximizing the minimum fill rate across sample paths, \mexpost reserves resources upfront to avoid allocation failures that could occur in a single path and cannot be compensated later. However, this reservation behavior systematically assigns lower fill rates to later nodes, and thus results in worse long-term fairness compared to the \hqexpost objective, which is validated by our numerical results in \Cref{sec:experiments}.

In contrast, the second objective, \hqexpost, makes decisions without relying on the minimum fill rate over visited nodes.
Given a policy tuple $\Vmu = (\Vsig, ~\Vpi)$, where $\Vsig$ is the \routing policy and $\Vpi$ is the allocation policy, the \hqexpost objective maximizes the minimum of the current fill rate and the average of the future minimum fill rate, i.e.,
    \begin{equation} \tag{\hqexpost} \label{obj:hq}
        Z_0^{\Vmu} (c) = \E_{\D_{\sigma_1}} \left[ \E_{\D_{\sigma_2}} \left[\frac{\pi_1}{d_{1}} \land \E_{\D_{\sigma_3}} \left[\frac{\pi_2}{d_{2}} \land \dots \land \E_{\D_{\sigma_{N}}} \left[\frac{\pi_{N-1}}{d_{N-1}} \land \frac{\pi_N}{d_N} \right] \dots \right] \right] \right].
    \end{equation}

Despite its apparent complexity, \hqexpost aligns more closely with traditional dynamic programming (DP) frameworks, as we will show in \Cref{subsec:DP_formulation}. It balances immediate allocation decisions with expectations over future outcomes, and crucially, it reduces the dimension of the problem by relaxing the dependence on the minimum fill rate over visited nodes. This flexibility allows for the correction of suboptimal past decisions. As we will demonstrate in \Cref{sec:experiments}, while \hqexpost may compromise short-term fairness, it can lead to improved long-term fairness. 

The ultimate goal is to find the optimal policy $\Vmu^{*, ~w/z} = (\Vsig^{*, ~w/z}, ~\Vpi^{*, ~w/z})$ that maximizes \mexpost and \hqexpost respectively, i.e., 
\begin{align}
    W_0^{*} (c) &= \max_{\Vmu} ~W_0^{\Vmu} (c), \label{eqn:m_max}\\
    Z_0^{*} (c) &= \max_{\Vmu} ~Z_0^{\Vmu} (c), \label{eqn:hq_max}
\end{align}
where $\Vmu^{*, ~w}$, $\Vmu^{*, ~z}$ are optimal policy tuples for \mexpost and \hqexpost, respectively.
We begin by formulating \Cref{eqn:m_max,eqn:hq_max} as DPs and present basic properties of DP models to prepare for proving structural properties later.

%% file: preliminary/DP_formulation.tex
We start by presenting dynamic programming (DP) models of both the \mexpost and the \hqexpost objectives. Let $\hist^{n-1}$ 
be the minimum fill rate over the previous customers visited before stage $n$. Notice that $\beta_{\min}^0 = 1$ by default, so $\beta_{\min}^{n-1} \leq 1$, $\forall n$. We start off by presenting the dynamic program for the \mexpost objective.

\paragraph{\mexpost}
At each stage $n \in N$, the current state $\Vu_n = \left(c_n, ~\sigma_n, ~d_n, ~\itaS_n \right)$ contains the remaining capacity $c_n$, current node $\sigma_n$, their demand realization $d_n$, the set of unvisited nodes $\itaS_n$, and additionally the minimum fill rate over previously visited nodes $\beta_{\min}^{n-1}$.  
For an arbitrary policy tuple $\Vmu$, we have the Bellman equations for \mexpost,
\begin{equation} \label{eqn:m_bellman} 
    \begin{split}
        &W_n^{\Vmu}\left(\Vu_n, ~\beta_{\min}^{n-1} \right) = \E_{d_{n+1} \sim \D_{\sigma_{n+1}}} \left[W_{n+1}^{\Vmu}\left(\Vu_{n+1}, ~\beta_{\min}^{n-1} \land \frac{\pi_n}{d_n} \right) \right],\\
        &W_N^{\Vmu}\left(\Vu_N, ~\beta_{\min}^{N-1} \right) = \beta_{\min}^{N-1} \land \frac{\pi_N}{d_N},
    \end{split}
\end{equation}
where $\Vu_{n+1} = \left(c_n - \pi_n, ~\sigma_{n+1}, ~d_{n+1}, ~\itaS_{n} \backslash \{\sigma_{n+1}\} \right)$. 
The optimal policy $\Vmu^{*, ~w}$ for \mexpost additionally satisfies
\begin{equation} \label{eqn:m_dp} 
    \begin{split}
        &W_n^{*}\left(\Vu_n, ~\beta_{\min}^{n-1} \right) = \max \limits_{\scriptstyle{
        0 \leq \pi_n \leq c_n \land d_n, ~ \sigma_{n+1} \in \itaS_n}} \E_{d_{n+1} \sim \D_{\sigma_{n+1}}} \left[W_{n+1}^{*}\left(\Vu_{n+1}, ~\beta_{\min}^{n-1} \land \frac{\pi_n}{d_n} \right) \right],\\
       &W_N^{*}\left(\Vu_N, ~\beta_{\min}^{N-1} \right) = \beta_{\min}^{N-1} \land \frac{c_N}{d_N}, \quad \text{where $\pi_N^{*,~w} = c_N \land d_N$}.
    \end{split}
\end{equation}

We emphasize the dependence of \mexpost on the minimum fill rate over visited nodes by keeping $\beta_{\min}^{n-1}$ outside the state $\Vu_n$. We will later see that \hqexpost is memoryless. 
At stage $n = 0$, when the truck loads $c$ units of resource but has not yet visited any nodes, 
\begin{equation}
    W_0^{*} \left(c \right) = \max \limits_{\sigma_1 \in [N]} \E_{d_1 \sim \D_{\sigma_1}} \left[W_1^{*}\left(\Vu_1, ~1 \right) \right].
\end{equation}

\paragraph{\hqexpost}
At each stage $n \in N$, the current state $\Vu_n = \left(c_n, ~\sigma_n, ~d_n, ~\itaS_n \right)$ is defined the same as in the \mexpost dynamic program.
For an arbitrary policy tuple $\Vmu$, we have the Bellman equations for \hqexpost,
\begin{equation} \label{eqn:hq_bellman}       Z_n^{\Vmu}\left(\Vu_n\right) = \E_{d_{n+1} \sim \D_{\sigma_{n+1}}} \left[\frac{\pi_n}{d_n} \land Z_{n+1}^{\Vmu}\left(\Vu_{n+1}\right) \right] \text{ for $n < N$}, \quad \text{and} \quad
        Z_N^{\Vmu}\left(\Vu_N \right) = \frac{\pi_N}{d_N} \land 1,
\end{equation}
where $\Vu_{n+1} = (c_n - \pi_n, ~\sigma_{n+1}, ~d_{n+1}, ~\itaS_{n} \backslash \{\sigma_{n+1}\})$.  
The optimal policy $\Vmu^{*, ~z}$ for \hqexpost additionally satisfies
\begin{equation} \label{eqn:hq_dp} 
    \begin{split}
    &Z^{*}_{n} \left(\Vu_n \right) = \max \limits_{{\scriptstyle
    0 \leq \pi_n \leq c_n \land d_n,~\sigma_{n+1} \in \itaS_n }} 
    \mathbb{E}_{d_{n+1} \sim \mathcal{D}_{\sigma_{n+1}} }\left[ \frac{\pi_n}{d_n} \land Z^{*}_{n+1} \left(\Vu_{n+1} \right) \right],\\
    &Z^{*}_{N} \left( \Vu_N \right) = \frac{c_N}{d_N} \land 1, \quad \text{where $\pi_N^{*,~z} = c_N \land d_N$.}
    \end{split}
\end{equation}
At stage $n = 0$, when the truck loads $c$ units of resource and decides the first node to visit, 
\begin{equation}
    Z_0^{*} \left(c \right) = \max \limits_{\sigma_1 \in [N]} \E_{d_1 \sim \D_{\sigma_1}} \left[Z_1^{*}\left(\Vu_1 \right) \right].
\end{equation}

We identify two key distinctions between the \mexpost and \hqexpost models. First, the \mexpost model incorporates the previous minimum fill rate $\beta_{\min}^{n-1}$, into the system state. Second, it tracks the exact minimum fill rate across all sample paths. These features make the \mexpost model more conservative than \hqexpost, as it constrains current allocations based on historical performance and computes the precise minimum fill rate. An intermediate objective, referred to as SRA-e in \citet{lien2014sequential}, includes the minimum fill rate over previous nodes $\hist^{n-1}$ but does not record the exact expected minimum. However, we do not consider SRA-e further in this paper.

%% file: preliminary/DP_properties.tex
To prepare for analyzing the optimal allocation and \routing policies, we begin by presenting fundamental properties of the dynamic programming models under both objective functions.  Detailed proofs are deferred to \Cref{appendix:dp_properties}.

We first establish the monotonicity of the \mexpost and \hqexpost objectives in the capacity and demand. Intuitively, with more resources and less demand, one has the freedom to allocate more resources and thus obtains a higher minimum fill rate. A key distinction between the two objectives lies in how they incorporate past decisions. The \mexpost objective incorporates the prior minimum fill rate $\beta_{\min}^{n-1}$, reflecting performance over the entire sample path; consequently, it is constrained by past allocations and always bounded above by the previously attained minimum fill rate. In contrast, the \hqexpost objective excludes the prior minimum fill rate $\beta_{\min}^{n-1}$ and evaluates only current and future allocations.

To formalize our results, we first distinguish between static and dynamic routing policies.
\begin{definition}[Static and Dynamic Routing Policies]
    A \routing ~policy $\Vsig$ is said to be {\bf static} if at any stage $n$ and for any state $\Vu_n$, $\sigma_n(\Vu_n) = \sigma_n$, i.e., the \routing policy is independent of the state $\Vu_n$.  Otherwise, the \routing policy $\Vsig$ is said to be {\bf dynamic}.
\end{definition}
\noindent Under a static routing policy, the visitation order $(\sigma_1, \ldots, \sigma_N)$ is fixed in advance, before any demand is realized. In contrast, a dynamic policy allows the route to adapt to observed states.

We now establish monotonicity properties under a fixed (static) routing order:
\begin{restatable}[Monotonicity]{lemma}{Monotone} \label{lemma:monotone}
Given any static route $\Vsig \in \permu{N}$, for any stage $0 < n \leq N$, we have
\begin{enumerate}[label=(\alph*)]
    \item $W^{(\Vsig,~*)}_n (\Vu_n, ~\beta_{\min}^{n-1})$ and $Z^{(\Vsig,~*)}_n \left( \Vu_n \right)$ are non-decreasing in the capacity $c_n$,
    \item $W^{(\Vsig,~*)}_n (\Vu_n, ~\beta_{\min}^{n-1})$ is non-decreasing in the minimum fill rate over visited nodes $\beta_{\min}^{n-1}$,
    \item $W^{(\Vsig,~*)}_n (\Vu_n, ~\beta_{\min}^{n-1})$ and $Z^{(\Vsig,~*)}_n \left( \Vu_n \right)$ are non-increasing in the demand $d_n$,
    \item $W^{(\Vsig,~*)}_n (\Vu_n, ~\beta_{\min}^{n-1})$ is no greater than the minimum fill rate over the visited nodes, i.e., $W^{(\Vsig,~*)}_n (\Vu_n, ~\beta_{\min}^{n-1}) \leq \beta_{\min}^{n-1}$.
\end{enumerate}
\end{restatable}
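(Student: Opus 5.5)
The plan is backward induction on the stage $n$, from $n=N$ down to $n=1$, with the static route $\Vsig$ fixed throughout. We prove all four parts simultaneously, since the inductive step for (a) and (c) uses (b) at the next stage.

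\textbf{Base case $n=N$.} We have $W_N^* = \beta_{\min}^{N-1}\land \frac{c_N}{d_N}$ and $Z_N^* = \frac{c_N}{d_N}\land 1$. Both are non-decreasing in $c_N$ and non-increasing in $d_N$. The quantity $W_N^*$ is non-decreasing in $\beta_{\min}^{N-1}$ and bounded by it, so all four properties hold.

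\textbf{Inductive step.} Suppose the claims hold at stage $n+1$ for every value of the next demand $d_{n+1}$.

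For (a), we use a feasible-set argument. If $c_n\le c_n'$, then every allocation $\pi_n\le c_n\land d_n$ is also feasible under $c_n'$. However, the resulting next-stage capacity $c_n'-\pi_n$ is larger than $c_n-\pi_n$. By (a) at stage $n+1$, applied pointwise in $d_{n+1}$ and then integrated, the continuation value is larger. The current fill rate $\pi_n/d_n$ is unchanged, and the maps $x\mapsto \beta\land x$ and $x\mapsto \frac{\pi_n}{d_n}\land x$ are monotone. Hence each candidate's objective is weakly larger, and so is the maximum.

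For (b), fix $\pi_n$. Raising $\beta_{\min}^{n-1}$ raises $\beta_{\min}^{n-1}\land \frac{\pi_n}{d_n}$, which is the next stage's history argument. By (b) at stage $n+1$ the expectation increases, and maximizing over $\pi_n$ preserves this.

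For (d), note that for any $\pi_n$, the next history argument satisfies $\beta_{\min}^{n-1}\land\frac{\pi_n}{d_n}\le \beta_{\min}^{n-1}$. By (d) at stage $n+1$, the integrand is at most $\beta_{\min}^{n-1}$. Taking the expectation and then the maximum gives the bound.

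\textbf{Part (c): the main obstacle.} Monotonicity in $d_n$ is less direct, because $d_n$ enters both the feasible set $\pi_n\le c_n\land d_n$ and the fill rate $\pi_n/d_n$. Raising $d_n$ enlarges the feasible set, so a plain feasible-set argument fails. The plan is a mapping argument. Take $d_n\le d_n'$ and let $\pi_n'$ be any feasible allocation under $d_n'$. Define
\[
\pi_n = \pi_n'\frac{d_n}{d_n'} \le \pi_n'.
\]
This choice is feasible under $d_n$, since $\pi_n\le d_n$ and $\pi_n\le c_n$, and it gives the same fill rate $\pi_n/d_n=\pi_n'/d_n'$. It also leaves more capacity, $c_n-\pi_n\ge c_n-\pi_n'$. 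Using (a) at stage $n+1$, together with the fact that the history argument for \mexpost is identical, the value of $\pi_n$ under $d_n$ dominates the value of $\pi_n'$ under $d_n'$. Taking the supremum over $\pi_n'$ yields $W_n^*(d_n)\ge W_n^*(d_n')$, and the identical argument works for $Z_n^*$.

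The one subtlety concerns the base case for $Z_N$, where the fill rate is capped at $1$ via $\land 1$, since $\pi_N = c_N\land d_N$. Because $\frac{c_N}{d_N}\land 1$ is clearly non-increasing in $d_N$, this cap causes no difficulty. Finally, the maximum is attained because the feasible set is compact, and continuity is not needed since the argument compares objective values candidate by candidate.
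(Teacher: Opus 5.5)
Your proposal is correct and matches the paper's proof in substance: backward induction with a feasible-set comparison for (a), the rescaled allocation $\pi_n = \pi_n' d_n/d_n'$ (same fill rate, more leftover capacity) combined with (a) at stage $n+1$ for (c), and the bound $W_{n+1}^* \le \beta_{\min}^{n-1}\land \pi_n/d_n$ for (d). The only difference is (b): the paper cites \citet[Proposition 7]{ma2022fairness}, whereas you give a direct one-line induction, which makes your version self-contained. Incidentally, (a) and (c) do not actually need (b) at stage $n+1$, since the history argument is unchanged in both comparisons.
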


Building on monotonicity, we next show that, under a fixed routing order, \mexpost is concave in both the prior minimum fill rate and capacity, and \hqexpost is concave in capacity. This concavity generally fails under dynamic routing, where the objectives are the maximum over concave functions.

\begin{restatable}[Concavity]{lemma}{Concave}
\label{lemma:concave_in_pi}
Given a static \routing ~policy $\Vsig \in \permu{N}$, for any stage $0 < n \leq N$, we have
\begin{enumerate}[label=(\alph*)]
    \item $W^{(\Vsig,~*)}_n (\Vu_n, ~\beta_{\min}^{n-1})$ is concave in the minimum fill rate over visited nodes $\beta_{\min}^{n-1}$,
    \item $W^{(\Vsig,~*)}_n (\Vu_n, ~\beta_{\min}^{n-1})$ and $Z^{(\Vsig,~*)}_n (\Vu_n)$ are concave in the capacity $c_n$.
\end{enumerate} 
\end{restatable}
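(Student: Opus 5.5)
We argue by backward induction on the stage $n$, for a fixed static route $\Vsig$, so that the next node $\sigma_{n+1}$ is not a decision and the only control is $\pi_n$. The basic tool is the standard fact that if $f(x,y)$ is jointly concave on a convex set $C$, then $g(x)=\sup_{y:(x,y)\in C} f(x,y)$ is concave in $x$. To use it, we strengthen the induction hypothesis to a joint statement:
\begin{itemize}
\item for \mexpost, $W^{(\Vsig,*)}_n(\Vu_n,\beta_{\min}^{n-1})$ is jointly concave in the pair $(c_n,\beta_{\min}^{n-1})$ for every fixed $d_n$;
\item for \hqexpost, $Z^{(\Vsig,*)}_n(\Vu_n)$ is concave in $c_n$ for every fixed $d_n$.
\end{itemize}
Parts (a) and (b) follow immediately from these, since concavity in each coordinate is implied by joint concavity.

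\emph{Base case $n=N$.} Here $W_N^{*}=\beta_{\min}^{N-1}\land \frac{c_N}{d_N}$. This is the minimum of two affine functions of $(c_N,\beta_{\min}^{N-1})$, hence jointly concave. Likewise $Z_N^{*}=\frac{c_N}{d_N}\land 1$ is a minimum of affine functions of $c_N$, hence concave.

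\emph{Inductive step for \mexpost.} Fix $d_n$ and write
\[
F(c_n,\beta,\pi_n)=\E_{d_{n+1}}\Bigl[W_{n+1}^{*}\Bigl(c_n-\pi_n,\,d_{n+1},\,\beta\land \tfrac{\pi_n}{d_n}\Bigr)\Bigr].
\]
By the induction hypothesis $W_{n+1}^{*}$ is jointly concave in (capacity, $\beta$), and by \Cref{lemma:monotone}(b) it is non-decreasing in its $\beta$-argument. The map $(c_n,\beta,\pi_n)\mapsto c_n-\pi_n$ is affine, and $(\beta,\pi_n)\mapsto \beta\land \frac{\pi_n}{d_n}$ is concave. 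A concave function that is non-decreasing in its concave argument, composed with an affine argument in the other slot, is again concave. Hence $F$ is jointly concave in $(c_n,\beta,\pi_n)$ for each $d_{n+1}$, and expectation over $d_{n+1}$ preserves this.

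The feasible set $\{(c_n,\beta,\pi_n): 0\le \pi_n\le c_n\land d_n\}$ is convex, since it is an intersection of half-spaces. Partial maximization over $\pi_n$ therefore yields a $W_n^{*}$ that is jointly concave in $(c_n,\beta)$. This step is the main obstacle: it requires carrying the capacity and the prior minimum together in the induction, and using monotonicity in $\beta$ to justify the composition with the min. Proving concavity in each variable separately would not survive the maximization over $\pi_n$.

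\emph{Inductive step for \hqexpost.} The argument is the same but simpler. The function $\frac{\pi_n}{d_n}\land Z_{n+1}^{*}(c_n-\pi_n,\cdot)$ is a minimum of a linear function and a concave function of an affine map, so it is jointly concave in $(c_n,\pi_n)$. Taking expectation over $d_{n+1}$ and maximizing over the convex feasible set for $\pi_n$ gives concavity of $Z_n^{*}$ in $c_n$.

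The boundary case $c_n=0$ requires no separate treatment, because the feasible set is then the single point $\pi_n=0$, which is still convex. We need the route to be static so that the maximization is over $\pi_n$ alone. Under dynamic routing a maximum over $\sigma_{n+1}$ would appear, and a maximum of concave functions need not be concave, consistent with the remark preceding the lemma.
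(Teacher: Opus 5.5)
Your proof is correct. For the \hqexpost objective it matches the paper's argument: backward induction, composition of $Z^{(\Vsig,~*)}_{n+1}$ with the affine map $c_n-\pi_n$, a minimum with $\pi_n/d_n$, expectation, and partial maximization over the convex feasible set (the paper cites \citet[Section~3.2.5]{boyd2004convex}). You are in fact more careful than the paper's wording. The paper says only that the integrand is concave ``in $c_n$'', whereas the partial-maximization rule needs the joint concavity in $(c_n,\pi_n)$ that you state explicitly.

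For the \mexpost objective the two routes genuinely differ. The paper does not prove (a), nor the \mexpost half of (b), itself; it defers both to \citet[Proposition 7]{ma2022fairness}. You instead give a self-contained induction. You carry joint concavity in $(c_n,\beta_{\min}^{n-1})$ together with monotonicity in $\beta_{\min}^{n-1}$ (\Cref{lemma:monotone}(b)), so that composing with the concave map $(\beta,\pi_n)\mapsto\beta\wedge\pi_n/d_n$ preserves concavity.

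You are right that this strengthening is forced. Because $\pi_n$ enters both the capacity argument and the fill-rate argument of $W^{(\Vsig,~*)}_{n+1}$, concavity in each coordinate separately would not yield the joint concavity in $(c_n,\beta,\pi_n)$ that partial maximization requires.

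What your route buys is self-containment and the stronger joint-concavity conclusion. What the paper's citation buys is brevity. The same external proposition also supplies the supermodularity of $W^{(\Vsig,~*)}_{n+1}$ in $(c_{n+1},\hist^{n})$, which the paper uses later in \Cref{lemma:decreasing_l_and_B,lemma:non-increasing_marginal_for_w} and which your argument does not produce.

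If you want the \mexpost part fully independent of that reference, note that \Cref{lemma:monotone}(b) is itself only cited in the paper. It folds trivially into your induction, since $\beta\mapsto\beta\wedge\pi_n/d_n$ is non-decreasing and both expectation and maximization preserve monotonicity.
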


We next introduce the equating property, a structural feature shared by both the \hqexpost and \mexpost objectives. We show that the optimal allocation policy always satisfies this property, which reduces the search space by ruling out non-equating allocations. Since all preceding results are established under static routing, we continue in this setting and first present the \hqexpost result, which is conceptually simpler.

\paragraph{\hqexpost} We say that an allocation policy has the ``equating property" if it equates the fill rate at the current node with the future minimum fill rate to go function.
\begin{definition}[Equating Property under \hqexpost]
    Given a static \routing policy $\Vsig \in \permu{N}$, for any stage $0 < n < N$ and state $\Vu_n$ containing the current demand $d_n$, an allocation policy $\Vpi$ is said to satisfy the {\bf equating property} under \hqexpost if it equates the fill rate at the current node and the expected future fill rate to go for some future demand, that is, there exists a demand $d_{n+1} \sim \D_{\sigma_{n+1}}$ such that
    \begin{align} \label{eqn:hq_equating_current_future}
        \frac{\pi_n}{d_n} = Z^{(\Vsig, ~\Vpi)}_{n+1} (c_n - \pi_n, ~\sigma_{n+1}, ~d_{n+1}, ~\itaS_{n+1}).
    \end{align}
\end{definition}

We now show that the optimal allocation policy $\pi_n^{*,~z}(\Vu_n)$ satisfies the equating property. Intuitively, if $\pi_n^{*,~z}(\Vu_n)$ failed to equate the two values, shifting resources toward either the present or the future would strictly improve the fill rate. The result holds for continuous distributions with bounded, connected support, though the property itself is defined more generally.

\begin{restatable}[Equating Property for Optimality under \hqexpost]{proposition}{HqEquatingProperty} \label{lemma:hq_equate_property_cont}
    Suppose that the demand distributions $\D_i$ for all $i \in [N]$ are continuous with connected support. Given a static \routing policy $\Vsig \in \permu{N}$, for any stage $0 < n < N$ and state $\Vu_n$ with $c_n > 0$, any optimal allocation policy $\pi_n^{*,~z}(\Vu_n)$ under the \hqexpost objective satisfies the equating property. 
\end{restatable}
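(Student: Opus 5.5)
We prove the statement by contradiction, using the intermediate value theorem in the future demand. Fix the static route $\Vsig$, a stage $0<n<N$ and a state $\Vu_n$ with $c_n>0$. Write $\pi = \pi_n^{*,~z}(\Vu_n)$, and for each future demand $d'$ in the support $[\lb_{\sigma_{n+1}},\ub_{\sigma_{n+1}}]$ set
\[
g(\pi,d') := Z^{*}_{n+1}(c_n-\pi,\sigma_{n+1},d',\itaS_{n+1}).
\]
The stage objective is $F(\pi)=\E_{d'}\bigl[\tfrac{\pi}{d_n}\land g(\pi,d')\bigr]$.

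By \Cref{lemma:monotone}(c), $g$ is non-increasing in $d'$. By \Cref{lemma:concave_in_pi}(b) together with part (a) of \Cref{lemma:monotone}, $g$ is concave and non-decreasing in the capacity, so it is continuous in $\pi$ on the interior. We also need $g$ continuous in $d'$. We plan to prove this by backward induction: $Z_N^*=c_N/d_N\land 1$ is continuous; the right-hand side of \eqref{eqn:hq_dp} is the maximum over a compact interval of an integrand that is continuous in $(\pi,d_n)$; Berge's maximum theorem then gives continuity. Since the support is connected, the range of $d'\mapsto g(\pi,d')$ is the interval $[g(\pi,\ub),\,g(\pi,\lb)]$.

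The equating property therefore fails only if $\pi/d_n$ lies strictly outside this interval. There are two cases.

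\emph{Case 1: $\pi/d_n < g(\pi,d')$ for all $d'$.} Then $F(\pi)=\pi/d_n$. We need $\pi< c_n\land d_n$. If $\pi=d_n$, then $\pi/d_n=1\ge g$, which is impossible. If $\pi=c_n$, then $g(\pi,\cdot)=Z^*_{n+1}(0,\cdot)=0\le\pi/d_n$, also impossible. So we may raise $\pi$ by a small $\epsilon>0$. By continuity of $g$ in $\pi$, uniform over the compact support (use the monotonicity and concavity in capacity, or Dini's theorem), the strict inequality survives. Hence $F(\pi+\epsilon)=(\pi+\epsilon)/d_n>F(\pi)$, contradicting optimality.

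\emph{Case 2: $\pi/d_n > g(\pi,d')$ for all $d'$.} Then $\pi>0$ and $F(\pi)=\E[g(\pi,d')]$. Lowering $\pi$ by $\epsilon$ preserves the inequality by uniform continuity, and gives $F(\pi-\epsilon)=\E[g(\pi-\epsilon,d')]\ge F(\pi)$ by monotonicity in capacity. To make this a strict improvement, or a contradiction, we need $g$ strictly increasing in capacity on a set of positive probability. We argue this from the structure of $Z^*_{n+1}$. At small capacities the optimal value is positive and strictly increasing: one can scale all later allocations proportionally, and $Z^*_{n+1}(0)=0$ with concavity gives $Z^*_{n+1}(x)\ge (x/y)Z^*_{n+1}(y)$. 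We also have $g<\pi/d_n\le 1$. Concavity plus a value strictly below its maximum (which is $\le 1$, attained at large capacity) then forces strict increase at $c_n-\pi$ whenever $g(\pi,d')<\sup_x g(x,d')$.

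We must also handle the possibility that $g(\cdot,d')$ is flat at its supremum. In that region the future value is already attained with less capacity and is $\ge$ what full satisfaction yields. We show this cannot happen while $g<\pi/d_n\le 1$, because the future value with sufficient capacity is $1>g$.

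The main obstacle is the strictness in Case 2, which rules out flat regions of $Z^*_{n+1}$ in capacity when its value is below 1. If needed, we will instead phrase the proposition as "there exists an optimal allocation satisfying the property." For this fallback, decrease $\pi$ until equality holds; this does not lower $F$. Continuity of $g$ in $\pi$, together with $g(\pi,\cdot)\to$ large values as $\pi\downarrow 0$ versus $\pi/d_n\to 0$, guarantees a crossing point by the intermediate value theorem.
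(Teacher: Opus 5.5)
Your proof is correct and has the same skeleton as the paper's. You argue by contradiction and split according to whether $\pi_n/d_n$ lies strictly below, strictly above, or inside the range of $d_{n+1}\mapsto Z^{(\Vsig,~*)}_{n+1}(c_n-\pi_n,~d_{n+1})$. The last case is settled by the intermediate value theorem over the connected support.

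The differences are in execution:
\begin{itemize}
\item In the two one-sided cases, the paper builds a global alternative $\tilde{\pi}_n$ that equates the fill rates at $d_{n+1}^{\max}$ (resp.\ $d_{n+1}^{\min}$), using the IVT applied to $l_n(\cdot\,;d_{n+1})$. You instead perturb $\pi_n$ by $\pm\epsilon$ and reduce uniformity to the single extreme demand via monotonicity in $d_{n+1}$.
\item You justify continuity in $d_{n+1}$ with Berge's theorem, which the paper simply asserts. The induction should carry joint continuity in $(c_n,d_n)$, since the constraint set moves with $c_n$.
\end{itemize}

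More importantly, your concavity argument settles strictness in the ``all above'' case, which the paper does not. The paper gets only $Z^{(\Vsig,~*)}_{n+1}(\tilde{\Vu}_{n+1}) \ge Z^{(\Vsig,~*)}_{n+1}(\Vu_{n+1})$ from monotonicity. It then remarks that in case of equality $\tilde{\pi}_n$ is also optimal. That yields existence of an equating optimum, not the stated claim that every optimal allocation equates.

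Your observation closes this gap. Let $\phi(x)=Z^{(\Vsig,~*)}_{n+1}(x,~d_{n+1})$, which is concave and non-decreasing in $x$.
\begin{itemize}
\item If $\phi(x_0)=\phi(x_0+\epsilon)$, then $\phi$ is constant on $[x_0,\infty)$.
\item Hence $\phi(x_0)=\sup\phi=1$, by the ``if'' direction of \Cref{lemma:expected_fill_rate_one}.
\item This contradicts $\phi(x_0)<\pi_n/d_n\le 1$.
\end{itemize}
So lowering $\pi_n$ strictly improves the objective for every $d_{n+1}$, and the proposition holds as stated. The fallback to an existence statement is therefore unnecessary. Your ``flat at its supremum'' paragraph can be replaced by this one-line argument.
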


\paragraph{\mexpost} Similarly, we have the equating property defined under the \mexpost objective. Unlike \Cref{eqn:hq_equating_current_future}, it is defined sample path-wise due to \mexpost depending on the minimum fill rate from prior allocations.
\begin{definition}[Equating Property under \mexpost]
    Given a static \routing policy $\Vsig \in \permu{N}$, for any stage $0 < n < N$ and state $\Vu_n$ containing the current demand $d_n$, an allocation policy $\Vpi$ is said to satisfy the {\bf equating property} under \mexpost if it equates the current fill rate and the future minimum fill rate over a specific sample path, that is, there exists a sequence of demands $(d_{n+1}, \ldots,d_N)$ where $d_i \sim \D_{\sigma_{i}}$, for all $n+1 \leq i \leq N$ such that
    \begin{align} \label{eqn:m_equating_current_future}
        \frac{\pi_n}{d_n} = \frac{\pi_{n+1}}{d_{n+1}} = \cdots = \frac{\pi_N}{d_N}.
    \end{align}
\end{definition}
The same intuition in \Cref{lemma:hq_equate_property_cont} applies to \Cref{lemma:m_equate_property}, indicating that the optimal allocation policy should at least equate the fill rates on one possible sample path.
\begin{restatable}[Equating Property for Optimality under \mexpost]{proposition}{MaEquatingProperty} \label{lemma:m_equate_property}
    Suppose that the demand distributions $\D_i$ for all $i \in [N]$ are continuous with connected supports. Given a static \routing policy $\Vsig \in \permu{N}$, for any stage $0 < n < N$, state $\Vu_n$ and minimum fill rate over previously visited nodes $\beta_{\min}^{n-1}$, any optimal allocation policy $\pi_n^{*,~w}(\Vu_n, ~\beta_{\min}^{n-1})$ under the \mexpost objective satisfies the equating property.
\end{restatable}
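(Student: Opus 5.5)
Fix a static route $\Vsig$, a stage $0<n<N$, a state $\Vu_n$ with $c_n>0$, and a prior minimum $\beta:=\beta_{\min}^{n-1}$. The proof is by contradiction, using an exchange (perturbation) argument. Suppose $\pi^*:=\pi_n^{*,w}(\Vu_n,\beta)$ is optimal. Define the current fill rate $r=\pi^*/d_n$ and the terminal minimum along a future sample path,
\[
M(\mathbf d)=\min_{n+1\le i\le N}\pi_i^{*}/d_i,
\]
for $\mathbf d=(d_{n+1},\dots,d_N)$, where the future allocations are optimal given the residual capacity $c_n-\pi^*$ and running minimum $\beta\wedge r$. The value is $\E[\beta\wedge r\wedge M(\mathbf d)]$.

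First I will show that, by backward induction together with \Cref{lemma:m_equate_property} applied at later stages, the optimal future allocations themselves have the equating structure. It then suffices to show that there is a path on which $M(\mathbf d)=r$ and the future fill rates are all equal to $r$.

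For the base case $n=N-1$, the last allocation is $\pi_N=c_{N}\wedge d_N$, so the future fill rate is $(c_n-\pi^*)/d_N\wedge 1$. This is continuous in $d_N$ over the connected support $[\lb_{\sigma_N},\ub_{\sigma_N}]$. The equating condition fails only if either $r<(c_n-\pi^*)/\ub_{\sigma_N}\wedge 1$ for every $d_N$, or $r>(c_n-\pi^*)/\lb_{\sigma_N}$.

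In the first case (``too little now''), increasing $\pi^*$ by a small $\epsilon$ keeps the future fill rate above $r$ on every path. The value $\E[\beta\wedge r\wedge\cdot]$ then becomes $\beta\wedge(r+\epsilon/d_n)$, which weakly increases. I need strictness, or a way to handle the boundary. If $\beta\le r$, the objective is flat in $\pi_n$. I would handle this by noting that the claim concerns optimal policies, and the paper's intent presumably allows choosing the equating one. More carefully, I would take the statement to mean that the maximal or canonical optimizer equates. Alternatively I will argue that $\pi^*$ can be increased until equating without loss, and I would flag this tie issue explicitly.

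In the second case (``too much now''), $r$ strictly exceeds the future rate on every path. Decreasing $\pi^*$ by $\epsilon$ lowers $r$ but raises the residual capacity. Since $\D_{\sigma_N}$ is continuous, the future rate on a set of positive probability is strictly below $\beta\wedge r$ (when it is below $\beta$), so the expectation strictly increases. This contradicts optimality. Here I use the monotonicity in \Cref{lemma:monotone}(a): the future value is nondecreasing in capacity and strictly increasing where the $\min$ is attained by the future term.

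For general $n$, I replace the explicit last-stage rate by the random variable $M(\mathbf d)$. By induction (equating at stage $n+1$), $M(\mathbf d)=\pi_{n+1}/d_{n+1}$ is achieved along an equating continuation. Using continuity of the optimal value in $c_{n+1}$ (from the concavity in \Cref{lemma:concave_in_pi}(b), since concave functions are continuous on the interior) and connected supports, the set of achievable $M$ values forms an interval $[\underline M,\overline M]$. If $r$ lies in this interval, some path realizes $M=r$, and prepending stage $n$ gives \eqref{eqn:m_equating_current_future}. Otherwise I perturb $\pi^*$ exactly as in the base case.

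The main obstacle is this perturbation step in the general stage. I must show that shifting $\epsilon$ of capacity to the future strictly raises $\E[\beta\wedge r\wedge M]$ when $r>\overline M$. This requires that $M$ respond strictly to capacity on a positive-probability set. I would obtain it from the structure $\pi_N=c_N\wedge d_N$: extra capacity propagates, and under an equating continuation it raises every future fill rate. I also need to treat carefully the degenerate ties where $\beta$ binds, which I expect to resolve by selecting among optimizers.
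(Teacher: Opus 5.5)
Your skeleton (contradiction split into ``too little now'', ``too much now'' and a mixed case, an IVT over connected supports, and backward induction on the equating property) matches the paper's Cases I--III. Your base case $n=N-1$ is fine up to ties. The gap is in the general stage, at exactly the step you call the main obstacle.

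The paper needs, and proves as separate lemmas, two facts about the future minimum fill rate $M$ (its $B_n$). The first is continuity in $\pi_n$ (\Cref{lemma:continuous_difference_function}), via Berge's maximum theorem applied to the \emph{argmax} at each later stage. The second is monotonicity: $B_n$ is non-increasing in $\pi_n$, strictly so when $B_n<1$ (\Cref{lemma:decreasing_l_and_B}). That lemma is a monotone comparative statics argument. From concavity and the supermodularity of $W_{i+1}^{(\Vsig,~*)}$ in $(c_{i+1},\beta_{\min}^{i})$ given by \citet[Proposition~7]{ma2022fairness}, every later optimal allocation is non-decreasing in capacity, the optimal residual capacity is non-decreasing in capacity, and every later optimal allocation is non-increasing in the running minimum.

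Your substitutes do not deliver these facts:
\begin{itemize}
\item Continuity of the optimal \emph{value} in $c_{n+1}$ says nothing about continuity of $M$. $M$ is built from optimal \emph{allocations}, and it must be continuous in $(d_{n+1},\dots,d_N)$ for your interval claim to hold.
\item ``Extra capacity propagates'' is precisely the unproved claim that intermediate-stage optimal allocations are monotone in capacity. It does not follow from $\pi_N=c_N\wedge d_N$.
\item Perturbing $\pi_n$ also moves the running minimum $\beta_{\min}^{n-1}\wedge\pi_n/d_n$ passed to stage $n+1$. This is a second channel you never control; the paper controls it through monotonicity in $\beta_{\min}^{i-1}$.
\item A strict gain along a single equating continuation is a null event under continuous demands, so it cannot make $\E[\beta_{\min}^{n-1}\wedge r\wedge M]$ strictly larger.
\end{itemize}

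Your trichotomy also does not close. If $[\underline M,\overline M]$ is the range of $M$ over all paths, then a path with $M=r$ only says the \emph{minimum} of the future fill rates equals $r$. It does not say all of them do, as \eqref{eqn:m_equating_current_future} requires. If the interval is instead the range of $\pi^{*}_{n+1}/d_{n+1}$ along equating continuations, then $r$ below that range need not give $r<M$ on every path, because the minimum can be attained at a later stage. In that case your ``too little now'' perturbation is unavailable.

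The paper handles this mixed situation in its Case III. It fixes a path with $B_n=r$ and lowers the intermediate allocations on that path to fill rate $r$, which is why it only produces \emph{some} optimal allocation that equates.

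Your concern about ties is legitimate, and the paper shares it. Its strict inequalities in Cases I and II tacitly require $\pi_n^{*,w}/d_n<\beta_{\min}^{n-1}$, respectively $B_n<\beta_{\min}^{n-1}$ with positive probability. So the statement should be read as ``some optimal allocation equates''.
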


Finally, for a fixed policy $\Vmu = (\Vsig, ~\Vpi)$, allowing for dynamic \routing, \hqexpost upper bounds \mexpost. This implies that \hqexpost overestimates the true ex-post minimum fill rate. 

\begin{restatable}[Upper Bound]{proposition}{Jensen} \label{lemma:upper_bound}
Given a routing and allocation policy $\Vmu = (\Vsig, \Vpi)$, we have
\begin{equation} \label{eqn:upper_bound}
    W^{\Vmu}_0 \left(c \right) \leq Z^{\Vmu}_{0} \left(c \right).
\end{equation}
\end{restatable}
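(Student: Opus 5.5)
We prove \Cref{eqn:upper_bound} with a backward induction that compares the two Bellman recursions \Cref{eqn:m_bellman} and \Cref{eqn:hq_bellman} for the same fixed policy $\Vmu$. The routing may be dynamic, so each recursion is read along the realized states: $\sigma_{n+1}$ and $\pi_n$ are functions of the history. The claim to prove by backward induction on $n$ is
\begin{equation*}
W_n^{\Vmu}\left(\Vu_n, ~\beta\right) \;\leq\; \beta \land Z_n^{\Vmu}\left(\Vu_n\right) \qquad \text{for every } \beta \in [0,1]
\end{equation*}
and every reachable state $\Vu_n$, where $\beta$ stands for the prior minimum fill rate $\beta_{\min}^{n-1}$. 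We regard the policy's decisions as functions of the history. Under \mexpost the decision may depend on $\beta_{\min}^{n-1}$, but that quantity is itself determined by the history, so both recursions are evaluated along the same decision rule. The claim is stated for an arbitrary real $\beta$ only as a convenient inductive device.

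\emph{Base case $n=N$.} Here $W_N^{\Vmu}(\Vu_N,\beta) = \beta \land \frac{\pi_N}{d_N}$. Since $\pi_N \le d_N$, we have $\frac{\pi_N}{d_N} \land 1 = \frac{\pi_N}{d_N}$, so this equals $\beta \land Z_N^{\Vmu}(\Vu_N)$ exactly.

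\emph{Inductive step.} Fix $n<N$ and write $x = \pi_n/d_n$. By the \mexpost recursion and the induction hypothesis applied with prior minimum $\beta \land x$,
\begin{equation*}
W_n^{\Vmu}(\Vu_n,\beta) = \E_{d_{n+1}}\left[ W_{n+1}^{\Vmu}(\Vu_{n+1}, \beta\land x)\right] \le \E_{d_{n+1}}\left[ \beta \land x \land Z_{n+1}^{\Vmu}(\Vu_{n+1})\right].
\end{equation*}
The function $y \mapsto \beta \land y$ is concave and $\beta$ is deterministic given $\Vu_n$. Jensen's inequality therefore gives
\begin{equation*}
\E\left[\beta \land \left(x \land Z_{n+1}^{\Vmu}(\Vu_{n+1})\right)\right] \le \beta \land \E\left[x \land Z_{n+1}^{\Vmu}(\Vu_{n+1})\right] = \beta \land Z_n^{\Vmu}(\Vu_n),
\end{equation*}
where the final equality is the \hqexpost recursion \Cref{eqn:hq_bellman}. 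Under dynamic routing the next node $\sigma_{n+1}$ is chosen as a function of $\Vu_n$ before $d_{n+1}$ is drawn, so both expectations are taken over the same distribution $\D_{\sigma_{n+1}}$. This completes the induction.

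\emph{Conclusion at stage $0$.} The recursions start with $\beta_{\min}^0 = 1$, and we average over $d_1 \sim \D_{\sigma_1}$ with the common first node $\sigma_1$. This gives
\begin{equation*}
W_0^{\Vmu}(c) = \E\left[W_1^{\Vmu}(\Vu_1,1)\right] \le \E\left[1 \land Z_1^{\Vmu}(\Vu_1)\right] = Z_0^{\Vmu}(c).
\end{equation*}
The last equality holds because $Z_1^{\Vmu}(\Vu_1) \le 1$, since every fill rate is at most $1$.

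The main point of care is in the inductive step. We must keep $\beta$ inside the minimum rather than dropping it, so that the hypothesis can be fed the updated prior minimum $\beta \land x$. We must also check that the conditioning is legitimate, that is, both value functions are evaluated along identical decisions and identical next-node distributions. Once this is verified, the rest is a single application of Jensen's inequality to the concave map $y \mapsto \beta \land y$ at each stage.
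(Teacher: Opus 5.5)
Your proof is correct and uses the same backward induction on the two Bellman recursions as the paper. The only difference is in how the cap $W_{n+1}^{\Vmu} \le \hist^{n} \le \pi_n/d_n$ is obtained. The paper keeps the hypothesis $W_n^{\Vmu} \le Z_n^{\Vmu}$ and takes the cap from \Cref{lemma:monotone}(d), which is stated only for the optimal value under a static route. You build the cap into the strengthened hypothesis $W_n^{\Vmu} \le \beta \land Z_n^{\Vmu}$, which keeps the argument self-contained for an arbitrary, possibly dynamic-routing, policy.
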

\noindent While the two objectives share several structural properties, they differ in important ways. We examine these differences numerically in \Cref{sec:experiments}.

%% file: parts/allocation.tex
\section{Optimal Allocation Policy under Static Routing Policy}
\label{sec:allocation}
In this section, we study the optimal allocation policy under static \routing policies, which are operationally easier to implement as they allow nonprofits to coordinate volunteers, donors, drivers, and distribution points around a predetermined schedule. They also simplify the analysis, since static \routing policies are more analytically tractable. We show that an optimal allocation policy exhibits a threshold structure for both objectives. Specifically, for each node $n$, there exists a threshold $t_n$ such that if $d_n \leq t_n$, the optimal policy fully satisfies the demand, whereas if $d_n > t_n$, the optimal allocation is strictly less than the demand. Combined with the equating property (\Cref{lemma:hq_equate_property_cont}), this result restricts attention to policies that satisfy both the equating property and the threshold structure, thereby enabling a tractable characterization of optimal solutions. We also provide guidance for identifying these thresholds.
All proofs are provided in \Cref{appendix:allocation}.

\input{parts/our_threshold}

\input{parts/Manshadi_threshold}

%% file: parts/our_threshold.tex
\subsection{\hqexpost Objective} \label{sec: our_threshold}

We begin by establishing the threshold structure of an optimal allocation policy under the \hqexpost objective. Towards this, we introduce additional notation. 

Let $Z^{(\Vsig, ~*)}_n (\Vu_n)$ denote the optimal expected future minimum fill rate to go function under a static \routing policy $\Vsig$ in our DP model (\Cref{eqn:hq_dp}). We define
\begin{equation} \label{eqn:def_h}
    h_n(\pi_n; ~d_n) = \E_{d_{n+1}\sim \D_{\sigma_{n+1}}} \left[\frac{\pi_n}{d_n} \land Z^{(\Vsig, ~*)}_{n+1} (\Vu_{n+1}) \right],
\end{equation}
and the marginal profit on the boundary 
\begin{equation} \label{eqn:def_marginal_profit}
    \varphi_n(d_n) \triangleq \frac{\partial h_n}{\partial \pi_n} (\pi_n; ~d_n) \Big{|}_{\pi_n = d_n}.
\end{equation}
Our main result is the following.
\begin{restatable}[Threshold Policy under \hqexpost]{theorem}{HqThreshold} \label{thm:hq_threshold}
    Suppose that the distributions $\D_i$ for all $i \in [N]$ are continuous with connected support. Given a static routing policy $\Vsig \in \permu{N}$, for any stage $n$ and state $\Vu_n$, an optimal allocation policy $\pi_n^{*,~z} (\Vu_n)$ for the \hqexpost objective has a threshold structure. In particular, there exists a threshold $t_n(c_n) \in [0, ~c_n]$ such that
    \begin{equation} \label{threshold_policy_eqpost} 
        \pi_n^{*,~z} (\Vu_n) =
        \begin{cases}
            d_n, &\quad ~d_n \leq t_n(c_n),\\
            \Pi_n(\Vu_n), &\quad ~d_n > t_n(c_n),
        \end{cases}
    \end{equation}
    where $\Pi_n(\Vu_n) < d_n$ is a feasible stationary point satisfying $\frac{\partial h_n}{\partial \pi_n}(\pi_n;~d_n) \mid_{\pi_n = \Pi_n(\Vu_n)} = 0$. 
    
    Moreover, when $n = N$ the threshold value $t_n(c_n) = c_n$. For any stage $n < N$, 
    \begin{numcases}{t_n(c_n) =} 
            &$0, \quad c_n < \scarce$, \tag{\textit{scarce resource}}\\
            &$\T_n(c_n) \in [0, ~c_n]$, $\quad \scarce 
        \leq c_n \leq \abundant$, \tag{\textit{intermediate resource}}\\
            &$c_n,$ $\quad c_n > \abundant,$\tag{\textit{abundant resource}}
    \end{numcases} 
    where $\scarce \triangleq d_n^{\min} + d_{n+1}^{\min} + \sum \limits_{i = n+2}^N d_i^{\max}$, $\abundant \triangleq \sum \limits_{i = n}^N d_i^{\max}$, and $\T_n(c_n)$ is a root of the marginal profit on the boundary $\varphi_n(d_n)$.
\end{restatable}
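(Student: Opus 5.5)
The plan is to analyze $h_n(\pi_n; d_n)$ as a function of $\pi_n$ on $[0, c_n \land d_n]$ and read off the threshold from its concavity and from how its slope at the boundary $\pi_n = d_n$ varies with $d_n$.

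\emph{Step 1: concavity of $h_n$.} For fixed $d_n$ and $d_{n+1}$:
\begin{itemize}
\item $\pi_n \mapsto \pi_n/d_n$ is linear;
\item $\pi_n \mapsto Z^{(\Vsig,*)}_{n+1}(c_n-\pi_n,\cdot)$ is concave, by \Cref{lemma:concave_in_pi}(b);
\item their minimum is concave, and taking expectation over $d_{n+1}$ preserves concavity.
\end{itemize}
So $h_n$ is concave in $\pi_n$. Its maximizer over $[0, c_n\land d_n]$ is therefore either an interior stationary point or a boundary point.

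At $\pi_n=0$ we get $h_n=0$. This is not optimal when $c_n>0$, since a small positive allocation gives a positive minimum because $Z_{n+1}>0$. The case $n=N$ is immediate: $\pi_N^{*}=c_N\land d_N$, so the threshold is $t_N=c_N$. For $n<N$ and $d_n>c_n$, the constraint $\pi_n\le c_n$ matters. But allocating $\pi_n=c_n$ forces $Z_{n+1}=0$, so the optimum lies strictly inside; I will absorb this case into $\Pi_n$.

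\emph{Step 2: monotonicity of the boundary slope.} For $d_n\le c_n$, full satisfaction is optimal iff the left derivative $\varphi_n(d_n)\ge 0$. Differentiating under the expectation, with $F$ the cdf of $d_{n+1}$, gives
\[
\varphi_n(d_n)=\tfrac{1}{d_n}\,\Pr\!\big(Z_{n+1}(c_n-d_n,d_{n+1})>1\big) - \E\!\big[\partial_c Z_{n+1}(c_n-d_n,d_{n+1})\,\mathbf{1}\{Z_{n+1}<1\}\big].
\]
Because $Z_{n+1}\le 1$, this reduces to
\[
\varphi_n(d_n) = -\,\E\!\big[\partial_c Z_{n+1}\,\mathbf 1\{Z_{n+1}<1\}\big] + \tfrac1{d_n}\Pr(\text{equal at }1\text{ treated via one-sided derivative}).
\]

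The key claim is that the set $\{d_n : \varphi_n(d_n)\ge 0\}$ is an interval $[d_n^{\min}, \T_n]$. To show this, increase $d_n$ and check each effect:
\begin{itemize}
\item the remaining capacity $c_n-d_n$ falls;
\item by \Cref{lemma:monotone}(a), $Z_{n+1}$ weakly decreases;
\item by concavity, $\partial_c Z_{n+1}$ weakly increases;
\item the event $\{Z_{n+1}=1\}$ shrinks;
\item the coefficient $1/d_n$ decreases.
\end{itemize}
Each effect pushes $\varphi_n$ down, so $\varphi_n$ is non-increasing. Continuity of the distributions with connected support makes $\varphi_n$ continuous. Hence the threshold $t_n$ is either a root $\T_n$, or an endpoint if $\varphi_n$ keeps one sign. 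For $d_n>t_n$, the optimum is the interior stationary point $\Pi_n<d_n$; by \Cref{lemma:hq_equate_property_cont} it is where the current fill rate equals $Z_{n+1}$ for some $d_{n+1}$.

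\emph{Step 3: the three regimes.}
\begin{itemize}
\item \emph{Abundant} ($c_n>\abundant$): after fully serving any $d_n\le d_n^{\max}$, the leftover capacity is at least $\sum_{i>n} d_i^{\max}$. Then $Z_{n+1}\equiv 1$ near the boundary, so $h_n$ is increasing up to $d_n$ and $t_n=c_n$ (every feasible $d_n$ is fully served).
\item \emph{Scarce} ($c_n<\scarce$): fully serving $d_n$ leaves less than $d_{n+1}^{\min}+\sum_{i\ge n+2}d_i^{\max}$. Then $Z_{n+1}<1$ with positive probability, in particular for $d_{n+1}$ near its minimum, and actually for every $d_{n+1}$ with some inner loss. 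So $Z_{n+1}<1=\pi_n/d_n$ on a set of positive measure, and reducing $\pi_n$ slightly raises $h_n$. This gives $\varphi_n<0$ for all $d_n$, hence $t_n=0$.
\item \emph{Intermediate}: the threshold is the root $\T_n(c_n)$ from Step 2.
\end{itemize}

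\emph{Main obstacle.} I expect the hardest part to be the scarce-regime sign argument together with the differentiability and monotonicity of $\varphi_n$. The difficulty is that $Z_{n+1}$ is concave but possibly only one-sidedly differentiable, and it is affected by the kink at fill rate $1$. I would handle this with one-sided derivatives throughout, using the continuity of the distributions so that the kink sets have measure zero. I would then verify the strict negativity in the scarce case by exhibiting an explicit perturbation of $\pi_n$ that strictly improves $h_n$.
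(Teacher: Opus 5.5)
Your proposal is correct and follows the same skeleton as the paper's proof. Both use concavity of $h_n$ (\Cref{lemma:concave_in_pi}), let the sign of the boundary derivative $\varphi_n(d_n)$ decide between full and partial allocation (\Cref{lemma:marginal_profit_convexity}), show $\varphi_n$ is monotone in $d_n$, and read the regimes off the full-fill condition $Z_{n+1}(c,d_{n+1})=1 \iff c\ge d_{n+1}+\sum_{i\ge n+2}d_i^{\max}$ (\Cref{lemma:expected_fill_rate_one}).

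You depart from the paper in two sub-steps.
\begin{itemize}
\item \emph{Scarce regime.} The paper argues by contradiction through the equating property (\Cref{lemma:hq_equate_property_cont}). If $\pi_n^{*,~z}=d_n$, some $d_{n+1}$ would need $Z_{n+1}(c_n-d_n,d_{n+1})=1$, and the full-fill lemma at $d_{n+1}^{\min}$ rules this out. You instead show $\varphi_n(d_n)<0$ directly.
\item \emph{Intermediate regime.} The paper parametrizes the tie set by the critical demand $g_{n+1}$, splits $h_n$ into two integrals, and differentiates with the Leibniz rule. This requires smoothness assumptions on $Z_{n+1}$ and $g_{n+1}$ and yields four non-positive terms. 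You observe that, for each fixed $d_{n+1}$, the integrand $\frac{1}{d_n}\mathbf{1}\{Z_{n+1}=1\}-\partial_c^+Z_{n+1}\,\mathbf{1}\{Z_{n+1}<1\}$ is non-increasing in $d_n$. This captures exactly the same four effects. It needs only one-sided derivatives of concave functions, with monotone convergence to pass the derivative through the expectation. It also covers the empty-tie-set sub-case, which the paper treats separately.
\end{itemize}
The paper's route gives an explicit formula for $\varphi_n$ in terms of $F_{n+1}(g_{n+1})$ and ties the threshold to the equating property. Yours is more elementary, avoids the smoothness assumptions, and does not need the equating property at all.

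A few repairs are needed.
\begin{itemize}
\item Your displayed formula contains $\Pr(Z_{n+1}>1)$, which is zero. The first term should be $\frac{1}{d_n}\Pr(Z_{n+1}(c_n-d_n,d_{n+1})=1)$. At a tie, $Z_{n+1}$ is flat at $1$ to the right in capacity, so the left derivative of the minimum is $1/d_n$. Your bullet list already uses this event.
\item For strictness in the scarce regime you need $\partial_c^+Z_{n+1}(c,d_{n+1})>0$ whenever $Z_{n+1}(c,d_{n+1})<1$. Concavity gives this: $\partial_c^+Z_{n+1}(c,d_{n+1})\ge (1-Z_{n+1}(c,d_{n+1}))/(c'-c)>0$ with $c'=d_{n+1}+\sum_{i\ge n+2}d_i^{\max}$.
\item The claim that $Z_{n+1}<1$ for every $d_{n+1}$ is the ``only if'' half of \Cref{lemma:expected_fill_rate_one}. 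You use it without proof; establish it by backward induction.
\item You assert rather than prove that $\varphi_n$ is continuous in $d_n$; right derivatives of concave functions can jump. The paper's IVT step rests on the same assumption. Monotonicity alone already gives the threshold form with $t_n=\sup\{d_n:\varphi_n(d_n)\ge 0\}$; continuity is needed only to call $t_n$ a root.
\end{itemize}
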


\begin{remark}
Since $d_n \in [d_n^{\min}, ~d_n^{\max}]$, the threshold $t_n(c_n)$ should fall within this range. When the true threshold lies outside this interval we set $t_n(c_n) = 0$ or $t_n(c_n) = c_n$ for notational convenience; this does not affect the form or interpretation of the threshold structure.
\end{remark}

\begin{figure}[!t]
\centering
\begin{subfigure}{0.48\textwidth}
    \centering
    \scalebox{0.73}{\input{figures/opt_forward}}
    \caption{\small Threshold policy for \hqexpost (\Cref{thm:hq_threshold}).}
    \label{fig:opt_allocation_threshold_hq}
\end{subfigure}
\hfill
\begin{subfigure}{0.48\textwidth}
    \centering
    \scalebox{0.73}{\input{figures/opt_fdr}}
    \caption{\small Threshold policy for \mexpost (\Cref{thm:m_threshold}).}
    \label{fig:opt_allocation_threshold_m}
\end{subfigure}
\caption{Graphical representations of the threshold policies for the \hqexpost and \mexpost objectives with static \routing policies. In \Cref{fig:opt_allocation_threshold_hq} the slope of the line is one, whereas in \Cref{fig:opt_allocation_threshold_m} it is $\hist^{n-1}$.}
\label{fig:opt_allocation_threshold_combined}
\end{figure}

\noindent
\Cref{thm:hq_threshold} characterizes the structure of an optimal allocation across three distinct capacity regimes. See \Cref{fig:opt_allocation_threshold_hq} for a diagram.

When resources are scarce, \Cref{thm:hq_threshold} establishes that fully satisfying the demand at the current node may unnecessarily deplete capacity and reduce future fill rates. Specifically, if after allocating the demand at the current node, the capacity $c_n$ is unable to make the expected minimum fill rate to go function $Z_{n+1}^{(\Vsig, ~*)} (\Vu_{n+1})$ equal to one even under the minimum future demand $d_{n+1}^{\min}$, namely when $c_n < d_n^{\min} + d_{n+1}^{\min} + \sum \limits_{i = n+2}^N d_i^{\max}$, the threshold satisfies $t_n(c_n) = 0$ and demand is never satisfied.
When resources are abundant enough to cover all future demands, expressed as $c_n \geq \sum \limits_{i = n}^N d_i^{\max}$, it is optimal to always satisfy the current demand resulting in $t_n(c_n) = c_n$.
When resources are neither scarce nor abundant, the threshold $t_n(c_n)$ is non-trivial. In this regime, the threshold corresponds to the root of $\varphi_n(d_n)$, which identifies the critical demand level at fully satisfying the demand transitions from being beneficial to detrimental. 
\begin{proof}[Proof sketch]
First note that when $n = N$ an optimal allocation trivially satisfies a threshold structure via \Cref{eqn:hq_dp} since
\[
\pi_n^{*,~z}(\Vu_N) = c_N \wedge d_N.
\]
This follows a threshold structure with $t_N(c_N) = c_N$.
Hence, we focus on the case when $n < N$.  Recall that an optimal allocation policy $\pi_n^{*, ~z}$ satisfies (see \Cref{eqn:hq_dp}):
\[
\pi_n^{*,~z}(\Vu_n) = \argmax_{0 \leq \pi_n \leq c_n \wedge d_n} \E_{d_{n+1} \sim \D_{\sigma_{n+1}}} \left[ \frac{\pi_n}{d_n} \wedge Z_{n+1}^{(\Vsig,~*)}(\Vu_{n+1}) \right] = \argmax_{0 \leq \pi_n \leq c_n \wedge d_n} h_n(\pi_n;~d_n).
\]
From \Cref{lemma:concave_in_pi}, $Z_{n+1}^{(\Vsig,~*)}(\Vu_{n+1})$ is concave in $\pi_n$ and hence so is $h_n(\pi_n; ~d_n)$. Thus an optimal allocation $\pi_n^{*,~z}$ is either a stationary point of $h_n (\pi_n; ~d_n)$ or at the boundary $c_n \land d_n$.
We then establish that the $0 \leq \pi_n \leq c_n$ constraint is redundant.  Hence, an optimal allocation is either achieved at the boundary $d_n$ or a stationary point of $h_n(\pi_n; ~d_n)$ within the interval $(0, d_n)$.

To establish the threshold structure, we use the following intermediary lemma, which relates the sign of the derivative $\frac{\partial h_n}{\partial \pi_n} (\pi_n; ~d_n)$ at the boundary when $\pi_n = d_n$ to whether or not $h_n (\pi_n;~d_n)$ has a feasible stationary point. Indeed,
\begin{restatable}[Boundary Condition for Optimal Allocation]{lemma}{MarginalProfit} \label{lemma:marginal_profit_convexity}
    Given a static routing policy $\Vsig \in \permu{N}$, for any stage $n$ and state $\Vu_n$, let $h_n(\pi_n; ~d_n)$ be defined as in \Cref{eqn:def_h} and $\varphi_n(d_n)$ be defined as in \Cref{eqn:def_marginal_profit}.
    When the boundary function is non-negative ($\varphi_n(d_n) \ge 0$), it is optimal to fully allocate the demand, so $\pi_n^{*,~z}(\Vu_n) = d_n$. When the boundary function is negative ($\varphi_n(d_n) < 0$), it is optimal to allocate strictly less than the demand, so $\pi_n^{*,~z}(\Vu_n) < d_n$.
\end{restatable}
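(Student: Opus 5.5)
The plan rests on concavity. By \Cref{lemma:concave_in_pi}(b), $Z^{(\Vsig,~*)}_{n+1}(\Vu_{n+1})$ is concave in its capacity argument $c_{n+1} = c_n - \pi_n$, and hence concave in $\pi_n$ for every fixed $d_{n+1}$. The map $\pi_n \mapsto \pi_n/d_n$ is linear. The pointwise minimum of a linear and a concave function is concave, and concavity is preserved under expectation over $d_{n+1}\sim\D_{\sigma_{n+1}}$. So $h_n(\cdot\,;d_n)$ is concave on the feasible interval. As in the proof sketch above, the constraint $\pi_n \le c_n$ can be taken as non-binding, so the feasible set is $[0,d_n]$.

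\textbf{Case $\varphi_n(d_n)\ge 0$.} Concavity makes the derivative $\partial h_n/\partial\pi_n$ non-increasing in $\pi_n$, using one-sided derivatives where $h_n$ is not differentiable. The derivative at $\pi_n = d_n$ is the left derivative $\varphi_n(d_n)$, so the left derivative of $h_n$ at every $\pi_n < d_n$ is at least $\varphi_n(d_n)\ge 0$. Hence $h_n$ is non-decreasing on $[0,d_n]$ and the maximum is attained at $\pi_n = d_n$, i.e.\ $\pi_n^{*,~z}(\Vu_n)=d_n$. If there are ties with an interior point, I would select the boundary point, which is consistent with the threshold convention.

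\textbf{Case $\varphi_n(d_n)<0$.} The left derivative at $d_n$ is strictly negative, so there exists $\epsilon>0$ with $h_n(d_n-\epsilon;d_n) > h_n(d_n;d_n)$. Therefore $d_n$ is not a maximizer, and by concavity the whole maximizer set lies in $[0,d_n)$. I would also observe that $h_n(0;d_n)=0$, while for small $\pi_n>0$ we have $h_n>0$ whenever $c_n>0$ (by the positivity of $Z_{n+1}$ implied by \Cref{lemma:hq_equate_property_cont}). So the maximizer is interior, $\pi_n^{*,~z}\in(0,d_n)$, and it is a stationary point in the sense of \Cref{thm:hq_threshold}.

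\textbf{Main obstacle: differentiability.} The delicate step is justifying that $\varphi_n$ is a meaningful derivative, since the $\land$ introduces kinks. I would write
\[
\frac{\partial h_n}{\partial \pi_n} = \frac{1}{d_n}\,\mathbb{P}\big(\pi_n/d_n < Z_{n+1}\big) + \E\Big[\tfrac{\partial Z_{n+1}}{\partial \pi_n}\,\mathbf{1}\{Z_{n+1}<\pi_n/d_n\}\Big].
\]
Continuity of $\D_{\sigma_{n+1}}$ combined with the monotonicity of $Z_{n+1}$ in $d_{n+1}$ (\Cref{lemma:monotone}(c)) makes the tie set $\{\pi_n/d_n = Z_{n+1}\}$ have probability zero. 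This justifies interchanging derivative and expectation, e.g.\ by dominated convergence applied to difference quotients of concave functions. If full differentiability fails, I would interpret $\varphi_n$ as the left derivative; the argument above only uses one-sided derivatives, so it is unaffected.
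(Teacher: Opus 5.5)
Your proposal is correct and follows essentially the paper's argument: you use concavity of $h_n$ via \Cref{lemma:concave_in_pi}, and then the sign of the left derivative at $\pi_n = d_n$ decides between the boundary and an interior stationary point; in the negative case, your direct comparison with $d_n-\epsilon$ replaces the paper's IVT on $\partial h_n/\partial \pi_n$ and so avoids assuming that derivative is continuous. One side remark is false, though harmless: the tie set $\{\pi_n/d_n = Z_{n+1}\}$ need not be null, because at $\pi_n = d_n$ it contains the whole region $\{Z_{n+1} = 1\}$, which has positive probability whenever $c_n - d_n > d_{n+1}^{\min} + \sum_{i=n+2}^N d_i^{\max}$; $h_n$ is therefore generally not differentiable at $d_n$, and your fallback to one-sided derivatives is exactly what is needed.
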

The remainder of the proof proceeds by considering three capacity regimes. In the scarce capacity regime, we show that for all demands $d_n \in [d_n^{\min}, d_n^{\max}]$, we have $Z_{n+1}^{(\sigma,~*)}(\Vu_{n+1}) < 1$. Thus, allocating the demand is not optimal by \Cref{lemma:hq_equate_property_cont} due to violating the {\em equating property}. 
In the abundant capacity regime, we show that for all $d_n \in [d_n^{\min}, d_n^{\max}]$, $Z_{n+1}^{(\sigma,~*)}(\Vu_{n+1}) = 1$ and hence satisfying demand is trivially optimal. Finally, when the resource is in between, we apply the {\em equating property} to express $h_n(\pi_n; ~d_n)$ by two integrals and show that the {boundary function} $\varphi_n(d_n)$ is non-increasing ($\frac{\mathrm{d} \varphi_n}{\mathrm{d} d_n} \leq 0$). Hence, by the intermediate value theorem there exists a threshold $t_n(c_n)$ such that the {boundary function} at that point satisfies $\varphi_n(t_n(c_n)) = 0$.  
When $d_n \leq t_n(c_n)$, the boundary function is non-negative ($\varphi_n(d_n) \geq 0$), and the policy allocates the full demand; when $d_n > t_n(c_n)$, the boundary function becomes negative ($\varphi_n(d_n) < 0$), and the policy allocates strictly less.
\end{proof}

%% file: figures/opt_forward.tex
\tikzset{every picture/.style={line width=0.75pt}} 

\begin{tikzpicture}[x=0.75pt,y=0.75pt,yscale=-1,xscale=1,every node/.style={font=\large}]

\draw [line width=1.5]    (78,94.71) -- (78,248) ;
\draw [shift={(78,90.71)}, rotate = 90] [fill={rgb, 255:red, 0; green, 0; blue, 0 }  ][line width=0.08]  [draw opacity=0] (11.61,-5.58) -- (0,0) -- (11.61,5.58) -- cycle    ;
\draw [line width=1.5]    (78,248) -- (348.43,248) ;
\draw [shift={(352.43,248)}, rotate = 180] [fill={rgb, 255:red, 0; green, 0; blue, 0 }  ][line width=0.08]  [draw opacity=0] (11.61,-5.58) -- (0,0) -- (11.61,5.58) -- cycle    ;
\draw [color={rgb, 255:red, 0; green, 0; blue, 0 }  ,draw opacity=1 ][line width=1.5]    (208,145) .. controls (242.43,125.86) and (257.43,114.86) .. (338.43,115.86) ;
\draw [line width=1.5]    (78,248) -- (208,145) ;
\draw [line width=1.5]  [dash pattern={on 1.69pt off 2.76pt}]  (208,248) -- (208,145) ;
\draw [line width=1.5]  [dash pattern={on 1.69pt off 2.76pt}]  (79.14,145) --  (208,145) ;

\draw (60,241) node [anchor=north west][inner sep=0.75pt]   [align=left] {$\displaystyle 0$};
\draw (340,251) node [anchor=north west][inner sep=0.75pt]   [align=left] {$\displaystyle d_{n}$};
\draw (-5,85) node [anchor=north west][inner sep=0.75pt]   [align=left] {$\displaystyle \pi_n^{*,~z}(\Vu_n)$};
\draw (190,252) node [anchor=north west][inner sep=0.75pt]   [align=left] {$\displaystyle t_n(c_n)$};
\draw (20,135) node [anchor=north west][inner sep=0.75pt]   [align=left] {$\displaystyle t_n(c_n)$};
\draw (220,90) node [anchor=north west][inner sep=0.75pt]   [align=left] {$\displaystyle \Pi_n^{*,~z}(\Vu_n)$};

\end{tikzpicture}

%% file: figures/opt_fdr.tex
\tikzset{every picture/.style={line width=0.75pt}} 

\begin{tikzpicture}[x=0.75pt,y=0.75pt,yscale=-1,xscale=1,every node/.style={font=\large}]

\draw [line width=1.5]    (78,94.71) -- (78,248) ;
\draw [shift={(78,90.71)}, rotate = 90] [fill={rgb, 255:red, 0; green, 0; blue, 0 }  ][line width=0.08]  [draw opacity=0] (11.61,-5.58) -- (0,0) -- (11.61,5.58) -- cycle    ;
\draw [line width=1.5]    (78,248) -- (348.43,248) ;
\draw [shift={(352.43,248)}, rotate = 180] [fill={rgb, 255:red, 0; green, 0; blue, 0 }  ][line width=0.08]  [draw opacity=0] (11.61,-5.58) -- (0,0) -- (11.61,5.58) -- cycle    ;
\draw [color={rgb, 255:red, 0; green, 0; blue, 0 }  ,draw opacity=1 ][line width=1.5]    (211,180) .. controls (242.43,165.86) and (257.43,154.86) .. (338.43,155.86) ;
\draw [line width=1.5]    (78,248) -- (211,180) ;
\draw [line width=1.5]  [dash pattern={on 1.69pt off 2.76pt}]  (211,249) -- (211,180) ;
\draw [line width=1.5]  [dash pattern={on 1.69pt off 2.76pt}]  (79.14,180) -- (211,180) ;

\draw (60,241) node [anchor=north west][inner sep=0.75pt]   [align=left] {$\displaystyle 0$};
\draw (354.43,251) node [anchor=north west][inner sep=0.75pt]   [align=left] {$\displaystyle d_{n}$};
\draw (-60,85) node [anchor=north west][inner sep=0.75pt]   [align=left] {$\displaystyle \pi_n^{*,~w}(\Vu_n,~\hist^{n-1})$};
\draw (170,252) node [anchor=north west][inner sep=0.75pt]   [align=left] {$\displaystyle t_n(c_n,~\hist^{n-1})$};
\draw (-70,167) node [anchor=north west][inner sep=0.75pt]   [align=left] {$\displaystyle \hist^{n-1} t_n(c_n,~\hist^{n-1})$};
\draw (220,125) node [anchor=north west][inner sep=0.75pt]   [align=left] {$\displaystyle \Pi_n^{*,~w}(\Vu_n,~\hist^{n-1})$};

\end{tikzpicture}

%% file: parts/Manshadi_threshold.tex
\subsection{\mexpost Objective} \label{subsec: manshadi_threshold}
We next establish the threshold structure of the optimal allocation policy under the \mexpost objective in a similar manner.

Let $W_n^{(\Vsig, ~*)}\left(\Vu_n, ~\beta_{\min}^{n-1} \right)$ denote the optimal expected minimum fill rate under a static \routing policy $\Vsig$ in our DP model (\Cref{eqn:m_dp}). We redefine
\begin{equation} \label{eqn:def_h_for_w}
    h_n(\pi_n; ~d_n, ~\hist^{n-1}) = \E_{d_{n+1} \sim \D_{\sigma_{n+1}}} \left[W_{n+1}^{(\Vsig, ~*)}\left(\Vu_{n+1}, ~\beta_{\min}^{n-1} \land \frac{\pi_n}{d_n} \right) \right],
\end{equation}
and the marginal profit on the boundary 
\begin{equation} \label{eqn:def_marginal_profit_for_w}
    \varphi_n(d_n,~\hist^{n-1}) \triangleq \lim_{\pi_n \to {(\hist^{n-1}d_n)}^-}\frac{\partial h_n}{\partial \pi_n} (\pi_n; ~d_n, ~\hist^{n-1}).
\end{equation}
Our main result is the following.
\begin{restatable}[Threshold Policy under \mexpost]{theorem}{MThreshold} \label{thm:m_threshold}
    Suppose that the distributions $\D_i$ for all $i \in [N]$ are continuous with connected support. Given a static routing policy $\Vsig \in \permu{N}$, for any stage $n$, minimum fill rate over previous nodes $\hist^{n-1}$ and state $\Vu_n$, an optimal allocation policy $\pi_n^{*,~w} (\Vu_n, ~\hist^{n-1})$ for the \mexpost objective has a threshold structure. In particular, there exists a threshold $t_n(c_n, ~\hist^{n-1}) \in [0, ~\dfrac{c_n}{\hist^{n-1}}]$ such that
    \begin{equation} \label{threshold_policy} 
        \pi_n^{*,~w} (\Vu_n, ~\hist^{n-1}) =
        \begin{cases}
            \hist^{n-1}d_n, &\quad ~d_n \leq t_n(c_n, ~\hist^{n-1}),\\
            \Pi_n(\Vu_n, ~\hist^{n-1}), &\quad ~d_n >t_n(c_n,~\hist^{n-1}),
        \end{cases}
    \end{equation}
    where $\Pi_n(\Vu_n, ~\hist^{n-1}) < \hist^{n-1}d_n$ is a feasible stationary point satisfying $$\frac{\partial h_n}{\partial \pi_n}(\pi_n;~d_n,~\hist^{n-1}) \mid_{\pi_n = \Pi_n(\Vu_n, ~\hist^{n-1})} = 0.$$ 
    
    Moreover, when $n = N$ the threshold value $t_n(c_n,~\hist^{n-1}) = \dfrac{c_n}{\hist^{n-1}}$. 
    For any stage $n < N$, 
    \begin{numcases}{t_n(c_n,~\hist^{n-1}) =} 
            &$0, \quad c_n \leq \scarce^{\prime}$, \tag{\textit{scarce resource}}\\
            &$\T_n(c_n,~\hist^{n-1})$, $\quad \scarce^{\prime} 
        < c_n < \abundant^{\prime}$, \tag{\textit{intermediate resource}}\\
            &$\dfrac{c_n}{\hist^{n-1}},$ $\quad c_n \geq \abundant^{\prime},$\tag{\textit{abundant resource}}
    \end{numcases} 
    where $\scarce^{\prime} \triangleq \hist^{n-1} \sum \limits_{i = n}^N d_i^{\min}$, $\abundant^{\prime} \triangleq \hist^{n-1} \sum \limits_{i = n}^N d_i^{\max}$, and 
    $\T_n(c_n,~\hist^{n-1}) \in [0, ~\dfrac{c_n}{\hist^{n-1}}]$ is a root of the marginal profit on the boundary $\varphi_n(d_n,~\hist^{n-1})$.
\end{restatable}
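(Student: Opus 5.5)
We will mirror the proof of \Cref{thm:hq_threshold}. The base case $n=N$ is immediate from \Cref{eqn:m_dp}. There $\pi_N^{*,~w}=c_N\wedge d_N$, and any allocation above $\hist^{N-1}d_N$ does not change the objective $\hist^{N-1}\wedge \pi_N/d_N$. So we may take the optimal allocation to be $\hist^{N-1}d_N$ whenever $d_N\le c_N/\hist^{N-1}$, and $c_N$ otherwise, which gives $t_N=c_N/\hist^{N-1}$.

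For $n<N$ the first step is to reduce the feasible set. Allocating $\pi_n>\hist^{n-1}d_n$ leaves the state $\hist^{n-1}\wedge \pi_n/d_n$ unchanged and only lowers $c_{n+1}$. By \Cref{lemma:monotone}(a) this weakly hurts, so we restrict to $0\le\pi_n\le \hist^{n-1}d_n\wedge c_n$. On this interval $h_n(\pi_n;d_n,\hist^{n-1})=\E[W^{(\Vsig,*)}_{n+1}(c_n-\pi_n,\cdot,\pi_n/d_n)]$.

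The second step is concavity of $h_n$ in $\pi_n$. This does not follow directly from \Cref{lemma:concave_in_pi}, which gives concavity in each argument separately, and we need joint concavity of $W_{n+1}$ in $(c,\beta)$ along the line $(c_n-\pi,\pi/d_n)$. We would prove this joint concavity by backward induction using the same argument as \Cref{lemma:concave_in_pi}. The key facts are that the terminal reward $\beta\wedge c/d$ is jointly concave and that the maximum over $\pi$ of a jointly concave function over a convex feasible set preserves concavity; this is where we would slightly strengthen the appendix lemma. With concavity, the optimum is either the boundary $\hist^{n-1}d_n$ or an interior stationary point $\Pi_n$.

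Next we argue, as in the \hqexpost case, that the constraint $\pi_n\le c_n$ is inactive. Allocating all remaining capacity yields future fill rate $0$ with positive probability, since supports are bounded away from zero, so the equating property (\Cref{lemma:m_equate_property}) rules it out. We then prove the analogue of \Cref{lemma:marginal_profit_convexity}: if $\varphi_n(d_n,\hist^{n-1})\ge0$, concavity makes the boundary optimal; if $\varphi_n<0$, the optimum is strictly interior.

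The third step covers the regimes. If $c_n\ge\abundant'$, allocating $\hist^{n-1}d_n$ now and at every later node stays feasible on every path and achieves the upper bound $\hist^{n-1}$ from \Cref{lemma:monotone}(d), so $t_n=c_n/\hist^{n-1}$. If $c_n\le\scarce'$, then after allocating $\hist^{n-1}d_n$ the remaining capacity cannot give fill rate $\hist^{n-1}$ on any future path. Hence no sample path equates at level $\hist^{n-1}$, which violates \Cref{lemma:m_equate_property}, so full allocation is never optimal and $t_n=0$.

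The intermediate regime is the main obstacle. There we must show that $d_n\mapsto\varphi_n(d_n,\hist^{n-1})$ is non-increasing, so that its sign changes at most once, and then apply the intermediate value theorem. Our first attempt would write $\varphi_n$ as a left derivative. With $\pi=\beta d_n$ near the boundary, the derivative splits into two parts: $\frac{1}{d_n}\E[\partial_\beta W_{n+1}]$, and $-\E[\partial_c W_{n+1}]$ evaluated at capacity $c_n-\hist^{n-1}d_n$ and fill rate level $\hist^{n-1}$. By \Cref{lemma:concave_in_pi}(b), $\partial_cW_{n+1}$ is non-increasing in capacity, so the second term becomes more negative as $d_n$ grows. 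The first term decreases through the factor $1/d_n$, but $\partial_\beta W_{n+1}$ increases as capacity shrinks. Controlling that interaction is the delicate part. We would handle it through the equating property, using the set of future demands for which the path equates, to express both partial derivatives as probabilities of that set. This is analogous to the two-integral representation used in the \hqexpost proof.
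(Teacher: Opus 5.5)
Your base case, the reduction to $\pi_n\le \hist^{n-1}d_n$, and the scarce and abundant regimes follow the paper's argument. Your remark that concavity of $h_n$ along $(c_n-\pi_n,\pi_n/d_n)$ needs joint concavity of $W_{n+1}^{(\Vsig,~*)}$ in $(c_{n+1},\hist^{n})$, not just the separate concavity in \Cref{lemma:concave_in_pi}, is a fair refinement.

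\textbf{The gap is in the intermediate regime, which is the substance of the theorem.} You never show that $d_n\mapsto\varphi_n(d_n,\hist^{n-1})$ is non-increasing, and the obstacle you describe comes from a sign error. You claim $\partial_\beta W_{n+1}$ increases as capacity shrinks. In fact $W_{n+1}^{(\Vsig,~*)}$ is supermodular in $(c_{n+1},\hist^{n})$ (\citet[Proposition 7]{ma2022fairness}, already used in the proof of \Cref{lemma:decreasing_l_and_B}). So $\partial_\beta W_{n+1}$ is non-decreasing in capacity and therefore \emph{falls} as $c_n-\hist^{n-1}d_n$ shrinks. Intuitively, $\partial_\beta W_{n+1}$ is the chance that the inherited level $\hist^n$ is the binding minimum, and less remaining capacity makes that less likely. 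There is therefore no competing effect. Differentiating the expression you wrote down, with all derivatives evaluated at $(c_n-\hist^{n-1}d_n,\,d_{n+1},\,\hist^{n-1})$, gives
\[
\frac{\mathrm{d}\varphi_n}{\mathrm{d} d_n}
= \E\left[\hist^{n-1}\frac{\partial^2 W_{n+1}}{\partial c_{n+1}^2}
-\frac{1}{d_n^2}\frac{\partial W_{n+1}}{\partial \hist^{n}}
-\frac{\hist^{n-1}}{d_n}\frac{\partial^2 W_{n+1}}{\partial c_{n+1}\,\partial \hist^{n}}\right]\le 0 .
\]
The three terms are non-positive by, respectively, concavity in capacity, monotonicity in $\hist^n$ (\Cref{lemma:monotone}(b)), and supermodularity. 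This is \Cref{lemma:non-increasing_marginal_for_w}, after which the intermediate value theorem finishes the argument.

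\textbf{Your proposed substitute does not avoid this ingredient.} You suggest expressing both partial derivatives as probabilities of the set of equating future paths, but this is not worked out. The equating property (\Cref{lemma:m_equate_property}) concerns the \emph{optimal} allocation, whereas $\varphi_n$ is evaluated at the boundary allocation $\hist^{n-1}d_n$. For $d_n$ above the threshold, that allocation is precisely not optimal. Even granting an envelope-type representation of $\partial_\beta W_{n+1}$ as the probability that $\hist^n$ binds, you would still need that probability to be non-decreasing in remaining capacity. That is exactly the supermodularity fact you are missing. Also, unlike \hqexpost, the level $\hist^n$ is carried into every later stage of \mexpost. So there is no analogue of the explicit two-integral split of $h_n$ around a critical demand that drives \Cref{lemma:non-increasing_marginal}.
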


\begin{remark}
Similar to \Cref{thm:hq_threshold}, the threshold $t_n(c_n, ~\hist^{n-1})$ should fall within $[d_n^{\min}, ~d_n^{\max}]$. When the true threshold lies outside this interval we set $t_n(c_n, ~\hist^{n-1}) = 0$ or $t_n(c_n, ~\hist^{n-1}) = \dfrac{c_n}{\hist^{n-1}}$ for notational convenience.
\end{remark}

The proof follows largely similar arguments to \Cref{thm:hq_threshold}.
Similar to \Cref{thm:hq_threshold}, \Cref{thm:m_threshold} characterizes the structure of an optimal allocation across three distinct capacity regimes.
See \Cref{fig:opt_allocation_threshold_m} for a diagram.

Note that the \mexpost objective explicitly incorporates the prior minimum fill rate, scaling the slope of the threshold policy by the historical fill rate $\hist^{n-1}$. This scaling constrains allocations at stage $n$ by the minimum fill rate achieved across previously visited nodes. Because $\hist^{n-1}$ is non-increasing over stages, the optimal policy allocates higher fill rates early to mitigate future constraints. Consequently, as we will show in the simulations (\Cref{sec:experiments}), this behavior influences the policy’s performance on ex-ante and ex-post fairness metrics.

Compared to \Cref{thm:hq_threshold}, the capacity regimes in \Cref{thm:m_threshold} are slightly different due to the front-loading strategy.  In the abundant resource regime, the condition remains the same with the inclusion of $\hist^{n-1}$, whereby the remaining capacity is enough to guarantee the same fill rate of $\hist^{n-1}$ to any possible future demand.  In the scarce resource regime, we now require that the remaining capacity cannot sustain the fill-rate $\hist^{n-1}$ under the {\em minimum} future demand realizations,
versus just having to look at the minimum demand for the subsequent two locations as in \Cref{thm:hq_threshold}.  For the intermediate regime, the threshold is again determined by the root of $\varphi_n(\cdot)$, identifying where allocating above the historical minimum fill rate ceases to be beneficial.

%% file: parts/routing.tex
\section{Optimal Routing Policy under the Projected Proportional Allocation (PPA) Policy} \label{sec:sequencing}

In this section, we study the optimal routing decisions under a fixed allocation rule, the {\em Projected Proportional Allocation} (PPA) policy introduced in \citet{manshadi2021fair}.  At stage $n$, given  $\Vu_n = \left(c_n, ~\sigma_n, ~d_n, ~\itaS_n \right)$, the PPA allocation is:
\begin{equation} \label{eqn:PPA}
\pi^{PPA}_n \triangleq \left(\frac{d_{n}}{d_{n} + \sum \limits_{i \in \itaS_n} \mu_{i}}c_n \right) \land d_{n}, \quad \text{where } \mu_{i} = \mathbb{E}_{d_k \sim \mathcal{D}_{i}}\left[d_k\right].
\end{equation}
Throughout this section, we primarily focus on the \mexpost objective, where we show that under certain conditions, the optimal routing policy visits nodes in decreasing order of their coefficient of variation (CV). By contrast, the \hqexpost objective does not appear to admit an analogous structural characterization, which we illustrate via a counterexample in \Cref{sec:determine_end}. Nevertheless, our numerical experiments suggest that decreasing-CV remains a strong heuristic under \hqexpost as well. Proofs for this section are deferred to \Cref{appendix:decreasing_CV}.

\subsection{Deterministic Nodes} \label{sec:determine_end}
We begin by studying routing in a setting where one of the nodes has deterministic demand. Intuitively, the deterministic node should be visited last. Since its demand is known in advance, visiting it early provides no informational benefit, whereas prioritizing stochastic nodes allows the policy to incorporate realized uncertainty into subsequent decisions. We show that this intuition holds under both the optimal policy and the PPA allocation rule.

\begin{restatable}[Certainty‑Last Principle]{theorem}{DeterministicOpt} \label{thm:det_opt}
For any initial capacity $c$ and $N \ge 2$ nodes, suppose that one node $i_D$ has deterministic demand $\mu_D > 0$. Then, under the \mexpost objective $W_0^*(c)$ and joint optimization of allocation and \routing, an optimal \routing policy $\Vsig^*$ places the deterministic node last ($\sigma^*_N = i_D$).
\end{restatable}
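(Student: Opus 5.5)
The plan is a pathwise exchange (coupling) argument. Take any joint policy $\Vmu=(\Vsig,\Vpi)$, possibly with dynamic routing, under which the deterministic node $i_D$ is visited before the last stage on some sample paths. From it I will build a policy $\Vmu'$ that always visits $i_D$ last and satisfies $W_0^{\Vmu'}(c)\ge W_0^{\Vmu}(c)$. Applying this to an optimal policy $\Vmu^{*,~w}$ gives an optimal policy with $\sigma_N^*=i_D$.

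\textbf{Construction of $\Vmu'$.} The key observation is that visiting $i_D$ reveals no information: its demand is $\mu_D$ almost surely.
\begin{enumerate*}[label=(\roman*)]
\item $\Vmu'$ follows $\Vmu$ exactly until the (history-dependent) stage $k$ at which $\Vmu$ would travel to $i_D$.
\item At that point $\Vmu'$ does not travel. It computes the amount $\pi_D=\pi_k(\Vu_k,\beta_{\min}^{k-1})$ that $\Vmu$ would allocate there, using the known state with $d_k=\mu_D$, and earmarks $\pi_D$ units.
\item It then moves on to the node $\Vmu$ would pick next, with capacity $c_k-\pi_D$. It mimics all subsequent allocation and routing decisions of $\Vmu$ on the remaining stochastic nodes. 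This is well defined because every later decision of $\Vmu$ depends on the history only through quantities $\Vmu'$ also knows: the realized stochastic demands, the deterministic $\mu_D$, and $\pi_D$.
\item After all stochastic nodes are visited, $\Vmu'$ visits $i_D$ last and allocates $c_N'\wedge\mu_D$.
\end{enumerate*}
If $\Vmu$ already visits $i_D$ last on a path, $\Vmu'$ coincides with $\Vmu$ there.

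\textbf{Pathwise comparison.} Couple the stochastic demands of the two policies. On each path, every stochastic node receives the same allocation under both policies, so its fill rate is unchanged. For the capacity, $\Vmu'$ arrives at $i_D$ with
\[
c'_N=\pi_D+\Bigl(c-\sum_{\text{all nodes}}\pi^{\Vmu}\Bigr)\ge \pi_D,
\]
so it allocates at least $\pi_D\wedge\mu_D=\pi_D$. The fill rate of $i_D$ therefore weakly increases. Hence $\min_n \pi_n/d_n$ under $\Vmu'$ is at least that under $\Vmu$ on every path. Taking expectations over $\distF$ gives $W_0^{\Vmu'}(c)\ge W_0^{\Vmu}(c)$.

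\textbf{Main obstacle.} The delicate step is admissibility. $\Vmu'$ is defined through the full history (it remembers $\pi_D$ and simulates $\Vmu$), not through the Markov state $(\Vu_n,\beta_{\min}^{n-1})$ used in \Cref{eqn:m_dp}. I would handle this by noting that the state of $\Vmu'$ can be augmented with the earmarked amount $\pi_D$. Equivalently, I would invoke the standard fact that history-dependent policies cannot exceed the Bellman optimum $W_0^*(c)$ of this finite-horizon problem, since the objective depends only on the realized allocations and demands. Then $W_0^*(c)\ge W_0^{\Vmu'}(c)\ge W_0^{\Vmu^{*,~w}}(c)=W_0^*(c)$, so $\Vmu'$ is optimal and routes $i_D$ last.

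A minor point is that the visit stage of $i_D$ under $\Vmu$ may be random under dynamic routing. The construction is pathwise, so this needs no separate treatment, but measurability of $\Vmu'$ should be noted in passing.
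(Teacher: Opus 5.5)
Your proof is correct, and it rests on the same observation the paper uses in its inductive step. Since $i_D$'s demand is known, the amount $\pi_D$ the original policy would allocate there is a fixed number. It can be reserved and paid out last, every stochastic node is served exactly as before, and the realized minimum fill rate weakly increases on every path.

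The organization differs. The paper argues by backward induction on the stage at which the next node is chosen:
\begin{itemize}
\item Its base case $n=N-2$ compares ``$i_D$ next'' with ``$i_D$ last'' by interchanging maximum and expectation. Visiting the stochastic node first lets the split of $c_{N-1}$ be chosen after its demand is observed.
\item Its step case uses the induction hypothesis to reduce to those same two candidates, and then invokes the reserve-and-defer feasibility of $\Vpi^*(\Vsig)$ under $\Vsig'$.
\end{itemize}

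Your single global coupling makes the induction unnecessary. The earmark argument already covers the base case, every position of $i_D$ (including $\sigma_1=i_D$), and dynamic routing at once.

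Your version is also more explicit on a point the paper glosses. Calling $\Vpi^*(\Vsig)$ ``feasible'' under $\Vsig'$ tacitly lets the deferred allocations depend on the reserved amount and on $i_D$'s would-be fill rate. Neither is part of the state $(\Vu_n,\hist^{n-1})$ under $\Vsig'$. Your appeal to the sufficiency of Markov policies closes this gap: the objective depends on the history only through $\hist^{n-1}$, which is in the state.

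In exchange, the paper's route stays within the value-function language and exposes the value-of-information intuition in its base case.

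If you want the optimal policy itself to be Markov, add one line. Run the Bellman recursion with the routing action restricted so that $i_D$ can only be chosen as the last node. Its optimum is at least $W_0^{\Vmu'}(c)=W_0^*(c)$, and it is attained by a Markov policy that visits $i_D$ last.
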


We now show that the same structural property holds under the PPA allocation rule under three nodes with symmetric distributions.  

\begin{restatable}[Certainty‑Last Principle under PPA]{theorem}{DeterministicPPA} 
\label{thm:det_PPA}
Consider a problem with $N = 3$ nodes where nodes $i_1$ and $i_2$ follow the symmetric discrete distribution in \Cref{fig:discrete_sym_distr} with common mean $\mu > 0$. The third node $i_D$ has deterministic demand $\mu_D > 0$. Then, for any initial capacity $c$, under the PPA policy and the \mexpost objective $W_0^{(*,~\Vpi^{PPA})}(c)$, an optimal \routing policy places the deterministic node last ($\sigma^*_N = i_D$).
\end{restatable}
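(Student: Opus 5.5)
The plan is to exploit the fact that PPA fixes the allocation, so every route induces an explicit fill-rate vector along each sample path. Since $i_1$ and $i_2$ are identically distributed, only the \emph{position} of $i_D$ matters: first, second, or last. I take the symmetric discrete distribution of \Cref{fig:discrete_sym_distr} to be the two-point law $d \in \{\mu-\delta, \mu+\delta\}$ with probability $1/2$ each. A dynamic routing policy can only change the decision at stage $2$, after $d_1$ is observed. So it suffices to prove two things:
\begin{enumerate*}[label=(\roman*)]
\item at stage $2$, with one random node and $i_D$ remaining, sending the truck to the random node first is optimal for every state $(c_2,\hist^{1})$;
\item at stage $0$, starting with a random node beats starting with $i_D$.
\end{enumerate*}

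\textbf{Step 1 (two-node subproblem).} Fix remaining capacity $c'$, prior minimum $\beta=\hist^{1}$, one random node $X$ and the deterministic node.

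If $X$ is visited first, PPA gives $X$ the amount $\frac{d}{d+\mu_D}c' \wedge d$. The deterministic node then receives the remainder, so both fill rates equal $\frac{c'}{d+\mu_D}\wedge 1$ (or the deterministic node is fully served). Hence the objective is $\E\big[\beta \wedge \frac{c'}{d+\mu_D}\wedge 1\big]$.

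If $i_D$ is visited first, it receives $\frac{\mu_D}{\mu_D+\mu}c'$, leaving $\frac{\mu}{\mu_D+\mu}c'$ for $X$. The objective is then $\E\big[\beta\wedge 1\wedge \frac{c'}{\mu_D+\mu}\wedge \frac{\mu c'}{(\mu_D+\mu)d}\big]$.

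I will compare the two sample path by sample path.
\begin{itemize}
\item For $d=\mu-\delta$: $\frac{c'}{\mu-\delta+\mu_D} > \frac{c'}{\mu+\mu_D}$.
\item For $d=\mu+\delta$: $(\mu+\mu_D)(\mu+\delta)/\mu = \mu+\delta+\mu_D+\mu_D\delta/\mu \ge \mu+\delta+\mu_D$, so again the $X$-first fill rate is at least the $i_D$-first one.
\end{itemize}
Since $x\mapsto \beta\wedge 1\wedge x$ is monotone, the $X$-first objective dominates pathwise, which gives (i). This argument also shows that the route ``random, random, $i_D$'' weakly dominates ``random, $i_D$, random''.

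\textbf{Step 2 (first-stage comparison).} By Step 1 I only need to compare the route $(i_D,i_1,i_2)$ with $(i_1,i_2,i_D)$.

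In the route $(i_D,i_1,i_2)$, the deterministic node receives fill rate $\frac{c}{\mu_D+2\mu}\wedge 1$. The capacity left is $\frac{2\mu c}{\mu_D+2\mu}$. PPA then gives the two random nodes fill rates $\frac{c}{\mu_D+2\mu}\cdot\frac{2\mu}{d_1+\mu}$ and $\frac{c}{\mu_D+2\mu}\cdot\frac{2\mu - \mu d_1/(d_1+\mu)\cdot 2}{d_2}$ (with caps at $1$).

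In the route $(i_1,i_2,i_D)$, the first fill rate is $\frac{c}{d_1+\mu+\mu_D}$. The last two fill rates are equalized at $\frac{c(\mu+\mu_D)}{(d_1+\mu+\mu_D)(d_2+\mu_D)}$.

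I will write both expected minima as averages over the four equally likely pairs $(d_1,d_2)$. The plan is to show the inequality after pairing the outcome $(\mu-\delta,\mu+\delta)$ with $(\mu+\delta,\mu-\delta)$. I do not expect pointwise domination on every path, but I do expect it on symmetrized pairs, since $1/(\cdot)$ is convex and the $(i_1,i_2,i_D)$ route spreads the deterministic node's share across the realized uncertainty. I will organize the comparison by the capacity breakpoints where the caps at $1$ (and the cap $\pi\le d$) become active. These breakpoints are finitely many and explicit; checking the monotone ordering of the minima on each piece then reduces to elementary polynomial inequalities in $(\mu,\delta,\mu_D,c)$.

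\textbf{Main obstacle.} The hard part will be Step 2, specifically the bookkeeping of the piecewise regimes. For small $c$ no cap binds and the inequality should reduce to an AM–HM/Jensen-type inequality. For $c\ge 2(\mu+\delta)+\mu_D$ both routes yield $1$. The intermediate regimes, where some but not all fill rates are capped, are where I expect pairing arguments to be delicate. I would first try to show that in each regime the minimum on the $(i_D,i_1,i_2)$ route is attained at the last random node. I would then bound it above by the common equalized rate of the $(i_1,i_2,i_D)$ route on the paired outcomes.
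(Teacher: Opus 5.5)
Your Step~1 is essentially the paper's Case~I: moving $i_D$ from the middle to the end raises the minimum fill rate on every sample path. One omission there: your $i_D$-first formula ignores the cap. When $c'>\mu+\mu_D$, node $i_D$ receives only $\mu_D$ and $X$ receives $c'-\mu_D$, so you must also check $\frac{c'-\mu_D}{d}\wedge 1\le \frac{c'}{d+\mu_D}\wedge 1$. This is true: for $c'\le d+\mu_D$ it is equivalent to $\mu_D(c'-d-\mu_D)\le 0$, and otherwise both sides equal $1$.

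More seriously, you have misread the hypothesis. \Cref{fig:discrete_sym_distr} is the three-point law on $\{\mu-k,\mu,\mu+k\}$ with mass $p_i$ at $\mu$, and $p_{i_1},p_{i_2}$ may differ. Your two-point i.i.d.\ law is only the case $p_{i_1}=p_{i_2}=0$, so the realizations $d=\mu$ must also be handled. Your reduction ``only the position of $i_D$ matters'' is also false when $p_{i_1}\neq p_{i_2}$. What you actually need is to compare $(i_D,a,b)$ with $(a,b,i_D)$ for both orderings of $\{a,b\}=\{i_1,i_2\}$.

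The genuine gap is Step~2, which is the entire content of the theorem and is left as a plan; moreover, the concrete sub-plan you describe would fail. In the uncapped regime, PPA's fill rate drops from one stage to the next exactly when the next realized demand exceeds its mean. Hence the binding node depends on the signs of $d_1-\mu$ and $d_2-\mu$:
\begin{itemize}
\item on $(i_D,i_1,i_2)$ the minimum is $\beta_{i_D}=\frac{c}{2\mu+\mu_D}$ when $d_1,d_2\le\mu$, and it is $\beta_{i_1}$ when $d_1>\mu\ge d_2$;
\item on $(i_1,i_2,i_D)$ the minimum is the first rate $\frac{c}{d_1+\mu+\mu_D}$, not the equalized one, whenever $d_2\le\mu$.
\end{itemize}
So your planned reduction (minimum at the last random node, bounded by the equalized rate) fails whenever $d_2<\mu$.

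At the same time, your pessimism about pointwise domination is unfounded. Under the order-preserving coupling, $(i_1,i_2,i_D)$ is at least as good as $(i_D,i_1,i_2)$ on \emph{every} sample path, so no pairing or Jensen/AM--HM step is needed. The paper proves exactly this by splitting into the four sign cases and then into capacity breakpoints. For instance, with $(d_1,d_2)=(\mu-k,\mu+k)$ and no caps, the needed inequality $\frac{2\mu^2}{(2\mu+\mu_D)(2\mu-k)(\mu+k)}\le\frac{\mu+\mu_D}{(2\mu-k+\mu_D)(\mu+k+\mu_D)}$ reduces to $\mu_D k(\mu-k)(3\mu+\mu_D)\ge 0$.

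In the partially capped regimes, the pathwise inequalities depend on the support structure. For example, when $d_1<\mu<d_2$ and $d_1+\mu+\mu_D<c\le 2\mu+\mu_D$, the difference between the two binding rates is $\frac{\mu_D}{d_2(d_2+\mu_D)}\bigl(\frac{(2\mu-d_2)c}{2\mu+\mu_D}-d_1\bigr)$. Its sign relies on $2\mu-d_2=d_1$. Until this case analysis is actually carried out, including the $d=\mu$ realizations, the comparison between starting and ending with $i_D$ (and hence the theorem) is not established.
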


We next turn to the \hqexpost objective and examine whether a similar principle holds. In contrast to the \mexpost objective, the \hqexpost objective does not exhibit a clear structural property for the routing policy. We present a simple counterexample showing that the optimal routing may place the deterministic node in the middle rather than at the end.

\begin{example}[Different Simultaneous Policies]
    Let $N = 3$, suppose we have probability distributions $\D_1 = \D_2 \sim \begin{cases} 4, &\text{with probability $\frac{1}{2}$},\\ 2, &\text{with probability $\frac{1}{2}$}. \end{cases}$
    $\D_3$ is deterministic with demand equal to $1$.
    When the initial capacity $c=2$ or $9$, the optimal static \routing policy under \hqexpost puts  the deterministic node at the end, i.e., $\Vsig^{*,~z} = (1, ~2, ~3)$.
    However, when the initial capacity $c=8$, the optimal static \routing policy under \hqexpost puts the deterministic node in the middle, i.e., $\Vsig^{*,~z} = (1, ~3, ~2)$.
\end{example}

In this example, we observe that under the \hqexpost objective, there is no uniform pattern about the optimal routing policy as both the scarce and abundant capacity puts the deterministic nodes at the end, however, a higher value is achieved by placing the deterministic node in the middle for the intermediate capacity level. Motivated by this observation, we focus on \mexpost for the remainder of the section, as it exhibits stronger structural properties.

\subsection{Decreasing Coefficient of Variation (CV)}\label{sec:decreasing_CV}

Building on the deterministic node insight in \Cref{sec:determine_end}, we conjecture a more general principle that nodes should be ordered by decreasing coefficient of variation (CV).
A higher CV corresponds to greater demand volatility, so visiting such nodes earlier allows the policy to resolve more uncertainty before allocating remaining capacity. While \citet{lien2014sequential} established this for the optimal allocation policy, we show that this intuition extends to the PPA policy as well.

\paragraph{Same Mean, Different Variances}  
We first consider symmetric discrete distributions with common mean $\mu$ (see \Cref{fig:discrete_sym_distr}). As the probability mass $p_i$ at the mean increases, the distribution becomes more concentrated and has lower variance. Intuitively, nodes with larger $p_i$ (lower variance) should be visited later. The theorem below confirms this intuition for a two-node problem under the PPA policy. 

\begin{figure}
    \centering
    \scalebox{0.73}{\input{figures/discrete_sym_distr.txt}}
    \caption{Probability mass function of the symmetric discrete distributions considered.}
    \label{fig:discrete_sym_distr}
\end{figure}

\begin{restatable}[Decreasing CV Routing with Equal Means]{theorem}{DecreasingVarTwoNode} \label{thm:decreasing_var_two_node}
Consider a two-node problem under PPA, where both nodes follow the symmetric discrete distribution in \Cref{fig:discrete_sym_distr} with common mean $\mu > 0$. Then, for any capacity $c$, the \routing order with larger variance first (i.e., $p_{\sigma_1} < p_{\sigma_2}$) is optimal.
\end{restatable}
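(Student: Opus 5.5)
The plan is to compute the two-node \mexpost value under PPA in closed form, and then reduce the comparison of the two orders to a single scalar inequality. I take the distribution in \Cref{fig:discrete_sym_distr} to put mass $p_i$ at $\mu$ and mass $(1-p_i)/2$ at each of $\mu\pm\delta$, with $0<\delta<\mu$; I have inferred this support from the figure caption, and it should be checked against the figure.

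\textbf{Step 1 (closed form).} With $N=2$, at stage one $\itaS_1$ is the single remaining node, so \Cref{eqn:PPA} gives $\pi_1 = \frac{d_1 c}{d_1+\mu}\wedge d_1$. At stage two $\pi_2 = (c-\pi_1)\wedge d_2$. Hence the sample-path minimum fill rate is a deterministic function of the realized pair,
\[
f(x,y) \;=\; \Bigl(\tfrac{c}{x+\mu}\wedge 1\Bigr)\wedge\Bigl(\tfrac{c-\pi_1(x)}{y}\wedge 1\Bigr),
\]
where $x=d_{\sigma_1}$ and $y=d_{\sigma_2}$. When $c<x+\mu$ this simplifies to $\frac{c}{x+\mu}\bigl(1\wedge \frac{\mu}{y}\bigr)$. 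When $c\ge x+\mu$ it equals $1\wedge\frac{c-x}{y}$. For a route in which the first node has law $q_1$ and the second has law $q_2$, the objective is $W_0=\sum_{x,y} q_1(x)q_2(y)f(x,y)$.

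\textbf{Step 2 (bilinear reduction).} Write each law as $q_i = p_i\,e_\mu + (1-p_i)\,u$, where $e_\mu$ is the point mass at $\mu$ and $u$ is uniform on $\{\mu-\delta,\mu+\delta\}$. Define the antisymmetric kernel $g(x,y)=f(x,y)-f(y,x)$. The difference between visiting node $1$ first and node $2$ first is then $\sum_{x,y} q_1(x)q_2(y)g(x,y)$. Expanding bilinearly:
\begin{itemize}
\item the $e_\mu\otimes e_\mu$ term vanishes because $g(\mu,\mu)=0$;
\item the $u\otimes u$ term vanishes because $g$ is antisymmetric and both factors use the same measure $u$;
\item the two cross terms combine.
\end{itemize}
The result is
\[
W_0^{(1,2)}-W_0^{(2,1)} = (p_1-p_2)\,G(c),\qquad G(c) \triangleq \tfrac12\bigl[g(\mu,\mu-\delta)+g(\mu,\mu+\delta)\bigr].
\]
So the theorem is equivalent to $G(c)\le 0$ for every $c>0$. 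That is, putting the realization that is certain to equal the mean first is never better than putting it second. This is a two-point analogue of the Certainty-Last Principle (\Cref{thm:det_opt,thm:det_PPA}). Once $G\le 0$ is shown, any order with $p_{\sigma_1}<p_{\sigma_2}$ weakly dominates, which proves the claim.

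\textbf{Step 3 (verifying $G(c)\le 0$).} I expect this to be the main obstacle. $G$ is piecewise rational in $c$, with breakpoints at $c\in\{2\mu-\delta,\;2\mu,\;2\mu+\delta\}$ coming from $c=x+\mu$, and at the values where $\frac{c-x}{y}=1$. I would split $c$ into the intervals these breakpoints determine and compute the four quantities $f(\mu,\mu\pm\delta)$ and $f(\mu\pm\delta,\mu)$ explicitly on each interval.

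In the scarce regime $c\le 2\mu-\delta$ every term has the form $\frac{c}{x+\mu}\bigl(1\wedge\frac{\mu}{y}\bigr)$. This gives
\[
2G = \frac{c}{2\mu}\Bigl(1+\frac{\mu}{\mu+\delta}\Bigr) - \frac{c}{2\mu-\delta} - \frac{c}{2\mu+\delta}\cdot\frac{\mu}{\mu+\delta}\cdot\frac{\mu+\delta}{\mu},
\]
and I would check its sign algebraically. Convexity of $x\mapsto \frac{1}{x+\mu}$ should drive the inequality here, by Jensen-type reasoning. In the abundant regime $c\ge 3\mu+\delta$ all values equal $1$, so $G=0$.

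For the intermediate intervals I would check that each linear or rational expression in $c$ for $2G$ is nonpositive at both endpoints and monotone, or concave where needed, in between. If a direct sign argument fails on some interval, I would fall back to proving $G\le 0$ at the breakpoints and using the fact that $G$ is piecewise linear in $c$ wherever the $1/(x+\mu)$ factors are fixed constants.
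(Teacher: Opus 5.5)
Your proposal is correct, and Step 3 closes as you plan. On the three pieces $[0,2\mu-\delta]$, $[2\mu-\delta,2\mu]$ and $[2\mu,2\mu+\delta]$, the quantity $2G$ is affine in $c$. It equals, respectively, $c\bigl[\frac{1}{2\mu}+\frac{1}{2(\mu+\delta)}-\frac{1}{2\mu-\delta}-\frac{1}{2\mu+\delta}\bigr]$, then $\frac{c}{2\mu}+\frac{c}{2(\mu+\delta)}-1-\frac{c}{2\mu+\delta}$, then $\frac{c-\mu}{\mu+\delta}-\frac{c}{2\mu+\delta}$. The sign checks are as follows:
\begin{itemize}
\item The first bracket is negative by your Jensen step: $\frac{1}{2\mu-\delta}+\frac{1}{2\mu+\delta}\ge\frac{1}{\mu}>\frac{1}{2\mu}+\frac{1}{2(\mu+\delta)}$.
\item At the next breakpoint, $2G(2\mu)=\frac{\mu}{\mu+\delta}-\frac{2\mu}{2\mu+\delta}<0$.
\item $G$ vanishes identically for $c\ge 2\mu+\delta$, so your cutoff $3\mu+\delta$ is valid but loose.
\end{itemize}
Since each piece is affine, endpoint checks suffice.

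The paper takes a more brute-force route. It computes $W_1^{(\Vpi^{PPA},\Vsig)}(c,d_1,1)$ for each $d_1\in\{\mu-k,\mu,\mu+k\}$, regime by regime. It then averages these to write $W_0^{(\Vpi^{PPA},\Vsig)}(c)$ explicitly in terms of $p_1$, $p_2$ and $p_1p_2$. Finally it notes that the reversed route is obtained by swapping $p_1\leftrightarrow p_2$. The constant and cross terms therefore cancel, and the difference is a regime-dependent coefficient times $p_2-p_1$.

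Your bilinearity-plus-antisymmetry argument gets the same $(p_2-p_1)$ structure without ever forming $W_0$. It also identifies the coefficient as $-G(c)$, which involves only the four sample paths pairing a mean realization with an off-mean one. This reduces the sign check to three affine functions, and the certainty-last interpretation comes for free. The reduction itself uses only one fact: the sample-path value $f(x,y)$ is the same function of the realizations in visiting order under both routes. That holds for PPA with equal means, so PPA enters only through the sign of $G$.

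Your route is also more robust. Some of the paper's displayed coefficients contain algebra slips. For instance, on $(2\mu-k,2\mu]$ the true coefficient is $\frac{1}{2}\bigl[1+\frac{c}{2\mu+k}-\frac{c}{2\mu}-\frac{c}{2(\mu+k)}\bigr]$. This depends on $c$ and is invariant under rescaling $(\mu,k,c)$, so the paper's $\mu\ge\frac{1}{2}$ rescaling remark is unnecessary. The signs the paper concludes are nonetheless correct.
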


The proof (see \Cref{appendix:decreasing_CV}) reveals the following intuition. When capacity is extremely scarce, the objective is dominated by whether the low-demand realization $\mu-k$ is satisfied. Because the first-stage demand is observed exactly, placing the more variable node first improves performance. When capacity is abundant, the low-demand realization is always satisfied, and the risk comes from the high-demand realization $\mu+k$. Even then, reducing uncertainty earlier benefits. 

\paragraph{Same Variance, Different Means}  
We next consider distributions with identical shape and variance that differ only by a shift in mean.  In this setting, the main question is whether to visit high-mean nodes earlier or later. Intuitively, it is preferable to face higher average demand when capacity is more plentiful, and to face lower demand as capacity becomes scarce.

\begin{restatable}[Decreasing CV Routing with Equal Variances]{theorem}{IncreasingMeanTwoNode} \label{thm:increasing_mean_routing_two_node}
Consider a two-node problem under the PPA policy, where the demand distributions $\D_1$ and $\D_2$ satisfy $\D_2 = \D_1 + \Delta$ for some $\Delta > 0$. Then, for any capacity $c$, an optimal \routing order visits the smaller-mean node first (i.e., $\mu_{\sigma_1} < \mu_{\sigma_2}$).
\end{restatable}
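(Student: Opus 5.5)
The plan is to reduce the two-node problem under PPA to an explicit expectation over two i.i.d.\ copies of the base demand, and then compare the two orders by a coupling/interpolation argument in the shift $\Delta$. First I will derive a closed form for the \mexpost value of an arbitrary static order $(a,b)$ under \Cref{eqn:PPA}. At stage 1 with demand $d_1$, PPA allocates $\pi_1 = \bigl(\tfrac{d_1}{d_1+\mu_b}c\bigr)\land d_1$. At stage 2, the last node receives $\pi_2 = (c-\pi_1)\land d_2$.

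\textbf{Step 1 (closed form).} There are two cases at stage 1.
\begin{itemize}
\item If $c < d_1+\mu_b$, then $\pi_1/d_1 = c/(d_1+\mu_b)$ and $c-\pi_1 = c\mu_b/(d_1+\mu_b)$.
\item Otherwise $\pi_1 = d_1$ and $c-\pi_1 = c-d_1$.
\end{itemize}
Combining the cases, the sample-path minimum fill rate is
\begin{equation*}
g_{a,b}(d_1,d_2) = \min\Bigl\{1,\ \tfrac{c}{d_1+\mu_b},\ \tfrac{c-\pi_1(d_1)}{d_2}\Bigr\},
\end{equation*}
and $W_0^{(\Vsig,\Vpi^{PPA})}(c) = \E[g_{a,b}(d_1,d_2)]$. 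Write $\D_2 = \D_1+\Delta$, let $\mu$ be the mean of $\D_1$, and let $X,Y$ be i.i.d.\ draws from $\D_1$. Then the two candidate orders have the following values:
\begin{itemize}
\item order $A$ (small mean first): $W_A(\Delta) = \E[g(X,\,Y+\Delta;\,\mu_b=\mu+\Delta)]$;
\item order $B$ (large mean first): $W_B(\Delta) = \E[g(Y+\Delta,\,X;\,\mu_b=\mu)]$.
\end{itemize}

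\textbf{Step 2 (interpolation in $\Delta$).} Define $F(\Delta) = W_A(\Delta)-W_B(\Delta)$. At $\Delta = 0$ the two orders face identically distributed demands, and $X,Y$ are exchangeable, so $F(0)=0$. It therefore suffices to show that $F$ is non-decreasing in $\Delta$. Since $g$ is a minimum of finitely many smooth, monotone pieces, it is Lipschitz in $\Delta$. I will differentiate under the expectation (almost everywhere) and identify which branch of the minimum is active on each region of $(X,Y)$.

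\textbf{Step 3 (branch comparison).} Along order $A$, increasing $\Delta$ affects both the reservation $\mu_b$ and the second-stage demand. Along order $B$, it affects the first-stage demand $d_1$ only. I will evaluate each derivative branch by branch.
\begin{itemize}
\item \emph{Proportional branch.} Here the second-stage fill rate is $c\mu_b/((d_1+\mu_b)d_2)$.
  \begin{itemize}
  \item Under $A$, this equals $c(\mu+\Delta)/((X+\mu+\Delta)(Y+\Delta))$. Its $\Delta$-derivative is partially offset, because the extra reserve $\mu+\Delta$ in the numerator anticipates the shift.
  \item Under $B$, the first-stage rate $c/(Y+\Delta+\mu)$ falls without any compensating term.
  \end{itemize}
  The aim is to show that the loss under $A$ is dominated by the loss under $B$.
\item \emph{Full-satisfaction branch.} Here the rate is $(c-d_1)/d_2$, and I will carry out the analogous comparison.
\end{itemize}
To make the comparison pointwise, I will symmetrize over the swap $(X,Y)\mapsto(Y,X)$ and prove the pathwise inequality
\begin{equation*}
\partial_\Delta\bigl[g_A(x,y)+g_A(y,x)\bigr] \ge \partial_\Delta\bigl[g_B(x,y)+g_B(y,x)\bigr]
\end{equation*}
for all $x,y$ in the support. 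This is what lets me avoid any distributional assumption beyond the shift structure.

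\textbf{Main obstacle.} I expect the hard step to be the regime bookkeeping in Step 3: the active branch, which is one of $1$, $c/(d_1+\mu_b)$, $c\mu_b/((d_1+\mu_b)d_2)$, or $(c-d_1)/d_2$, can differ between the two orders and between $(x,y)$ and $(y,x)$. I would first split $c$ into the scarce, intermediate and abundant ranges, in the spirit of \Cref{thm:m_threshold}. Within each range I would enumerate the feasible branch pairs and check the symmetrized derivative inequality by direct algebra. If a mixed-branch case resists this, the fallback is to compare $g_A$ and $g_B$ directly at fixed $\Delta$, using monotonicity of the minimum in each argument: bound the mixed case by the branch giving the smaller value under $B$ and the larger value under $A$.
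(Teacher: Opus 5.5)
Your Step 2 reduces the theorem to a false statement, so Step 3 cannot succeed. Fix $c$. Each order's value is at most its expected first-stage fill rate $c/(d_1+\mu_b)$, and $d_1+\mu_b$ equals $X+\mu+\Delta$ under $A$ and $Y+\Delta+\mu$ under $B$, so $|F(\Delta)|<c/\Delta\to 0$. Since also $F(0)=0$, a non-decreasing $F$ would have to vanish identically, yet $F>0$ in general.

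Concretely, take $\mathcal{D}_1$ uniform on $\{1,3\}$ and $c=2$, so every path is in the proportional branch. Then $F(\Delta)=\frac{\Delta}{6(3+\Delta)}\bigl(\frac{1}{3+\Delta}+\frac{1}{5+\Delta}\bigr)$, which gives $F(10)\approx 0.018>F(100)\approx 0.003$. Your symmetrized derivative inequality fails accordingly. At $x=y=3$ one has $g_A=\frac{2(2+\Delta)}{(3+\Delta)(5+\Delta)}$ and $g_B=\frac{4}{3(5+\Delta)}$, and $\partial_\Delta g_A<\partial_\Delta g_B$ once $\Delta>\sqrt{15}$.

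This is structural, not a bookkeeping issue. Order $A$ sits at a higher level, so it also loses more in absolute terms as $\Delta$ grows. The theorem compares levels at a fixed $\Delta$, and nothing forces the gap to increase with $\Delta$.

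What is missing is the right pairing of sample paths at fixed $\Delta$; once you have it, no interpolation is needed. Your same-$(X,Y)$ coupling does not give pathwise dominance: in the example, $g_A(3,1)=\frac{2}{5+\Delta}<\frac{4}{3(3+\Delta)}=g_B(3,1)$ for $\Delta<1$. Instead compare $g_A(x,y)$ with $g_B(y,x)$, i.e., pair order $A$'s path $(x,\,y+\Delta)$ with order $B$'s path $(x+\Delta,\,y)$. This is the paper's bijection $f(d)=d+\Delta$.

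Under this pairing both first stages see the same total $x+\mu+\Delta$. Hence the first-stage fill rate $\frac{c}{x+\mu+\Delta}\wedge 1$ and the active PPA branch coincide exactly. The second stage then reduces as follows:
\begin{itemize}
\item In the proportional branch with $y\ge\mu$, the comparison is $\frac{\mu+\Delta}{y+\Delta}\ge\frac{\mu}{y}$. When $y<\mu$, both second-stage rates exceed the common first-stage rate.
\item In the full-satisfaction branch, the comparison is $\frac{c-x}{y+\Delta}\ge\frac{c-x-\Delta}{y}$, which holds whenever the right-hand side is at most one. Otherwise both rates are capped at one.
\end{itemize}
Thus $g_A(x,y)\ge g_B(y,x)$ pointwise. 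Adding the swapped inequality gives exactly your symmetrized inequality, but for values rather than derivatives, and taking expectations proves the theorem.

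Your insistence on the full-satisfaction branch is well placed. The paper's displayed expression uses the proportional second-stage rate $\frac{c}{d_1+\mu_2}\cdot\frac{\mu_2}{d_2}$ throughout, which is correct only when $c\le d_1+\mu_2$. The mediant inequality above is what covers the remaining case.
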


The proof constructs a one-to-one mapping between sample paths and shows that PPA’s structure discounts fill rates more heavily for a low-mean node when demand exceeds the mean. Thus, placing the higher-mean node second yields a uniformly better fill-rate profile.

Although decreasing CV is not universally optimal, experiments in \Cref{sec:experiments} show that it is nearly optimal across a broad range of practical settings.  Moreover, we note that similar results appear in \citet{lien2014sequential}, where they show that decreasing CV is optimal when $N = 2$ given the same distributions as in \Cref{thm:decreasing_var_two_node} and \Cref{thm:increasing_mean_routing_two_node} under the optimal allocation policy. 

%% file: figures/discrete_sym_distr.txt
\tikzset{every picture/.style={line width=0.75pt}} 

\begin{tikzpicture}[x=0.75pt,y=0.75pt,yscale=-1,xscale=1]

\draw    (140,210) -- (363,210) ;
\draw [shift={(366,210)}, rotate = 180] [fill={rgb, 255:red, 0; green, 0; blue, 0 }  ][line width=0.08]  [draw opacity=0] (8.93,-4.29) -- (0,0) -- (8.93,4.29) -- cycle    ;
\draw    (140,210) -- (140,26) ;
\draw [shift={(140,23)}, rotate = 90] [fill={rgb, 255:red, 0; green, 0; blue, 0 }  ][line width=0.08]  [draw opacity=0] (8.93,-4.29) -- (0,0) -- (8.93,4.29) -- cycle    ;
\draw   (160,120) -- (198,120) -- (198,210) -- (160,210) -- cycle ;
\draw   (281,120) -- (321,120) -- (321,210) -- (281,210) -- cycle ;
\draw   (221,69) -- (260,69) -- (260,210) -- (221,210) -- cycle ;

\draw (162,213.4) node [anchor=north west][inner sep=0.75pt]    {$\mu-k$};
\draw (283,213.4) node [anchor=north west][inner sep=0.75pt]    {$\mu+k$};
\draw (235,216.4) node [anchor=north west][inner sep=0.75pt]    {$\mu$};
\draw (123,210.4) node [anchor=north west][inner sep=0.75pt]    {$0$};
\draw (361,221.4) node [anchor=north west][inner sep=0.75pt]    {$d$};
\draw (93,23.4) node [anchor=north west][inner sep=0.75pt]    {$f( d)$};
\draw (236,43.4) node [anchor=north west][inner sep=0.75pt]    {$p_i$};
\draw (165,90.4) node [anchor=north west][inner sep=0.75pt]    {$\frac{1-p_i}{2}$};
\draw (285,90.4) node [anchor=north west][inner sep=0.75pt]    {$\frac{1-p_i}{2}$};

\end{tikzpicture}

%% file: parts/experiments.tex
\section{Experiments}
\label{sec:experiments}
So far we have focused on deriving optimal allocation and routing policies for the \mexpost and \hqexpost objectives, which are surrogate max-min formulations designed to balance fairness and efficiency (see \Cref{sec:model_preliminaries}). 
We proceed by addressing three research questions. First, we examine whether improvements in the surrogate objective translate into improvements in the fairness metrics, and we quantify the contribution of routing optimization to these gains. Second, we compare the \mexpost and \hqexpost objectives and provide guidance on objective selection across different operational contexts. Finally, we propose a heuristic, PPA-deCV, and numerically evaluate its performance relative to the jointly optimal allocation and routing policy. 
\footnote{Code can be found on GitHub: \url{https://github.com/Haiqing-Gao/Sequential_Fair_Allocation_Paper_Codes}.}

\begin{figure}[!t]
    \centering
    \begin{subfigure}{0.23\textwidth}
        \centering
        \includegraphics[width=\textwidth]{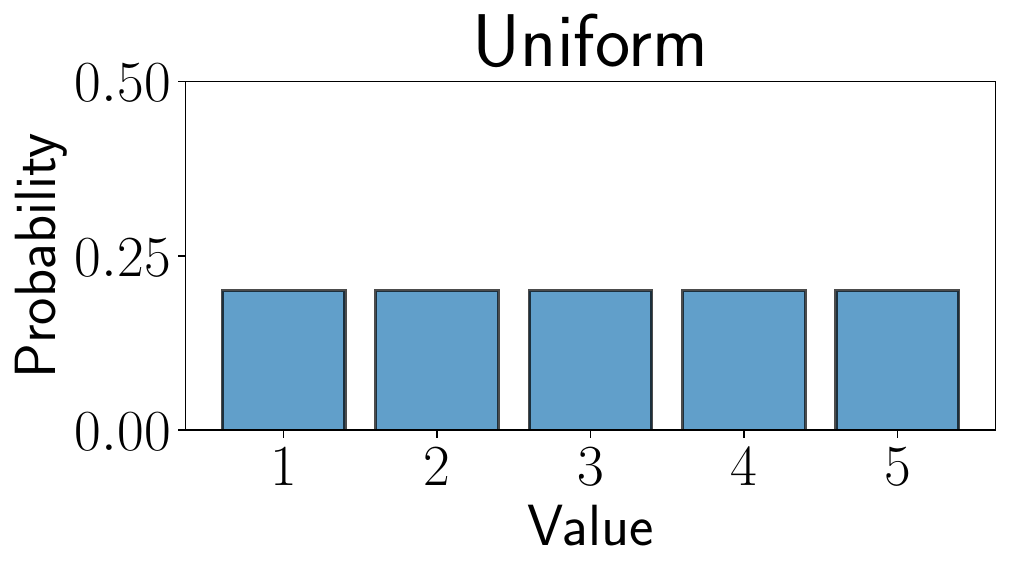}
    \end{subfigure}
    \hfill
    \begin{subfigure}{0.23\textwidth}
        \centering
        \includegraphics[width=\textwidth]{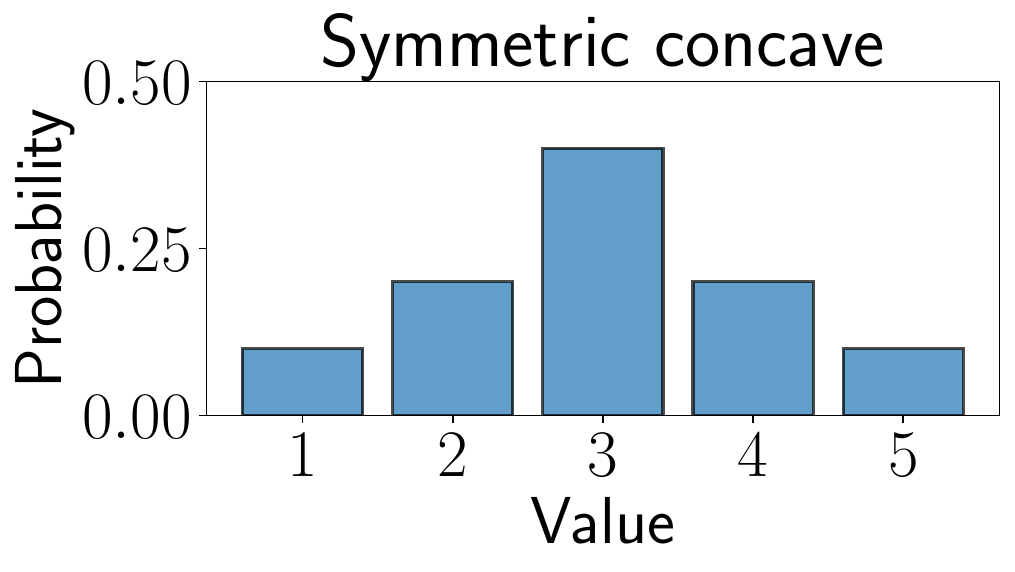}
    \end{subfigure}
    \hfill
    \begin{subfigure}{0.23\textwidth}
        \centering
        \includegraphics[width=\textwidth]{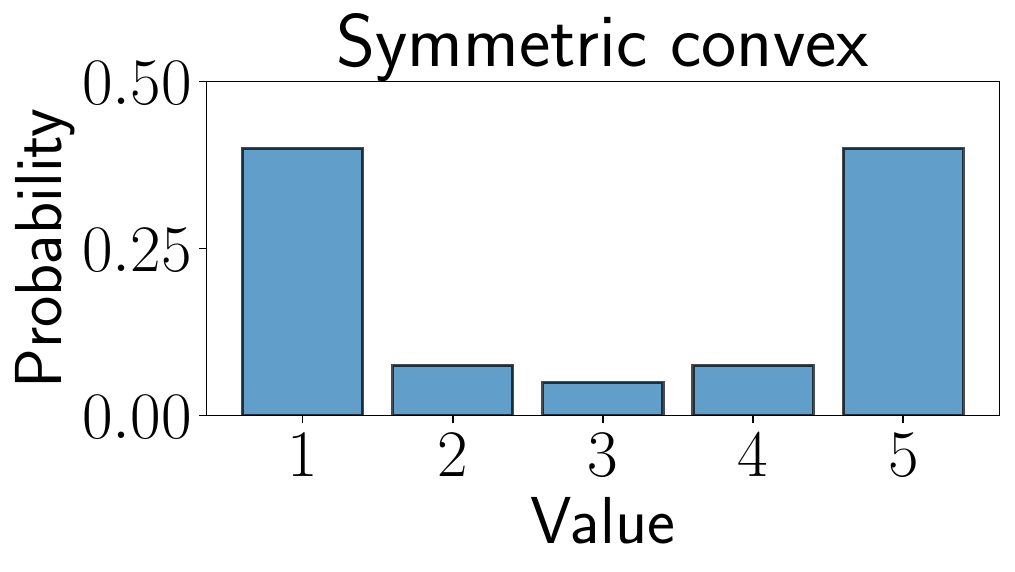}
    \end{subfigure}
    \hfill
    \begin{subfigure}{0.23\textwidth}
        \centering
        \includegraphics[width=\textwidth]{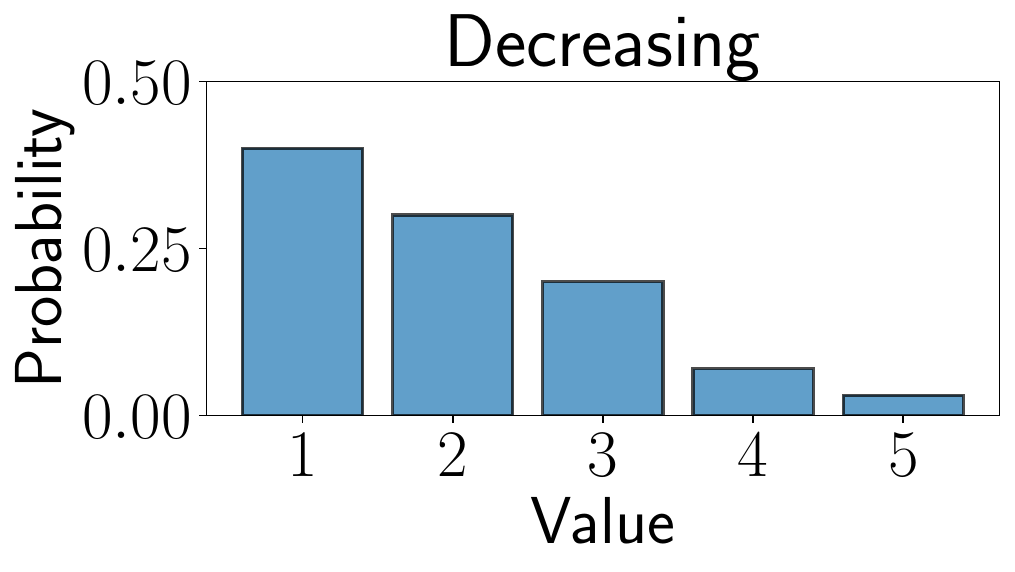}
    \end{subfigure}

    \vspace{0.5em}

    \begin{subfigure}{0.23\textwidth}
        \centering
        \includegraphics[width=\textwidth]{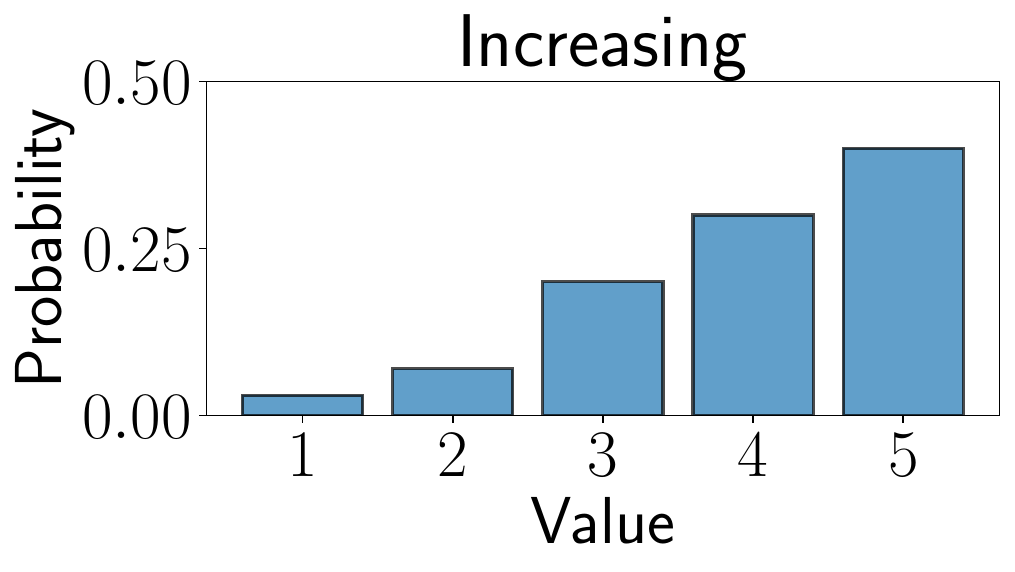}
    \end{subfigure}
    \hfill
    \begin{subfigure}{0.23\textwidth}
        \centering
        \includegraphics[width=\textwidth]{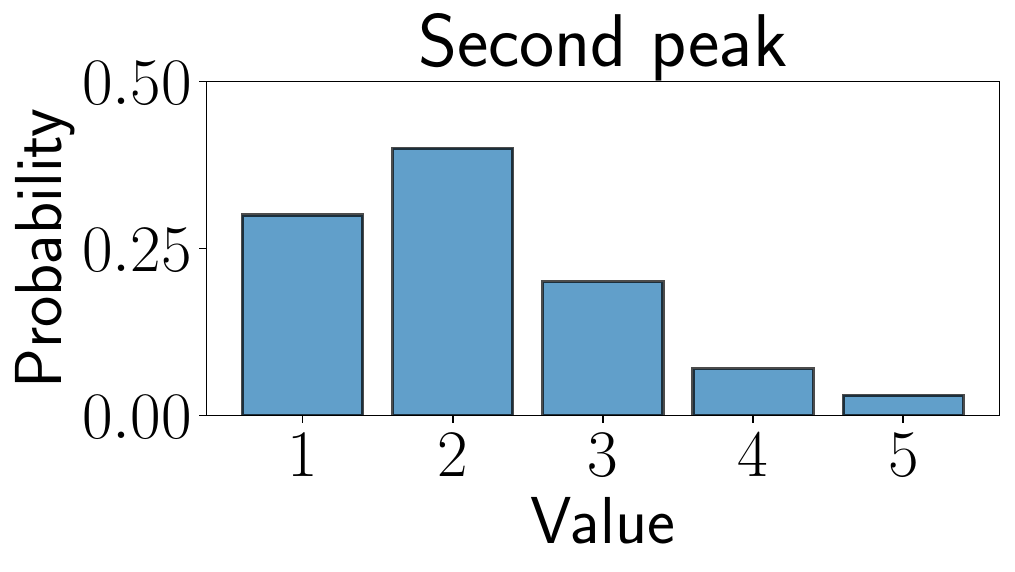}
    \end{subfigure}
    \hfill
    \begin{subfigure}{0.23\textwidth}
        \centering
        \includegraphics[width=\textwidth]{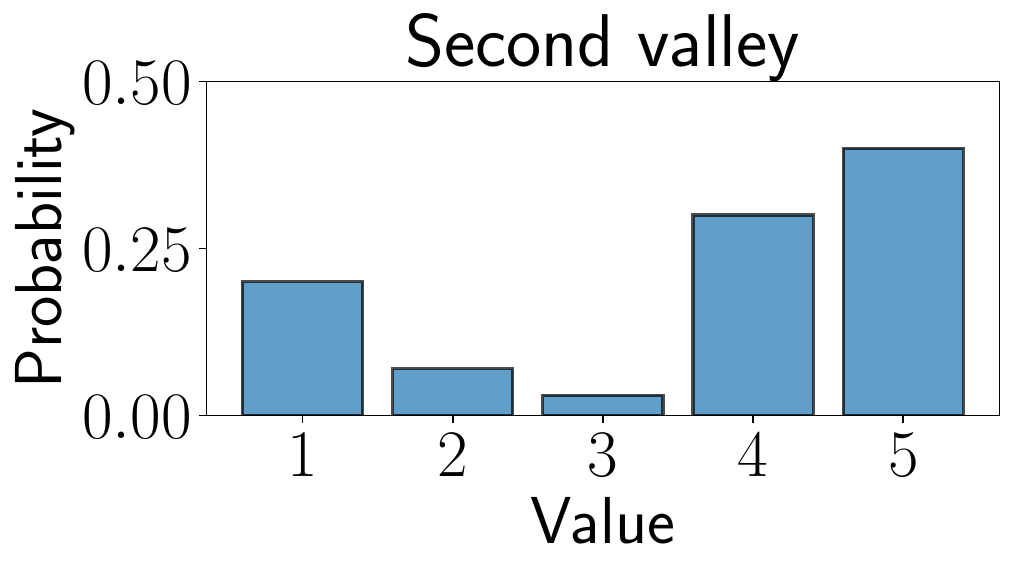}
    \end{subfigure}
    \hfill
    \begin{subfigure}{0.23\textwidth}
        \centering
        \includegraphics[width=\textwidth]{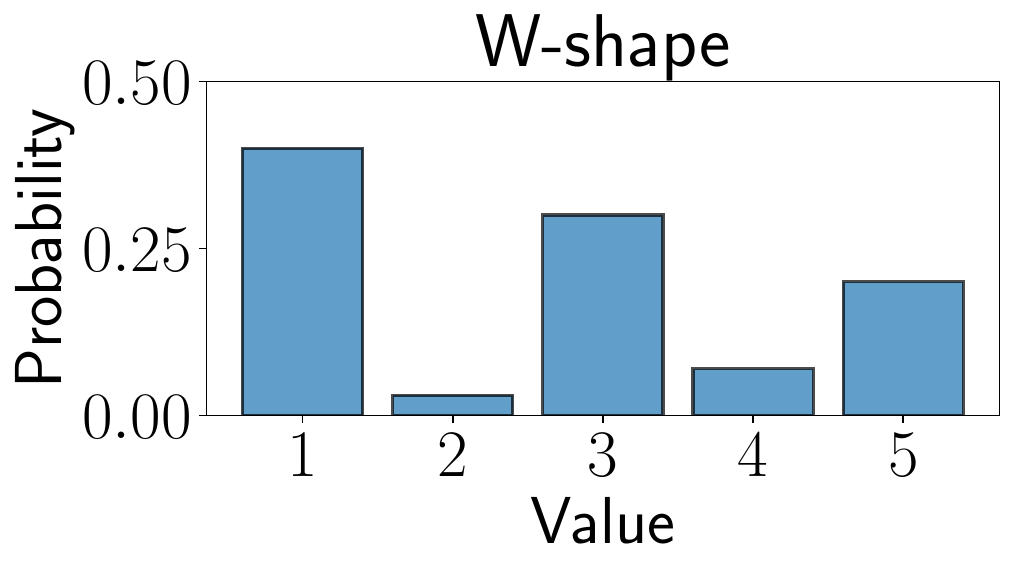}
    \end{subfigure}

    \caption{Probability mass functions of the eight discrete demand distributions.}
    \label{fig:distr_pmf}
\end{figure}

\subsection{Experimental Setup}
\label{subsec:sims_set_up}
We consider instances with $3$ to $6$ nodes, as computing the optimal dynamic \routing policy becomes computationally intensive. Demand at each node follows one of {\bf eight} discrete demand distributions, whose probability mass functions are shown in \Cref{fig:distr_pmf}. The full specification of these distributions is provided in \Cref{appendix:distr_dict}.

We study both {\em homogeneous} and {\em heterogeneous} systems. 
In {\em homogeneous} systems, we consider networks with $3$ to $6$ nodes, where all nodes share the same demand distribution selected from the eight candidates. In heterogeneous systems, we consider $3$ nodes where demand distributions vary across nodes according to the following configurations:
\begin{itemize}
\item \textbf{Same mean, different variances:} Each of the eight distributions is shifted by a constant so that all have a mean of $3.5$. We then evaluate all ${8 \choose 3}$ combinations of the distributions.
\item \textbf{Same variance, different means:} A base distribution $\mathcal{D}$ is selected from the eight candidates, and we construct a three-node system with demand distributions ${\mathcal{D}, \, \mathcal{D} + c, \, \mathcal{D} + 2c}$, where $c \in \{0.5, 1\}$.
\item \textbf{Same CV, different means and variances:} Each distribution is shifted by a constant so that the CV equals $0.35$, and all ${8 \choose 3}$ combinations are evaluated.
\item \textbf{Random:} Demand distributions are sampled directly from the original eight distributions, and all ${8 \choose 3}$ combinations are considered.
\end{itemize}
Finally, we normalize capacity by dividing by the sum of mean demands in each configuration, which we term the capacity level. We consider capacity levels ranging from $0.1$ up to the ratio of the maximum possible total demand to the total mean demand. Depending on the configuration, this results in $17$-$18$ capacity levels. Overall, this experimental design produces $7,776$ instances.

\subsection{Alignment between Max-min Fairness Objectives and Fairness Metrics} 
\label{subsec:sims_obj_vs_metric}
The surrogate objectives in \Cref{sec:model_preliminaries} are intended to balance fairness and efficiency. Here we ask: \emph{Does optimizing these objectives improve the fairness metrics $\DfairEP$ and $\DfairEA$?}

\begin{figure}[!t]
    \centering
    \begin{subfigure}{1.0\textwidth} 
        \centering
        \includegraphics[width=0.95\textwidth]{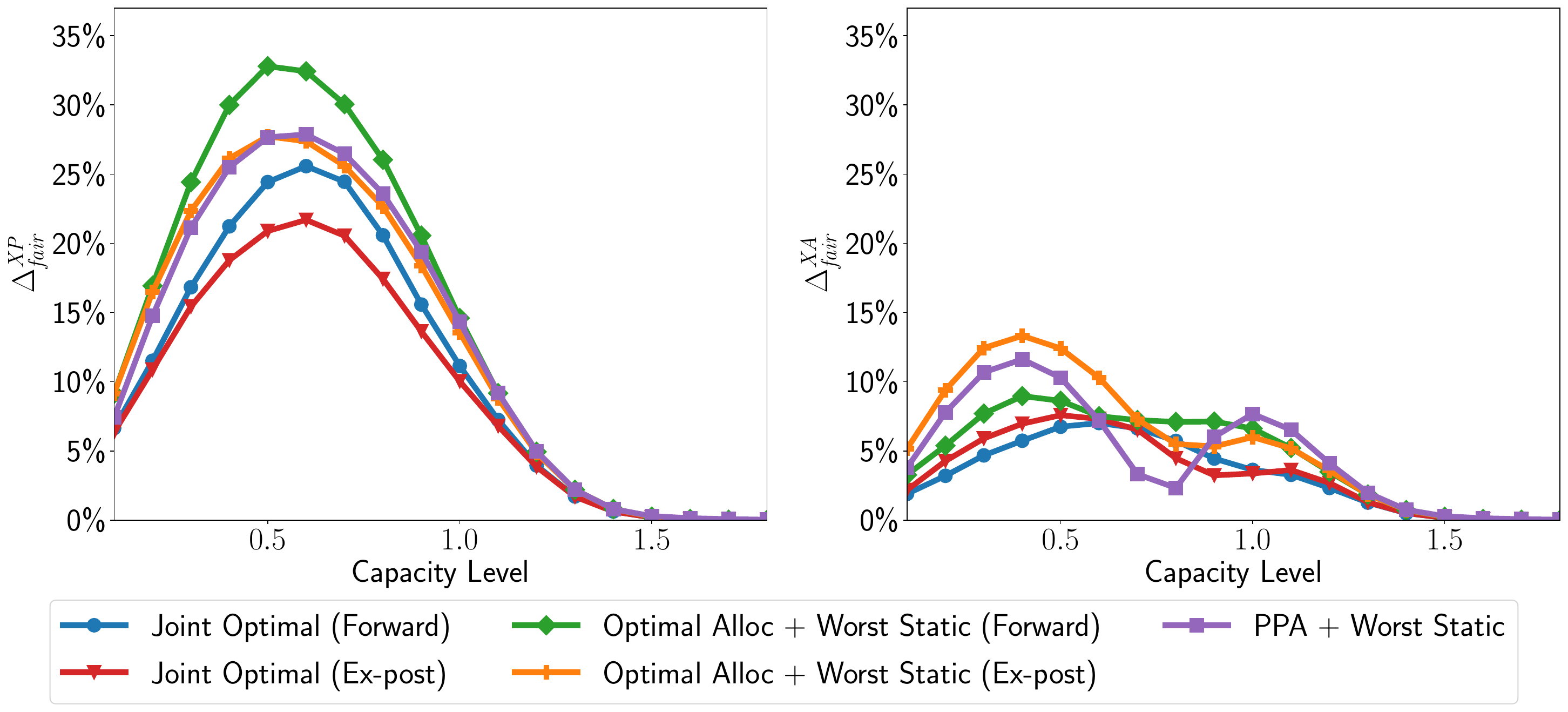}
        \caption{$\DfairEP$ and $\DfairEA$ (y‑axis) vs the relative capacity level (x‑axis).}
        \label{subfig:fair_improvement}
    \end{subfigure}

    \vspace{0.5em}

    \begin{subfigure}{1.0\textwidth} 
        \centering
        \includegraphics[width=0.95\textwidth]{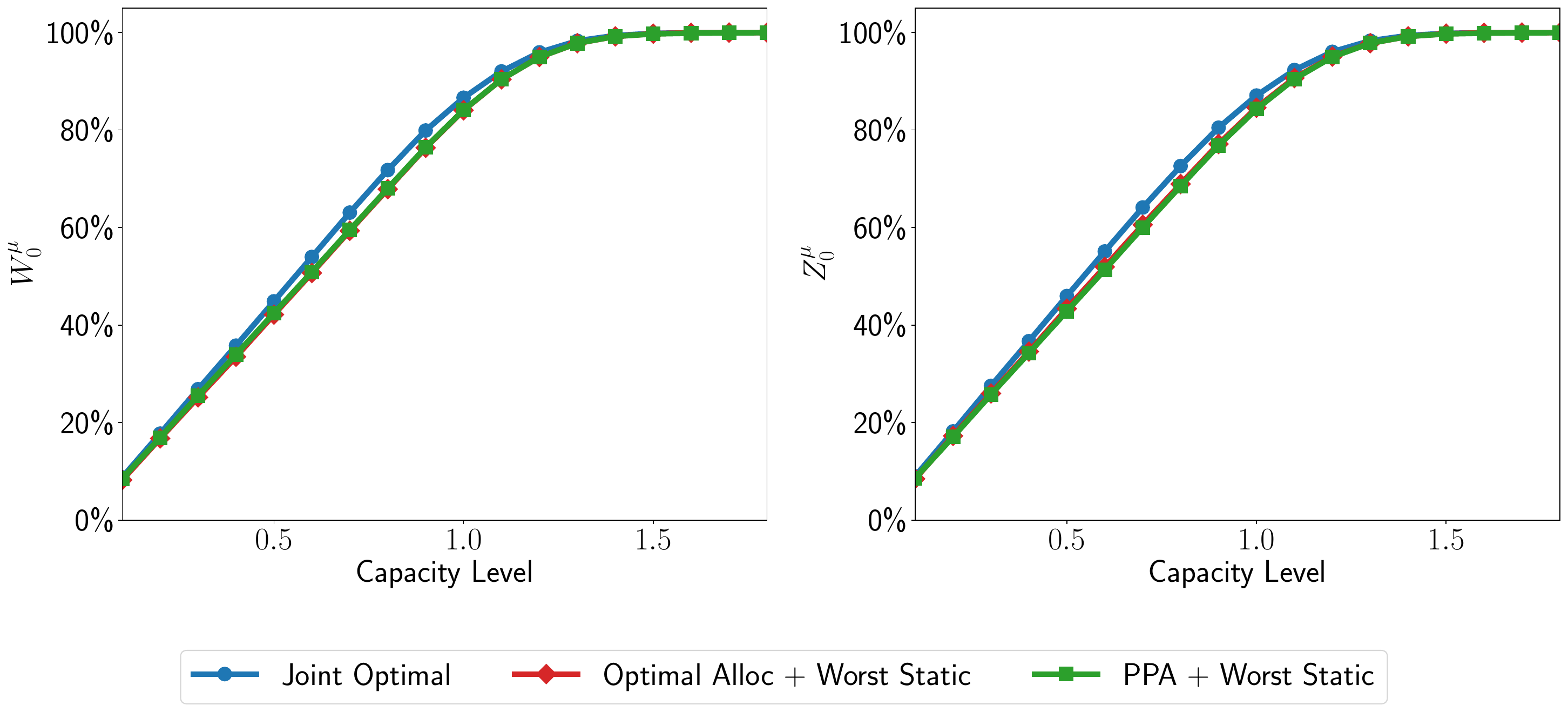}
        \caption{$W_0^{\Vmu}$ and $Z_0^{\Vmu}$ (y‑axis) vs the relative capacity level (x‑axis).}
        \label{subfig:obj_improvement}
    \end{subfigure}

    \caption{$\DfairEP$, $\DfairEA$, $W_0^{\Vmu}$ and $Z_0^{\Vmu}$ (y‑axis) plotted against the relative capacity level (x‑axis). We compare the jointly optimal allocation-routing policy, the optimal allocation combined with the worst static routing policy, and PPA under the worst static routing policy, evaluated under the \mexpost and \hqexpost objectives.}
    \label{fig:obj_align}
\end{figure}

\Cref{fig:obj_align} plots fairness metrics($\DfairEP$ and $\DfairEA$) and objective values $(Z_0^{\Vmu}$ and $W_0^{\Vmu}$) against capacity for the joint optimal policies under both objectives, averaged over all heterogeneous systems for a total of $6{,}624$ instances. We first observe in \Cref{subfig:obj_improvement} that both objective values increase monotonically as capacity increases from $0.1$ to $0.6$. However, over the same range, \Cref{subfig:fair_improvement} shows that fairness deteriorates: $\DfairEP$ increases to $25\%$ and $\DfairEA$ to $7\%$, with the worst performance occurring at $c=0.6$. Thus, improvements in the surrogate objectives do not necessarily translate into improved fairness metrics.

Despite this misalignment, optimizing these objectives remains beneficial relative to heuristic approaches. To highlight the impact of jointly optimizing allocation and routing, beyond the allocation-only focus common in prior literature, we compare the jointly optimal policy with the PPA allocation evaluated under the worst static routing policy, a conservative baseline to reflect settings in which routing decisions are not optimized. Under this comparison, improvements in objective values are modest (at most $2\%$, see \Cref{subfig:obj_improvement}), while gains in fairness metrics are substantially larger: both $\DfairEP$ and $\DfairEA$ improve by up to $10\%$ (see \Cref{subfig:fair_improvement}). These gains are most pronounced at intermediate capacity levels, which coincide with regimes where max-min objectives yield the greatest disparity.
\begin{observation}
\label{obs:fairness_waste}
\textit{
As capacity increases from scarce levels to approximately half of the total mean demand, allocation disparities become more pronounced despite increasing objective values. Nevertheless, joint optimization of allocation and routing yields substantial improvements in fairness metrics relative to heuristic policies, with the largest gains occurring precisely in these challenging regimes.
}
\end{observation}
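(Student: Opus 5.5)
Since \Cref{obs:fairness_waste} is an empirical statement, I would support it in two layers. The first is an analytic argument that explains why disparity must rise at intermediate capacity while the objectives increase. The second is a numerical verification over the $6{,}624$ heterogeneous instances of \Cref{subsec:sims_set_up}, covering both the magnitude of the effect and the gains from routing.

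\emph{Analytic layer.}
\begin{enumerate}
\item Monotonicity of the objectives in capacity. For every fixed static route, \Cref{lemma:monotone}(a) gives that $W^{(\Vsig,~*)}_n$ and $Z^{(\Vsig,~*)}_n$ are non-decreasing in $c_n$. The joint optima $W_0^*(c)$ and $Z_0^*(c)$ are maxima over finitely many first-node choices of such functions, so they are also non-decreasing in $c$. This accounts for the rising curves in \Cref{subfig:obj_improvement}.
\item Disparity vanishes at both extremes. As $c \to 0$, every fill rate $\pi_n/d_n \le c/d^{\min}$ tends to zero, so both $\DfairEP$ and $\DfairEA$ tend to zero. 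For $c \ge \sum_i d_i^{\max}$, the abundant regime of \Cref{thm:hq_threshold} and \Cref{thm:m_threshold} applies at every stage. Every node is then fully served, all fill rates equal one, and the disparity is again zero.
\item Disparity is strictly positive in between. In the intermediate regime the threshold structure applies. A node whose realized demand lies below $t_n(c_n)$ receives fill rate $1$ (respectively $\hist^{n-1}$), while one above the threshold is rationed at the stationary point $\Pi_n$. The equating property (\Cref{lemma:hq_equate_property_cont}, \Cref{lemma:m_equate_property}) equates the current fill rate only with some future sample path, not with all of them.
\item Conclusion of the analytic layer. Exhibiting two sample paths with different realized fill-rate profiles, each with positive probability, shows that $\DfairEP > 0$ at intermediate capacity. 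A continuity argument in $c$ then yields an interior peak in disparity even though the objectives are monotone.
\end{enumerate}

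\emph{Numerical layer.}
\begin{enumerate}
\item Compute the jointly optimal policies by backward induction on the Bellman recursions \Cref{eqn:m_dp} and \Cref{eqn:hq_dp}, over a discretized capacity grid and all subsets of unvisited nodes.
\item Enumerate all $3!$ static routes and, for each, compute the optimal allocation and the PPA allocation \Cref{eqn:PPA}.
\item Select the worst static route for each instance and evaluate $\DfairEP$, $\DfairEA$, $W_0^{\Vmu}$ and $Z_0^{\Vmu}$ exactly by enumerating the discrete demand paths. Average these by capacity level.
\end{enumerate}
This verifies the location of the peak near capacity level $0.6$ and the claim that fairness improves by up to $10\%$ while the objectives improve by at most $2\%$.

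\emph{Main obstacle.} I expect the hardest step to be the analytic claim that the disparity peak and the largest routing gains coincide at intermediate capacity. Routing only matters when thresholds are nontrivial: in the scarce and abundant regimes the allocation is route-insensitive, since demand is either never met or always met. My first attempt would be to make this precise on the two-node symmetric example of \Cref{thm:decreasing_var_two_node}, computing the gap between routes as an explicit piecewise function of $c$. I would then rely on the experiments for the general case.
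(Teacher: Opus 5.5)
Your numerical layer is essentially all the support the paper gives. The observation is read off \Cref{fig:obj_align}, which averages $\DfairEP$, $\DfairEA$, $W_0^{\Vmu}$ and $Z_0^{\Vmu}$ over the $6{,}624$ heterogeneous three-node instances. It compares the jointly optimal policy with PPA (and with the optimal allocation) on the worst static route. When you implement it, two details matter:
\begin{itemize}
\item the \mexpost DP must carry $\hist^{n-1}$ in its state;
\item you must fix a tie-breaking rule (e.g.\ $\pi_N = c_N \wedge d_N$ as in \Cref{eqn:m_dp}), because \mexpost is indifferent to allocations above the running minimum while the disparity metrics are not.
\end{itemize}
The analytic layer, however, does not go through.
\begin{itemize}
\item \Cref{thm:hq_threshold,thm:m_threshold,lemma:hq_equate_property_cont,lemma:m_equate_property} are proved only for continuous demand with connected support under a static route. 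The instances use the discrete five-point distributions of \Cref{appendix:distr_dict}, and the jointly optimal policy routes dynamically, so none of these results describes the plotted policies. For the same reason, step 1 should be argued by feasibility, not as a maximum of static-route value functions.
\item Step 4 uses the wrong criterion. $\DfairEP>0$ needs a positive-probability path on which $\max_i\beta_i>\min_i\beta_i$. Two paths with different but internally flat fill-rate profiles give $\DfairEP=0$.
\item For \hqexpost the equating property matches $\pi_n/d_n$ to the \emph{expected} fill rate to go $Z_{n+1}$, not to any realized fill rate, so it says nothing about within-path disparity.
\end{itemize}
Positivity below $\sum_i d_i^{\max}$ can be obtained directly from non-anticipativity, since earlier fill rates cannot react to the last realized demand; no structural theorem is needed. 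Even repaired, ``zero at both ends, positive in between'' is far weaker than the observation. Its actual content is that disparity increases over capacity levels $0.1$ to $0.6$ and that the gains concentrate there, and that content is purely empirical.

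The premise of your ``main obstacle'' is also false: routing is not irrelevant in the scarce regime. A zero threshold only says demand is never fully met. The allocation is then the stationary point $\Pi_n$ of $h_n$, which depends on the distributions and order of the remaining nodes. Also, the regimes are defined stage by stage through $c_n$, not by the global capacity level.

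The example you plan to use refutes the premise. In \Cref{thm:decreasing_var_two_node}, for $0<c\le 2\mu-k$ the value under route $(1,2)$ is $\frac{2\mu+k(1+p_2)}{2(\mu+k)}\,c\cdot\frac{4\mu^2-k^2p_1}{2\mu(2\mu-k)(2\mu+k)}$. This changes strictly when $p_1\neq p_2$ are swapped. That computation also concerns the objective $W_0$ under PPA, not $\DfairEP$ or $\DfairEA$. The gains in the observation compare the jointly optimal policy with PPA on the worst route, and these differ in allocation as well as routing. An explicit two-node route gap would therefore not measure the quantity in the statement. Whether the disparity peak coincides with the largest gains can only be checked numerically, exactly as the paper does.
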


This finding suggests that practitioners should be cautious in interpreting max-min objectives as proxies for fairness. While optimizing these objectives improves outcomes relative to heuristics, it may require additional mechanisms to mitigate early-stage imbalances.

\smallskip
\paragraph{Value of Routing} To isolate the role of routing, we compare three policies: (i) the jointly optimal allocation and routing policy, (ii) the optimal allocation policy evaluated under the worst static route, and (iii) the PPA allocation policy evaluated under the worst static route.  \Cref{subfig:fair_improvement} shows their performance on $\DfairEP$ and $\DfairEA$ across capacity levels.

For ex-post fairness $\DfairEP$ (\Cref{subfig:fair_improvement}, left), the optimal allocation under the worst route performs comparably to PPA under \mexpost, and performs worse under \hqexpost, with a gap of up to $8\%$ at $c=0.6$. Thus, under a poorly chosen route, replacing PPA with the optimal allocation yields little improvement and can even degrade performance. In contrast, the jointly optimal policy substantially outperforms both worst-route benchmarks, with gains most pronounced in intermediate capacity regimes (up to $10\%$ at $c=0.5$). This indicates that improvements in $\DfairEP$ are primarily driven by routing rather than allocation along a fixed route.
A similar comparison holds for ex-ante fairness $\DfairEA$ (\Cref{subfig:fair_improvement}, right). Under the worst route, the optimal allocation closely tracks PPA and is even outperformed in intermediate regimes, where PPA achieves gains of up to $3\%$ (at $c=0.7$). By contrast, the jointly optimal policy delivers significant improvements over both benchmarks, except in the range $c \in [0.7, 0.8]$.

Taken together, these comparisons show that the fairness gains in \Cref{subfig:fair_improvement} are driven primarily by routing optimization. Optimizing allocations under a poorly selected route has limited impact, while jointly optimizing the route substantially reduces both ex-post and ex-ante fairness disparities.

\begin{observation}
\label{obs:value_routing}
\textit{Routing optimization is the primary driver of fairness improvements in the capacity regimes where max-min objectives perform poorly as fairness metrics. Under poorly chosen routes, allocation optimization alone provides limited gains and can even worsen fairness relative to heuristic allocation policies.}
\end{observation}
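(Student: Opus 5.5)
Observation~\ref{obs:value_routing} is an empirical claim, so I would establish it through a controlled computational comparison on the experimental design of \Cref{subsec:sims_set_up}, supplemented by small analytic examples showing the effect is structural rather than an artifact of averaging. The claim has two parts, and each reduces to an inequality between three policies evaluated on the same instances: (i) the jointly optimal policy $\Vmu^{*,~w/z}$; (ii) the optimal allocation under the worst static route; and (iii) PPA under the worst static route. The first part says (i) improves on both (ii) and (iii) in $\DfairEP$ and $\DfairEA$, with the largest gaps at intermediate capacity. The second part says (ii) is not uniformly better than (iii), and is sometimes worse.

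\textbf{Step 1: compute the three policies exactly.} For each of the $6{,}624$ heterogeneous instances and each capacity level, I would solve the Bellman recursions \Cref{eqn:m_dp} and \Cref{eqn:hq_dp} by backward induction. Demands are discrete, so expectations are finite sums. Capacity would be discretized on a fine grid, and for a fixed static route the search for $\pi_n$ can be restricted using \Cref{thm:hq_threshold} and \Cref{thm:m_threshold}. These say the optimum is either full allocation (at fill rate $1$ or $\hist^{n-1}$) or a stationary point of the concave function $h_n$ (\Cref{lemma:concave_in_pi}), which a one-dimensional search locates reliably. The jointly optimal policy also maximizes over $\sigma_{n+1}\in\itaS_n$ at every stage. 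The worst static route is found by enumerating all $\permu{3}$ orders and taking the minimizer of the corresponding objective.

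\textbf{Step 2: evaluate the metrics exactly.} For each resulting policy, I would compute the fill rates $\beta_i$ on every demand sample path by full enumeration, which is feasible for $N=3$. From these I would obtain $\DfairEP$, $\DfairEA$ and $\Deff$ exactly, with no Monte Carlo error. I would then average over instances at each capacity level and record the gaps (i)$-$(ii), (i)$-$(iii) and (ii)$-$(iii).

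\textbf{Step 3: make the claim structural.} To show that allocation optimization under a bad route can worsen fairness, I would exhibit an explicit three-node instance, for example with one deterministic node placed first, which is the worst order by \Cref{thm:det_opt}. On this instance I would compute by hand that the optimal allocation's reservation behavior under \mexpost, with fill rate capped by $\hist^{n-1}$, lowers later fill rates more than PPA does, so $\DfairEA$ of (ii) exceeds that of (iii). I would then contrast this with the improvement obtained by reordering according to \Cref{thm:decreasing_var_two_node} and \Cref{thm:increasing_mean_routing_two_node}.

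\textbf{Main obstacle.} I expect the main obstacle to be the accuracy of Step 1 under dynamic routing at intermediate capacities. This is exactly where the reported gaps concentrate, and where the threshold $\T_n$ is interior and sensitive to the capacity grid. I would first refine the grid until the reported differences are stable to within a tolerance smaller than the observed gaps. Only then would I draw the qualitative conclusions.
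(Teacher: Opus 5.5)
Your Steps 1--2 are the paper's own argument. The observation is read off from \Cref{subfig:fair_improvement}, which compares exactly your three policies: the jointly optimal policy, the optimal allocation under the worst static route, and PPA under the worst static route. These are averaged over the $6{,}624$ heterogeneous three-node instances at each capacity level. In that figure the joint policy's gains peak at intermediate capacity (up to $10\%$). Policy (ii) does not improve materially on (iii) and is sometimes worse: by up to $8\%$ in $\DfairEP$ under \hqexpost at $c=0.6$, and by up to $3\%$ in $\DfairEA$ at intermediate capacity.

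The parts you add beyond this need correcting.

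\textbf{Step 3.} \Cref{thm:det_opt} does not identify a worst route. It only shows that placing the deterministic node last is optimal for \mexpost under optimal allocation. Its proof compares the last position against earlier positions with the rest of the order held fixed, and never ranks the suboptimal orders against one another. It also says nothing about the worst route for PPA. So your instance is not certified as a worst-route instance. The sign of the $\DfairEA$ comparison you plan to compute by hand is asserted, not argued, and nothing in the paper's theory predicts it. An explicit instance would be a legitimate existence witness for the ``can even worsen'' clause, but you would have to actually exhibit one. Note too that the paper's clearest worsening is in $\DfairEP$ under \hqexpost, not $\DfairEA$ under \mexpost.

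\textbf{Step 1.} \Cref{thm:hq_threshold,thm:m_threshold} assume continuous demand with connected support, which the discrete experimental distributions violate. You do not need them. Under a static route, the concavity of $h_n$ in $\pi_n$ (\Cref{lemma:concave_in_pi}) carries no such assumption and already justifies a one-dimensional search. For $N=3$, each choice of $\sigma_2$ leaves a static tail, so the dynamic-routing stage is a maximum over two concave problems. The obstacle you anticipate is therefore milder than you suggest.
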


\Cref{obs:value_routing} highlights that routing decisions are central to improving fairness metrics. In particular, the comparison between the jointly optimal policies and the worst-route benchmarks shows that the largest gains come from optimizing the route, whereas optimizing allocations along a poorly chosen route provides limited improvements and can even exacerbate unfairness relative to PPA.

\subsection{Objective Selection}
\label{subsec:sims_short_vs_long}

In \Cref{sec:model_preliminaries}, we argued that the key distinction between the \mexpost and \hqexpost objectives lies in their alignment with the ex-post and ex-ante fairness metrics. We now make this distinction explicit by comparing the performance of $\DfairEA$ and $\DfairEP$ under the two objectives. This leads to the following question: \emph{How should practitioners select a fairness objective based on the operational context of their application?}

In \Cref{fig:ea_vs_ep_frontier}, we plot the trade-off between $\DfairEA$ or $\DfairEP$ (y-axis) and $\Deff$ (x-axis) for the jointly optimal allocation and routing policies under the \mexpost and \hqexpost objectives. The reported curves are averaged over all $7{,}776$ instances. 
In high efficiency regimes (around $95\%$), \mexpost achieves better ex-post fairness, improving $\DfairEP$ by approximately $3\%$, while \hqexpost yields better ex-ante fairness, improving $\DfairEA$ by approximately $5\%$.

\begin{figure}[!t]
    \centering
    \includegraphics[width=0.8\textwidth]{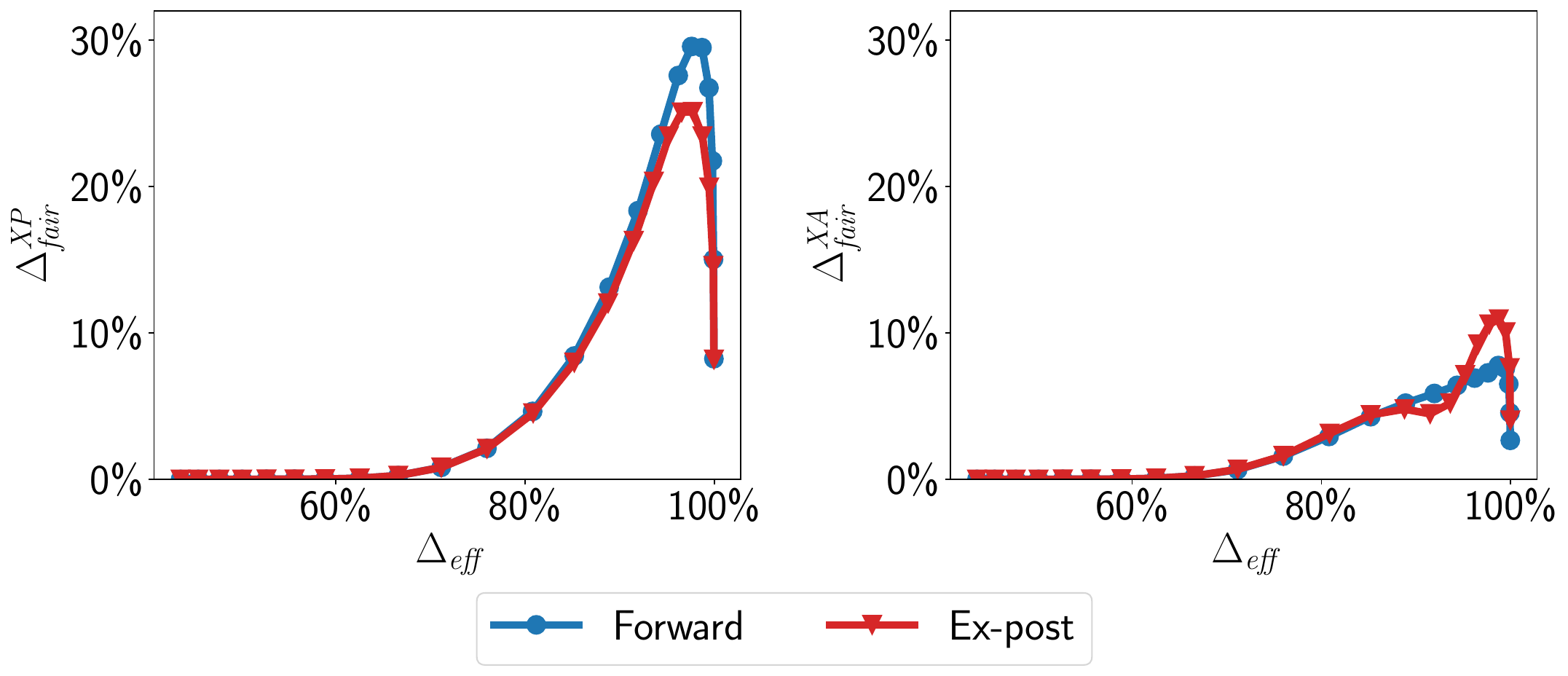}
    \caption{$\DfairEP$ and $\DfairEA$ (y-axis) versus $\Deff$ (x-axis) for the jointly optimal allocation and routing policy under the \mexpost and \hqexpost objectives across all systems.}
    \label{fig:ea_vs_ep_frontier}
\end{figure}

\begin{observation}
\label{obs:short_long}
\textit{The jointly optimal policy under the \mexpost objective substantially improves ex-post fairness $\DfairEP$ while maintaining a similar level of efficiency as \hqexpost, whereas the opposite pattern holds for ex-ante fairness $\DfairEA$.}
\end{observation}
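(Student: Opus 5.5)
Because this statement is an empirical observation and not a theorem, I would support it with a reproducible computational argument, backed by a structural explanation drawn from the earlier results. The computational part proceeds as follows. For each of the $7{,}776$ instances of \Cref{subsec:sims_set_up}, I would solve the dynamic programs \Cref{eqn:m_dp} and \Cref{eqn:hq_dp} exactly, with joint (dynamic) routing and allocation. This is feasible because all demand distributions are discrete and $N \le 6$. I would discretize capacity on a fine grid, and I would search allocations only over candidates that satisfy the equating property (\Cref{lemma:hq_equate_property_cont,lemma:m_equate_property}) and the threshold structure (\Cref{thm:hq_threshold,thm:m_threshold}). This restriction keeps the search tractable and guarantees that the computed policies are truly optimal.

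Given the two optimal policies $\Vmu^{*,w}$ and $\Vmu^{*,z}$, I would compute $\DfairEP$, $\DfairEA$ and $\Deff$ exactly by enumerating every demand sample path together with its probability, so that no Monte Carlo error enters. Each instance and each objective then yields one point $(\Deff, \Dfair)$ for every capacity level. Sweeping the capacity level traces a fairness--efficiency curve for each objective, and averaging over instances produces the frontiers in \Cref{fig:ea_vs_ep_frontier}.

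The step I expect to be the main obstacle is making the comparison at a \emph{matched} efficiency level. The two objectives produce different efficiencies at the same capacity, so comparing the curves capacity-by-capacity would confound the effects. I would therefore first interpolate each instance's curve as a function of $\Deff$ (efficiency is monotone in capacity along a fixed instance, by \Cref{lemma:monotone}). I would then compare $\DfairEP$ and $\DfairEA$ at common efficiency values, for example near $95\%$, and only after that average across instances. For robustness, I would also report the fraction of instances in which \mexpost beats \hqexpost on $\DfairEP$, and the reverse on $\DfairEA$. This shows that the averaged gaps of roughly $3\%$ and $5\%$ are not driven by a few outliers.

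The mechanism behind the observation follows from the earlier structural results. Under \mexpost, \Cref{thm:m_threshold} caps each allocation at the slope $\hist^{n-1}$ and equalizes fill rates along each individual sample path. This reduces the realized spread $\max_i\beta_i-\min_i\beta_i$ and hence improves $\DfairEP$. However, because $\hist^{n-1}$ is non-increasing over stages, later nodes systematically receive lower expected fill rates, which hurts $\DfairEA$. Under \hqexpost, the policy is memoryless (\Cref{thm:hq_threshold}) and evaluates only the current and future nodes, so earlier over- or under-allocations are corrected in expectation. This balances the expected fill rates across positions, improving $\DfairEA$ at the cost of a larger spread within a single path. \Cref{lemma:upper_bound} is consistent with this picture, since it shows that \hqexpost is optimistic about the realized minimum.
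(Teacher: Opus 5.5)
Your overall route matches the paper's: the observation rests entirely on the numerical comparison in \Cref{fig:ea_vs_ep_frontier}. There, the jointly optimal policies for both objectives are computed on the $7{,}776$ instances, and instance-averaged $\DfairEP$ and $\DfairEA$ are plotted against $\Deff$, giving the roughly $3\%$ and $5\%$ gaps near $95\%$ efficiency.

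\textbf{The step that fails.} You prune the allocation search to candidates satisfying the equating property and the threshold structure, and claim this guarantees optimality. It does not. \Cref{lemma:hq_equate_property_cont,lemma:m_equate_property,thm:hq_threshold,thm:m_threshold} are proved only under two conditions:
\begin{itemize}
\item continuous demand with connected support (Case~III of the equating proofs uses exactly this);
\item a fixed static route (the threshold proofs rest on concavity in capacity, which the paper notes fails under dynamic routing, i.e.\ for the jointly optimal policy you intend to compute).
\end{itemize}
The experimental demands, by contrast, are discrete on $\{1,\dots,5\}$.

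With discrete supports the pruning can discard the optimum even on a static route. Take $N=3$, observed $d_1=1$, $c=2.7$, $\D_2$ equal to $0.5$ w.p.\ $0.2$ and $1$ w.p.\ $0.8$, and $\D_3$ uniform on $\{1,3\}$, and write $Z_2(c_2,d_2)$ for $Z_2^{(\Vsig,~*)}$.
\begin{itemize}
\item One computes $Z_2(c_2,1)=c_2/3$ for $c_2\le 2$ and $Z_2(c_2,1)=\tfrac12+(c_2-1)/6$ for $2\le c_2\le 4$, so there is a kink at $c_2=2$.
\item Consequently $h_1$ has slope $1/15$ just left of $\pi_1=0.7$ and $-1/15$ just right of it. Hence $\pi_1^{*,z}=0.7$ is the unique maximizer, with $h_1(0.7)\approx 0.673$.
\item Yet $Z_2(2,1)=2/3<0.7<0.75=Z_2(2,0.5)$, so no future demand is equated.
\item The best equating allocation, $\pi_1=4.7/7\approx 0.671$, attains only about $0.671$.
\end{itemize}
A pruned search would therefore trace the frontiers of a suboptimal policy, not of the jointly optimal one the observation is about.

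\textbf{The repair.} Drop the pruning and solve the full recursions \Cref{eqn:m_dp,eqn:hq_dp} over a fine allocation and capacity grid. The computational burden this creates is exactly why the paper stops at $N\le 6$. Note that a capacity grid yields an approximation, not an exact solution.

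\textbf{Two smaller points.}
\begin{itemize}
\item \Cref{lemma:monotone} concerns the value functions $W$ and $Z$, not $\Deff=\E[\sum_i\pi_i]/c$. It therefore says nothing about how efficiency moves with capacity, and does not justify re-parametrizing each instance's curve by efficiency. Either check monotonicity numerically or treat each frontier as a curve parametrized by capacity.
\item Your mechanism paragraph overstates the structure. The \mexpost equating property only gives equal fill rates along \emph{some} sample path, not along each one, and again only for continuous supports. That paragraph is intuition, mirroring the paper's informal reservation-versus-correction discussion in \Cref{sec:model_preliminaries}, rather than evidence.
\end{itemize}
With these fixes, your matched-efficiency comparison and per-instance win rates would genuinely strengthen the paper's instance-averaged frontier plot.
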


\Cref{obs:short_long} highlights a clear operational trade-off between short-term and long-term fairness. Rather than identifying a universally superior objective, these results suggest that practitioners should select the objective that aligns with their fairness priorities.
In short-term or one-shot settings, such as emergency resource allocation or disaster response, where realized outcomes are paramount and there is limited opportunity for future correction, the \mexpost objective is more appropriate. In contrast, in repeated or long-term settings, such as food bank operations, where fairness is assessed cumulatively and temporal imbalances can be corrected over time, the \hqexpost objective provides better alignment with long-term fairness.

\begin{figure}[!t]
    \centering
    \includegraphics[width=0.8\textwidth]{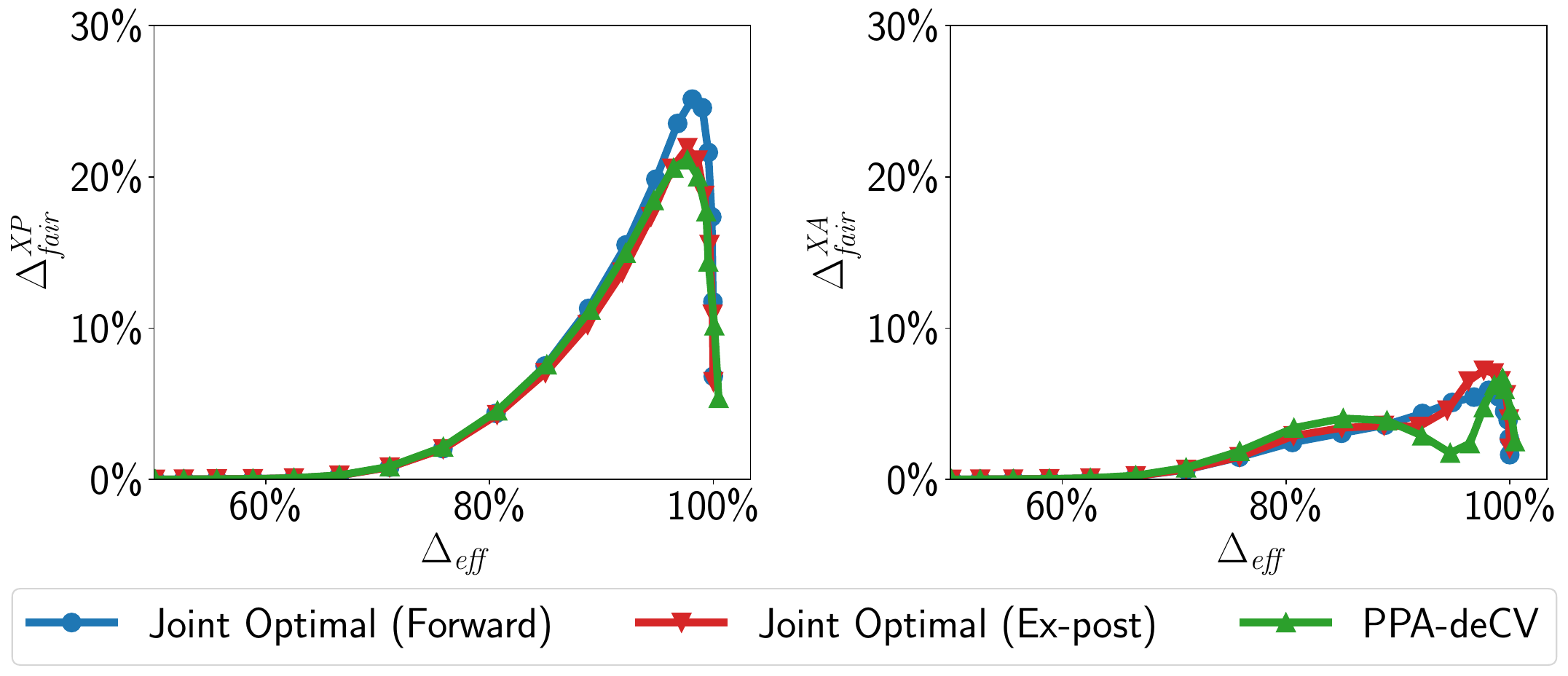}
    \caption{$\DfairEP$ and $\DfairEA$ (y-axis) versus $\Deff$ (x-axis), comparing the jointly optimal allocation and routing policy with PPA-deCV under the \mexpost and \hqexpost objectives in heterogeneous systems.}
    \label{fig:heuristics}
\end{figure}

\subsection{PPA-deCV Heuristic} \label{subsec:sims_heuristic}
Computing the jointly optimal allocation and routing policy is computationally intensive and becomes impractical in larger systems. This motivates the design of simple and implementable heuristics. In this section, we propose \textbf{PPA-deCV} and address the question: \emph{How well can our simple heuristic, PPA-deCV, approximate the performance of the jointly optimal solution?}

We formally introduce our allocation and routing heuristic, \textbf{PPA-deCV}. Prior to visiting any nodes, PPA-deCV computes each node’s coefficient of variation from its distribution and orders nodes in decreasing order of this quantity. Given this static route, resources are then allocated dynamically upon arrival according to the PPA policy in \Cref{eqn:PPA}. This heuristic builds directly on the structural results we show above and results in the literature.  PPA under a static route provides strong fairness guarantees as shown in \citet{manshadi2021fair}, aligns with the threshold structure identified in \Cref{sec:allocation}, and complements the decreasing CV routing structure characterized in \Cref{sec:sequencing}. Together, these features yield a simple and interpretable approximation of the optimal solution.

\Cref{fig:heuristics} compares the fairness-efficiency frontiers of PPA-deCV and the jointly optimal policy under the \mexpost\ and \hqexpost\ objectives. We plot $\DfairEP$ and $\DfairEA$ against efficiency $\Deff$, averaging across all heterogeneous systems except the same-CV configuration (for which decreasing CV is not defined), yielding a total of 4,608 instances. In \Cref{fig:heuristics} (left), we observe that PPA-deCV achieves nearly identical ex-post fairness to the jointly optimal policy under the \mexpost\ objective. It also improves $\DfairEP$ by approximately $5\%$ relative to the jointly optimal policy under the \hqexpost\ objective when efficiency is around $95\%$. This improvement occurs in the intermediate-capacity regime, where unfairness is most pronounced. In the same regime, \Cref{fig:heuristics} (right) shows that PPA-deCV also outperforms both jointly optimal policies in terms of ex-ante fairness, reducing $\DfairEA$ by up to $3\%.$

\begin{observation}
\textit{PPA-deCV achieves fairness-efficiency trade-offs comparable to the jointly optimal policy and can outperform it in regimes where surrogate objectives are misaligned with fairness metrics.}
\end{observation}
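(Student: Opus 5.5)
Since this observation is an empirical claim about the fairness--efficiency frontiers, I would not attempt a deductive proof. Instead I would support it in two layers: a structural argument explaining \emph{why} PPA-deCV should sit close to the jointly optimal frontier, followed by a systematic numerical verification on the heterogeneous instances of \Cref{subsec:sims_set_up}, excluding the same-CV family where the decreasing-CV order is undefined. That leaves $4{,}608$ instances.

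For the structural layer, I would separate the allocation and routing components.
\begin{itemize}
\item \emph{Allocation.} Along any fixed static route, PPA in \Cref{eqn:PPA} has exactly the form characterized in \Cref{thm:m_threshold}. It allocates the full demand when $d_n$ is small relative to $c_n$. Otherwise it allocates a proportional, strictly smaller amount. This amount approximately equates the current fill rate with the anticipated future fill rate $c_n/(d_n+\sum_{i\in\itaS_n}\mu_i)$, in the spirit of \Cref{lemma:m_equate_property}. So PPA can be viewed as a certainty-equivalent approximation of the optimal threshold policy.
\item \emph{Routing.} The decreasing-CV route is optimal under PPA in the two canonical two-node regimes, by \Cref{thm:decreasing_var_two_node,thm:increasing_mean_routing_two_node}. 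It also places deterministic nodes last, by \Cref{thm:det_PPA}. Together with \Cref{obs:value_routing}, which says that routing is the dominant driver of fairness gains, this suggests that choosing the right route recovers most of the benefit of joint optimization.
\end{itemize}

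For the numerical layer, I would proceed as follows.
\begin{enumerate}
\item For each instance and capacity level, compute $(\Deff,\DfairEP,\DfairEA)$ under three policies: the jointly optimal policy for \mexpost, obtained by solving \Cref{eqn:m_dp} with dynamic routing; the jointly optimal policy for \hqexpost, obtained by solving \Cref{eqn:hq_dp}; and PPA-deCV.
\item Average these quantities within each capacity level.
\item Plot the resulting frontiers, as in \Cref{fig:heuristics}.
\end{enumerate}
The ``comparable'' part of the claim would be established by showing that the PPA-deCV curve nearly coincides with the \mexpost-optimal curve in the $(\Deff,\DfairEP)$ plane. I would report a pointwise maximum gap. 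The ``can outperform'' part would be established by exhibiting the intermediate-capacity region, around $95\%$ efficiency. There I expect PPA-deCV to beat the \hqexpost-optimal policy in $\DfairEP$ by roughly $5\%$, and to beat both optimal policies in $\DfairEA$ by up to $3\%$.

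To explain the outperformance, I would point to \Cref{obs:fairness_waste}: the surrogate objectives are misaligned with the fairness metrics exactly in this capacity regime. A policy that optimizes $W_0$ or $Z_0$ therefore need not minimize $\DfairEP$ or $\DfairEA$. PPA's proportional rule does not front-load allocations the way the optimal \mexpost policy does, and so it avoids that policy's early-stage imbalances.

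The main obstacle is computing the jointly optimal dynamic-routing policies, since the state space grows with the unvisited set and with $\hist^{n-1}$. I would handle this by discretizing capacity and $\hist^{n-1}$ on fine grids. To ensure the reported differences are real rather than grid artifacts, I would refine the grid until the objective values stabilize.
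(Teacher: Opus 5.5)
Your plan matches how the paper supports this observation: the support is purely numerical, via the frontiers in \Cref{fig:heuristics} averaged over the $4{,}608$ heterogeneous instances excluding the same-CV family, with the same loose structural motivation, and the paper reports exactly the pattern you anticipate (near-coincidence with the \mexpost-optimal curve in $\DfairEP$, roughly $5\%$ lower $\DfairEP$ than the \hqexpost-optimal policy near $95\%$ efficiency, and up to $3\%$ lower $\DfairEA$ than both). One correction to your motivational layer: PPA is not \emph{exactly} of the form in \Cref{thm:m_threshold}, since its full-allocation branch gives fill rate one regardless of $\hist^{n-1}$ (the slope-one shape of \Cref{thm:hq_threshold}), its partial branch is a proportional rule rather than a stationary point of $h_n$, and both theorems assume continuous distributions while the experiments use discrete ones, so the agreement is only qualitative.
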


Overall, PPA-deCV offers a computationally efficient and practically implementable alternative that captures most of the benefits of joint optimization.

%% file: parts/conclusions.tex
\section{Conclusion} \label{sec:conclusion}
We study a dynamic fair sequential allocation and routing problem motivated by non-profit operations and analyze two max-min fairness objectives, \mexpost and \hqexpost, under demand uncertainty. For a fixed visitation order, we establish an \emph{equating property} of the optimal allocation policy, which yields a clear threshold structure. We further characterize routing policies and show that, under mild conditions, visiting locations in decreasing order of CV is optimal when the PPA policy is implemented. Combining these insights, we propose the PPA-deCV heuristic, which achieves fairness-efficiency trade-offs comparable to the jointly optimal allocation-routing policy while substantially reducing computational complexity.

Our numerical experiments highlight an important distinction between \emph{fairness objectives} and \emph{fairness metrics}. We find that optimizing max-min objectives yields only marginal objective value improvements, whereas optimizing routing decisions drives substantial gains in both ex-post and ex-ante fairness metrics. Moreover, we show that \mexpost achieves substantial gains in ex-post fairness with minimal loss in efficiency relative to \hqexpost, while \hqexpost outperforms in terms of ex-ante fairness, highlighting the importance of selecting the objective in line with operational priorities.
Several directions remain for future work. These include extending the routing results to more general settings, deriving performance bounds for the decreasing CV routing policy, and developing efficient methods for approximating the optimal allocation thresholds.

%% file: parts/appendix/dp_properties.tex
\section{Proof of Basic Properties in \Cref{subsec:DP_properties}} \label{appendix:dp_properties}
\Monotone*
\begin{proof}
$(a)$ We prove the result for both objectives by backward induction.

\noindent \textbf{Base Case:} When $n=N$, we have $W_N^{(\Vsig,~*)}\left( \Vu_N, ~\beta_{\min}^{N-1} \right)$ and $Z^{(\Vsig,~*)}_N \left( \Vu_N \right)$ are trivially non-decreasing in the capacity $c_N$ by \Cref{eqn:m_dp} and \Cref{eqn:hq_dp}.

    \noindent \textbf{Step Case:} $n+1 \rightarrow n$. For every $n \geq 1$, from \Cref{eqn:m_dp,eqn:hq_dp}, we have
    \begin{align*}
        W_n^{(\Vsig,~*)}\left(\Vu_n, ~\beta_{\min}^{n-1} \right) &= \max \limits_{0 \leq \pi_n \leq d_n \land c_n} \E_{d_{n+1} \sim \D_{\sigma_{n+1}}} \left[W_{n+1}^{(\Vsig,~*)}\left(c_n - \pi_n, ~\sigma_{n+1}, ~d_{n+1}, ~\itaS_n \backslash \{\sigma_{n+1}\}, ~\beta_{\min}^{n-1} \land \frac{\pi_n}{d_n} \right) \right],\\
        Z^{(\Vsig,~*)}_n \left( \Vu_n \right) &= \max \limits_{0 \leq \pi_n \leq d_n \land c_n} \mathbb{E}_{d_{n+1} \sim \mathcal{D}_{\sigma_{n+1}}} \left[ \frac{\pi_n}{d_n} \land Z^{(\Vsig,~*)}_{n+1} \left(c_n - \pi_n, ~\sigma_{n+1}, ~d_{n+1}, ~\itaS_n \backslash \{\sigma_{n+1}\} \right) \right].
    \end{align*}
    From the induction hypothesis, $W^{(\Vsig,~*)}_{n+1} (\cdot)$, $Z^{(\Vsig,~*)}_{n+1} \left(\cdot \right)$ is non-decreasing in the remaining capacity $c_{n+1} = c_n - \pi_n$, and hence is non-decreasing in the current capacity $c_n$. Taking the expectation and the maximum over the allocation $\pi_n$, $W^{(\Vsig,~*)}_{n} (\cdot)$, $Z^{(\Vsig,~*)}_{n} \left(\cdot \right)$ remain non-decreasing in $c_n$.

$(b)$ This directly follows from \citet[Proposition 7]{ma2022fairness}.

$(c)$ When $n=N$, we have $W_N^{(\Vsig,~*)}\left( \Vu_N, ~\beta_{\min}^{N-1} \right)$ and $Z^{(\Vsig,~*)}_N \left( \Vu_N \right)$ are trivially non-increasing in the demand $d_N$ by \Cref{eqn:m_dp} and \Cref{eqn:hq_dp}.

For any stage $1 \leq n < N$, let $d_{n,1} > d_{n,2}$, we have 
\begin{align*}
    &\quad W_n^{(\Vsig,~*)}\left(c_n,~\sigma_n,~d_{n,1},~\itaS_n, ~\beta_{\min}^{n-1} \right)\\
    &= \max \limits_{0 \leq \pi_n \leq d_{n,1} \land c_n} \E_{d_{n+1} \sim \D_{\sigma_{n+1}}} \left[W_{n+1}^{(\Vsig,~*)}\left(c_n - \pi_n, ~\sigma_{n+1}, ~d_{n+1}, ~\itaS_n \backslash \{\sigma_{n+1}\}, ~\beta_{\min}^{n-1} \land \frac{\pi_n}{d_{n,1}} \right) \right],\\
    & \overset{(i)}{=} \E_{d_{n+1} \sim \D_{\sigma_{n+1}}} \left[W_{n+1}^{(\Vsig,~*)}\left(c_n - \pi_n^{*,1}, ~\sigma_{n+1}, ~d_{n+1}, ~\itaS_n \backslash \{\sigma_{n+1}\}, ~\beta_{\min}^{n-1} \land \frac{\pi_n^{*,1}}{d_{n,1}} \right) \right],\\
    & = \E_{d_{n+1} \sim \D_{\sigma_{n+1}}} \left[W_{n+1}^{(\Vsig,~*)}\left(c_n - \pi_n^{*,1}, ~\sigma_{n+1}, ~d_{n+1}, ~\itaS_n \backslash \{\sigma_{n+1}\}, ~\beta_{\min}^{n-1} \land \frac{\pi_n^{*,1} \cdot \frac{d_{n,2}}{d_{n,1}}}{d_{n,2}} \right) \right],\\
    & \overset{(ii)}{\leq} \E_{d_{n+1} \sim \D_{\sigma_{n+1}}} \left[W_{n+1}^{(\Vsig,~*)}\left(c_n - \pi_n^{2}, ~\sigma_{n+1}, ~d_{n+1}, ~\itaS_n \backslash \{\sigma_{n+1}\}, ~\beta_{\min}^{n-1} \land \frac{\pi_n^{2}}{d_{n,2}} \right) \right],\\
    &\overset{(iii)}{\leq} \max \limits_{0 \leq \pi_n \leq d_{n,2} \land c_n} \E_{d_{n+1} \sim \D_{\sigma_{n+1}}} \left[W_{n+1}^{(\Vsig,~*)}\left(c_n - \pi_n, ~\sigma_{n+1}, ~d_{n+1}, ~\itaS_n \backslash \{\sigma_{n+1}\}, ~\beta_{\min}^{n-1} \land \frac{\pi_n}{d_{n,2}} \right) \right],\\
    &=W_n^{(\Vsig,~*)}\left(c_n,~\sigma_n,~d_{n,2},~\itaS_n, ~\beta_{\min}^{n-1} \right).
\end{align*}
Equality $(i)$ is from denoting the optimal allocation by $\pi_n^{*,1}$. Inequality $(ii)$ is by setting $\pi_n^2 = \pi_n^{*,1} \cdot \dfrac{d_{n,2}}{d_{n,1}}$, and since $\pi_n^2 < \pi_n^{*,1}$, we apply the monotonicity of $W_{n+1}^{(\Vsig,~*)}(\cdot)$ in $c_{n+1}$ (\Cref{lemma:monotone}). Inequality $(iii)$ is because $\pi_n^2 \leq d_{n,2}$ and thus feasible.

Similarly, for \hqexpost, we have
\begin{align*}
    &\quad Z^{(\Vsig,~*)}_n \left(c_n,~\sigma_n,~d_{n,1},~\itaS_n \right) \\
    &= \max \limits_{0 \leq \pi_n \leq d_{n,1} \land c_n} \mathbb{E}_{d_{n+1} \sim \mathcal{D}_{\sigma_{n+1}}} \left[ \frac{\pi_n}{d_{n,1}} \land Z^{(\Vsig,~*)}_{n+1} \left(c_n - \pi_n, ~\sigma_{n+1}, ~d_{n+1}, ~\itaS_n \backslash \{\sigma_{n+1}\} \right) \right],\\
    &= \mathbb{E}_{d_{n+1} \sim \mathcal{D}_{\sigma_{n+1}}} \left[ \frac{\pi_n^{*,1}}{d_{n,1}} \land Z^{(\Vsig,~*)}_{n+1} \left(c_n - \pi_n^{*,1}, ~\sigma_{n+1}, ~d_{n+1}, ~\itaS_n \backslash \{\sigma_{n+1}\} \right) \right],\\
    &= \mathbb{E}_{d_{n+1} \sim \mathcal{D}_{\sigma_{n+1}}} \left[ \frac{\pi_n^{*,1} \cdot \frac{d_{n,2}}{d_{n,1}}}{d_{n,2}} \land Z^{(\Vsig,~*)}_{n+1} \left(c_n - \pi_n^{*,1}, ~\sigma_{n+1}, ~d_{n+1}, ~\itaS_n \backslash \{\sigma_{n+1}\} \right) \right],\\
    &= \mathbb{E}_{d_{n+1} \sim \mathcal{D}_{\sigma_{n+1}}} \left[ \frac{\pi_n^{2}}{d_{n,2}} \land Z^{(\Vsig,~*)}_{n+1} \left(c_n - \pi_n^{2}, ~\sigma_{n+1}, ~d_{n+1}, ~\itaS_n \backslash \{\sigma_{n+1}\} \right) \right],\\
    &\leq \max \limits_{0 \leq \pi_n \leq d_{n,2} \land c_n} \mathbb{E}_{d_{n+1} \sim \mathcal{D}_{\sigma_{n+1}}} \left[ \frac{\pi_n}{d_{n,2}} \land Z^{(\Vsig,~*)}_{n+1} \left(c_n - \pi_n, ~\sigma_{n+1}, ~d_{n+1}, ~\itaS_n \backslash \{\sigma_{n+1}\} \right) \right],
\end{align*}
using the same definition of $\pi_n^{*,1}$ and $\pi_n^2$ and follow the same argument above.

$(d)$ We prove this by backward induction. 

\noindent \textbf{Base Case:} When $n=N$, we have $W_N^{(\Vsig,~*)}\left( \Vu_N, ~\beta_{\min}^{N-1} \right) = \beta_{\min}^{N-1} \land \frac{c_N}{d_N} \leq \beta_{\min}^{N-1}$.

\noindent \textbf{Step Case:} $n+1 \rightarrow n$. For every $n \geq 1$,
\begin{align*}
    W_n^{(\Vsig,~*)}\left(\Vu_n, ~\beta_{\min}^{n-1} \right) &\overset{(i)}{=} \max \limits_{0 \leq \pi_n \leq c_n \land d_n} \E_{d_{n+1} \sim \D_{\sigma_{n+1}}} \left[W_{n+1}^{(\Vsig,~*)}\left(c_n - \pi_n, ~\sigma_{n+1}, ~d_{n+1}, ~\itaS_n \backslash \{\sigma_{n+1}\}, ~\beta_{\min}^{n-1} \land \frac{\pi_n}{d_n} \right) \right],\\
    &\overset{(ii)}{\leq} \max \limits_{0 \leq \pi_n \leq c_n \land d_n} \E_{d_{n+1} \sim \D_{\sigma_{n+1}}} \left[\beta_{\min}^{n-1} \land \frac{\pi_n}{d_n} \right] \leq \beta_{\min}^{n-1},
\end{align*}
where equality $(i)$ is from \Cref{eqn:m_bellman} and inequality $(ii)$ is from the induction hypothesis.
\end{proof}

\Concave*
\begin{proof}
$(a)$ This follows directly from \citet[Proposition 7]{ma2022fairness}.

$(b)$ The result for the \mexpost objective follows from \citet[Proposition 7]{ma2022fairness}. For the \hqexpost objective, we show the result by backward induction. 

\noindent \textbf{Base Case}: When $n = N$, we have $Z^{(\Vsig, ~*)}_N(\Vu_N)$ is trivially concave in the capacity $c_N$.

\noindent \textbf{Step Case}: $n + 1 \rightarrow n$. From \Cref{eqn:hq_dp},
\begin{equation*}
    Z^{{(\Vsig,~*)}}_n(\Vu_n) = \max_{0 \leq \pi_n \leq d_n \wedge c_n} \E_{d_{n+1} \sim \D_{\sigma_{n+1}}} \left[\frac{\pi_n}{d_n} \land Z^{{(\Vsig,~*)}}_{n+1} \left( \Vu_{n+1} \right) \right],
\end{equation*}
where $\Vu_{n+1} = (c_n - \pi_n, ~\sigma_{n+1}, ~d_{n+1}, ~\itaS_{n} \setminus \{\sigma_{n+1}\})$. By the induction hypothesis that $Z^{(\Vsig,~*)}_{n+1} (\cdot)$ is concave in $c_{n+1}$ and the preservation of concavity under the linear combination $c_{n+1} = c_n - \pi_n$, $Z^{(\Vsig,~*)}_{n+1} \left( \Vu_{n+1} \right)$ is concave in $c_n$. Hence, as the minimum of two concave functions, $\dfrac{\pi_n}{d_n} \land Z^{(\Vsig,~*)}_{n+1} \left( \Vu_{n+1} \right)$ is also concave in $c_n$.  We then have that $Z^{{(\Vsig,~*)}}_n(\Vu_n)$ is concave in $c_n$ by taking the expectation and applying \citet[Section~3.2.5]{boyd2004convex} since $0 \leq \pi_n \leq d_n \wedge c_n$ is a convex set.
\end{proof}

\HqEquatingProperty*
\begin{proof}
    Suppose that for some stage $0 < n < N$ and a given current demand $d_n$, for any future demand $d_{n+1}$, an optimal allocation $\pi_n^{*, ~z}(\Vu_n)$ does not satisfy the equating property. There are three possibilities where \Cref{eqn:hq_equating_current_future} could fail to hold: 
    \begin{enumerate}[label=\textbf{Case \Roman*:}, align=left]
        \item For all demand $d_{n+1} \in [\lb_{n+1}, ~\ub_{n+1}]$, we have $\frac{\pi_n^{*, ~z}(\Vu_n)}{d_n} < Z^{(\Vsig, ~*)}_{n+1} (\Vu_{n+1}).$
        \item For all demand $d_{n+1} \in [\lb_{n+1}, ~\ub_{n+1}]$, we have $\frac{\pi_n^{*, ~z}(\Vu_n)}{d_n} > Z^{(\Vsig, ~*)}_{n+1} (\Vu_{n+1}).$ 
        \item Both $\frac{\pi_n^{*, ~z}(\Vu_n)}{d_n} < Z^{(\Vsig, ~*)}_{n+1} (\Vu_{n+1})$ and $\frac{\pi_n^{*, ~z}(\Vu_n)}{d_n} > Z^{(\Vsig, ~*)}_{n+1} (\Vu_{n+1})$ hold for some demand $d_{n+1}$.
    \end{enumerate}
    In the first two cases, we construct a new policy denoted by $\tilde{\Vpi}$ that has a higher objective value, leading to a contradiction. In the third case, we directly show that an optimal allocation policy satisfies \Cref{eqn:hq_equating_current_future}.
            
   \noindent \textbf{Case I:} We first consider the case when for all $d_{n+1}$, the current fill rate is smaller than the expected future minimum fill rate to go function, that is, $\frac{\pi_n^{*, ~z}(\Vu_n)}{d_n} < Z^{(\Vsig, ~*)}_{n+1} (\Vu_{n+1})$. We construct a new policy $\tilde{\pi}_n$ by setting the current fill rate $\frac{\tilde{\pi}_n}{d_n}$ equal to the expected future minimum fill rate to go when $d_{n+1} = \ub_{n+1}$, i.e.,
    \begin{equation} \label{new_policy_1}
        \frac{\tilde{\pi}_n}{d_n} = Z^{(\Vsig, ~*)}_{n+1} (c_n - \tilde{\pi}_n, ~\sigma_{n+1}, ~\ub_{n+1}, ~\itaS_{n+1}).
    \end{equation}
    Note that $\tilde{\pi}_n \leq c_n$ trivially since the right-hand side is non-positive when $\tilde{\pi}_n > c_n$. To see that such a $\tilde{\pi}_n \leq d_n$ exists, let
    $$l_n(\pi_n; ~d_{n+1}) \triangleq Z^{(\Vsig, ~*)}_{n+1} (c_n - \pi_n, ~\sigma_{n+1}, ~d_{n+1}, ~\itaS_{n+1})d_n - \pi_n.$$
    The sign of $l_n(\pi_n; ~d_{n+1})$ indicates whether the current fill rate or the expected future minimum fill rate to go function is smaller at $d_{n+1}$. 
    Hence, \Cref{new_policy_1} is equivalent to equating the current fill rate and the expected future minimum fill rate to go at $d_{n+1}^{\max}$, i.e., $l_n(\tilde{\pi}_n; ~d_{n+1}^{\max}) = 0$.
    Clearly, when we allocate nothing, we have $l_n(0; ~d_{n+1}^{\max}) > 0$, and when we allocate the demand, we have $l_n(d_n; ~d_{n+1}^{\max}) \leq 0$ (due to the fact that $0 < Z^{(\Vsig, ~*)}_{n+1}(\Vu_n) \leq 1$ when $c_{n+1} > 0$). Hence, by continuity ($Z^{(\Vsig, ~*)}_{n+1}(\Vu_{n+1})$ being the maximum or minimum of continuous functions) and the intermediate value theorem (IVT), there exists an allocation $0 < \tilde{\pi}_n \leq d_n$ such that $l_n(\tilde{\pi}_n; ~d_{n+1}^{\max}) = 0$.
    If we define $\tilde{\Vu}_{n+1} = (c_n - \tilde{\pi}_n, ~\sigma_{n+1}, ~d_{n+1}, ~\itaS_{n+1})$, then
    \begin{align*}
        Z^{(\Vsig, ~\tilde{\pi}_n)}_{n} (\Vu_{n}) & \overset{}{=} \mathbb{E}_{d_{n+1} \sim \mathcal{D}_{\sigma_{n+1}} } \left[ \frac{\tilde{\pi}_n}{d_{n}}\land Z^{(\Vsig, ~*)}_{n+1} \left(\tilde{\Vu}_{n+1} \right)\right] \overset{(i)}{=} \frac{\tilde{\pi}_n}{d_{n}},\\
        &\overset{(ii)}{>} \frac{\pi^{*, ~z}_n}{d_{n}} \overset{(iii) }{=} \mathbb{E}_{d_{n+1} \sim \mathcal{D}_{\sigma_{n+1}} } \left[ \frac{\pi^{*, ~z}_n}{d_{n}}\land Z^{(\Vsig, ~*)}_{n+1} \left(\Vu_{n+1} \right)\right] \overset{ }{=} Z^{(\Vsig, ~*)}_{n} (\Vu_{n}).
    \end{align*}
    Equality $(i)$ is due to \Cref{new_policy_1} and the monotonicity of $Z^{(\Vsig, ~*)}_{n+1} \left(\tilde{\Vu}_{n+1} \right)$ in demand $d_{n+1}$ in \Cref{lemma:monotone}. Inequality $(ii)$ is from the assumption that $\frac{\pi_n^{*, ~z}(\Vu_n)}{d_n} < Z^{(\Vsig, ~*)}_{n+1} \left(c_n - \pi_n, ~\sigma_{n+1}, ~d_{n+1}^{\max}, ~\itaS_{n+1} \right)$, i.e., $l_n(\pi_n^{*,~z}; ~d_{n+1}^{\max}) > 0$. Furthermore, for any given future demand $d_{n+1}$, since $Z^{(\Vsig, ~*)}_{n+1} (c_n - \pi_n, ~\sigma_{n+1}, ~d_{n+1}, ~\itaS_{n+1})$ is non-decreasing in the capacity $c_{n+1} = c_n - \pi_n$ from \Cref{lemma:monotone}, $l_n(\pi_n; ~d_{n+1})$ is decreasing in $\pi_n$. Combined with the fact that $l(\tilde{\pi}_n; ~d_{n+1}^{\max}) = 0$, we have that $\tilde{\pi}_n > \pi_n^{*,~z}$. Equality $(iii)$ directly follows from the assumption.

  \noindent \textbf{Case II:} We next consider the case when for all $d_{n+1}$, the current fill rate is greater than the expected future fill rate to go function, that is, $\frac{\pi_n^{*, ~z}(\Vu_n)}{d_n} > Z^{(\Vsig, ~*)}_{n+1} (\Vu_{n+1})$. We construct a new allocation $\tilde{\pi}_n$ by setting the current fill rate $\frac{\tilde{\pi}_n}{d_n}$ equal to the expected future minimum fill rate to go with $d_{n+1} = \lb_{n+1}$:
    \begin{equation} \label{new_policy_2}
        \frac{\tilde{\pi}_n}{d_n} = Z^{(\Vsig, ~*)}_{n+1} (c_n - \tilde{\pi}_n, ~\sigma_{n+1}, ~\lb_{n+1}, ~\itaS_{n+1}).
    \end{equation}
    Following the same logic as in \textbf{Case I}, such a $\tilde{\pi}_n$ exists by applying the IVT to $l_n(\pi_n; ~d_{n+1}^{\min})$. Thus, 
     \begin{align*}
        Z^{(\Vsig, ~\tilde{\pi}_n)}_{n} (\Vu_{n}) &\overset{ }{=} \mathbb{E}_{d_{n+1} \sim \mathcal{D}_{\sigma_{n+1}} } \left[ \frac{\tilde{\pi}_n}{d_{n}}\land Z^{(\Vsig, ~*)}_{n+1} \left(\tilde{\Vu}_{n+1} \right)\right] \overset{(i)}{=} \mathbb{E}_{d_{n+1} \sim \mathcal{D}_{\sigma_{n+1}} } \left[Z^{(\Vsig, ~*)}_{n+1} \left(\tilde{\Vu}_{n+1} \right)\right],\\
        &\overset{(ii)}{>} \mathbb{E}_{d_{n+1} \sim \mathcal{D}_{\sigma_{n+1}} } \left[Z^{(\Vsig, ~*)}_{n+1} \left(\Vu_{n+1} \right)\right] \overset{(iii)}{=} \mathbb{E}_{d_{n+1} \sim \mathcal{D}_{\sigma_{n+1}} } \left[ \frac{\pi^{*, ~z}}{d_{n}}\land Z^{(\Vsig, ~*)}_{n+1} \left(\Vu_{n+1} \right)\right] \overset{ }{=} Z^{(\Vsig, ~*)}_{n} (\Vu_{n}),
    \end{align*}
    where equality $(i)$ is due to \Cref{new_policy_2} and the monotonicity of $Z^{(\Vsig, ~*)}_{n+1}(\tilde{\Vu}_{n+1})$ in the demand $d_{n+1}$ in \Cref{lemma:monotone}. Inequality $(ii)$ is from the assumption that $\frac{\pi_n^{*, ~z}(\Vu_n)}{d_n} > Z^{(\Vsig, ~*)}_{n+1} \left(c_n - \pi_n, ~\sigma_{n+1}, ~d_{n+1}^{\min}, ~\itaS_{n+1} \right)$, i.e., $l_n(\pi_n^{*,~z}; ~d_{n+1}^{\min}) < 0$. Since $l_n(\pi_n; ~d_{n+1}^{\min})$ is decreasing in $\pi_n$ and $l_n(\tilde{\pi}_n; ~d_{n+1}^{\min}) = 0$, we have $\tilde{\pi}_n < \pi^{*, ~z} \leq d_n$. By applying the monotonicity of $Z^{(\Vsig, ~*)}_{n+1}(\Vu_{n+1})$ in the capacity $c_{n+1} = c_n - \pi_n$ in \Cref{lemma:monotone}, we have $Z^{(\Vsig, ~*)}_{n+1}(\tilde{\Vu}_{n+1}) > Z^{(\Vsig, ~*)}_{n+1}(\Vu_{n+1})$ (if equal, $\tilde{\pi}_n$ is optimal). Equality $(iii)$ is from the assumption that for all $d_{n+1}$, the minimum is achieved by $Z^{(\Vsig, ~*)}_{n+1} \left(\Vu_{n+1} \right)$.
    
    Combining the arguments above, we find a policy $\tilde{\pi}_n(\Vu_n)$ with a strictly higher minimum fill rate, contradicting the optimality assumption of the original allocation $\pi_n^{*, ~z}(\Vu_n)$.

    \noindent \textbf{Case III:} Lastly, we consider the case where both directions hold. Note that under the allocation $\pi_n^{*,~z}$, $l_n(\pi_n^{*,~z}; ~d_{n+1})$ depends only on the future demand $d_{n+1}$. Since $Z_{n+1}^{(\Vsig, ~*)}(\Vu_{n+1})$ is continuous in $d_{n+1}$, $l_n(\pi_n^{*,~z}; ~d_{n+1})$ is continuous in $d_{n+1}$. By assumption, the function $l_n(\pi_n^{*,~z}; ~d_{n+1})$ attains both positive and negative values over certain demands in the support. Owing to the connected support assumption, we can apply the IVT to show there must exist a realization $d_{n+1}$ such that $l_n(\pi_n^{*,~z}; ~d_{n+1}) = 0$, that is:
    \[
    Z_{n+1}^{(\Vsig, ~*)}(\Vu_{n+1}) = \frac{\pi_n^{*,~z}}{d_n}.
    \]
    This establishes that the equating property is indeed satisfied, and highlights precisely where the connected support assumption is used.
\end{proof}

\MaEquatingProperty*
\begin{proof}
     Suppose that there exists a stage $n \in [N]$ such that for given state $\Vu_{n}$ and every sample path $\overset{\rightarrow}{d}_{n+1:N} = (d_{n+1}, \ldots, d_N)$, an optimal allocation $\pi_n^{*,~w}\left(\Vu_n, ~\hist^{n-1} \right)$ does not satisfy the equating property. For every sample path $\overset{\rightarrow}{d}_{n+1:N}$, define
     \begin{equation} \label{eqn:B_def}
         B_n (\Vu_{n}, ~\beta_{\min}^{n-1}, ~\pi_n ; ~\overset{\rightarrow}{d}_{n+1:N}) \triangleq \min \limits_{i = n+1, \dots, N}\left\{\frac{\pi_i^{*,~w}(\Vu_{i}, ~\beta_{\min}^{i-1})}{d_i} \right\}
     \end{equation}
     to be the future minimum fill rate over the sample path $\overset{\rightarrow}{d}_{n+1:N}$ under an optimal allocation $\Vpi^{*,~w}$ from stage $n+1$ till $N$. There are three possibilities where \Cref{eqn:m_equating_current_future} could fail to hold:
     \begin{enumerate}[label=\textbf{Case \Roman*:}, align=left]
        \item For all sample paths $\overset{\rightarrow}{d}_{n+1:N}$, we have 
        \begin{equation} \label{ineq_m1}
         \frac{\pi_{n}^{*,~w}(\Vu_{n}, ~\beta_{\min}^{n-1})}{d_{n}} < B_n (\Vu_{n}, ~\beta_{\min}^{n-1}, ~\pi_{n}^{*,~w}(\Vu_{n}, ~\beta_{\min}^{n-1}); ~\overset{\rightarrow}{d}_{n+1:N}).
        \end{equation}
        \item For all sample paths $\overset{\rightarrow}{d}_{n+1:N}$, we have 
        \begin{equation} \label{ineq_m2}
         \frac{\pi_{n}^{*,~w}(\Vu_{n}, ~\beta_{\min}^{n-1})}{d_{n}} > B_n (\Vu_{n}, ~\beta_{\min}^{n-1}, ~\pi_{n}^{*,~w}(\Vu_{n}, ~\beta_{\min}^{n-1}); ~\overset{\rightarrow}{d}_{n+1:N}).
        \end{equation}
        \item Both \Cref{ineq_m1} and \Cref{ineq_m2} hold for some sample path $\overset{\rightarrow}{d}_{n+1:N}$.
    \end{enumerate}
    In the first two cases, we construct a new policy denoted by $\tilde{\Vpi}$ that has a higher objective value, leading to a contradiction. In the third case, we directly show that an optimal allocation policy satisfies \Cref{eqn:m_equating_current_future}.

     We start off by showing that for any sample path $\vec{d}_{n+1:N}$ there exists a feasible allocation $\tilde{\pi}_n(\vec{d}_{n+1:N})$ which equates the current and future fill rates. Towards this, let
     \[
     l_n(\pi_n; ~\overset{\rightarrow}{d}_{n+1:N}) \triangleq B_n (\Vu_{n}, ~\beta_{\min}^{n-1}, ~\pi_n; ~\overset{\rightarrow}{d}_{n+1:N}) d_n - \pi_{n}.
     \]
     The sign of $l_n(\cdot)$ indicates whether the current fill rate or the future minimum fill rate under a given sample path $\overset{\rightarrow}{d}_{n+1:N}$ is smaller. The following lemma establishes that $l_n(\pi_n; ~\overset{\rightarrow}{d}_{n+1:N})$ is continuous in the allocation $\pi_n$ by Berge's maximum theorem.

     \begin{restatable}{lemma}{ContinuousDifference}
     \label{lemma:continuous_difference_function}
     The function $l_n(\pi_n; ~\overset{\rightarrow}{d}_{n+1:N})$ is continuous in the allocation $\pi_n$.
     \end{restatable}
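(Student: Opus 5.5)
The plan is to reduce continuity of $l_n(\pi_n; ~\overset{\rightarrow}{d}_{n+1:N}) = B_n(\Vu_n, ~\hist^{n-1}, ~\pi_n; ~\overset{\rightarrow}{d}_{n+1:N})\, d_n - \pi_n$ to continuity of $B_n$ in $\pi_n$. The term $-\pi_n$ is linear and $d_n$ is fixed. $B_n$ is a finite minimum of the fill rates $\pi_i^{*,~w}(\Vu_i, ~\hist^{i-1})/d_i$ for $i = n+1, \dots, N$, with all $d_i$ fixed along the sample path, and a minimum of finitely many continuous functions is continuous. So it suffices to show that each future optimal allocation $\pi_i^{*,~w}$ is continuous in $\pi_n$.

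Along the fixed path, $\pi_n$ enters stage $n+1$ only through the pair $(c_{n+1}, ~\hist^{n}) = (c_n - \pi_n, ~\hist^{n-1} \land \pi_n/d_n)$. This pair is continuous in $\pi_n$. Each later pair $(c_{i+1}, ~\hist^{i}) = (c_i - \pi_i^{*,~w}, ~\hist^{i-1} \land \pi_i^{*,~w}/d_i)$ is a continuous function of $(c_i, ~\hist^{i-1})$ and $\pi_i^{*,~w}$. By composition, it is therefore enough to prove the following by backward induction on $i$: the optimal allocation $\pi_i^{*,~w}(c_i, ~\sigma_i, ~d_i, ~\itaS_i, ~\hist^{i-1})$ is continuous in $(c_i, ~\hist^{i-1})$ for each fixed $d_i$.

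The base case is $i = N$. There $\pi_N^{*,~w} = c_N \land d_N$, which is continuous in $c_N$ and does not depend on $\hist^{N-1}$.

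The inductive step $i+1 \to i$ uses Berge's maximum theorem. The objective $\E_{d_{i+1}}\!\left[W_{i+1}^{(\Vsig,~*)}(c_i - \pi_i, \dots, ~\hist^{i-1} \land \pi_i/d_i)\right]$ is jointly continuous in $(\pi_i, c_i, \hist^{i-1})$. This holds because $W_{i+1}^{(\Vsig,~*)}$ is concave in each of $c$ and $\hist$ (\Cref{lemma:concave_in_pi}), finite, and monotone (\Cref{lemma:monotone}), hence continuous on the interior; alternatively it follows inductively from Berge's theorem applied to the value function itself. Expectation over a bounded integrand preserves this continuity. The feasible set $[0, ~c_i \land d_i]$ is a continuous, compact-valued correspondence. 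Berge's theorem then gives that $W_i^{(\Vsig,~*)}$ is continuous and that the argmax correspondence is nonempty, compact-valued and upper hemicontinuous. An upper hemicontinuous correspondence that is singleton-valued is a continuous function. The remaining task is therefore uniqueness of the optimal allocation, or at least a continuous selection from the argmax set.

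I expect uniqueness of the optimal allocation to be the main obstacle. By concavity of $h_i$ in $\pi_i$ (\Cref{lemma:concave_in_pi}), the argmax set is a closed interval, but concavity alone does not make it a single point. I would first try to rule out flat pieces of $h_i$ in the interior of the feasible region using the equating property (\Cref{lemma:m_equate_property} at later stages). On a flat piece, increasing $\pi_i$ would raise the current fill rate without lowering the future minimum along some path, which should give a strict improvement, and this contradicts flatness. If this argument fails, the fallback is to fix the convention that $\pi^{*,~w}$ denotes a specific selection, for instance the smallest maximizer. I would then show its continuity directly: for concave objectives that depend continuously on parameters, the endpoints of the argmax interval are lower or upper semicontinuous, and monotonicity in $c_i$ and $\hist^{i-1}$ (\Cref{lemma:monotone}) would be used to exclude jumps. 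With either route settled, the composition argument above yields continuity of $B_n$, and hence of $l_n$, in $\pi_n$.
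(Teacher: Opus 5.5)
Your main line is the paper's proof. You reduce continuity of $l_n$ to continuity of $B_n$, and hence of each future optimal allocation $\pi_i^{*,~w}$ in $\pi_n$. You then get continuity in $(c_i,\hist^{i-1})$ from Berge's maximum theorem, using the compact-valued, continuous feasible correspondence $[0,c_i\wedge d_i]$. Finally, you propagate forward through the continuous updates $c_{i+1}=c_i-\pi_i$ and $\hist^{i}=\hist^{i-1}\wedge \pi_i/d_i$. The paper does not resolve the uniqueness issue you flag. It assumes the optimal allocation is unique ``for simplicity'', and for multiple maximizers it only appeals to upper hemicontinuity and ``taking the supremum''. With that assumption stated explicitly, your argument coincides with the paper's.

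The two routes you sketch for removing that assumption do not work as stated.

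\emph{Route (a) tries to prove something false.} Under \mexpost the current fill rate enters only through $\hist^{i-1}\wedge\pi_i/d_i$, so raising $\pi_i$ above $\hist^{i-1}d_i$ earns nothing, and flat maxima really occur. Suppose $\hist^{i-1}<1$ and $c_i>\hist^{i-1}(d_i+\sum_{j>i}d_j^{\max})$. Take any $\pi_i\in[\hist^{i-1}d_i,\ \min\{d_i,\ c_i-\hist^{i-1}\sum_{j>i}d_j^{\max}\}]$. Then $\hist^{i}=\hist^{i-1}$ and $c_{i+1}\ge\hist^{i-1}\sum_{j>i}d_j^{\max}$. So $W_{i+1}^{(\Vsig,~*)}=\hist^{i-1}$ for every $d_{i+1}$ by \Cref{lemma:same_fill_rate}, which is the largest possible value by \Cref{lemma:monotone}(d). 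This is a nondegenerate interval of maximizers, and no equating argument can remove it.

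\emph{Route (b) relies on monotonicity to exclude jumps, but monotone selections can jump.} Take $h(\pi;\theta)=\theta\pi$ on $[0,1]$ with $\theta\in[-1,1]$. It is concave in $\pi$ and jointly continuous, yet both its smallest and largest maximizers are monotone step functions of $\theta$ with a jump at $\theta=0$. Berge gives only lower semicontinuity of the smallest maximizer and upper semicontinuity of the largest. For the same reason, the paper's supremum fallback is not enough on its own.

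A genuine fix has to use the problem's structure. One option is to show that the maximizer is unique outside the flat regime above. Inside that regime you would select the smallest maximizer. It equals the explicitly continuous $\hist^{i-1}d_i$, because there $h_i(\pi_i)=\pi_i/d_i$ is strictly increasing on $[0,\hist^{i-1}d_i]$. You would then check that the two descriptions agree at the regime boundary. The same selection must also be the one used in \Cref{lemma:decreasing_l_and_B}, since $B_n$ depends on it. Otherwise, state uniqueness as an explicit hypothesis, as the paper effectively does.
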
 
    Given the continuity of $l_n(\pi_n; ~\overset{\rightarrow}{d}_{n+1:N})$, we show that for any sequence of future demands $\vec{d}_{n+1:N}$, there exists a feasible allocation $\tilde{\pi}_n(\vec{d}_{n+1:N})$ such that $l_n(\tilde{\pi}_n(\vec{d}_{n+1:N}); ~\vec{d}_{n+1:N}) = 0$. Indeed, this follows because $l_n(0; ~\overset{\rightarrow}{d}_{n+1:N}) > 0$ and $l(d_n; ~\overset{\rightarrow}{d}_{n+1:N}) \leq 0$ (note that $0 < B_n(\cdot) \leq 1$ when $c_n > 0$). Since $l_n(\pi_n; ~\vec{d}_{n+1:N})$ is continuous in $\pi_n$, by applying the IVT, there exists $0 < \tilde{\pi}_n(\vec{d}_{n+1:N}) < d_n$ such that
    \begin{equation} \label{pi_sample_path}
        l_n(\tilde{\pi}_n(\vec{d}_{n+1:N}); ~\vec{d}_{n+1:N}) = 0.
    \end{equation}
    Next we show that 
    {$B_n (\Vu_{n}, ~\beta_{\min}^{n-1}, ~\pi_n; ~\overset{\rightarrow}{d}_{n+1:N})$ is non-increasing in $\pi_n$ and thus $l_n(\pi_n; ~\vec{d}_{n+1:N})$ is decreasing in $\pi_n$.} 
    \begin{restatable}{lemma}{DecreasingL} \label{lemma:decreasing_l_and_B}
        The future minimum fill rate function $B_n (\Vu_{n}, ~\beta_{\min}^{n-1}, ~\pi_n; ~\overset{\rightarrow}{d}_{n+1:N})$ is non-increasing in the allocation decision $\pi_n$. Consequently, the function $l_n(\pi_n; ~\overset{\rightarrow}{d}_{n+1:N})$ is decreasing in $\pi_n$. Moreover, whenever $B_n (\Vu_{n}, ~\beta_{\min}^{n-1}, ~\pi_n; ~\overset{\rightarrow}{d}_{n+1:N}) < 1$, the function is strictly decreasing in $\pi_n$.
    \end{restatable}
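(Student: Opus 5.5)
The plan is to prove the monotonicity of $B_n$ by a sample-path backward induction on the stages $n+1,\dots,N$, fixing the future demand path $\vec{d}_{n+1:N}$ throughout. The statement about $l_n$ then follows immediately: $l_n(\pi_n;\vec{d}_{n+1:N}) = B_n(\cdot)\,d_n - \pi_n$ is a non-increasing function plus the strictly decreasing term $-\pi_n$, so it is strictly decreasing.

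\textbf{Step 1 (normalize the optimal selection).} The state passed to stage $n+1$ is $c_{n+1}=c_n-\pi_n$ and $\hist^{n}=\hist^{n-1}\wedge \pi_n/d_n$. So raising $\pi_n$ has two effects: it lowers capacity, and it may raise the historical minimum. I would first show that we may fix an optimal allocation $\Vpi^{*,w}$ with $\pi_i^{*,w}\le \hist^{i-1}d_i$ at every stage $i$. Any fill rate above $\hist^{i-1}$ does not change $\hist^{i}$. Trimming the allocation down to $\hist^{i-1}d_i$ therefore leaves more capacity, which is weakly better by \Cref{lemma:monotone}(a).

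\textbf{Step 2 (reduce to capacity monotonicity).} Under this selection we compare $\pi_n<\pi_n'$. If both fill rates are at least $\hist^{n-1}$, then $\hist^n$ is unchanged and only $c_{n+1}$ decreases. If $\pi_n/d_n<\hist^{n-1}$, then the stage-$n$ fill rate is itself the running minimum. In that case the path minimum from stage $n+1$ on is bounded by $\hist^n$ (\Cref{lemma:monotone}(d) applied pathwise, using the normalization of Step 1). So it suffices to show two things along a fixed sample path: the realized future fill rates are non-decreasing in $c_{n+1}$, and they are capped by $\hist^n$. These two facts combine to give non-increasingness in $\pi_n$.

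\textbf{Step 3 (monotone selection).} This is the main obstacle: the argmax at each later stage must be chosen so that the realized fill rates $\pi_i^{*,w}/d_i$, $i>n$, are monotone in the incoming capacity. Otherwise ties among maximizers could make $B_n$ jump. I would use \Cref{lemma:concave_in_pi}(b): $W_{i+1}$ is concave in capacity, so the objective $\E[W_{i+1}(c_i-\pi_i,\dots,\hist^{i-1}\wedge \pi_i/d_i)]$ has increasing differences in $(\pi_i,c_i)$ through the term $W_{i+1}(c_i-\pi_i)$. By Topkis' theorem, the largest maximizer is non-decreasing in $c_i$. I would also check that the leftover capacity $c_i-\pi_i$ is non-decreasing in $c_i$, again from concavity, since the maximizer moves with slope at most one. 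Induction from $i=N$, where $\pi_N=c_N\wedge \hist^{N-1}d_N$ is clearly monotone, down to $n+1$ then gives pathwise monotonicity of every future fill rate, and hence of $B_n$.

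\textbf{Step 4 (strictness).} For strictness when $B_n<1$, the binding stage $i^\star$ attaining the minimum has fill rate below $1$. I would trace a strict decrease in leftover capacity forward to that stage. At $i=N$ the map $c_N/d_N$ is strictly increasing when it binds. At intermediate stages the equating structure (interior stationary points) should transmit a strict change. If that transmission turns out delicate, I would fall back on the fact that only the weak monotonicity of $B_n$ is needed for the strict decrease of $l_n$.
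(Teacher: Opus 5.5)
\textbf{The gap is in Step 2, and your Step 1 normalization makes it fatal.} When $\pi_n<\hist^{n-1}d_n$, raising $\pi_n$ raises $\hist^n=\pi_n/d_n$. So ``the future fill rates are capped by $\hist^n$'' is a cap that moves \emph{up} with $\pi_n$. Combined with monotonicity in $c_{n+1}$, it gives no downward control at all, and nowhere do you say how the later fill rates respond to a higher historical minimum.

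Under your normalization the conclusion is in fact false. Take $N=n+1$ and your terminal rule $\pi_N^{*,w}=c_N\wedge\hist^{N-1}d_N$. On $\{\pi_n<\hist^{n-1}d_n\}$ one has $B_n=\frac{c_n-\pi_n}{d_N}\wedge\frac{\pi_n}{d_n}$. This equals $\pi_n/d_n$ as soon as $c_n\ge\pi_n(1+d_N/d_n)$, so $B_n$ is strictly increasing there and $l_n\equiv 0$. Trimming to $\pi_i\le\hist^{i-1}d_i$ is harmless for the value function. But $B_n$ is a pathwise quantity attached to a particular maximizer, so the choice of maximizer is part of the statement. The paper instead keeps the DP's terminal rule $\pi_N^{*,w}=c_N\wedge d_N$ and takes the largest maximizer at every stage.

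\textbf{Step 3 has the same blind spot, plus an unjustified step.} Even with $\hist^n$ fixed, a change in $c_{n+1}$ moves $\pi_{n+1}^{*,w}$ and hence $\hist^{n+1}=\hist^n\wedge\pi_{n+1}^{*,w}/d_{n+1}$. An induction that tracks capacity alone therefore does not propagate; you need comparative statics in both state variables $(c_i,\hist^{i-1})$.

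Moreover, increasing differences of $\E[W_{i+1}(c_i-\pi_i,\cdot,\hist^{i-1}\wedge\pi_i/d_i)]$ in $(c_i,\pi_i)$ does not follow from concavity in capacity. The cross partial also contains $\frac{1}{d_i}\partial^2W_{i+1}/\partial c\,\partial\hist$ on $\{\pi_i\le\hist^{i-1}d_i\}$, whose sign needs the supermodularity of $W_{i+1}$ in $(c,\hist)$ from \citet[Proposition~7]{ma2022fairness}. Likewise, monotonicity of the leftover $c_i-\pi_i^{*,w}$ needs that supermodularity plus concavity in $\hist$, not ``slope at most one.''

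The paper's proof is organized around three comparative statics: $\pi_i^{*,w}$ in $c_i$ (Part I), $c_i-\pi_i^{*,w}$ in $c_i$ (Part II), and $\pi_i^{*,w}$ in $\hist^{i-1}$ (Part III). You have analogues of the first two only. Also, your Step 4 fallback yields strictness of $l_n$, not the strict decrease of $B_n$ that Case II of \Cref{lemma:m_equate_property} invokes.

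\textbf{Adding a Part III analogue would not close the gap by itself.} A smaller allocation at stage $i$ leaves more capacity for stage $i+1$. Take $n=1$, $N=3$, $d_1=1$, $c_1=5.2$, $\pi_1=0.2+\delta$, $d_2=10$, and $\D_3$ continuous with support $[5,10]$. The stage-2 maximizers are exactly $[2+10\delta,\,3-11\delta]$. With the largest one, $B_1=\min\{0.3-1.1\delta,\,(2+10\delta)/d_3\}$. For each $d_3\in(20/3,10)$, both $B_1$ and $l_1$ strictly increase in $\delta$ near $0$. So in the regime $\pi_n<\hist^{n-1}d_n$ the claim needs a restriction or a more careful choice of maximizer before any proof can go through.
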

    Lastly, note that we can rewrite the \mexpost objective as an expectation of the minimum fill rate over the previously visited nodes $\hist^{n-1}$, the current fill rate $\dfrac{\pi_n}{d_n}$, and the future minimum fill rate $B_n(\cdot)$:
     \begin{equation} \label{mexpost_with_B}
         W_{n}^{(\Vsig, ~*)}\left(\Vu_{n}, ~\beta_{\min}^{n-1} \right) = \max \limits_{0 \leq \pi_{n} \leq c_{n} \land d_{n}} \E_{\vec{d}_{n+1:N}} \left[\beta_{\min}^{n-1} \land \frac{\pi_{n}}{d_{n}} \land B_n (\Vu_{n}, ~\beta_{\min}^{n-1}, ~\pi_n; ~\overset{\rightarrow}{d}_{n+1:N}) \right].
     \end{equation}
     The rest of the proof proceeds with cases.
    
    \noindent \textbf{Case I:} We first consider the case in which, for all sample paths, an optimal allocation $\pi_n^{*,~w}\left(\Vu_n,~\hist^{n-1} \right)$ (for the remainder of the proof, we simplify the notation to $\pi_n^{*,~w}$) yields the current fill rate that is lower than the future minimum fill rate, i.e., \Cref{ineq_m1} is true. Let $\tilde{\pi}_{n} = \min \limits_{\overset{\rightarrow}{d}_{n+1:N}} \tilde{\pi}_{n}(\vec{d}_{n+1:N})$ be the minimum of the zeros of $l_n$ in \Cref{pi_sample_path} (note that the minimum is achieved since the demand distributions have compact supports).  Then we have
     \begin{align*}
         W_n^{(\Vsig, ~*)}(\Vu_n, ~\beta_{\min}^{n-1}) &= \E_{d_{n+1} \sim \D_{\sigma_{n+1}}} \left[W_{n+1}^{(\Vsig, ~*)}\left(\Vu_{n+1}, ~\beta_{\min}^{n-1} \land \frac{\pi_n^{*,~w}}{d_n} \right) \right] \overset{(i)}{=} \beta_{\min}^{n-1} \land \frac{\pi_n^{*,~w}}{d_n} \\
         &\overset{(ii)}{<} \beta_{\min}^{n-1} \land \frac{\tilde{\pi}_n}{d_n} \overset{(iii)}{=} W_n^{(\Vsig, ~\tilde{\pi}_n)}(\Vu_n, ~\beta_{\min}^{n-1}).
     \end{align*}
    Equality $(i)$ is due to \Cref{mexpost_with_B} and the assumption that \Cref{ineq_m1} is true. Inequality $(ii)$ is from the fact that $l_n(\pi_n; ~\overset{\rightarrow}{d}_{n+1:N})$ is decreasing in $\pi_n$. Let $\vec{d}^{\prime}_{n+1:N} = \argmin \limits_{\overset{\rightarrow}{d}_{n+1:N}} \tilde{\pi}_{n}(\vec{d}_{n+1:N})$ be the sample path where $\tilde{\pi}_n$ equates the current fill rate and the future minimum fill rate. From the assumption, we know that $l_n(\pi_n^{*,~w}; ~\vec{d}^{\prime}_{n+1:N}) > 0$, combining with definition $l_n(\tilde{\pi}_n; ~\vec{d}^{\prime}_{n+1:N}) = 0$, we have $\tilde{\pi}_{n} > \pi_n^{*,~w}$. Equality $(iii)$ follows by applying the construction of $\tilde{\pi}_n = \min \limits_{\overset{\rightarrow}{d}_{n+1:N}} \tilde{\pi}_{n}(\vec{d}_{n+1:N})$ together with the monotonicity of $B_n(\cdot)$ (non-increasing in $\pi_n$) to \Cref{mexpost_with_B}. We know that for every sample path $$B_n (\Vu_{n}, ~\beta_{\min}^{n-1}, ~\tilde{\pi}_n; ~\overset{\rightarrow}{d}_{n+1:N}) \geq B_n (\Vu_{n}, ~\beta_{\min}^{n-1}, ~\tilde{\pi}_n(\vec{d}_{n+1:N}); ~\overset{\rightarrow}{d}_{n+1:N}) \overset{(i)}{=} \frac{\tilde{\pi}_n(\vec{d}_{n+1:N})}{d_n} \geq \frac{\tilde{\pi}_n}{d_n},$$ where $(i)$ is because $\tilde{\pi}_n(\vec{d}_{n+1:N})$ is defined as the zeros of $l_n$. Therefore, we can omit the future minimum fill rate in $W_n^{(\Vsig, ~\tilde{\pi}_n)}(\Vu_n, ~\beta_{\min}^{n-1})$.
     
     \noindent \textbf{Case II:} We next consider the case when for all sample paths, the current fill rate is greater than the future minimum fill rate, i.e., \Cref{ineq_m2} is true. Let $\tilde{\pi}_{n} = \max \limits_{\overset{\rightarrow}{d}_{n+1:N}} \tilde{\pi}_{n}(\vec{d}_{n+1:N})$ be the maximum of the zeros in \Cref{pi_sample_path} (again the maximum is achieved due to the compactness of the range of $\overset{\rightarrow}{d}_{n+1:N}$), we have
     \begin{align*}
         W_n^{(\Vsig, ~*)}(\Vu_n, ~\beta_{\min}^{N-1}) &= \E_{d_{n+1} \sim \D_{\sigma_{n+1}}} \left[W_{n+1}^{(\Vsig, ~*)}\left(\Vu_{n+1}, ~\beta_{\min}^{n-1} \land \frac{\pi_n^{*,~w}}{d_n} \right) \right] \\
         &\overset{(i)}{=} \E_{\vec{d}_{n+1:N}} \left[\beta_{\min}^{n-1} \land B_n (\Vu_{n}, ~\beta_{\min}^{n-1}, ~\pi_n^{*,~w}; ~\overset{\rightarrow}{d}_{n+1:N}) \right]\\
         &\overset{(ii)}{<} \E_{\vec{d}_{n+1:N}} \left[\beta_{\min}^{n-1} \land B_n (\Vu_{n}, ~\beta_{\min}^{n-1}, ~\tilde{\pi}_n; ~\overset{\rightarrow}{d}_{n+1:N}) \right]
         \overset{(iii)}{=} W_n^{(\Vsig, ~\tilde{\pi}_n)}(\Vu_n, ~\beta_{\min}^{N-1}),
     \end{align*}
     where equality $(i)$ is due to the assumption (\Cref{ineq_m2}) and \Cref{mexpost_with_B}. Equality $(ii)$ is from the construction of $\tilde{\pi}_{n}$ and the fact that $B_n(\cdot)$ is non-increasing in $\pi_n$. Let $\vec{d}^{\prime}_{n+1:N} = \argmax \limits_{\overset{\rightarrow}{d}_{n+1:N}} \tilde{\pi}_{n}(\vec{d}_{n+1:N})$ be the sample path where $\tilde{\pi}_n$ equates the current fill rate and the future minimum fill rate. From the assumption, we know that $l_n(\pi_n^{*,~w}; ~\vec{d}^{\prime}_{n+1:N}) < 0$, combining with definition $l_n(\tilde{\pi}_n; ~\vec{d}^{\prime}_{n+1:N}) = 0$ and the decreasing property of $l_n(\pi_n; ~\overset{\rightarrow}{d}_{n+1:N})$ in $\pi_n$, we have $\tilde{\pi}_{n} < \pi_n^{*,~w}$. Furthermore, we have $ B_n (\Vu_{n}, ~\beta_{\min}^{n-1}, ~\pi_n^{*,~w}; ~\overset{\rightarrow}{d}_{n+1:N}) < \frac{\pi_n^{*,~w}}{d_n} \leq 1$, and from \Cref{lemma:decreasing_l_and_B}, when $\tilde{\pi}_{n} < \pi_n^{*,~w}$, we have $B_n(\Vu_{n}, ~\beta_{\min}^{n-1}, ~\pi_n^{*,~w}; ~\overset{\rightarrow}{d}_{n+1:N}) < B_n(\Vu_{n}, ~\beta_{\min}^{n-1}, ~\tilde{\pi}_n; ~\overset{\rightarrow}{d}_{n+1:N})$. Equality $(iii)$ is because $\tilde{\pi}_{n} = \max \limits_{\overset{\rightarrow}{d}_{n+1:N}} \tilde{\pi}_{n}(\vec{d}_{n+1:N})$ and the fact that $B_n(\cdot)$ is non-increasing in $\pi_n$, we know that for every sample path $\overset{\rightarrow}{d}_{n+1:N}$, we have
     \[
     \frac{\tilde{\pi}_n}{d_n} \geq \frac{\tilde{\pi}_n(\vec{d}_{n+1:N})}{d_n} \overset{(i)}{=} B_n (\Vu_{n}, ~\beta_{\min}^{n-1}, ~\tilde{\pi}_n(\vec{d}_{n+1:N}); ~\overset{\rightarrow}{d}_{n+1:N}) \geq B_n (\Vu_{n}, ~\beta_{\min}^{n-1}, ~\tilde{\pi}_n; ~\overset{\rightarrow}{d}_{n+1:N}),
     \]
     where $(i)$ is because $\tilde{\pi}_n(\vec{d}_{n+1:N})$ is defined as the zeros of $l_n$. With \Cref{mexpost_with_B}, we can omit the current fill rate of the objective function $W_n^{(\Vsig, ~\tilde{\pi}_n)}(\Vu_n, ~\beta_{\min}^{n-1})$.

     Combining the arguments above, we find a policy $\tilde{\Vpi}$ with a higher objective value, contradicting the optimality assumption of the original allocation $\Vpi^{*, ~w}$.
     
    \noindent \textbf{Case III:} Lastly, we consider the case when both \Cref{ineq_m1} and \Cref{ineq_m2} are true, meaning that $l_n(\pi_n^{*,~w}; ~\vec{d}_{n+1:N})$ attains both positive and negative values over certain demands in the support. We will show that we can find an optimal allocation satisfying \Cref{eqn:m_equating_current_future}. Let $n$ be the largest stage such that \Cref{eqn:m_equating_current_future} is not true. Note that under the allocation $\pi_n^{*,~w}$, $l_n(\pi_n^{*,~w}; ~\vec{d}_{n+1:N})$ depends only on the future demand $\vec{d}_{n+1:N}$. Since $B_n(\cdot)$ is continuous in $\vec{d}_{n+1:N}$, $l_n(\pi_n^{*,~w}; ~\vec{d}_{n+1:N})$ is continuous in $\vec{d}_{n+1:N}$, and the demand distribution has connected support, by the IVT there must exist a realization $\vec{d}_{n+1:N}$ such that $l_n(\pi_n^{*,~w}; ~\vec{d}_{n+1:N}) = 0$, that is:
    \[
    \frac{\pi_{n}^{*,~w}(\Vu_{n}, ~\beta_{\min}^{n-1})}{d_{n}} = B_n (\Vu_{n}, ~\beta_{\min}^{n-1}, ~\pi_{n}^{*,~w}(\Vu_{n}, ~\beta_{\min}^{n-1}); ~\overset{\rightarrow}{d}_{n+1:N}).
    \]

    Next we fix the sample path $\vec{d}_{n+1:N}$ in which $l_n(\pi_n^{*,~w}; ~\vec{d}_{n+1:N}) = 0$. Let $n^{\prime}$ be the stage to achieve the minimum fill rate over $n+1$ to $N$, i.e., $$B_n (\Vu_{n}, ~\beta_{\min}^{n-1}, ~\pi_{n}^{*,~w}(\Vu_{n}, ~\beta_{\min}^{n-1}); ~\overset{\rightarrow}{d}_{n+1:N}) = \frac{\pi_{n^{\prime}}^{*,~w}(\Vu_{n^{\prime}}, ~\beta_{\min}^{n^{\prime}-1})}{d_{n^{\prime}}}.$$
    Since stage $n$ is the largest stage to violate \Cref{eqn:m_equating_current_future}, we know that for any stage $i > n$ and any state $\Vu_i$, there exists a sample path $\overset{\rightarrow}{d}_{i:N}$ such that fill rates are equal. Because of that, we can find a sample path $\overset{\rightarrow}{d}^{\prime}_{n^{\prime}+1:N}$ such that \Cref{eqn:m_equating_current_future} holds. For stages between $n$ and $n^{\prime}$, if there exists $n < n'' < n^{\prime}$ such that $\frac{\pi_{n''}^{*,~w}(\Vu_{n''}, ~\beta_{\min}^{n''-1})}{d_{n''}} > \frac{\pi_{n^{\prime}}^{*,~w}(\Vu_{n^{\prime}}, ~\beta_{\min}^{n^{\prime}-1})}{d_{n^{\prime}}}$ (as the current fill rate is the minimum), 
    we adjust the allocation at $n''$ to $\tilde{\pi}_{n''} = \frac{\pi_{n}^{*,~w}(\Vu_{n}, ~\beta_{\min}^{n-1})}{d_{n}} \cdot d_{n''}$. 
    Note that the objective value remains the same since 
    \begin{align*}
        W_n^{(\Vsig, ~*)}(\Vu_n, ~\beta_{\min}^{n-1}) &= \E_{d_{n+1} \sim \D_{\sigma_{n+1}}} \left[W_{n+1}^{(\Vsig, ~*)}\left(\Vu_{n+1}, ~\beta_{\min}^{n-1} \land \frac{\pi_n^{*,~w}}{d_n} \right) \right]\\
        &\overset{(i)}{\leq} \E_{d_{n+1} \sim \D_{\sigma_{n+1}}} \left[\beta_{\min}^{n-1} \land \frac{\pi_n^{*,~w}}{d_n} \right] = \beta_{\min}^{n-1} \land \frac{\pi_n^{*,~w}}{d_n} \overset{(ii)}{=} \beta_{\min}^{n-1} \land \frac{\tilde{\pi}_{n''}}{d_{n''}},
    \end{align*}
    where inequality $(i)$ is from \Cref{lemma:monotone} and equality $(ii)$ is by construction. Hence, there is an optimal allocation satisfying \Cref{eqn:m_equating_current_future} on the sample path $\left(\overset{\rightarrow}{d}_{n+1:n^{\prime}}, ~\overset{\rightarrow}{d}^{\prime}_{n^{\prime}+1:N} \right)$.
\end{proof}

\Jensen*
\begin{proof}
    Let $\Vmu = \left(\Vsig, ~\Vpi \right)$.  We show via backwards induction that for any stage $n$, state $\Vu_n$, and the minimum fill rate over visited customers $\hist^{n-1}$ that:
    \begin{equation} \label{eqn:wz_ineq_n}
        W_n^{\Vmu}\left(\Vu_n, ~\beta_{\min}^{n-1} \right) \leq Z_n^{\Vmu}\left(\Vu_n\right).
    \end{equation}
    
    \noindent \textbf{Base Case:} $n = N$. This is clearly true from \Cref{eqn:m_bellman,eqn:hq_bellman}.
            
    \noindent \textbf{Step Case:} $n+1 \rightarrow n$. For any stage $n$, we have
    \begin{align*}
        W_n^{\Vmu}\left(\Vu_n, ~\beta_{\min}^{n-1} \right) &= \E_{d_{n+1} \sim \D_{\sigma_{n+1}}} \left[W_{n+1}^{\Vmu}\left(\Vu_{n+1}, ~\beta_{\min}^{n-1} \land \frac{\pi_n}{d_n} \right) \right],\\
        &\overset{(i)}{\leq} \E_{d_{n+1} \sim \D_{\sigma_{n+1}}} \left[\frac{\pi_n}{d_n} \land Z_{n+1}^{\Vmu}\left(\Vu_{n+1}\right)\right] = Z_n^{\Vmu}\left(\Vu_n\right),
    \end{align*}
    where $(i)$ is due to $W_{n+1}^{\Vmu}\left(\Vu_{n+1}, \hist^{n} \right) \leq \hist^n = \hist^{n-1} \land \frac{\pi_n}{d_n} \leq \frac{\pi_n}{d_n}$ by \Cref{lemma:monotone} and the induction hypothesis. 
    Since for all $n \geq 1$, \Cref{eqn:wz_ineq_n} is true, we have 
    $$W_0^{\Vmu} \left(c \right) = \E_{d_1 \sim \D_{\sigma_1}} \left[W_1^{\Vmu}\left(\Vu_1, ~1 \right) \right] \leq \E_{d_1 \sim \D_{\sigma_1}} \left[Z_1^{\Vmu}\left(\Vu_1\right) \right]= Z_0^{\Vmu} \left(c \right).$$
\end{proof}

\subsection{Auxiliary Lemmas} \label{appendix:equating_lemmas}

\ContinuousDifference*
\begin{proof}
     To prove that $l_n(\pi_n; ~\overset{\rightarrow}{d}_{n+1:N})$ is continuous in the allocation $\pi_n$, it suffices to show that $B_n(\cdot)$ is continuous in $\pi_n$. Since $B_n(\cdot)$ is the minimum of the fill rates from stage $n+1$ till $N$, we only need to show that for any $i > n$ , $\pi_{i}^{*,~w}(\Vu_{i}, ~\beta_{\min}^{i-1})$ is continuous in $\pi_n$.
     For simplicity, we assume that the optimal allocation is unique. When there are multiple optimal solutions, we extend the result to that $\pi_{i}^{*,~w}(\Vu_{i}, ~\beta_{\min}^{i-1})$ is upper hemicontinuous in $\pi_n$, which will not affect the proof of \Cref{lemma:m_equate_property}.
     
     We start by showing for any $i > n$ that $\pi_{i}^{*,~w}(\Vu_{i}, ~\beta_{\min}^{i-1})$ is continuous in the remaining capacity $c_i$ and the minimum fill rate over the previous nodes $\hist^{i-1}$. Since both $c_i$ and $\hist^{i-1}$ are continuous functions of $\pi_n$, an induction argument combined with this continuity implies that $\pi_{i}^{*,~w}(\Vu_{i}, ~\beta_{\min}^{i-1})$ is continuous in $\pi_n$.
    
     First, we prove that for any stage $i$, the feasible set $\left\{\pi_i: 0 \leq \pi_i \leq c_i \land d_i \right\}$ is a compact and continuous correspondence. Given $c_i$, the set is bounded and closed and thus compact. To show the continuity, let $f_i(c_i) = 0$ and $g_i(c_i) = c_i \land d_i$. Then the feasible set can be written as $\left\{\pi_i: f_i(c_i) \leq \pi_i \leq g_i(c_i) \right\}$, where both functions are continuous in $c_i$. Applying the lemma of continuous correspondence \citep[11.18(e)]{border1985fixed}, we know that the feasible set is continuous in $c_i$.
     Now note that
     \[
     \pi_{i}^{*,~w} (\Vu_{i}, ~\beta_{\min}^{i-1}) = \argmax \limits_{0 \leq \pi_{i} \leq c_{i} \land d_{i}} \E_{d_{i+1} \sim \D_{\sigma_{i+1}}}
     \left[W_{i+1}^{(\Vsig, ~*)}\left(\Vu_{i+1}, ~\beta_{\min}^{i-1} \land \frac{\pi_{i}}{d_{i}} \right) \right],
     \]
     where $W_{i+1}^{(\Vsig, ~*)}\left(\Vu_{i+1}, ~\beta_{\min}^{i-1} \land \frac{\pi_{i}}{d_{i}} \right)$ is continuous in both $c_{i}$ and $\hist^{i-1}$. Applying Berge's Maximum Theorem \citep[Theorem 12.1]{border1985fixed}, we know that the maximizer $\pi_{i}^{*,~w} (\Vu_{i}, ~\beta_{\min}^{i-1})$ is continuous in $c_{i}$ and $\hist^{i-1}$ if it is unique; otherwise, the theorem instead implies that the set of maximizers is upper hemicontinuous, and we take the supremum. 

     Next, we show that for any $i > n$, $\pi_{i}^{*,~w}(\Vu_{i}, ~\beta_{\min}^{i-1})$ is continuous in $\pi_n$ by induction.

     \noindent \textbf{Base Case}: $i = n+1$. Note that $\pi_{n+1}^{*,~w}(\Vu_{n+1}, ~\beta_{\min}^{n})$ is a function of $\pi_n$ through the remaining capacity $c_{n+1} = c_{n} - \pi_n$ and the updated minimum fill rate $\hist^{n} = \hist^{n-1} \land \frac{\pi_n}{d_n}$, thus, from the chain rule we know $\pi_{n+1}^{*,~w}(\Vu_{n+1}, ~\beta_{\min}^{n})$ is continuous in $\pi_n$. 

     \noindent \textbf{Step Case}: $i \rightarrow i+1$. Note that the remaining capacity $c_{i+1} = c_n - \pi_n - \sum \limits_{j = n+1}^{i} \pi_{j}^{*,~w}(\Vu_{j}, ~\beta_{\min}^{j-1})$ and the updated minimum fill rate $\hist^{i} = \min \limits_{j = n+1, \dots, i} \left\{\hist^{n-1}, \frac{\pi_n}{d_n}, \frac{\pi_{j}^{*,~w}(\Vu_{j}, ~\beta_{\min}^{j-1})}{d_j} \right\}$. From the induction hypothesis, for all $j \leq i$, $\pi_{j}^{*,~w}(\Vu_{j}, ~\beta_{\min}^{j-1})$ are continuous in $\pi_n$ and the fact that $\pi_{i+1}^{*,~w} (\Vu_{i+1}, ~\beta_{\min}^{i})$ is continuous in $c_{i+1}$ and $\hist^{i}$, we conclude that $\pi_{i+1}^{*,~w} (\Vu_{i+1}, ~\beta_{\min}^{i})$ is continuous in $\pi_n$.
 \end{proof}

\DecreasingL*
\begin{proof}
    Note that to show the future minimum fill rate \(B_n(\Vu_{n}, ~\beta_{\min}^{n-1}, ~\pi_n; ~\overset{\rightarrow}{d}_{n+1:N})\) is non-increasing in \(\pi_{n}\), it's equivalent to show that for any stage $i > n+1$, the optimal allocation \(\pi_{i}^{*, ~w}(\Vu_i, ~\hist^{i-1})\) is non-decreasing in the capacity $c_{n+1} = c_n - \pi_n$ and non-increasing in the prior minimum fill rate $\hist^{n} = \hist^{n-1} \land \dfrac{\pi_n}{d_n}$. 
    
    Towards this, we first show for any stage $i$, we have \(\pi_{i}^{*, ~w}(\Vu_i, ~\hist^{i-1})\) is non-decreasing in \(c_{i}\). Second, we show that for any stage $i < N$, the optimal remaining capacity \(c_{i+1}^{*, ~w}(\Vu_i,~\hist^{i-1}) \triangleq c_{i}-\pi_{i}^{*, ~w}(\Vu_i,~\hist^{i-1})\) is also non-decreasing in \(c_{i}\). Then by induction, when \(c_{n+1}\) decreases, for any stage $i>n$, we have \(\pi_{i}^{*, ~w}(\Vu_{i},~\hist^{i-1})\) is non-increasing. Finally, we show that for any stage $i$, we have \(\pi_{i}^{*, ~w}(\Vu_{i},~\hist^{i-1})\) is non-increasing in $\hist^{i-1}$ and then by induction, when $\hist^n$ increases, for any stage $i>n$, we have \(\pi_{i}^{*, ~w}(\Vu_{i},~\hist^{i-1})\) is non-increasing, and therefore, the minimum fill rate \(B_n(\Vu_{n}, ~\beta_{\min}^{n-1}, ~\pi_n; ~\overset{\rightarrow}{d}_{n+1:N})\) is also non-increasing. Consequently, \(B_n(\Vu_{n}, ~\beta_{\min}^{n-1}, ~\pi_n; ~\overset{\rightarrow}{d}_{n+1:N})\) is non-increasing in $\pi_n$ due to $c_{n+1} = c_n - \pi_n$ and $\hist^{n} = \hist^{n-1} \land \dfrac{\pi_n}{d_n}$. We note that similar results (monotonicity of $\pi_i^{*,~w}$ with respect to $c_i$ and $\hist^{i-1}$) appear in \cite{ma2022fairness}.     
    
    \noindent \textbf{Part I:} For any stage $i$, we prove that \(\pi_{i}^{*,~w}(\Vu_i,~\hist^{i-1})\) is non-decreasing in \(c_{i}\).
    By definition, 
    \begin{align*}
        \pi_{i}^{*,~w}(\Vu_i,~\hist^{i-1})&= \argmax \limits_{\scriptstyle 0 \leq \pi_i \leq c_i \land d_i} \mathbb{E}_{d_{i+1} \sim D_{\sigma_{i+1}}}\left[W_{i+1}^{(\Vsig, ~*)}\left(c_{i}-\pi_{i}, ~d_{i+1}, ~\hist^{i-1} \wedge \frac{\pi_{i}}{d_{i}}\right)\right],\\
        &\triangleq \argmax \limits_{\scriptstyle 0 \leq \pi_i \leq c_i \land d_i} h_{i}\left(\pi_{i}; ~c_{i}, ~d_{i}, ~\beta_{\min }^{i-1}\right).
    \end{align*}
    
    We show that \(h_{i}\left(\pi_{i} ; ~c_i, ~d_{i}, ~\beta_{\min }^{i-1}\right)\) is supermodular in \(\left(c_{i}, ~\pi_{i}\right)\). Note that it's equivalent to show \(\frac{\partial^{2} h_{i}}{\partial c_{i} \partial \pi_{i}}(\cdot) \geq 0\) by \citet[Corollary 2.6.1]{topkis1998supermodularity}. We show this as follows:
    \begin{align*}
    \frac{\partial^{2} h_{i}}{\partial c_{i} \partial \pi_{i}}(\cdot) & = \mathbb{E}_{d_{i+1} \sim \D_{\sigma_{i+1}}} \left[ \frac{\partial}{\partial c_{i}}\left(-\frac{\partial W_{i+1}^{(\Vsig, ~*)}}{\partial c_{i+1}} \left(c_{i}-\pi_{i}, ~d_{i+1}, ~\hist^{i-1} \wedge \frac{\pi_{i}}{d_{i}}\right) \right. \right.   \\
    &\left. \left. \quad + \frac{1}{d_i} \frac{\partial W_{i+1}^{(\Vsig, ~*)}}{\partial \hist^i}\left(c_{i}-\pi_{i}, ~d_{i+1}, ~\hist^{i-1} \wedge \frac{\pi_{i}}{d_{i}} \right) \mathbf{1}\{\pi_{i} \leq \beta_{\min }^{i-1} d_{i}\} \right) \right], \\
    & =\mathbb{E}_{d_{i+1} \sim \D_{\sigma_{i+1}}} \left[ \underbrace{-\frac{\partial^2 W_{i+1}^{(\Vsig, ~*)}}{\partial c_{i+1}^2}\bigl(c_{i}-\pi_{i}, ~d_{i+1}, ~\hist^{i-1} \wedge \frac{\pi_{i}}{d_{i}}\bigr)}_{(1)} \right. \\
    &\left. \quad + \underbrace{\frac{1}{d_i} \frac{\partial^2 W_{i+1}^{(\Vsig, ~*)}}{\partial c_{i+1} \partial \hist^i}\bigl(c_{i}-\pi_{i}, ~d_{i+1}, ~\hist^{i-1} \wedge \frac{\pi_{i}}{d_{i}}\bigr) \mathbf{1}\{\pi_{i} \leq \beta_{\min }^{i-1} d_{i}\}}_{(2)} \right] \geq 0.
    \end{align*}
    Term $(1)$ is non-negative by concavity of $W_{i+1}^{(\Vsig, ~*)}\left(c_{i}-\pi_{i}, ~d_{i+1}, ~\hist^{i-1} \wedge \frac{\pi_{i}}{d_{i}}\right)$ in \(c_{i+1}\) (\Cref{lemma:concave_in_pi}), and term $(2)$ is non-negative due to the supermodularity of $W_{i+1}^{(\Vsig, ~*)}\left(c_{i}-\pi_{i}, ~d_{i+1}, ~\hist^{i-1} \wedge \frac{\pi_{i}}{d_{i}}\right)$ in $(c_{i+1}, ~\hist^i)$ from \citet[Proposition 7]{ma2022fairness}. 
    Since the set $\{0 \leq \pi_i \leq c_i \land d_i\}$ is a sublattice on $(c_i,~\pi_i)$ and \(h_{i}\left(\pi_{i} ; ~c_i,~d_{i}, ~\beta_{\min }^{i-1}\right)\) is supermodular in \(\left(c_{i}, ~\pi_{i}\right)\), we can apply \citet[Theorem 2.8.2]{topkis1998supermodularity} to show that $\pi_i^{*,~w}(\cdot)$ is non-decreasing in $c_i$. If there exists multiple optimal solutions, we apply \citet[Theorem 2.8.3]{topkis1998supermodularity} to show that the supremum of the optimal solutions is non-decreasing.

    \noindent \textbf{Part II:} Similar to \textbf{Part I}, we will show that for any stage $i < N$, the optimal remaining capacity \(c_{i+1}^{*,~w}(\Vu_i,~\hist^{i-1}) \triangleq c_{i}-\pi_{i}^{*,~w}(\Vu_i,~\hist^{i-1})\) is non-decreasing in \(c_{i}\) using supermodularity. First, we transform the optimization problem such that
    \begin{equation} \label{eqn:remaining_capacity_opt}
    \begin{aligned}
    c_{i+1}^{*, ~w}(\Vu_i,~\hist^{i-1}) & = \argmax \limits_{c_i - d_i \vee 0 \leq c_{i+1} \leq c_i} \mathbb{E}_{d_{i+1} \sim \D_{\sigma_{i+1}}}\left[W_{i+1}^{(\Vsig, *)}\left(c_{i+1}, ~d_{i+1}, ~\frac{c_{i}-c_{i+1}}{d_{i}} \wedge \hist^{i-1} \right)\right], \\
    & \triangleq \argmax \limits_{c_i - d_i \vee 0 \leq c_{i+1} \leq c_i} f_{i}\left(c_{i+1}; ~c_{i}, ~d_{i},~\hist^{i-1}\right).
    \end{aligned}
    \end{equation}
    
    We show that \(\frac{\partial^{2} f_{i}}{\partial c_{i} \partial c_{i+1}}\left(c_{i+1}; ~c_{i}, ~d_{i},~\hist^{i-1}\right) \geq 0\) and thus, \(f_{i}\left(\cdot \right)\) is supermodular in \(\left(c_{i+1}, c_{i}\right)\) by \citet[Corollary 2.6.1]{topkis1998supermodularity}. Moreover,
    \begin{align*}
    \frac{\partial^{2} f_{i}}{\partial c_{i} \partial c_{i+1}}\left(\cdot\right) & = \mathbb{E}_{d_{i+1} \sim \D_{\sigma_{i+1}}} \Bigl[ \frac{\partial}{\partial c_i} \Bigl( \frac{\partial W_{i+1}^{(\Vsig, *)}}{\partial c_{i+1}} \bigl(c_{i+1}, ~d_{i+1}, ~\frac{c_{i}-c_{i+1}}{d_{i}} \wedge \hist^{i-1} \bigr) \\
    &\quad - \frac{1}{d_i}\frac{\partial W_{i+1}^{(\Vsig, *)}}{\partial \hist^{i}} \bigl(c_{i+1}, ~d_{i+1}, ~\frac{c_{i}-c_{i+1}}{d_{i}} \wedge \hist^{i-1} \bigr) \mathbf{1}\{c_{i+1} \geq c_i - \hist^{i-1}d_i\} \Bigr) \Bigr], \\
    & = \mathbb{E}_{d_{i+1} \sim \D_{\sigma_{i+1}}} \Bigl[ \underbrace{\frac{1}{d_i}\frac{\partial^2 W_{i+1}^{(\Vsig, *)}}{\partial \hist^i \partial c_{i+1}} \bigl(c_{i+1}, ~d_{i+1}, ~\frac{c_{i}-c_{i+1}}{d_{i}} \wedge \hist^{i-1} \bigr)}_{(1)} \\
    &\quad  \underbrace{- \frac{1}{d_i^2}\frac{\partial^2 W_{i+1}^{(\Vsig, *)}}{\partial \left(\hist^{i}\right)^2} \bigl(c_{i+1}, ~d_{i+1}, ~\frac{c_{i}-c_{i+1}}{d_{i}} \wedge \hist^{i-1} \bigr) \mathbf{1}\{c_{i+1} \geq c_i - \hist^{i-1}d_i \}}_{(2)} \Bigr] \geq 0.
    \end{align*}
    Term $(1)$ is non-negative since $W_{i+1}^{(\Vsig, ~*)} (\cdot)$ is supermodular in $(c_{i+1}, ~\hist^i)$ from \citet[Proposition 7]{ma2022fairness}, and term $(2)$ is also non-negative due to $W_{i+1}^{(\Vsig, ~*)}(\cdot)$'s concavity in $\hist^{i}$ from \Cref{lemma:concave_in_pi}. 
    Again, the feasible set $\{c_i - d_i \vee 0 \leq c_{i+1} \leq c_i\}$ is a sublattice of $(c_{i+1, ~c_i})$ and $f_{i}\left(c_{i+1}; ~c_{i}, ~d_{i},~\hist^{i-1}\right)$ is supermodular in $(c_{i+1},~c_i)$, applying either \citet[Theorem 2.8.2]{topkis1998supermodularity} (when the optimal solution is unique) or \citet[Theorem 2.8.3]{topkis1998supermodularity} (when there exist multiple optimal solutions) will prove that $c_{i+1}^{*,~w}(\cdot)$ is non-decreasing.

    \noindent \textbf{Part III:} For any stage $i$, we prove that \(\pi_{i}^{*,~w}(\Vu_i,~\hist^{i-1})\) is non-increasing in \(\hist^{i-1}\). We show that \(h_{i}\left(\pi_{i} ; ~c_i, ~d_{i}, ~\beta_{\min }^{i-1}\right)\) is submodular in \(\left(\hist^{i-1}, ~\pi_{i}\right)\). Note that it's equivalent to show \(\frac{\partial^{2} h_{i}}{\partial \hist^{i-1} \partial \pi_{i}}(\cdot) \leq 0\) by \citet[Corollary 2.6.1]{topkis1998supermodularity}. We show this as follows:
    \begin{align*}
    \frac{\partial^{2} h_{i}}{\partial \hist^{i-1} \partial \pi_{i}}(\cdot) & = \mathbb{E}_{d_{i+1} \sim \D_{\sigma_{i+1}}} \left[ \frac{\partial}{\partial \hist^{i-1}}\left(-\frac{\partial W_{i+1}^{(\Vsig, ~*)}}{\partial c_{i+1}} \left(c_{i}-\pi_{i}, ~d_{i+1}, ~\hist^{i-1} \wedge \frac{\pi_{i}}{d_{i}}\right) \right. \right.   \\
    &\left. \left. \quad + \frac{1}{d_i} \frac{\partial W_{i+1}^{(\Vsig, ~*)}}{\partial \hist^i}\left(c_{i}-\pi_{i}, ~d_{i+1}, ~\hist^{i-1} \wedge \frac{\pi_{i}}{d_{i}} \right) \mathbf{1}\{\pi_{i} \leq \beta_{\min }^{i-1} d_{i}\} \right) \right], \\
    & =\mathbb{E}_{d_{i+1} \sim \D_{\sigma_{i+1}}} \left[ -\frac{\partial^2 W_{i+1}^{(\Vsig, ~*)}}{\partial \hist^i \partial c_{i+1}}\bigl(c_{i}-\pi_{i}, ~d_{i+1}, ~\hist^{i-1} \wedge \frac{\pi_{i}}{d_{i}}\bigr) \mathbf{1}\{\pi_{i} > \beta_{\min }^{i-1} d_{i}\} \right. \\
    &\left. \quad + \frac{1}{d_i} \frac{\partial^2 W_{i+1}^{(\Vsig, ~*)}}{\partial (\hist^i)^2}\bigl(c_{i}-\pi_{i}, ~d_{i+1}, ~\hist^{i-1} \wedge \frac{\pi_{i}}{d_{i}}\bigr) \mathbf{1}\{\pi_{i} \leq \beta_{\min }^{i-1} d_{i}\} \mathbf{1}\{\pi_{i} > \beta_{\min }^{i-1} d_{i}\} \right],\\
    & = - \mathbb{E}_{d_{i+1} \sim \D_{\sigma_{i+1}}} \left[\frac{\partial^2 W_{i+1}^{(\Vsig, ~*)}}{\partial \hist^i \partial c_{i+1}}\bigl(c_{i}-\pi_{i}, ~d_{i+1}, ~\hist^{i-1} \wedge \frac{\pi_{i}}{d_{i}}\bigr) \mathbf{1}\{\pi_{i} \geq \beta_{\min }^{i-1} d_{i}\}\right] \overset{(i)}{\leq} 0.
    \end{align*}
    Inequality $(i)$ is from the supermodularity of $W_{i+1}^{(\Vsig, ~*)}\left(c_{i}-\pi_{i}, ~d_{i+1}, ~\hist^{i-1} \wedge \frac{\pi_{i}}{d_{i}}\right)$ in $(c_{i+1}, ~\hist^i)$ in \citet[Proposition 7]{ma2022fairness}. Similar to \textbf{Part I} we can apply \citet[Theorem 2.8.2]{topkis1998supermodularity} to show that $\pi_i^{*,~w}(\cdot)$ is non-increasing in $\hist^{i-1}$.
\end{proof}

%% file: parts/appendix/Threshold_properties.tex
\section{Proof of Threshold Structure in \Cref{sec:allocation}} \label{appendix:allocation}

\HqThreshold*
Before we prove \Cref{thm:hq_threshold}, we state preliminary lemmas and defer their proofs to \Cref{appendix: threshold_lemmas}. First, we show that we can simplify the constraint $0 \leq \pi_n \leq c_n \land d_n$ to $\pi_n \leq d_n$.

\begin{restatable}{lemma}{NewFormulation} \label{lemma:unconstrained_pi}
   Given any static routing policy $\Vsig$, for any stage $0 < n < N$ and state $\Vu_n$, any unconstrained maximizer of $h_n(\pi_n; ~d_n)$ is positive and strictly less than the capacity $c_n$. Furthermore, when the optimal solution is not unique, there always exists an unconstrained maximizer that allocates no greater than the demand $d_n$.
\end{restatable}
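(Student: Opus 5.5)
The plan is to study $h_n(\pi_n;~d_n)$ as a function of $\pi_n$ on all of $\mathbb{R}$. To do this, I first extend the continuation value $Z^{(\Vsig,~*)}_{n+1}$ to non-positive remaining capacity $c_{n+1}=c_n-\pi_n\le 0$ using the same recursion as \Cref{eqn:hq_dp}. At the last stage this gives $Z_N^{(\Vsig,~*)}=\frac{c_N}{d_N}\wedge 1$, which is $\le 0$ when $c_N\le 0$. The conclusions then follow from four ingredients, established in this order:
\begin{enumerate*}[label=(\roman*)]
\item $Z^{(\Vsig,~*)}_{k}\le 1$ everywhere;
\item $Z^{(\Vsig,~*)}_{k}(\Vu_k)>0$ whenever $c_k>0$;
\item $Z^{(\Vsig,~*)}_{k}(\Vu_k)\le 0$ whenever $c_k\le 0$;
\item $Z^{(\Vsig,~*)}_{n+1}$ is non-decreasing in capacity, which is \Cref{lemma:monotone}(a) and carries over to the extension by the same induction.
\end{enumerate*}

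Facts (i) and (iii) follow by a short backward induction. For (i), every term is a minimum involving a value that is at most $1$. For (iii), with $c_k\le 0$ every feasible, or extended, allocation is non-positive, so each minimum is non-positive. Fact (ii) is the step I expect to need the most care. At stage $N$ it is immediate, since $c_N/d_N\wedge 1>0$. For $k<N$ I would exhibit a feasible allocation with positive value: choose $\pi_k=\epsilon\in(0,c_k\wedge d_k)$. Then $c_{k+1}=c_k-\epsilon>0$, and by the induction hypothesis $Z_{k+1}^{(\Vsig,~*)}>0$ for every $d_{k+1}$ in the compact support. By continuity in $d_{k+1}$, the value $\E\bigl[\tfrac{\epsilon}{d_k}\wedge Z_{k+1}^{(\Vsig,~*)}\bigr]$ is strictly positive, so the maximum is positive.

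With these facts the first claim follows quickly.
\begin{itemize}
\item For $\pi_n\le 0$ we have $\frac{\pi_n}{d_n}\le 0$, so $h_n(\pi_n;~d_n)\le 0$.
\item For $\pi_n\ge c_n$ we have $c_{n+1}\le 0$, so by (iii) $h_n(\pi_n;~d_n)\le 0$.
\item For a small $\epsilon\in(0,c_n)$, fact (ii) applied at stage $n+1$ gives $h_n(\epsilon;~d_n)>0$, since $c_n>0$ is in force.
\end{itemize}
Hence every unconstrained maximizer lies in the open interval $(0,c_n)$.

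For the second claim, take any $\pi_n\ge d_n$. Then $\frac{\pi_n}{d_n}\ge 1\ge Z^{(\Vsig,~*)}_{n+1}$ by (i), so $h_n(\pi_n;~d_n)=\E\bigl[Z^{(\Vsig,~*)}_{n+1}(c_n-\pi_n,\ldots)\bigr]$. By (iv) this is non-increasing in $\pi_n$ on $[d_n,\infty)$. Therefore, if some maximizer $\pi_n^\star$ exceeds $d_n$, then $h_n(d_n;~d_n)\ge h_n(\pi_n^\star;~d_n)$, so $d_n$ is also a maximizer. It is interior, because $0<d_n<\pi_n^\star<c_n$.

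The same inequality handles the unique case: a unique maximizer cannot exceed $d_n$, since otherwise $d_n$ would be a second maximizer. So whenever the maximizer is unique or not, there is one in $(0,c_n\wedge d_n]$. This justifies dropping the constraint $0\le\pi_n\le c_n$ and keeping only $\pi_n\le d_n$.
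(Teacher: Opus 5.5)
Your proof is correct and follows the same two-step route as the paper. First, you show $h_n(\cdot;~d_n)\le 0$ off $(0,c_n)$ while $h_n>0$ somewhere inside. Second, you use $\frac{\pi_n}{d_n}\ge 1\ge Z^{(\Vsig,~*)}_{n+1}$ for $\pi_n\ge d_n$ together with monotonicity in capacity (\Cref{lemma:monotone}) to get that $h_n$ is non-increasing on $[d_n,\infty)$.

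You are more careful than the paper, which asserts the sign facts and the behavior at negative capacity without proof. Your appeal to continuity in $d_{k+1}$ for fact (ii) is unnecessary, since a pointwise strictly positive integrand already has strictly positive expectation.
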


By \Cref{lemma:unconstrained_pi}, for any stage $0 < n < N$, we immediately have an equivalent formulation to \Cref{eqn:hq_dp}:
\begin{equation}
    \begin{split}
        Z^{(\Vsig, ~*)}_n (\Vu_n) &= \max \limits_{\pi_n \leq d_n} \E_{d_{n+1}\sim \D_{\sigma_{n+1}}} \left[\frac{\pi_n}{d_n} \land Z^{(\Vsig, ~*)}_{n+1} (\Vu_{n+1}) \right],\\
        &= \max \limits_{\pi_n \leq d_n} h_n(\pi_n; ~d_n).
        \label{eqn:new_hq_dp} 
    \end{split}
\end{equation}

We also need \Cref{lemma:marginal_profit_convexity} to show that, due to concavity, the sign of the marginal profit on the boundary determines whether one should allocate the demand or not. 

\MarginalProfit*

Lastly, to prove \Cref{thm:hq_threshold}, we also need \Cref{lemma:expected_fill_rate_one} that gives necessary and sufficient conditions on the capacity $c_n$ such that we achieve a fill-rate of one, i.e., it characterizes capacity regimes where $Z_{n}^{(\Vsig, ~*)} (c_n, ~d_{n}) = 1$.

\begin{restatable}{lemma}{FillRateOneCapacity} 
    \label{lemma:expected_fill_rate_one}
    Given a static routing policy $\Vsig$, for any stage $1 \leq n \leq N$, the expected minimum fill rate $Z_n^{(\Vsig, ~*)}(c_n, ~d_n) = 1$ if and only if 
    \begin{equation}
        c_n \geq d_n + \sum \limits_{i = n+1}^N d_i^{\max}.
    \end{equation}
\end{restatable}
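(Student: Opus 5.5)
We prove \Cref{lemma:expected_fill_rate_one} by backward induction on $n$, establishing both directions together. At the base case $n = N$, \Cref{eqn:hq_dp} gives $Z_N^{(\Vsig,~*)}(\Vu_N) = \frac{c_N}{d_N} \land 1$. This equals one if and only if $c_N \geq d_N$, which is the claim because the sum over $i \ge N+1$ is empty.

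For the inductive step, we assume the statement at stage $n+1$ for every capacity and every demand $d_{n+1}$, and we split into two directions.

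\emph{Sufficiency.} Suppose $c_n \geq d_n + \sum_{i=n+1}^N d_i^{\max}$. We choose the feasible allocation $\pi_n = d_n$, which is feasible because $c_n \ge d_n$. The remaining capacity is then $c_{n+1} = c_n - d_n \geq \sum_{i=n+1}^N d_i^{\max}$. For every realization $d_{n+1}$ this is at least $d_{n+1} + \sum_{i=n+2}^N d_i^{\max}$. By the induction hypothesis, $Z_{n+1}^{(\Vsig,~*)}(\Vu_{n+1}) = 1$ almost surely, so $h_n(d_n; d_n) = \E[1 \land 1] = 1$. Hence $Z_n^{(\Vsig,~*)} \ge 1$. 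The reverse inequality $Z_n^{(\Vsig,~*)} \le 1$ holds trivially, since every term is a minimum involving a fill rate $\pi_n/d_n \le 1$.

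\emph{Necessity.} Suppose $c_n < d_n + \sum_{i=n+1}^N d_i^{\max}$ and fix any feasible $\pi_n \le c_n \land d_n$.
\begin{itemize}
\item If $\pi_n < d_n$, then $\frac{\pi_n}{d_n} \land Z_{n+1} \le \frac{\pi_n}{d_n} < 1$ pointwise, so $h_n(\pi_n;d_n) < 1$.
\item If $\pi_n = d_n$, then $c_{n+1} = c_n - d_n < \sum_{i=n+1}^N d_i^{\max}$. We set $\epsilon = \sum_{i=n+1}^N d_i^{\max} - c_{n+1} > 0$. For every $d_{n+1} \in (d_{n+1}^{\max} - \epsilon, d_{n+1}^{\max}]$ we have $c_{n+1} < d_{n+1} + \sum_{i=n+2}^N d_i^{\max}$, so the induction hypothesis gives $Z_{n+1}^{(\Vsig,~*)}(\Vu_{n+1}) < 1$ on this interval.
\end{itemize}

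\textbf{Main obstacle.} The delicate point in the second case is turning "$Z_{n+1} < 1$ on an interval" into "$\E[\cdot] < 1$". This requires the interval near $d_{n+1}^{\max}$ to carry positive probability. For continuous distributions with connected support $[d^{\min}, d^{\max}]$ this holds, because $d^{\max}$ is the supremum of the support. The same holds for discrete distributions, where $d^{\max}$ is an atom. Since $Z_{n+1} \le 1$ everywhere and is strictly less than one on a set of positive probability, the expectation is strictly below one, giving $h_n(d_n;d_n) < 1$.

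\emph{Conclusion.} Thus $h_n(\pi_n;d_n) < 1$ for every feasible $\pi_n$. To pass from this to $\max_{\pi_n} h_n < 1$, we use compactness of the feasible set $[0, c_n \land d_n]$ together with continuity of $h_n$ in $\pi_n$. Continuity follows from the concavity in capacity given by \Cref{lemma:concave_in_pi} and the monotonicity in \Cref{lemma:monotone}, as in the continuity argument of \Cref{lemma:hq_equate_property_cont}. The maximum is therefore attained and is strictly less than one, which closes the induction.
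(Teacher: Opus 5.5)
Your proposal is correct and follows essentially the same backward induction as the paper. Sufficiency comes from allocating the full demand. For necessity, $Z_n=1$ forces $\pi_n=d_n$ together with $Z_{n+1}(c_n-d_n,d_{n+1})=1$, and the induction hypothesis turns this into $c_n-d_n\ge\sum_{i=n+1}^N d_i^{\max}$. Where the paper uses monotonicity in $d_{n+1}$ to reduce to the single point $d_{n+1}^{\max}$, you make explicit that the argument needs every interval $(d_{n+1}^{\max}-\epsilon,d_{n+1}^{\max}]$ to have positive probability, and you also address attainment of the maximum, which the paper takes for granted; these tighten the paper's argument rather than change its route.
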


Now we are ready to prove \Cref{thm:hq_threshold}.

\begin{proof}
    First note that when $n = N$ the optimal allocation policy trivially satisfies a threshold structure via \Cref{eqn:hq_dp} since
    \[
    \pi_n^{*,~z}(\Vu_N) = c_N \wedge d_N.
    \]
    This follows a threshold structure with $t_N = c_N$. Hence, we focus on the case when $n < N$.
    The remainder of the proof proceeds in cases corresponding to the three capacity regimes. In the scarce regime, we show that allocating the demand does not satisfy the {\em equating property} and hence is not optimal by \Cref{lemma:hq_equate_property_cont}. In the abundant regime, we apply \Cref{lemma:expected_fill_rate_one} to show that allocating the demand is trivially optimal. Lastly, in the intermediate regime, we write the expectation representation of $h_n(\pi_n; ~d_n)$ as an integral and establish the sign of $\varphi_n(d_n)$.  Then via \Cref{lemma:marginal_profit_convexity}, we obtain the threshold structure.
    
    Since $\D_{n+1}$ has continuous and connected support, we let $F_{n+1}(\cdot)$ denote its cdf and $f_{n+1}(\cdot)$ its pdf.  Thus we can rewrite $h_n(\pi_n;~d_n)$ as
    \begin{equation} \label{eqn:intergal_obj}
        h_n(\pi_n; ~d_n) = \int_{d^{\min}_{n+1}}^{d^{\max}_{n+1}} \left( \frac{\pi_n}{d_n} \land Z_{n+1}^{(\Vsig, ~*)}(c_n - \pi_n,~ d_{n+1}) \right) f_{n+1}(d_{n+1}) \, \mathrm{d} d_{n+1},
    \end{equation}
    where the state $\Vu_n$ is simplified to $(c_n, ~d_n)$ since $\Vsig$ is given.
    
    For simplicity, we assume that for any state $n$, our objective $Z_{n}^{(\Vsig, ~*)} (c_n,~d_n)$ is second-order differentiable in the capacity $c_n$ and observed demand $d_n$. Indeed, $Z_{n}^{(\Vsig, ~*)} (c_n,~d_n)$ is smooth except for a finite number of points due to the minimum operator, replacing derivatives with supergradients preserves the proof.
    We next split into three cases based on the remaining capacity $c_n$, which affects at what values of future demands the equating property hold, and thus the form of the integral.
    
    \textbf{Case I (Scarce Capacity):} We first consider the case when $c_n < d_n^{\min} + d_{n+1}^{\min} + \sum \limits_{i = n+2}^N d_i^{\max}$.  Note that by \Cref{lemma:hq_equate_property_cont}, an optimal allocation satisfies the {\em equating property}, that is, for any observed demand $d_n \in [d_n^{\min},~d_n^{\max}]$, there exists a future demand $d_{n+1} \in  [d_{n+1}^{\min},~d_{n+1}^{\max}]$ such that
    \begin{equation} \label{eqn:proof_hq_equating}
        \frac{\pi_n^{*,~z}}{d_n} = Z_{n+1}^{(\Vsig, ~*)}(c_n - d_n,~ d_{n+1}).
    \end{equation}
    Suppose $\pi_n^{*,~z} = d_n$. By \Cref{eqn:proof_hq_equating}, there must exist a future demand such that 
    \begin{equation} \label{eqn:proof_allocating_equating}
        \frac{\pi_n^{*,~z}}{d_n} = 1 = Z_{n+1}^{(\Vsig, ~*)}(c_n - d_n,~ d_{n+1}).
    \end{equation}
    By the decreasing property of $Z_{n+1}^{(\Vsig, ~*)}$ in demand (\Cref{lemma:monotone}), \Cref{eqn:proof_allocating_equating} must be satisfied at $d_{n+1} = d_{n+1}^{\min}$.
    By \Cref{lemma:expected_fill_rate_one}, this requires that the remaining capacity satisfies $c_{n+1} \geq d_{n+1}^{\min} + \sum \limits_{i = n+2}^N d_i^{\max}$. However, from the capacity condition in the scarce regime, 
    $$c_{n+1} = c_n - \pi_n^{*,~z}(\Vu_n) = c_n - d_n < d_{n+1}^{\min} + \sum \limits_{i = n+2}^N d_i^{\max}.$$
    That results in a contradiction. 
    Since this is true {\em for any demand} $d_n$, the threshold $t_n(c_n) = 0$.

     \textbf{Case II (Abundant Capacity):} We next consider the case when $c_n > \sum \limits_{i = n}^N d_i^{\max}$. From \Cref{lemma:expected_fill_rate_one}, we immediately conclude that $\pi_n^{*,~z} = d_n$ and the threshold $t_n(c_n) = c_n$.

     \textbf{Case III (Intermediate Capacity):} We lastly consider the case when $d_n^{\min} + d_{n+1}^{\min} + \sum \limits_{i = n+2}^N d_i^{\max} \leq c_n \leq \sum \limits_{i = n}^N d_i^{\max}$. We start by presenting \Cref{lemma:non-increasing_marginal} that shows that in the intermediate capacity regime, $\varphi_n(d_n)$ is non-increasing in $d_n$. We defer its proof to \Cref{appendix: threshold_lemmas}.

     \begin{restatable}{lemma}{DecreasingMarginalProfit}
     \label{lemma:non-increasing_marginal}
         For any demand $d_n \in [d_n^{\min}, ~d_n^{\max}]$, $\varphi_n(d_n)$ is non-increasing in $d_n$. Furthermore, if $c_n > d_n^{\max} + d_{n+1}^{\min} + \sum \limits_{i=n+2}^N d_i^{\max}$, then $\varphi_n(d_n)$ is decreasing in $d_n$.
     \end{restatable}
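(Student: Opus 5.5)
The plan is to compute $\varphi_n(d_n)$ explicitly from the integral representation \Cref{eqn:intergal_obj}, and then read off monotonicity term by term. The key structural fact is a single crossing. By \Cref{lemma:monotone}, $Z_{n+1}^{(\Vsig,~*)}(c_n-\pi_n, ~d_{n+1})$ is non-increasing in $d_{n+1}$. Hence for fixed $\pi_n$ there is a crossing point $x^*(\pi_n) \in [d_{n+1}^{\min}, ~d_{n+1}^{\max}]$ (clipped to the support) such that
\begin{itemize}
    \item the minimum in \Cref{eqn:intergal_obj} equals $\pi_n/d_n$ for $d_{n+1} \le x^*$;
    \item it equals $Z_{n+1}^{(\Vsig,~*)}$ for $d_{n+1} > x^*$.
\end{itemize}
The existence of this crossing is exactly the equating property of \Cref{lemma:hq_equate_property_cont}. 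This splits $h_n$ into two integrals:
\begin{equation*}
h_n(\pi_n;~d_n) = \frac{\pi_n}{d_n}F_{n+1}(x^*) + \int_{x^*}^{d_{n+1}^{\max}} Z_{n+1}^{(\Vsig,~*)}(c_n-\pi_n,~x)\, f_{n+1}(x)\,\mathrm{d}x .
\end{equation*}

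I would differentiate this expression in $\pi_n$ using Leibniz's rule. The two boundary terms at $x^*$ cancel, because the integrands agree there. This gives
\begin{equation*}
\frac{\partial h_n}{\partial \pi_n} = \frac{F_{n+1}(x^*)}{d_n} - \int_{x^*}^{d_{n+1}^{\max}} \frac{\partial Z_{n+1}^{(\Vsig,~*)}}{\partial c}(c_n-\pi_n,~x)\, f_{n+1}(x)\,\mathrm{d}x .
\end{equation*}
Next I evaluate at $\pi_n = d_n$. The current fill rate is then $1$, so $x^*$ is the largest $x$ with $Z_{n+1}^{(\Vsig,~*)}(c_n-d_n,~x)=1$. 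By \Cref{lemma:expected_fill_rate_one}, this is
\begin{equation*}
x^*(d_n) = c_n - d_n - \sum_{i=n+2}^N d_i^{\max},
\end{equation*}
clipped to $[d_{n+1}^{\min}, d_{n+1}^{\max}]$. In the intermediate regime this quantity is at least $d_{n+1}^{\min}$ when $d_n = d_n^{\min}$. Substituting gives an explicit formula for $\varphi_n(d_n)$ as a function of $d_n$ alone.

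Monotonicity then follows term by term as $d_n$ increases.
\begin{itemize}
    \item \emph{First term.} $x^*(d_n)$ is non-increasing in $d_n$, so $F_{n+1}(x^*)$ is non-increasing, and $1/d_n$ is decreasing. Their product of non-negative factors is therefore non-increasing.
    \item \emph{Second (subtracted) term.} The integrand is non-negative by \Cref{lemma:monotone}(a). The domain $[x^*, d_{n+1}^{\max}]$ grows as $x^*$ falls. The integrand itself is non-decreasing in $d_n$: the capacity $c_n - d_n$ decreases, and $\partial Z_{n+1}/\partial c$ is non-increasing in capacity by the concavity in \Cref{lemma:concave_in_pi}(b). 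So the subtracted term is non-decreasing.
\end{itemize}
Together these give that $\varphi_n$ is non-increasing. For the strict claim, the hypothesis $c_n > d_n^{\max}+d_{n+1}^{\min}+\sum_{i\ge n+2} d_i^{\max}$ forces $x^*(d_n) > d_{n+1}^{\min}$ for every $d_n$ in the support. Hence $F_{n+1}(x^*)>0$, and the strictly decreasing factor $1/d_n$ makes the first term strictly decreasing.

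I expect the main obstacle to be regularity rather than the sign argument. First, $Z_{n+1}^{(\Vsig,~*)}$ is only piecewise smooth, so the Leibniz differentiation and the cancellation of boundary terms must be justified. Following the simplification stated in the proof of \Cref{thm:hq_threshold}, I would replace derivatives by (one-sided) supergradients, which preserves each monotonicity comparison. Second, one must handle the clipped case $x^* = d_{n+1}^{\max}$, where the integral vanishes and $\varphi_n = 1/d_n$, which is still decreasing. Finally, one must check that $x^*(\pi_n)$ is well defined and continuous near $\pi_n = d_n$, using the continuity of $Z_{n+1}^{(\Vsig,~*)}$ in $d_{n+1}$ and the connected support.
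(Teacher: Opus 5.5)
Your proposal is correct and is essentially the paper's proof. Both split $h_n$ at the critical demand, use the same Leibniz cancellation to get $\varphi_n(d_n)=F_{n+1}(x^*)/d_n-\int_{x^*}^{d_{n+1}^{\max}}\frac{\partial Z_{n+1}^{(\Vsig,~*)}}{\partial c_{n+1}}(c_n-d_n,~x)\,f_{n+1}(x)\,\mathrm{d}x$, and rely on the same sign facts: monotonicity and concavity of $Z_{n+1}^{(\Vsig,~*)}$ in capacity, a non-increasing critical demand, and $F_{n+1}(x^*)>0$ for strictness. Your closed form for $x^*$ via \Cref{lemma:expected_fill_rate_one} (in place of \Cref{lemma:nonincreasing_solution}) and your derivative-free value comparisons with clipping are a slightly cleaner packaging that covers both capacity sub-cases at once, though the single crossing itself comes from monotonicity of $Z_{n+1}^{(\Vsig,~*)}$ in $d_{n+1}$, not from the equating property, which concerns optimal allocations.
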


    Leveraging \Cref{lemma:non-increasing_marginal}, we show the threshold structure. Indeed, if $\varphi_n(d_n^{\min}) < 0$, then for any demand $d_n \in [d_n^{\min}, ~d_n^{\max}]$, we have $\varphi_n(d_n) < 0$. By \Cref{lemma:marginal_profit_convexity}, we know that $\pi_n^{*,~z}(d_n) < d_n$ for all $d_n$ and so $t_n(c_n) = 0$. If $\varphi_n(d_n^{\max}) > 0$ then for any demand $d_n \in [d_n^{\min}, ~d_n^{\max}]$, we have $\varphi_n(d_n) > 0$. By \Cref{lemma:marginal_profit_convexity}, we know that $\pi_n^{*,z}(d_n) = d_n$ for all $d_n$ and so $t_n(c_n) = c_n$. Otherwise, we apply the IVT to show that there exists a demand $d_n = t_n(c_n)$ such that $\varphi_n(t_n(c_n)) = 0$. When $d_n < t_n(c_n)$, the marginal profit is non-negative ($\varphi_n(d_n) \ge 0$), and by \Cref{lemma:marginal_profit_convexity}, the optimal policy fully allocates the demand, so $\pi_n^{*,~z}(\Vu_n) = d_n$. When $d_n > t_n(c_n)$, the marginal profit is negative ($\varphi_n(d_n) < 0$), and by \Cref{lemma:marginal_profit_convexity}, the optimal policy allocates strictly less than the demand, so $\pi_n^{*,~z}(\Vu_n) < d_n$.
\end{proof}

\MThreshold*
Before we prove \Cref{thm:m_threshold}, we start by showing that the sign of the boundary function determines whether $\pi_n = \hist^{n-1}d_n$ is optimal or not.  Its proof is deferred to \Cref{appendix: threshold_lemmas}.

\begin{restatable}{lemma}{MMarginalProfit} \label{lemma:marginal_profit_convexity_for_w}
    Given a static routing policy $\Vsig \in \permu{N}$, for any stage $n$, minimum fill rate over previous nodes $\hist^{n-1}$ and state $\Vu_n$, let $h_n(\pi_n;~d_n, ~\hist^{n-1})$ be defined as in \Cref{eqn:def_h_for_w} and $\varphi_n(d_n,~\hist^{n-1})$ be defined as in \Cref{eqn:def_marginal_profit_for_w}.
    When the boundary function is non-negative ($\varphi_n(d_n,~\hist^{n-1}) \geq 0$), the optimal policy maintains the same minimum fill rate, so $\pi_n^{*,~w}(\Vu_n, \hist^{n-1}) = \hist^{n-1}d_n$. When the boundary function is negative ($\varphi_n(d_n,~\hist^{n-1}) < 0$), the optimal policy strictly decreases the minimum fill rate, so $\pi_n^{*,~w}(\Vu_n,~\hist^{n-1}) < \hist^{n-1}d_n$.
\end{restatable}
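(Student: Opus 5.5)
The plan is to reduce the statement to the concavity of $h_n(\cdot;~d_n,~\hist^{n-1})$ in $\pi_n$, together with a separate look at the region $\pi_n > \hist^{n-1}d_n$.

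\emph{Step 1 (structure on each side of the kink).} By \Cref{lemma:concave_in_pi}, $W_{n+1}^{(\Vsig,~*)}$ is concave in the capacity $c_{n+1}=c_n-\pi_n$ and in $\hist^{n}$. The map $\pi_n\mapsto \hist^{n-1}\land \pi_n/d_n$ is concave, and by \Cref{lemma:monotone}(b) $W_{n+1}^{(\Vsig,~*)}$ is non-decreasing in $\hist^n$. A non-decreasing concave function of a concave argument is concave, and the joint concavity in $(c_{n+1},\hist^n)$ can be taken from \citet[Proposition 7]{ma2022fairness}. Hence $h_n$ is concave in $\pi_n$ on the feasible interval.

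On the region $\pi_n\ge \hist^{n-1}d_n$, the second argument is frozen at $\hist^{n-1}$. Only the capacity decreases there, so by \Cref{lemma:monotone}(a) $h_n$ is non-increasing on this region. Therefore some maximizer lies in $[0,\hist^{n-1}d_n\land c_n]$, and we may restrict attention to $\pi_n\le \hist^{n-1}d_n$.

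\emph{Step 2 (sign of the left derivative).} Take $h_n$ concave on $[0,\hist^{n-1}d_n]$. If $\varphi_n(d_n,\hist^{n-1})\ge 0$, then concavity forces every left derivative on this interval to be at least $\varphi_n\ge 0$. So $h_n$ is non-decreasing up to $\hist^{n-1}d_n$, and $\hist^{n-1}d_n$ is optimal.

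This needs feasibility, namely $\hist^{n-1}d_n\le c_n$. If instead $c_n<\hist^{n-1}d_n$, I would argue directly that the left derivative at the kink is negative. The reason is that allocating all of $c_n$ leaves zero capacity, so the future fill rate is $0$ and $h_n=0$ there. Consequently $\varphi_n<0$ in that case, and the condition $\varphi_n\ge 0$ already implies feasibility.

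Conversely, if $\varphi_n<0$, the left derivative at $\hist^{n-1}d_n$ is negative. Then $h_n(\hist^{n-1}d_n-\epsilon)>h_n(\hist^{n-1}d_n)$ for small $\epsilon>0$. Combined with Step 1, which shows that no point above the kink beats the kink, every maximizer is strictly below $\hist^{n-1}d_n$.

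\emph{Main obstacle.} The delicate point is the concavity/differentiability claim near the kink, in particular making the one-sided limit in \Cref{eqn:def_marginal_profit_for_w} well defined. I would handle it by working with left derivatives, which exist for concave functions, instead of assuming smoothness. This mirrors the supergradient remark made in the proof of \Cref{thm:hq_threshold}.

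A second subtlety is strictness in the case $\varphi_n<0$. Concavity alone gives only weak inequalities away from the kink, but strictness is needed only locally near the kink, and that comes directly from the negative left derivative. If $h_n$ is flat on some interval below the kink, non-uniqueness is harmless: all maximizers still lie strictly below $\hist^{n-1}d_n$.
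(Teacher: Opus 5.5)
Your proposal is correct and follows essentially the same route as the paper. Both arguments use concavity of $h_n$ in $\pi_n$, then monotonicity in capacity to show that allocating beyond $\hist^{n-1}d_n$ never helps, then the sign of the left derivative at the kink.

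The differences are minor. In the case $\varphi_n<0$ you use a local-improvement argument, whereas the paper uses the intermediate value theorem to locate a stationary point below the kink. You also treat non-smoothness and the infeasible case $c_n<\hist^{n-1}d_n$ explicitly, which the paper does not address. Your point that joint, not merely separate, concavity in $(c_{n+1},\hist^{n})$ is needed is well taken. It follows by a short backward induction, since partially maximizing a jointly concave function over a convex set preserves concavity.
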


Lastly, we present a lemma that gives necessary and sufficient conditions on the capacity $c_n$ such that we maintain the same minimum fill rate, i.e., it characterizes capacity regimes where $W_{n}^{(\Vsig, ~*)} (c_n, ~d_{n}, ~\hist^{n-1}) = \hist^{n-1}$.

\begin{restatable}{lemma}{MSameFillRate} 
    \label{lemma:same_fill_rate}
    Given a static routing policy $\Vsig$, for any stage $1 \leq n \leq N$, the expected minimum fill rate $W_{n}^{(\Vsig, ~*)} (c_n, ~d_{n}, ~\hist^{n-1}) = \hist^{n-1}$ if and only if
    \begin{equation}
        c_n \geq \hist^{n-1} \left(d_n + \sum \limits_{i = n+1}^N d_i^{\max} \right).
    \end{equation}
\end{restatable}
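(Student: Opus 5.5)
We prove \Cref{lemma:same_fill_rate} by backward induction on $n$, treating the two directions separately. Throughout, $\Vsig$ is fixed, so we write the state as $(c_n, ~d_n, ~\hist^{n-1})$.

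\textbf{Base case ($n=N$).} By \Cref{eqn:m_dp}, $W_N^{(\Vsig,~*)} = \hist^{N-1} \land \frac{c_N}{d_N}$. This equals $\hist^{N-1}$ if and only if $c_N \geq \hist^{N-1} d_N$, which is exactly the claimed condition, since the sum over future nodes is empty.

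\textbf{Sufficiency at stage $n<N$.} Assume $c_n \geq \hist^{n-1}\bigl(d_n + \sum_{i=n+1}^N d_i^{\max}\bigr)$. Choose the feasible allocation $\pi_n = \hist^{n-1} d_n$; it satisfies $\pi_n \leq d_n$ because $\hist^{n-1}\leq 1$, and $\pi_n \leq c_n$ by the hypothesis. Then $\hist^{n} = \hist^{n-1}$ and the remaining capacity is
\[
c_{n+1} = c_n - \hist^{n-1}d_n \geq \hist^{n-1}\Bigl(d_{n+1}^{\max} + \sum_{i=n+2}^N d_i^{\max}\Bigr) \geq \hist^{n}\Bigl(d_{n+1} + \sum_{i=n+2}^N d_i^{\max}\Bigr)
\]
for every realization $d_{n+1}$. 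By the induction hypothesis, $W_{n+1}^{(\Vsig,~*)} = \hist^{n-1}$ pointwise in $d_{n+1}$, so its expectation is $\hist^{n-1}$. Hence $W_n^{(\Vsig,~*)} \geq \hist^{n-1}$, and \Cref{lemma:monotone}(d) gives equality.

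\textbf{Necessity at stage $n<N$.} Assume $W_n^{(\Vsig,~*)} = \hist^{n-1}$, and let $\pi_n$ be an optimal allocation. We have
\[
\E_{d_{n+1}}\Bigl[W_{n+1}\bigl(c_n-\pi_n,~d_{n+1},~\hist^{n-1}\land \tfrac{\pi_n}{d_n}\bigr)\Bigr] = \hist^{n-1},
\]
while each integrand is at most $\hist^{n-1}\land \frac{\pi_n}{d_n} \leq \hist^{n-1}$ by \Cref{lemma:monotone}(d). This forces $\frac{\pi_n}{d_n} \geq \hist^{n-1}$ and $W_{n+1}(\cdot, d_{n+1}, \hist^{n-1}) = \hist^{n-1}$ for almost every $d_{n+1}$.

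The main obstacle is passing from ``almost every $d_{n+1}$'' to the endpoint $d_{n+1}^{\max}$. We handle it as follows. By \Cref{lemma:monotone}(c), $W_{n+1}$ is non-increasing in $d_{n+1}$, so the set of demands where it equals $\hist^{n-1}$ is an interval $[d_{n+1}^{\min}, a]$. Under the continuous, connected-support assumption this interval has full measure, so it must be all of $[d_{n+1}^{\min}, d_{n+1}^{\max}]$, whether or not $W_{n+1}$ is continuous at the endpoint. In particular $W_{n+1} = \hist^{n-1}$ at $d_{n+1} = d_{n+1}^{\max}$.

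The induction hypothesis at $d_{n+1}^{\max}$ then gives
\[
c_n - \pi_n \geq \hist^{n-1}\Bigl(d_{n+1}^{\max} + \sum_{i=n+2}^N d_i^{\max}\Bigr).
\]
Combining this with $\pi_n \geq \hist^{n-1} d_n$ yields $c_n \geq \hist^{n-1}\bigl(d_n + \sum_{i=n+1}^N d_i^{\max}\bigr)$. This closes the induction.
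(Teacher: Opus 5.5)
Your proof follows essentially the same route as the paper: backward induction, the bound $W_{n+1}\le \hist^{n-1}\land \pi_n/d_n$ from \Cref{lemma:monotone}(d), and monotonicity in demand to reduce to $d_{n+1}^{\max}$. The paper proves sufficiency by a direct construction ($\pi_i=\hist^{n-1}d_i$ for all $i\ge n$) rather than by induction, which is an immaterial difference. Your necessity step deriving only $\pi_n\ge\hist^{n-1}d_n$ is more precise than the paper's claimed equality. You are also right that the expectation identity gives $W_{n+1}=\hist^{n-1}$ only for almost every $d_{n+1}$; the paper simply asserts it for all $d_{n+1}$. However, your resolution of this point, which you flag as the main obstacle, is wrong as stated.

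Since $W_{n+1}\le\hist^{n-1}$ and $W_{n+1}$ is non-increasing in $d_{n+1}$, the set $\{d_{n+1}: W_{n+1}=\hist^{n-1}\}$ is down-closed. Nothing, however, makes it closed: it can be $[d_{n+1}^{\min},a)$, because a non-increasing function can drop at a single point. Full measure plus connected support does force its supremum to equal $d_{n+1}^{\max}$. But under a continuous distribution the singleton $\{d_{n+1}^{\max}\}$ has probability zero, so $[d_{n+1}^{\min},d_{n+1}^{\max})$ also has full measure. Hence you cannot conclude $W_{n+1}=\hist^{n-1}$ at $d_{n+1}^{\max}$. Your phrase ``whether or not $W_{n+1}$ is continuous at the endpoint'' is exactly where the argument breaks, since left-continuity is what would make the set closed.

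The repair is short. Apply the induction hypothesis at every $d_{n+1}<d_{n+1}^{\max}$ to get $c_n-\pi_n\ge\hist^{n-1}\bigl(d_{n+1}+\sum_{i=n+2}^N d_i^{\max}\bigr)$, then let $d_{n+1}\uparrow d_{n+1}^{\max}$. With that change your necessity argument is complete. The equality at $d_{n+1}^{\max}$ then follows afterwards from the ``if'' direction.
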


Now we are ready to prove \Cref{thm:m_threshold}.

\begin{proof}
    First note that when $n = N$, maintaining the minimum fill rate is an optimal allocation policy that trivially satisfies a threshold structure via \Cref{eqn:m_dp}:
    \[
    \pi_n^{*,~w}(\Vu_N,~\hist^{N-1}) = c_N \wedge \hist^{N-1}d_N.
    \]
    This follows a threshold structure with $t_N = \dfrac{c_N}{\hist^{N-1}}$.
    Hence, we focus on the case when $n < N$. The remainder of the proof proceeds in cases corresponding to the three capacity regimes. In the scarce regime, we show that allocating the prior minimum fill rate is not optimal due to the violation of the {\em equating property}. In the abundant regime, we show that by \Cref{lemma:same_fill_rate}, allocating the prior minimum fill rate is trivially optimal. Lastly, we show that the boundary function $\varphi_n(d_n,~\hist^{n-1})$ is non-increasing, and thus, by \Cref{lemma:marginal_profit_convexity_for_w} we obtain the threshold structure.

    \textbf{Case I (Scarce Capacity):} We first consider the case when $c_n < \hist^{n-1} \sum \limits_{i = n}^N d_i^{\min}$. 
    Note that an optimal allocation policy satisfies the {\em equating property} by \Cref{lemma:m_equate_property}, so given any demand $d_n$, there exists a sequence of future demands $(d_{n+1}, \ldots, d_N)$ such that
    \begin{equation}
    \label{eq:proof_equate_threshold}
    \frac{\pi_n^{*,~w}}{d_n} = \ldots = \frac{\pi_N^{*,~w}}{d_N}.
    \end{equation}
    Suppose that $\pi_n^{*,~w}(\Vu_n,~ \hist^{n-1}) = \hist^{n-1} d_n$. By \Cref{eq:proof_equate_threshold} there must exist a sample path such that 
    \[
    \frac{\pi_n^{*,~w}(\Vu_n, ~\hist^{n-1})}{d_n} = \hist^{n-1} = \frac{\pi_{n+1}^{*,~w}}{d_{n+1}} = \ldots = \frac{\pi_N^{*,~w}}{d_N}.
    \]
    This requires that the remaining capacity satisfies $c_{n+1} \geq \hist^{n-1} \sum \limits_{i=n+1}^N d_i \geq \hist^{n-1} \sum \limits_{i=n+1}^N d_i^{\min}$. However, owing to the capacity condition in the scarce regime, the remaining capacity is
    \begin{equation*}
    c_{n+1} = c_n - \pi_n^{*,~w}(\Vu_n, ~\hist^{n-1}) = c_n - \hist^{n-1} d_n < \hist^{n-1} \sum_{i=n+1}^N d_i^{\min}.
    \end{equation*}
    That induces a contradiction and hence $\pi_n^{*,~w}(\Vu_n, ~\hist^{n-1}) < \hist^{n-1} d_n$. Since that is true {\em for any value} of $d_n$, and the threshold $t_n(c_n,~\hist^{n-1})$ is equal to zero.

     \textbf{Case II (Abundant Capacity):} We next consider the case when $c_n > \hist^{n-1} \sum \limits_{i = n}^N d_i^{\max}$. From \Cref{lemma:same_fill_rate}, we immediately conclude that $\pi_n^{*,~w} = \hist^{n-1}d_n$ and the threshold $t_n(c_n,~\hist^{n-1}) = \dfrac{c_n}{\hist^{n-1}}$.
     
     \textbf{Case III (Intermediate Capacity):} We lastly consider the case when $\hist^{n-1} \sum \limits_{i = n}^N d_i^{\min} \leq c_n \leq \hist^{n-1} \sum \limits_{i = n}^N d_i^{\max}$. We start off by showing the following Lemma, that $\varphi_n(d_n,~\hist^{n-1})$ is non-increasing in this capacity regime. We defer its proof to \Cref{appendix: threshold_lemmas}.

     \begin{restatable}{lemma}{MDecreasingMarginalProfit}
     \label{lemma:non-increasing_marginal_for_w}
         For any demand $d_n \in [d_n^{\min}, ~d_n^{\max}]$, $\varphi_n(d_n,~\hist^{n-1})$ is non-increasing in $d_n$.
     \end{restatable}

    Leveraging \Cref{lemma:non-increasing_marginal_for_w}, we show the threshold structure. Indeed, if $\varphi_n(d_n^{\min},~\hist^{n-1}) < 0$, for any demand $d_n \in [d_n^{\min}, ~d_n^{\max}]$, $\varphi_n(d_n,~\hist^{n-1}) < 0$. By \Cref{lemma:marginal_profit_convexity_for_w}, we know that $\pi_n^{*,~w}(\Vu_n,,~\hist^{n-1}) < \hist^{n-1}d_n$ for all $d_n$ and so $t_n(c_n,~\hist^{n-1}) = 0$. If $\varphi_n(d_n^{\max},~\hist^{n-1}) > 0$ then for any demand $d_n \in [d_n^{\min}, ~d_n^{\max}]$, $\varphi_n(d_n,~\hist^{n-1}) > 0$. By \Cref{lemma:marginal_profit_convexity_for_w}, we know that $\pi_n^{*,w}(\Vu_n,~\hist^{n-1}) = \hist^{n-1}d_n$ for all $d_n$ and so $t_n(c_n,~\hist^{n-1}) = \dfrac{c_n}{\hist^{n-1}}$. 

    Otherwise we have $\varphi_n(d_n^{\min},~\hist^{n-1}) \geq 0$ and $\varphi_n(d_n^{\max},~\hist^{n-1}) \leq 0$. Since $\varphi_n(d_n, ~\hist^{n-1})$ is continuous in $d_n$, we can apply the IVT to show that there exists a demand $t_n(c_n,~\hist^{n-1})$ such that $\varphi_n(t_n(c_n,~\hist^{n-1}),~\hist^{n-1}) = 0$. When $d_n < t_n(c_n,~\hist^{n-1})$, the marginal profit is non-negative ($\varphi_n(d_n,~\hist^{n-1}) \geq 0$), and by \Cref{lemma:marginal_profit_convexity_for_w}, the optimal policy fully allocates the demand, so $\pi_n^{*,~w}(\Vu_n,~\hist^{n-1}) = \hist^{n-1}d_n$. When $d_n > t_n(c_n,~\hist^{n-1})$, the marginal profit is negative ($\varphi_n(d_n,~\hist^{n-1}) < 0$), and by \Cref{lemma:marginal_profit_convexity_for_w}, the optimal policy allocates strictly less than the demand, so $\pi_n^{*,~w}(\Vu_n,~\hist^{n-1}) < \hist^{n-1} d_n$.
\end{proof}

\subsection{Auxiliary Lemmas} \label{appendix: threshold_lemmas}

\NewFormulation*
\begin{proof}
    First, we argue that the optimal allocation will never run out of capacity. Recall the definition of $h_n(\pi_n; ~d_n)$ in \Cref{eqn:def_h} as
    \[
        h_n(\pi_n; ~d_n) = \E_{d_{n+1}\sim \D_{\sigma_{n+1}}} \left[\frac{\pi_n}{d_n} \land Z^{(\Vsig, ~*)}_{n+1} (\Vu_{n+1}) \right].
    \] 
    We have $h_n(c_n;~d_n) = h_n(0;~d_n) = 0$ since by allocating $\pi_n = c_n$ the remaining capacity $c_n = 0$, and by allocating $\pi_n = 0$ the current fill rate is zero.
    Moreover, when $\pi_n < 0$ or $\pi_n > c_n$, $h_n(\pi_n; ~d_n) < 0$. Combined with the fact that $h_n(\pi_n; ~d_n) > 0$ when $0 < \pi_n < c_n$, we know that an unconstrained maximizer is positive and strictly less than the capacity $c_n$.

    Next we show that if there are multiple maximizers of $h_n(\pi_n;~d_n)$, then at least one of them is no more than the demand $d_n$.  This follows from the fact that $h_n(\pi_n;~d_n)$ is non-increasing in $\pi_n$ for $\pi_n \geq d_n$.  Indeed, for any $\pi_n \geq d_n$, we have $h_n(\pi_n;~d_n) = \E_{d_{n+1} \sim D_{\sigma_{n+1}}}[Z^{(\Vsig, ~*)}_{n+1} (\Vu_{n+1})]$. However, $Z^{(\Vsig, ~*)}_{n+1} (\Vu_{n+1})$ is non-decreasing in $c_{n+1} = c_n - \pi_n$ by \Cref{lemma:monotone} and so $Z^{(\Vsig, ~*)}_{n+1} (\Vu_{n+1})$ is non-increasing in $\pi_n$. Therefore, allocating more than the demand will not increase the objective value $h_n(\pi_n;~d_n)$, and there exists an unconstrained maximizer which is no greater than the demand $d_n$.
\end{proof}

\MarginalProfit*
\begin{proof}
    Recall that at stage $n$ by \Cref{lemma:unconstrained_pi} the optimal allocation $\pi_n^{*,~z}(\Vu_n)$ satisfies:
    \[
    \pi_n^{*,~z}(\Vu_n) = \argmax_{\pi_n \leq d_n} h_n(\pi_n; ~d_n).
    \]
    Note that from \Cref{eqn:def_h} $h_n(\pi_n; ~d_n)$ is the expectation of the minimum of $\dfrac{\pi_n}{d_n}$ and $Z_{n+1}^{(\Vsig, ~*)}(\Vu_n)$. Since $Z_{n+1}^{(\Vsig, ~*)}(\Vu_n)$ is concave in $c_{n+1} = c_n - \pi_n$ by \Cref{lemma:concave_in_pi} and thus concave in $\pi_n$, we know that $h_n(\pi_n; ~d_n)$ is concave in $\pi_n$. Hence, the optimal allocation $\pi_n^{*,~z}(\Vu_n)$ is either $d_n$, or a stationary point such that $\frac{\partial h_n}{\partial \pi_n}(\pi_n;~d_n) = 0$.  We first establish that whenever allocating the demand has a positive marginal profit, i.e., $\varphi(d_n) > 0$, then for all $\pi_n \leq d_n$, $\frac{\partial h_n}{\partial \pi_n}(\pi_n;~d_n) > 0$, and so as a result, the optimal policy allocates the demand $d_n$ as there are {\em no} feasible stationary points (when $\varphi_n(d_n) = 0$, $\pi_n = d_n$ is both a stationary point and a boundary point, and thus, $\pi_n^{*,~z}(\Vu_n) = d_n$). We then similarly show that whenever allocating the demand has a negative marginal profit, i.e., $\varphi(d_n) < 0$, then there must be some $0 < \pi_n < d_n$ such that $\frac{\partial h_n}{\partial \pi_n}(\pi_n;~d_n) = 0$, and so the optimal policy allocates strictly less than the demand $d_n$.

    \smallskip
    \noindent {\bf Case I ($\varphi_n(d_n) > 0$)}: First consider the case when allocating the demand has a positive marginal profit, so $\varphi_n(d_n) > 0$. 
    By concavity of $h_n(\pi_n; ~d_n)$, we know that the marginal profit function $\frac{\partial h_n}{\partial \pi_n} (\pi_n; ~d_n)$ is non-increasing, that is, $$\frac{\partial^2 h_n}{\partial \pi_n^2} (\pi_n; ~d_n) \leq 0.$$
    Therefore, for all $\pi_n \leq d_n$, we have $\frac{\partial h_n}{\partial \pi_n} (\pi_n; ~d_n) \geq \varphi_n(d_n) > 0$. In this case, no feasible stationary point exists; the optimal solution is on the boundary, i.e., $\pi_n^{*,~z} (\Vu_n) = d_n$.

    \smallskip
    \noindent {\bf Case II ($\varphi_n(d_n) < 0$)}: Now we consider the case when allocating the demand has a negative marginal profit, so $\varphi_n(d_n) < 0$. Note that since $h_n(0;~d_n) = 0$ and there exists some $\pi_n > 0$ such that $h_n(\pi_n;~d_n) > 0$, the marginal profit must be positive in a neighborhood of zero, i.e., $$\lim \limits_{\pi_n \to 0^+} \frac{\partial h_n}{\partial \pi_n} (\pi_n; ~d_n) > 0.$$ By the continuity of the marginal profit function $\frac{\partial h_n}{\partial \pi_n}(\pi_n; ~d_n)$, this is equivalent to stating that for some $\epsilon > 0$, $\frac{\partial h_n}{\partial \pi_n} (\epsilon; ~d_n) > 0$. Together with the fact that $\varphi_n(d_n) < 0$, we can apply the IVT to conclude that there exists a feasible stationary point $\Pi_n(\Vu_n) < d_n$ such that $\frac{\partial h_n}{\partial \pi_n}(\pi_n; ~d_n)\mid_{\pi_n = \Pi_n(\Vu_n)} = 0$, i.e., $\pi_n^{*,~z}(\Vu_n) = \Pi_n(\Vu_n) < d_n$.
\end{proof}

\FillRateOneCapacity*
\begin{proof}
   The ``if'' direction is immediate: when the capacity suffices to cover the maximum possible future demand, allocating $\pi_i = d_i$ for all $n \le i \le N$ yields $Z_{n}^{(\Vsig,~\Vpi)}(c_n, ~d_n) = 1$, which is clearly optimal.
   Next, we show the ``only if'' direction by backward induction.
    
    \noindent \textbf{Base Case:} $n = N$. Recall that $$Z_{N}^{(\Vsig, ~*)} (c_N, ~d_{N}) = \frac{c_N \land d_N}{d_N}.$$ Then whenever $Z_{N}^{(\Vsig, ~*)} (c_N, ~d_{N}) = 1$ it must hold that $c_N \geq d_N$.
    
    \noindent \textbf{Step Case:} $n+1 \rightarrow n$. By \Cref{eqn:new_hq_dp}, we have
    \begin{equation*}
        Z^{(\Vsig, ~*)}_{n} (c_n, ~d_{n}) = \max \limits_{\pi_n \leq d_{n}} \E_{d_{n+1}\sim \D_{\sigma_{n+1}}} \left[\frac{\pi_{n}}{d_{n}} \land Z^{(\Vsig, ~*)}_{n+1} (c_n - \pi_{n}, ~d_{n+1}) \right].
    \end{equation*}
    To achieve $Z^{(\Vsig, ~*)}_{n} (c_n, ~d_{n}) = 1$, it must hold that $\pi_n^{*,~z} (\Vu_n) = d_n$ and that $Z^{(\Vsig, ~*)}_{n+1} (c_n - d_n, ~d_{n+1}) = 1$ for all $d_{n+1} \in [d_{n+1}^{\min}, d_{n+1}^{\max}]$. Since the objective is non-increasing in the demand $d_{n+1}$ by \Cref{lemma:monotone}, this is equivalent to requiring $Z^{(\Vsig, ~*)}_{n+1} (c_n - d_{n}, ~d_{n+1}^{\max}) = 1$.
    From the induction hypothesis, this holds if and only if 
    \[
    c_{n+1} = c_n - d_n \geq d_{n+1}^{\max} + \sum \limits_{i = n+2}^N d_i^{\max} \quad \Longleftrightarrow \ c_n \geq d_n + \sum \limits_{i = n+1}^N d_i^{\max}.
    \]
    Thus, $Z^{(\Vsig, ~*)}_n (c_n, ~d_n) = 1$ if and only if $c_n \geq d_n + \sum \limits_{i = n+1}^N d_i^{\max}.$
\end{proof}

\DecreasingMarginalProfit*
\begin{proof}
    We begin by expressing $h_n(\pi_n; ~d_n)$ in an integral form that isolates the regions where each term inside the minimum operator dominates.  Specifically, for a given $\pi_n$, we partition the domain of $d_{n+1}$ based on whether the current fill rate $\pi_n / d_n$ or the expected future minimum fill-rate to go function $Z_{n+1}^{(\Vsig, *)}(c_n - \pi_n, ~d_{n+1})$ attains the minimum.  By the equating property (\Cref{lemma:hq_equate_property_cont}), there exists a critical demand that separates these two regions. This critical value enables us to rewrite $h_n(\pi_n; ~d_n)$ as an integral without the minimum operator, yielding an explicit expression for the boundary function $\varphi_n(d_n)$.  Using this representation, we then analyze $\dfrac{\mathrm{d}\varphi_n}{\mathrm{d}d_n}$ and show that it is nonpositive, establishing the monotonicity of $\varphi_n(d_n)$.

    We begin with the case when the capacity satisfies $c_n > d_n^{\max} + d_{n+1}^{\min} + \sum \limits_{i=n+2}^N d_i^{\max}$ and define the {\em critical demand} that allows us to decompose $h_n(\pi_n;~d_n)$ into two different integrals.

    \begin{definition}[Critical Demand] \label{def:critical_demand}
        Let $$\mathcal{G}_{n+1}(c_n, ~d_n, ~\pi_n) \triangleq \left\{d_{n+1} \in [d_{n+1}^{\min}, ~d_{n+1}^{\max}]: \frac{\pi_n}{d_n} = Z^{(\Vsig, ~*)}_{n+1} (c_n - \pi_n,  ~d_{n+1}) \right\}$$ be the set of solutions to \Cref{eqn:hq_equating_current_future}. The {\bf critical demand} is the supremum of the set, i.e., $$g_{n+1}(c_n, ~d_n, ~\pi_n) \triangleq \sup\, \mathcal{G}_{n+1}(c_n, ~d_n, ~\pi_n).$$
    \end{definition}
    Note that $\mathcal{G}_{n+1}(c_n, ~d_n, ~\pi_n^{*,~z}(\Vu_n)) \neq \emptyset$ since the optimal allocation satisfies the equating property (\Cref{lemma:hq_equate_property_cont}).

    First, we show under the capacity condition $c_n > d_n^{\max} + d_{n+1}^{\min} + \sum \limits_{i = n+2}^N d_i^{\max}$ that when $\pi_n = d_n$, the corresponding critical demand satisfies $g_{n+1}(c_n, ~d_n, ~d_n) > d_{n+1}^{\min}$. For notational simplicity, we denote this quantity by $g_{n+1}(c_n, ~d_n)$.

    Indeed, for any $d_n \in [d_n^{\min}, ~d_n^{\max}]$ and some $\epsilon > 0$, we have $c_{n+1} = c_n - d_n > d_{n}^{\max} - d_n + d_{n+1}^{\min} + \sum \limits_{i = n+2}^N d_i^{\max} \geq \epsilon + d_{n+1}^{\min} + \sum \limits_{i = n+2}^N d_i^{\max}$. By \Cref{lemma:expected_fill_rate_one} this implies that $Z_{n+1}^{(\Vsig, ~*)}(c_n - d_n, ~d_{n+1}^{\min}+\epsilon) = 1.$  Thus, by the definition of the critical demand (\Cref{def:critical_demand}), it then follows that $g_{n+1}(c_n, ~d_n) > d_{n+1}^{\min}$.

    Thus, by continuity we know that in the neighborhood around $\pi_n = d_n$ we have:
    \begin{equation} \label{eqn:h_formula_general}
        \begin{split}
        h_n(\pi_n;~d_n) &= \underbrace{\int_{d_{n+1}^{\min}}^{g_{n+1}(c_n, ~d_n,~\pi_n)} \frac{\pi_n}{d_n} f_{n+1}(d_{n+1})  \, \mathrm{d} d_{n+1}}_{(1)}\\
        &+ \underbrace{\int_{g_{n+1}(c_n, ~d_n,~\pi_n)}^{d_{n+1}^{\max}} Z_{n+1}^{(\Vsig, ~*)} (c_n - \pi_n, ~d_{n+1}) f_{n+1}(d_{n+1}) \, \mathrm{d} d_{n+1}}_{(2)}.
        \end{split}
    \end{equation}

    We now compute the derivative of each term separately using the Leibniz integral rule. 
    For simplicity, we assume further that for any state $n$, the critical demand $g_{n+1}(c_n, ~d_n, ~\pi_n)$ is first order differentiable in $c_n$, $d_n$, and $\pi_n$. Indeed, by the implicit function theorem, $g_{n+1}(c_n, ~d_n, ~\pi_n)$ is smooth except for a finite number of points introduced by the minimum operator. In those cases, replacing derivatives with supergradients preserves the validity of the proof. For the first term we have
    \begin{align*}
        \frac{\partial (1)}{\partial \pi_n}
        &= 
        \frac{\partial}{\partial \pi_n}
        \left(
            \int_{d_{n+1}^{\min}}^{g_{n+1}(c_n, ~d_n, ~\pi_n)} 
            \frac{\pi_n}{d_n} f_{n+1}(d_{n+1}) \, \mathrm{d}d_{n+1}
        \right) \\
        &=
        \frac{\pi_n}{d_n} f_{n+1}(g_{n+1}(c_n, ~d_n, ~\pi_n))
        \frac{\partial g_{n+1}}{\partial \pi_n}(c_n, ~d_n, ~\pi_n)
        + \int_{d_{n+1}^{\min}}^{g_{n+1}(c_n, ~d_n, ~\pi_n)} 
            \frac{1}{d_n} f_{n+1}(d_{n+1}) \, \mathrm{d}d_{n+1}.
    \end{align*}
    Evaluating this at $\pi_n = d_n$ yields:
    \[
    \frac{\partial (1)}{\partial \pi_n} \Bigg |_{\pi_n = d_n} = f_{n+1}(g_{n+1}(c_n, ~d_n))
        \frac{\partial g_{n+1}}{\partial \pi_n}(c_n, ~d_n) + \frac{F_{n+1}(g_{n+1}(c_n,~ d_n))}{d_n}.
    \]

    For the second term we have:
    \begin{align*}
    \frac{\partial (2)}{\partial \pi_n}
    &= 
    \frac{\partial}{\partial \pi_n}
    \left(
        \int_{g_{n+1}(c_n,~ d_n, ~\pi_n)}^{d_{n+1}^{\max}} 
        Z_{n+1}^{(\Vsig, *)}(c_n - \pi_n, ~d_{n+1}) 
        f_{n+1}(d_{n+1}) \, \mathrm{d}d_{n+1}
    \right) \\
    &=
    -\,Z_{n+1}^{(\Vsig, *)}(c_n - \pi_n, ~g_{n+1}(c_n, ~d_n, ~\pi_n))
    f_{n+1}(g_{n+1}(c_n, ~d_n, ~\pi_n))
    \frac{\partial g_{n+1}}{\partial \pi_n}(c_n, ~d_n, ~\pi_n) \\
    &\quad
    - \int_{g_{n+1}(c_n, ~d_n, ~\pi_n)}^{d_{n+1}^{\max}} 
        \frac{\partial Z_{n+1}^{(\Vsig, *)}}{\partial c_{n+1}}
        (c_n - \pi_n, ~d_{n+1}) f_{n+1}(d_{n+1}) 
        \, \mathrm{d}d_{n+1}.
    \end{align*}
    Evaluating this at $\pi_n = d_n$ yields:
    \begin{align*}
    \frac{\partial (2)}{\partial \pi_n} \Bigg |_{\pi_n = d_n} &= -\,Z_{n+1}^{(\Vsig, *)}(c_n - d_n, ~g_{n+1}(c_n, ~d_n))
    f_{n+1}(g_{n+1}(c_n, ~d_n))
    \frac{\partial g_{n+1}}{\partial \pi_n}(c_n, ~d_n) \\
    & \quad - \int_{g_{n+1}(c_n, ~d_n)}^{d_{n+1}^{\max}} \frac{\partial Z_{n+1}^{(\Vsig, ~*)} }{\partial c_{n+1}} (c_n-d_n, ~d_{n+1}) f_{n+1}(d_{n+1}) \, \mathrm{d} d_{n+1}.
    \end{align*}

    By definition of the critical demand $g_{n+1}(c_n, ~d_n)$ such that $Z_{n+1}^{(\Vsig, ~*)} (c_n - d_n, ~g_{n+1}(c_n, ~d_n)) = 1$, when combining both terms gives that:     
    \[
    \varphi_n(d_n) = \frac{\partial h_n(\pi_n; ~d_n)}{\partial \pi_n} \Bigg |_{\pi_n = d_n} = \frac{F_{n+1}(g_{n+1}(c_n, ~d_n))}{d_n} - \int_{g_{n+1}(c_n, ~d_n)}^{d_{n+1}^{\max}} \frac{\partial Z_{n+1}^{(\Vsig, ~*)} }{\partial c_{n+1}} (c_n-d_n, ~d_{n+1}) f_{n+1}(d_{n+1}) \, \mathrm{d} d_{n+1}.
    \]

    Hence the derivative of $\varphi_n(d_n)$ with respect to $d_n$ is:
    \begin{align*}
         \frac{\mathrm{d} \varphi_n}{\mathrm{d} d_n} &= \underbrace{-\frac{F_{n+1}(g_{n+1}(c_n, ~d_n))}{d_n^2}}_{(1)} + \underbrace{\frac{f_{n+1}(g_{n+1}(c_n, ~d_n)) \frac{\partial g_{n+1}}{\partial d_n} (c_n, ~d_n)}{d_n}}_{(2)}\\
        &~~~+ \underbrace{\int_{g_{n+1}(c_n, ~d_n)}^{d_{n+1}^{\max}} \frac{\partial^2 Z_{n+1}^{(\Vsig, ~*)} }{\partial c_{n+1}^2} (c_n - d_n, ~d_{n+1}) f_{n+1}(d_{n+1}) \, \mathrm{d} d_{n+1}}_{(3)}\\
        &~~~+ \underbrace{\frac{\partial Z_{n+1}^{(\Vsig, ~*)} }{\partial c_{n+1}}  (c_n - d_n, g_{n+1}(c_n, ~d_n)) f_{n+1}(g_{n+1}(c_n, ~d_n)) \frac{\partial g_{n+1}}{\partial d_n}(c_n, ~d_n)}_{(4)}.
    \end{align*}
    Before we check the sign of the derivative of the marginal profit, we state \Cref{lemma:nonincreasing_solution}, which indicates that the critical demand $g_{n+1}(c_n, ~d_n)$ is non-increasing in $d_n$. Again, the proof is deferred.
    \begin{restatable}{lemma}{NondecreasingCriticalDemand}
        \label{lemma:nonincreasing_solution}
    Suppose that $c_n \geq d_n + d_{n+1}^{\min} + \sum \limits_{i = n+2}^N d_i^{\max}$. Let $g_{n+1}(c_n, ~d_n)$ be the critical demand when the allocation satisfies the demand, i.e., $g_{n+1}(c_n, ~d_n) \triangleq g_{n+1}(c_n, ~d_n, ~\pi_n)\big |_{\pi_n = d_n}.$  Then $g_{n+1}(c_n, ~d_n)$ is non-increasing in the current demand $d_n$.
    \end{restatable}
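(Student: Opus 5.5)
We need to prove that $g_{n+1}(c_n, d_n)$, the supremum of future demands $d_{n+1}$ with $Z^{(\Vsig,*)}_{n+1}(c_n - d_n, d_{n+1}) = 1$, is non-increasing in $d_n$. The plan is to characterize this set explicitly using \Cref{lemma:expected_fill_rate_one}, and then read off monotonicity directly.

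The first step uses the fact that at $\pi_n = d_n$ the current fill rate equals one. The defining equation of $\mathcal{G}_{n+1}(c_n, d_n, d_n)$ therefore reads $Z^{(\Vsig,*)}_{n+1}(c_n - d_n, d_{n+1}) = 1$. By \Cref{lemma:expected_fill_rate_one} applied at stage $n+1$ with capacity $c_{n+1} = c_n - d_n$, this holds if and only if
\[
c_n - d_n \ge d_{n+1} + \sum_{i=n+2}^N d_i^{\max},
\quad\text{that is,}\quad
d_{n+1} \le c_n - d_n - \sum_{i=n+2}^N d_i^{\max}.
\]
Hence
\[
\mathcal{G}_{n+1}(c_n, d_n, d_n) = [d_{n+1}^{\min}, d_{n+1}^{\max}] \cap \Bigl(-\infty,\, c_n - d_n - \textstyle\sum_{i=n+2}^N d_i^{\max}\Bigr].
\]
The hypothesis $c_n \ge d_n + d_{n+1}^{\min} + \sum_{i\ge n+2} d_i^{\max}$ is exactly what guarantees this set is nonempty, since it contains $d_{n+1}^{\min}$. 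So the supremum is well defined and
\[
g_{n+1}(c_n, d_n) = \min\Bigl\{ d_{n+1}^{\max},\; c_n - d_n - \sum_{i=n+2}^N d_i^{\max} \Bigr\}.
\]

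Second, this expression is the minimum of a constant and a function that is affine and decreasing in $d_n$ with slope $-1$. It is therefore non-increasing in $d_n$, and strictly decreasing whenever the second term is active. As $d_n$ varies, the hypothesis must continue to hold; on the range where it fails the set is empty and the lemma makes no claim.

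The only subtle point is the first step. We need \Cref{lemma:expected_fill_rate_one} to apply pointwise in $d_{n+1}$ with the demand argument at stage $n+1$ being $d_{n+1}$ itself, and that lemma states exactly this. We should also note that $Z^{(\Vsig,*)}_{n+1} \le 1$ always, so equality with the current fill rate of one is equivalent to the fill-rate-one condition. No differentiability of $g_{n+1}$ is needed, which sidesteps the implicit-function-theorem caveats used elsewhere.
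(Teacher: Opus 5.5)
Your proof is correct, and it takes a different route from the paper. The paper argues by contradiction using only monotonicity of $Z^{(\Vsig,~*)}_{n+1}$ in capacity (\Cref{lemma:monotone}). If some $d_n' > d_n$ had $g_{n+1}(c_n,d_n') > g_{n+1}(c_n,d_n)$, then $1 = Z^{(\Vsig,~*)}_{n+1}(c_n-d_n', g_{n+1}(c_n,d_n')) \le Z^{(\Vsig,~*)}_{n+1}(c_n-d_n, g_{n+1}(c_n,d_n'))$. So $g_{n+1}(c_n,d_n')$ would lie in $\mathcal{G}_{n+1}(c_n,d_n,d_n)$, contradicting that $g_{n+1}(c_n,d_n)$ is its supremum.

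You instead use the sharper characterization in \Cref{lemma:expected_fill_rate_one} to compute $\mathcal{G}_{n+1}(c_n,d_n,d_n)$ exactly, which gives $g_{n+1}(c_n,d_n) = \min\{d_{n+1}^{\max}, c_n - d_n - \sum_{i=n+2}^N d_i^{\max}\}$.

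What each approach buys:
\begin{itemize}
\item The paper's argument relies on a weaker ingredient. It would survive in variants where capacity monotonicity holds but the fill-rate-one region has no closed form.
\item Your closed form gives strictly more information. It shows the hypothesis is exactly nonemptiness of $\mathcal{G}_{n+1}(c_n,d_n,d_n)$, and that $g_{n+1}(c_n,\cdot)$ is piecewise linear with slope $0$ or $-1$.
\item As a result, your formula makes explicit the derivative $\partial g_{n+1}/\partial d_n$ whose existence the proof of \Cref{lemma:non-increasing_marginal} simply assumes.
\item It also immediately yields the bound $g_{n+1}(c_n,d_n) > d_{n+1}^{\min}$ used there under the strict capacity condition.
\end{itemize}
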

    
    Therefore, $(1)$ is negative because the CDF is positive when $g_{n+1}(c_n, ~d_n) > d_{n+1}^{\min}$, $(2)$ is non-positive due to the deinition of the pdf and non-increasing property of the critical demand $g_{n+1}(c_n, ~d_n)$ in $d_n$ (\Cref{lemma:nonincreasing_solution}), $(3)$ is non-positive by the concavity of $Z_{n+1}^{(\Vsig, ~*)} (\Vu_{n+1})$ in capacity $c_{n+1}$ (\Cref{lemma:concave_in_pi}) and the positiveness of the pdf, and $(4)$ is non-positive because of the non-decreasing property of $Z_{n+1}^{(\Vsig, ~*)} (\Vu_{n+1})$ in capacity $c_{n+1}$ (\Cref{lemma:monotone}), definition of pdf and non-increasing property of the critical demand $g_{n+1}(c_n, ~d_n)$ in demand $d_n$ (\Cref{lemma:nonincreasing_solution}). Overall, this shows that $\dfrac{\mathrm{d} \varphi_n}{\mathrm{d} d_n} < 0$ and so $\varphi_n(d_n)$ is decreasing in $d_n$ as required.

        Next we consider the case when $c_n \leq d_n^{\max} + d_{n+1}^{\min} + \sum \limits_{i=n+2}^N d_i^{\max}$. Here, we have that for some $d_n$, the critical demand $g_{n+1} (c_n, ~d_n) < d_{n+1}^{\min}$.  Then we only have term $(2)$ in \Cref{eqn:h_formula_general}. As a consequence, the derivative of $h_n(\pi_n;~d_n)$ is
        \begin{align*}
            \frac{\partial h_n}{\partial \pi_n} (\pi_n;~d_n) &= -\int_{d_{n+1}^{\min}}^{d_{n+1}^{\max}} \frac{\partial Z_{n+1}^{(\Vsig, ~*)} }{\partial c_{n+1}} (c_n - \pi_n, ~d_{n+1}) f_{n+1}(d_{n+1}) \, \mathrm{d} d_{n+1}.
        \end{align*}
    and the derivative of $\varphi_n(d_n)$ is
    \begin{align*}
         \frac{\mathrm{d} \varphi_n}{\mathrm{d} d_n} &= \int_{d_{n+1}^{\min}}^{d_{n+1}^{\max}} \frac{\partial^2 Z_{n+1}^{(\Vsig, ~*)} }{\partial c_{n+1}^2} (c_n - d_n, ~d_{n+1}) f_{n+1}(d_{n+1}) \, \mathrm{d} d_{n+1} \leq 0,
    \end{align*}
    from the same argument above. This completes the proof of \Cref{lemma:non-increasing_marginal}.
\end{proof}

We now present the proof for \Cref{lemma:nonincreasing_solution}.
\NondecreasingCriticalDemand*
\begin{proof}
    Recall that by \Cref{def:critical_demand}, the critical demand $g_{n+1}(c_n, ~d_n, ~\pi_n)$ is defined as the maximum $d_{n+1}$ such that the current fill rate equals to the future expected minimum fill rate to go function:
    $$\frac{\pi_n}{d_n} = Z^{(\Vsig, ~*)}_{n+1} (c_n - \pi_n, ~g_{n+1}(c_n, ~d_n, ~\pi_n)).$$ 
    
    Hence, $g_{n+1}(c_n,~d_n)$ is the maximum value of demand such that $Z^{(\Vsig, ~*)}_{n+1} (c_n - d_n, ~g_{n+1}(c_n, ~d_n)) = 1$ holds. We show by contradiction that for any demand $d_n^{\prime} > d_n$, we have $g_{n+1}(c_n, ~d_n^{\prime}) \leq g_{n+1}(c_n, ~d_n)$.

    Assume that $g_{n+1}(c_n, ~d_n^{\prime}) > g_{n+1}(c_n, ~d_n)$ instead. Then we have 
    $$1 \overset{(i)}{=} Z^{(\Vsig, ~*)}_{n+1} (c_n - d_n^{\prime},~g_{n+1}(c_n,~d_n^{\prime})) \overset{(ii)}{\leq} Z^{(\Vsig, ~*)}_{n+1} (c_n - d_n, ~g_{n+1}(c_n,~d_n^{\prime})),$$
    where equality $(i)$ is by \Cref{def:critical_demand}, and inequality $(ii)$ is due to $c_{n+1}^{\prime} = c_n - d_n^{\prime} < c_{n+1} = c_n - d_n$ and the non-decreasing property in capacity $c_{n+1}$ (\Cref{lemma:monotone}). This indicates that $g_{n+1}(c_n,~d_n^{\prime}) \in \mathcal{G}_{n+1}(c_n, ~d_n, ~\pi_n) \Big|_{\pi_n = d_n}$ and contradicts to the fact that $g_{n+1}(c_n, ~d_n)$ is the supremum.
\end{proof}

\MMarginalProfit*
\begin{proof}
    At stage $n$, the optimal allocation policy satisfies:
    \[
    \pi_n^{*,~w}(\Vu_n, \hist^{n-1}) = \argmax_{0 \leq \pi_n \leq c_n \wedge d_n} h_n(\pi_n;~d_n,~\hist^{n-1}).
    \]
    From \Cref{lemma:concave_in_pi}, we know that $W^{(\Vsig,~*)}_{n+1}\left(\Vu_{n+1}, ~\hist^{n-1} \land \dfrac{\pi_n}{d_n} \right)$ is concave in $\pi_n$ and thus $h_n(\pi_n;~d_n,~\hist^{n-1})$, as the expectation of $W^{(\Vsig,~*)}_{n+1}(\cdot)$, is concave in $\pi_n$. Hence, the optimal allocation must be either the boundary point $\pi_n = c_n \wedge d_n$ or a feasible stationary point satisfying:
    \[
    \frac{\partial h_n}{\partial \pi_n}(\pi_n;~d_n,~\hist^{n-1}) = 0.
    \]

   The key observation is that the marginal profit becomes non-positive once $\pi_n$ exceeds $\hist^{n-1} d_n$. This ensures that it suffices to restrict attention to $\pi_n \leq \hist^{n-1} d_n$. To see this, differentiate $h_n$ via:
    \begin{equation}
    \label{eqn:partial_h_derivative_proof}
    \begin{split}
        &\frac{\partial h_n}{\partial \pi_n}(\pi_n; ~d_n, ~\hist^{n-1})\\
        =&\E_{d_{n+1} \sim \D_{\sigma_{n+1}}} \left[- \frac{\partial W_{n+1}^{(\Vsig, ~*)}}{\partial c_{n+1}}\left(\Vu_{n+1}, ~\beta_{\min}^{n-1} \land \frac{\pi_n}{d_n} \right) + \frac{1}{d_n} \frac{\partial W_{n+1}^{(\Vsig, ~*)}}{\partial \hist^{n}}\left(\Vu_{n+1}, ~\frac{\pi_n}{d_n} \right) \mathbf{1} \left\{\pi_n \leq \hist^{n-1}d_n \right\} \right].
    \end{split}
    \end{equation}
    The first term arises since $\Vu_{n+1} = c_n - \pi_n$, and the second term only arises when $\frac{\pi_n}{d_n} \wedge \hist^{n-1} = \dfrac{\pi_n}{d_n}$. Note that the marginal profit is smooth except at the point $\pi_n = \hist^{n-1}d_n$. 

    When $\pi_n > \hist^{n-1} d_n$, from \Cref{eqn:partial_h_derivative_proof} we have 
    \[
    \frac{\partial h_n}{\partial \pi_n}(\pi_n; ~d_n, ~\hist^{n-1}) = \E_{d_{n+1} \sim \D_{\sigma_{n+1}}} \left[- \frac{\partial W_{n+1}^{(\Vsig, ~*)}}{\partial c_{n+1}}\left(\Vu_{n+1}, ~\beta_{\min}^{n-1} \land \frac{\pi_n}{d_n} \right)\right].
    \]
    By \Cref{lemma:monotone} we know that this term is non-positive. Together with concavity, which implies that the derivative is non-increasing, it suffices to restrict attention to $\pi \le \hist^{n-1} d_n$.
    
    The remainder of the proof follows from identical arguments to \Cref{lemma:marginal_profit_convexity}, where we break into cases depending on the sign of $\varphi_n(d_n,~\hist^{n-1})$.  We omit the details here.
\end{proof}

\MSameFillRate*
\begin{proof}
   The ``if'' direction is immediate: when the capacity suffices to cover the maximum possible future demand, allocating $\pi_i = \hist^{n-1}d_i$ for all $n \le i \le N$ yields $W_{n}^{(\Vsig,~\Vpi)}(c_n, ~d_n,~\hist^{n-1}) = \hist^{n-1}$, which is clearly optimal combined with \Cref{lemma:monotone}.
   Next, we show the ``only if'' direction by backward induction.
    
    \noindent \textbf{Base Case:} $n = N$. Recall that $$W_{N}^{(\Vsig, ~*)} (c_N, ~d_{N},~\hist^{n-1}) = \hist^{n-1} \land \frac{c_N}{d_N}.$$ Then whenever $W_{N}^{(\Vsig, ~*)} (c_N, ~d_{N},~\hist^{n-1}) = \hist^{n-1}$ it must hold that $c_N \geq \hist^{n-1} d_N$.
    
    \noindent \textbf{Step Case:} $n+1 \rightarrow n$. By \Cref{eqn:m_dp}, we have
    \begin{equation*}
        W_n^{(\Vsig, ~*)}\left(c_n,~d_n ~\beta_{\min}^{n-1} \right) = \E_{d_{n+1} \sim \D_{\sigma_{n+1}}} \left[W_{n+1}^{(\Vsig, ~*)}\left(c_{n+1}, ~d_{n+1},~\beta_{\min}^{n-1} \land \frac{\pi_n}{d_n} \right) \right].
    \end{equation*}
    To achieve $W_n^{(\Vsig, ~*)}\left(c_n, ~d_n,~\beta_{\min}^{n-1} \right) = \hist^{n-1}$, it must hold that $\pi_n = \hist^{n-1}d_n$ and that $W_{n+1}^{(\Vsig, ~*)}\left(c_{n+1},~d_{n+1}, ~\beta_{\min}^{n-1} \right) = \hist^{n-1}$ for all $d_{n+1} \in [d_{n+1}^{\min}, ~d_{n+1}^{\max}]$. Since the objective is non-increasing in the demand $d_{n+1}$ by \Cref{lemma:monotone}, this is equivalent to requiring $W_{n+1}^{(\Vsig, ~*)}\left(c_{n+1}, ~d_{n+1}^{\max}, ~\beta_{\min}^{n-1} \right) = \hist^{n-1}$.
    From the induction hypothesis, this holds if and only if 
    \[
    c_{n+1} = c_n - \hist^{n-1}d_n \geq \hist^{n-1} \left(d_{n+1}^{\max} + \sum \limits_{i = n+2}^N d_i^{\max} \right) \quad \Longleftrightarrow \ c_n \geq \hist^{n-1} \left(d_n + \sum \limits_{i = n+1}^N d_i^{\max} \right).
    \]
    Thus, $W_n^{(\Vsig, ~*)}\left(c_n, ~d_n,~\beta_{\min}^{n-1} \right) = \hist^{n-1}$ if and only if $c_n \geq \hist^{n-1} \left( d_n + \sum \limits_{i = n+1}^N d_i^{\max} \right).$
\end{proof}

\MDecreasingMarginalProfit*
\begin{proof}
    We begin by explicitly deriving the boundary function using its definition. Then we show the derivative of $\varphi_n(d_n,~\hist^{n-1})$ with respect to $d_n$ is non-positive. 
    
    By \Cref{eqn:partial_h_derivative_proof}, we have
    \begin{equation}
    \begin{split}
        &\varphi_n(d_n,~\hist^{n-1}) \\
        = &\lim_{\pi_n \to (\hist^{n-1}d_n)^-}\frac{\partial h_n}{\partial \pi_n}(\pi_n; ~d_n, ~\hist^{n-1})\\
        = &\lim_{\pi_n \to (\hist^{n-1}d_n)^-} \E_{d_{n+1} \sim \D_{\sigma_{n+1}}} \left[- \frac{\partial W_{n+1}^{(\Vsig, ~*)}}{\partial c_{n+1}}\left(\Vu_{n+1}, ~\frac{\pi_n}{d_n} \right) + \frac{1}{d_n} \frac{\partial W_{n+1}^{(\Vsig, ~*)}}{\partial \hist^{n}}\left(\Vu_{n+1}, ~\frac{\pi_n}{d_n} \right) \right],\\
        = &\E_{d_{n+1} \sim \D_{\sigma_{n+1}}} \left[\underbrace{- \frac{\partial W_{n+1}^{(\Vsig, ~*)}}{\partial c_{n+1}}\left(c_n - \hist^{n-1}d_n,~d_{n+1}, ~\beta_{\min}^{n-1} \right)}_{(1)} + \underbrace{ \frac{1}{d_n} \frac{\partial W_{n+1}^{(\Vsig, ~*)}}{\partial \hist^{n}}\left(c_n - \hist^{n-1}d_n,~d_{n+1}, ~\beta_{\min}^{n-1} \right)}_{(2)} \right].
    \end{split}
    \end{equation}
    We now compute the derivative of the boundary function $\varphi_n(d_n,~\hist^{n-1})$ with respect to the demand $d_n$ for each term. For the first term, we have
    \begin{align*}
        \frac{\partial (1)}{\partial d_n} &= \frac{\partial}{\partial d_n} \left( - \frac{\partial W_{n+1}^{(\Vsig, ~*)}}{\partial c_{n+1}}\left(c_n - \hist^{n-1}d_n,~d_{n+1}, ~\beta_{\min}^{n-1} \right) \right),\\
        &= \hist^{n-1}\frac{\partial^2 W_{n+1}^{(\Vsig, ~*)}}{\partial c_{n+1}^2}\left(c_n - \hist^{n-1}d_n,~d_{n+1}, ~\beta_{\min}^{n-1} \right),
    \end{align*}
    which is non-positive due to concavity (\Cref{lemma:concave_in_pi}).
    For the second term, we have
    \begin{align*}
        \frac{\partial (2)}{\partial d_n} &= \frac{\partial}{\partial d_n} \left( \frac{1}{d_n} \frac{\partial W_{n+1}^{(\Vsig, ~*)}}{\partial \hist^{n}}\left(c_n - \hist^{n-1}d_n,~d_{n+1}, ~\beta_{\min}^{n-1} \right)\right),\\
        &= -\frac{1}{d_n^2}\frac{\partial W_{n+1}^{(\Vsig, ~*)}}{\partial \hist^{n}}\left(c_n - \hist^{n-1}d_n,~d_{n+1}, ~\beta_{\min}^{n-1} \right) - \frac{\hist^{n-1}}{d_n}\frac{\partial^2 W_{n+1}^{(\Vsig, ~*)}}{\partial c_{n+1} \partial \hist^{n}}\left(c_n - \hist^{n-1}d_n,~d_{n+1}, ~\beta_{\min}^{n-1} \right)
    \end{align*}
    by the product rule. Note that $W_{n+1}^{(\Vsig,~*)} (\Vu_{n+1},~\hist^{n})$ is non-decreasing in $\hist^{n-1}$ (\Cref{lemma:monotone}) and supermodular in $(c_{n+1}, ~\hist^{n-1})$ from \citet[Proposition 7]{ma2022fairness}, so the second term is also non-positive.
\end{proof}

%% file: parts/appendix/decreasing_CV_proofs.tex
\section{Proof of Analytical Routing Results in \cref{sec:sequencing}} \label{appendix:decreasing_CV}

\DeterministicOpt*
\begin{proof}
    We show the result by backward induction. Specifically, fix any stage $0 < n \leq N - 2$ at which the next node has not yet been selected.  For any state state $\Vu_n$, minimum fill rate over previously visited nodes $\hist^{n-1}$, and any feasible routing policy $\Vsig$ (either dynamic or static) for which the deterministic node $i_D$ has not yet been visited, let $\Vsig'$ denote the policy obtained from $\Vsig$ by preserving the original visitation order and moving $i_D$ to the end.  We show that
    \(
    W_n^{(*,~\Vsig')}(\Vu_n,~\beta_{\min}^{n-1}) \;\geq\; W_n^{(*,~\Vsig)}(\Vu_n,~\beta_{\min}^{n-1}).
    \)
    Taking expectations on both sides for $n = 1$ establishes that
    \(
    W_0^{(*,~\Vsig’)}(c) \;\geq\; W_0^{(*,~\Vsig)}(c).
    \)
    We assume $N \geq 3$, since the case $N = 2$ follows directly from the base case.

    \noindent \textbf{Base Case:} $n = N - 2$.     
    At this stage, there are only two possible routing policies to consider: (i)  $\Vsig$ which visits the deterministic node at the next stage ($\sigma_{N-1} = i_D$) or (ii) $\Vsig'$ which visits the deterministic node at the end ($\sigma^{\prime}_{N} = i_D$).  We first analyze the objective under $\Vsig$, where we visit $i_D$ before the final stochastic node:
    \begin{align*}
        &\quad W_{N-2}^{(*,~\Vsig) }\left(c_{N-2}, ~\sigma_{N-2}, ~d_{N-2}, ~\{i_D, ~\sigma_N\}, ~\beta_{\min}^{N-2} \right)\\
        &\overset{(i)}{=} W_{N-1}^{(*,~\Vsig) }\left(c_{N-2} - \pi_{N-2}^{*,~w}, ~i_D, ~\mu_D, ~\{\sigma_N\}, ~\beta_{\min}^{N-2} \land \frac{\pi_{N-2}^{*,~w}}{d_{N-2}} \right),\\
        &\overset{(ii)}{=} \max \limits_{0 \leq \pi_{N-1} \leq c_{N-1} \land \mu_D} \E_{d_N \sim \D_{\sigma_N}} \left[ \beta_{\min}^{N-2} \land \frac{\pi_{N-2}^{*,~w}}{d_{N-2}} \land \frac{\pi_{N-1}}{\mu_D} \land \frac{c_{N-1} - \pi_{N-1}}{d_N}\right],\\
        &\overset{(iii)}{=} \max \limits_{0 \leq \pi_{N-1} \leq c_{N-1}} \E_{d_N \sim \D_{\sigma_N}} \left[ \beta_{\min}^{N-2} \land \frac{\pi_{N-2}^{*,~w}}{d_{N-2}} \land \frac{\pi_{N-1}}{\mu_D} \land \frac{c_{N-1} - \pi_{N-1}}{d_N}\right],\\
        &\overset{(iv)}{=} \max \limits_{0 \leq \pi_{N} \leq c_{N-1}} \E_{d_N \sim \D_{\sigma_N}} \left[ \beta_{\min}^{N-2} \land \frac{\pi_{N-2}^{*,~w}}{d_{N-2}} \land \frac{c_{N-1} - \pi_{N}}{\mu_D} \land \frac{\pi_{N}}{d_N} \right],
    \end{align*}
    where equality $(i)$ is from \Cref{eqn:m_bellman} by assuming that the optimal allocation $\pi_{N-2}^{*,~w}$ is found and the fact that $i_D$ is deterministic, equality $(ii)$ is from \Cref{eqn:m_dp} and $c_{N-1} = c_{N-2} - \pi_{N-2}^{*,~w}$, equality $(iii)$ removes the constraint $\pi_{N-1} \leq \mu_D$ since it does not change the objective value ($\hist^{N-2} \leq 1$), and equality $(iv)$ is from changing the decision variable since $\pi_N = c_{N-1} - \pi_{N-1}$.
    On the other hand, when we apply the \routing policy $\Vsig'$,
    \begin{align*}
        & \quad W_{N-2}^{(*,~\Vsig') }\left(c_{N-2}, ~\sigma_{N-2}, ~d_{N-2}, ~\{i_D, ~\sigma_N\}, ~\beta_{\min}^{N-2} \right) \\
        & \overset{(i)}{=} \E_{d_{N-1} \sim \D_{\sigma_N}} \left[W_{N-1}^{(*,~\Vsig') }\left(c_{N-2} - \pi_{N-2}^{*,~w}, ~\sigma_N, ~d_{N-1}, ~\{i_D\}, ~\beta_{\min}^{N-2} \land \frac{\pi_{N-2}^{*,~w}}{d_{N-2}} \right) \right],\\
        & \overset{(ii)}{=} \E_{d_{N-1} \sim \D_{\sigma_N}} \left[\max \limits_{0 \leq \pi_{N-1} \leq c_{N-1} \land d_N} 
        \left[ \beta_{\min}^{N-2} \land \frac{\pi_{N-2}^{*,~w}}{d_{N-2}} \land \frac{\pi_{N-1}}{d_{N-1}} \land \frac{c_{N-1} - \pi_{N-1}}{\mu_D} \right] \right],\\
        &\overset{(iii)}{=} \E_{d_{N-1} \sim \D_{\sigma_N}} \left[\max \limits_{0 \leq \pi_{N-1} \leq c_{N-1}} 
        \left[ \beta_{\min}^{N-2} \land \frac{\pi_{N-2}^{*,~w}}{d_{N-2}} \land \frac{\pi_{N-1}}{d_{N-1}} \land \frac{c_{N-1} - \pi_{N-1}}{\mu_D} \right] \right],
    \end{align*}
    where equality $(i)$ is from \Cref{eqn:m_bellman} and the fact that $\sigma_{N-2} = \sigma^{\prime}_{N-2}$ so the state at $N-2$ and $\pi_{N-2}^{*,~w}$ are the same, equality $(ii)$ is from \Cref{eqn:m_dp} and $c_{N-1} = c_{N-2} - \pi_{N-2}^{*,~w}$ and equality $(iii)$ is due to $\hist^{N-2} \leq 1$ and removing the constraint $d_N$ does not affect the objective value. Trivially, $$W_{N-2}^{(*,~\Vsig') }\left(c_{N-2}, ~\sigma_{N-2}, ~d_{N-2}, ~\{i_D, ~\sigma_N\}, ~\beta_{\min}^{N-2} \right) \geq W_{N-2}^{(*,~\Vsig) }\left(c_{N-2}, ~\sigma_{N-2}, ~d_{N-2}, ~\{i_D, ~\sigma_N\}, ~\beta_{\min}^{N-2} \right)$$ since $\Vsig'$ maximizes over {\em every} sample path. 

    Note that when $N = 2$, the base case is $n = 0$ and thus there is no step case. Indeed, we show $W_0^{(*, ~\Vsig')}\left(c \right) \geq W_0^{(*, ~\Vsig)}\left(c \right)$ by taking $\hist^{N-2} = 1$.

    \noindent \textbf{Step Case:} $n = i+1 \rightarrow i$.
    First note that by induction hypothesis, if a \routing policy $\Vsig$ puts the deterministic node after stage $i$ but not the end, i.e., for some $i+1 \leq j < N$, $\sigma_j = i_D$, one can switch $\sigma_N$ with $i_D$ so that 
    $$W_i^{(*,~\Vsig')}\left(\Vu_i, ~\beta_{\min}^{i-1} \right) \overset{(i)}{=} \E_{\vec{d}_{i:j}} \left [W_j^{(*,~\Vsig')}\left(\Vu_j, ~\beta_{\min}^{j-1} \right) \right] \overset{(ii)}{\geq} \E_{\vec{d}_{i:j}} \left [W_j^{(*,~\Vsig)}\left(\Vu_j, ~\beta_{\min}^{j-1} \right) \right] \overset{(iii)}{=} W_i^{(*,~\Vsig)}\left(\Vu_i, ~\beta_{\min}^{i-1} \right),$$
    where equality $(i)$ and equality $(iii)$ are applying \Cref{eqn:m_dp} multiple times, and equality $(ii)$ is from induction hypothesis. Thus, we only have two candidates for the optimal static routing policy: putting the deterministic node at the end, denoted by $\Vsig'$ ($\sigma^{\prime}_N = i_D$), or putting the deterministic node at the beginning, denoted by $\Vsig$ ($\sigma_{1} = i_D$). 

    Let $\Vpi^{*}(\Vsig) = \left(\pi_i^*, ~\pi_{i+1}^*, ~\dots, ~\pi_N^* \right)$ be the optimal allocation policy for the DP model $W_i^{(*, ~\Vsig)} \left(\Vu_i, ~\beta_{\min}^{i-1} \right)$. Since node $D$ is deterministic, $\pi_D^*$ is a fixed value, and $\Vpi^{*}(\Vsig)$ is feasible to the DP model $W_i^{(\Vpi, ~\Vsig')} \left(\Vu_i, ~\beta_{\min}^{i-1} \right)$. Therefore, $$ W_i^{(*, ~\Vsig')} \left(\Vu_i, ~\beta_{\min}^{i-1} \right) \geq W_i^{(\Vpi^*(\Vsig), ~\Vsig')} \left(\Vu_i, ~\beta_{\min}^{i-1} \right) \geq W_i^{(*, ~\Vsig)} \left(\Vu_i, ~\beta_{\min}^{i-1} \right).$$
    Thus $\Vsig'$ is better than $\Vsig$, showing that $\Vsig'$ is optimal.
\end{proof}

\DeterministicPPA*
\begin{proof}
We proceed in two steps. First, we show that positioning the deterministic node in the middle of the route is suboptimal, as moving it to the end improves the objective. Second, we show that placing the deterministic node at the beginning is also suboptimal.

\textbf{Case I ($\sigma^*_2 \neq i_D$):} Let $\Vsig$ be any \routing policy (either dynamic or static) such that for some state $(\Vu_1, ~\hist^0)$, it visits the deterministic node in the middle, i.e., $\sigma_2(\Vu_1, ~\hist^0) = i_D$. Let $\Vsig'$ be the \routing policy constructed from $\Vsig$ such that preserves the original visitation order except moving $i_D$ to the end for these states $(\Vu_1, ~\hist^0)$ and sample paths following them, i.e., $\sigma'_1 = \sigma_1$, $\sigma'_2 = \sigma_3$ and $\sigma'_3 = i_D$. We will show that for any state $(\Vu_1, ~\hist^0)$, we have 
$$W_1^{ (\Vsig', ~\Vpi ^ { P P A }) } \left(\Vu_1, ~\beta_{\min}^{0} \right) \geq W_1^{ (\Vsig, ~\Vpi ^ { P P A }) } \left(\Vu_1, ~\beta_{\min}^{0} \right),$$
and therefore $\Vsig$ is not optimal. Specifically, we show that the minimum fill rate over the same sample path under $\Vsig'$ is higher than under $\Vsig$. Since $\sigma'_1 = \sigma_1$ and we apply the PPA policy for both \routing policy, $\beta^{(\Vsig', ~\Vpi ^ { P P A })}_{\sigma'_1} = \beta^{(\Vsig, ~\Vpi ^ { P P A })}_{\sigma_1}$. Moreover, starting from the same $c$ results in the same $c_2$ for both routes.

Let $(d_1, ~i_D, ~d_3)$ be the sample path generated in $\Vsig$. The corresponding sample path in $\Vsig'$ is $(d_1, ~d_3,~i_D)$. By \Cref{eqn:PPA}, when the \routing policy is $\Vsig'$, we have
\begin{align}
    \beta^{(\Vsig', ~\Vpi ^ { P P A })}_{\sigma_2'} &= \beta^{(\Vsig', ~\Vpi ^ { P P A })}_{\sigma_3} = \frac{c_2}{d_3 + \mu_D} \land 1,\label{eqn:beta_2_sig_prime} \\
    \beta^{(\Vsig', ~\Vpi ^ { P P A })}_{i_D} &= \frac{c_2 - \left( \frac{d_3}{d_3 + \mu_D}c_2 \land d_3 \right)}{\mu_D} \land 1 = \frac{c_2}{d_3 + \mu_D} \land 1. \label{eqn:beta_D_sig_prime}
\end{align}
When the \routing policy is $\Vsig$, we have
\begin{align}
    \beta^{(\Vsig, ~\Vpi ^ { P P A })}_{i_D} &= \frac{c_2}{\mu_D + \mu_{\sigma_3}} \land 1,\label{eqn:beta_D_sig} \\
    \beta^{(\Vsig, ~\Vpi ^ { P P A })}_{\Vsig_{3}} &= \frac{c_2 - \left( \frac{\mu_D}{\mu_D + \mu_{\sigma_3}}c_2 \land \mu_D \right)}{d_3} \land 1 =  \begin{cases} 
    \frac{c_2}{\mu_D + \mu_{\sigma_3}} \cdot \frac{\mu_{\sigma_3}}{d_3} \land 1, &\text{if $c_2 \leq \mu_D + \mu_{\sigma_3}$},\\
    \frac{c_2 - \mu_D}{d_3} \land 1, &\text{if $c_2 > \mu_D + \mu_{\sigma_3}$}.
    \end{cases} \label{eqn:beta_3_sig}
\end{align}

Note that these fill rates are dependent on both capacity $c_2$ and demand realization $d_3$. Therefore, we are going to compare the minimum fill rate in different capacity regimes and demand realizations scenarios. Specifically, $\beta^{(\Vsig', ~\Vpi ^ { P P A })}_{\sigma_2'} = \beta^{(\Vsig', ~\Vpi ^ { P P A })}_{i_D}$ always hold from \Cref{eqn:beta_2_sig_prime} and \Cref{eqn:beta_D_sig_prime}, thus, it is sufficient to compare $\beta^{(\Vsig', ~\Vpi ^ { P P A })}_{\sigma_2'}$ with $\beta^{(\Vsig, ~\Vpi ^ { P P A })}_{i_D} \land \beta^{(\Vsig, ~\Vpi ^ { P P A })}_{\sigma_3}$. Moreover, when $c_2 > d_3 + \mu_D$, we observe that fill rates from stage $2$ under both \routing policies are one and thus equal. Hence, we assume that $c_2 \leq d_3 + \mu_D$ for the rest of the discussion.

When $c_2 \leq \mu_{\sigma_3} + \mu_D$, there are two demand realization possibilities: $d_{3} \leq  \mu_{\sigma_3}$ and $d_{3} >  \mu_{\sigma_3}$.
Indeed, when $d_{3} \leq  \mu_{\sigma_3}$, we have $$\beta^{(\Vsig, ~\Vpi ^ { P P A })}_{i_D} \land \beta^{(\Vsig, ~\Vpi ^ { P P A })}_{\sigma_3} \overset{(i)}{=} \frac{c_2}{\mu_D + \mu_{\sigma_3}} \wedge \frac{c_2}{\mu_D + \mu_{\sigma_3}} \cdot \frac{\mu_{\sigma_3}}{d_{3}} \overset{(ii)}{=} \frac{c_2}{\mu_D + \mu_{\sigma_3}} \overset{(iii)}{\leq} \frac{c_2}{d_3 + \mu_D} \overset{(iv)}{=} \beta^{(\Vsig', ~\Vpi ^ { P P A })}_{\sigma_2'},$$
where equality $(i)$ is from \Cref{eqn:beta_D_sig} and \Cref{eqn:beta_3_sig} when $c_2 \leq \mu_{\sigma_3} + \mu_D$, equality $(ii)$ and inequality $(iii)$ is due to $d_{3} \leq  \mu_{\sigma_3}$, and equality $(iv)$ is from \Cref{eqn:beta_2_sig_prime}.

Moreover, when $d_3 >  \mu_{\sigma_3}$, we have $$\beta^{(\Vsig, ~\Vpi ^ { P P A })}_{i_D} \land \beta^{(\Vsig, ~\Vpi ^ { P P A })}_{\sigma_3} = \frac{c_2}{\mu_D + \mu_{\sigma_3}} \wedge \frac{c_2}{\mu_D + \mu_{\sigma_3}} \cdot \frac{\mu_{\sigma_3}}{d_{3}} \overset{(i)}{=} \frac{c_2}{\mu_D + \mu_{\sigma_3}} \cdot \frac{\mu_{\sigma_3}}{d_{3}} \overset{(ii)}{<} \frac{c_2}{d_3 + \mu_D} = \beta^{(\Vsig', ~\Vpi ^ { P P A })}_{\sigma_2'},$$ 
where equality $(i)$ is due to $d_3 >  \mu_{\sigma_3}$, inequality $(ii)$ is equivalent to $\frac{\mu_{\sigma_3}}{d_{3}} < \frac{\mu_{\sigma_3} + \mu_D}{d_{3} + \mu_D}$, which is true when $d_3 >  \mu_{\sigma_3}$ and $\mu_D > 0$.

Lastly we consider the case where $\mu_D + \mu_{\sigma_3} < c_2 \leq d_3 + \mu_D$.  Here we have $$\beta^{(\Vsig, ~\Vpi ^ { P P A })}_{i_D} \land \beta^{(\Vsig, ~\Vpi ^ { P P A })}_{\sigma_3} \overset{(i)}{=} \frac{c_2 - \mu_D}{d_3} \overset{(ii)}{\leq} \frac{c_2}{d_3 + \mu_D} \overset{(iii)}{=} \beta^{(\Vsig', ~\Vpi ^ { P P A })}_{\sigma_2'},$$
where equality $(i)$ is from \Cref{eqn:beta_D_sig} and \Cref{eqn:beta_3_sig} when $c_2 > \mu_D + \mu_{\sigma_3}$, inequality $(ii)$ is true from the capacity condition and $\mu_D > 0$, and equality $(iii)$ is from \Cref{eqn:beta_2_sig_prime} under the capacity condition.
As a result, $\Vsig'$ is a better (up to tie-breaking) \routing policy.

\textbf{Case II ($\sigma^*_1 \neq i_D$):} Let $\Vsig$ be any \routing policy (either dynamic or static) and $\sigma_1 = i_D$. Again we let $\Vsig'$ be the \routing policy such preserves the original visitation order except moving $i_D$ to the end for every sample path, i.e., $\sigma'_1 = \sigma_2$, $\sigma'_2 = \sigma_3$ and $\sigma'_3 = i_D$. We show that for any initial capacity $c$, we have $$ W_0^{(\Vsig', ~\Vpi^{PPA})}\left(c \right) \geq W_0^{(\Vsig, ~\Vpi^{PPA})}\left(c \right).$$ To this end, we prove that for any sample path, $\Vsig'$ achieves a better minimum fill rate than $\Vsig$ and then take the expectation. 

First, notice that by \Cref{eqn:PPA}, the minimum fill rate over stage $i$ and $i+1$ is determined by the demand realization at stage $i+1$. When $d_{i+1}$ is below the mean, the $i$-th stage's fill rate is lower; Otherwise, the $i+1$-th stage's fill rate is lower. We now consider a sample path $(\mu_D, ~d_2, ~d_3)$ under $\Vsig$ which corresponds to a sample path $(d_2, ~d_3, ~\mu_D)$ under $\Vsig'$. There are four possibilities for the demand realizations: 
\begin{enumerate*}[label=(\alph*)]
    \item $d_2 \leq \mu_{\sigma_2}$, $d_3 \leq \mu_{\sigma_3}$;
    \item $d_2 > \mu_{\sigma_2}$, $d_3 \leq \mu_{\sigma_3}$;
    \item $d_2 \leq \mu_{\sigma_2}$, $d_3 > \mu_{\sigma_3}$;
    \item $d_2 > \mu_{\sigma_2}$, $d_3 > \mu_{\sigma_3}$.
\end{enumerate*}

Note that $\beta^{(\Vsig', ~\Vpi ^ { P P A })}_{\Vsig'_1} = \beta^{(\Vsig, ~\Vpi ^ { P P A })}_{\sigma_1}$ is no longer true here, and fill rates under $\Vsig$ becomes

\begin{align}
    &\beta_{i_D}^{(\Vsig,~\pi^{PPA})} = \begin{cases}
    \frac{1}{\mu_D + \mu_{\sigma_2} + \mu_{\sigma_3}}c, &\text{if $c \leq \mu_{\sigma_2} + \mu_{\sigma_3} + \mu_D$,}\\
    1, &\text{otherwise.}
    \end{cases}\\
    &\beta_{\sigma_2}^{(\Vsig,~\pi^{PPA})} = \begin{cases}
    \frac{1}{\mu_D + \mu_{\sigma_2} + \mu_{\sigma_3}}c \cdot \frac{\mu_{\sigma_2} + \mu_{  \sigma_3}}{d_2 + \mu_{\sigma_3}}, &\text{if $c \leq \left( \mu_{\sigma_2} + \mu_{\sigma_3} + \mu_D \right) \cdot \frac{d_2 + \mu_{\sigma_3}}{\mu_{\sigma_2} + \mu_{\sigma_3}}$,}\\
    \frac{c-\mu_D}{d_2+\mu_{\sigma_3}}, &\text{if $\left( \mu_{\sigma_2} + \mu_{\sigma_3} + \mu_D \right) \cdot \frac{d_2 + \mu_{\sigma_3}}{\mu_{\sigma_2} + \mu_{\sigma_3}} < c \leq \mu_{D}+d_2+\mu_{\sigma_3}$,}\\
    1, &\text{otherwise.}
    \end{cases}\\
    &\beta_{\sigma_3}^{(\Vsig,~\pi^{PPA})} = \begin{cases}
    \frac{1}{\mu_D + \mu_{\sigma_2} + \mu_{\sigma_3}}c \cdot \frac{\mu_{\sigma_2} + \mu_{\sigma_3}}{d_2 + \mu_{\sigma_3}} \cdot \frac{\mu_{\sigma_3}}{d_3}, &\text{if $ c \leq \left( \mu_{\sigma_2} + \mu_{\sigma_3} + \mu_D \right) \cdot \frac{d_2 + \mu_{\sigma_3}}{\mu_{\sigma_2} + \mu_{\sigma_3}}$,}\\
    \frac{\frac{\mu_{\sigma_2} + \mu_{\sigma_3}}{\mu_D + \mu_{\sigma_2} + \mu_{\sigma_3}}c - d_2}{d_3} \land 1, &\text{if $\left( \mu_{\sigma_2} + \mu_{\sigma_3} + \mu_D \right) \cdot \frac{d_2 + \mu_{\sigma_3}}{\mu_{\sigma_2} + \mu_{\sigma_3}} < c \leq \mu_{\sigma_2} + \mu_{\sigma_3} + \mu_D$,}\\
    \frac{c - \mu_D - d_2}{d_3} \land 1,  &\text{if $c > \mu_{\sigma_2} + \mu_{\sigma_3} + \mu_D$.}\\
\end{cases}
\end{align}
Since we need to consider $\beta^{(\Vsig', ~\Vpi ^ { P P A })}_{\Vsig'_1}$ now under $\Vsig'$, we present another form of all fill rates,
\begin{align}
    &\beta_{\Vsig'_1}^{(\Vsig',~\pi^{PPA})} 
    = \frac{1}{d_2 + \mu_{\sigma_3} + \mu_D} c \land 1,\\
    &\beta_{\Vsig'_2}^{(\Vsig',~\pi^{PPA})} = \beta_{i_D}^{(\Vsig',~\pi^{PPA})} = \begin{cases}
    \frac{1}{d_2+ \mu_{\sigma_3} + \mu_D}c \cdot \frac{\mu_{\sigma_3} + \mu_D}{d_3 + \mu_D}, 
    &\text{if $c \leq d_2 + \mu_{\sigma_3} + \mu_D$,} \\
    \frac{c - d_2}{d_3 + \mu_{D}} \land 1, &\text{if $c > d_2 + \mu_{\sigma_3} + \mu_D$.}
\end{cases}
\end{align}

\textbf{Case (a) ($d_2 \leq \mu_{\sigma_2}$, $d_3 \leq \mu_{\sigma_3}$):} Since demands are both below the means, based on our discussion before the minimum fill rate under $\Vsig$ is $\beta_{i_D}^{(\Vsig,~\pi^{PPA})}$. Respectively, the minimum fill rate under $\Vsig'$ is $$\beta_{\Vsig'_1}^{(\Vsig',~\pi^{PPA})} = \frac{1}{d_2 + \mu_{\sigma_3} + \mu_D} c \land 1 \overset{(i)}{\geq} \frac{1}{\mu_D + \mu_{\sigma_2} + \mu_{\sigma_3}} c \land 1 = \beta_{i_D}^{(\Vsig,~\pi^{PPA})},$$
where inequality $(i)$ is from $d_2 \leq \mu_{\sigma_2}$.

\textbf{Case (b) ($d_2 > \mu_{\sigma_2}$, $d_3 \leq \mu_{\sigma_3}$):} Since demand at the second stage is above the mean, the minimum fill rate under $\Vsig$ is $\beta_{\sigma_2}^{(\Vsig,~\pi^{PPA})}$, and the minimum fill rate under $\Vsig'$ is $\beta_{\Vsig'_1}^{(\Vsig',~\pi^{PPA})}$.

\noindent When $c \leq \mu_D + \mu_{\sigma_2} + \mu_{\sigma_3}$ (and thus $c < d_2 + \mu_{\sigma_3} + \mu_D$),
$$\beta_{\Vsig'_1}^{(\Vsig',~\pi^{PPA})} = \frac{1}{d_2 + \mu_{\sigma_3} + \mu_D} c \overset{(i)}{\geq} \frac{1}{\mu_D + \mu_{\sigma_2} + \mu_{\sigma_3}} c \cdot \frac{\mu_{\sigma_2} + \mu_{\sigma_3}}{d_2 + \mu_{\sigma_3}} = \beta_{\sigma_2}^{(\Vsig,~\pi^{PPA})},$$
where inequality $(i)$ is equivalent to $\frac{\mu_{\sigma_2} + \mu_{\sigma_3} + \mu_D}{d_2 + \mu_{\sigma_3} + \mu_D} \geq \frac{\mu_{\sigma_2} + \mu_{\sigma_3}}{d_2 + \mu_{\sigma_3}}$ and is true when $d_2 > \mu_{\sigma_2}$ and $\mu_D > 0$.

When $\mu_D + \mu_{\sigma_2} + \mu_{\sigma_3} < c \leq d_2 + \mu_{\sigma_3} + \mu_D$, 
$$\beta_{\Vsig'_1}^{(\Vsig',~\pi^{PPA})} = \frac{1}{d_2 + \mu_{\sigma_3} + \mu_D} c \overset{(i)}{\geq} \frac{c - \mu_D}{d_2 + \mu_{\sigma_3}}= \beta_{\sigma_2}^{(\Vsig,~\pi^{PPA})},$$
where inequality $(i)$ is owing to $c \leq d_2 + \mu_{\sigma_3} + \mu_D$.

When $c > d_2 + \mu_{\sigma_3} + \mu_D$, both are equal to one.

\textbf{Case (c) ($d_2 \leq \mu_{\sigma_2}$, $d_3 > \mu_{\sigma_3}$):} Since $c$ is the reverse case of $(b)$, the minimum fill rate under $\Vsig$ is $\beta_{i_D}^{(\Vsig,~\pi^{PPA})} \land \beta_{\sigma_3}^{(\Vsig,~\pi^{PPA})}$. 

When $c \leq (\mu_D + \mu_{\sigma_2} + \mu_{\sigma_3})\frac{d_2 + \mu_{\sigma_3}}{\mu_{\sigma_2}+ \mu_{\sigma_3}}$, from the assumption on the distribution support, there can be two realizations: $d_2 = \mu_{\sigma_2} - k$ or $d_2 = \mu_{\sigma_2}$. If $d_2 = \mu_{\sigma_2}$, we have
\begin{align*}
    \beta_{i_D}^{(\Vsig,~\pi^{PPA})} \land \beta_{\sigma_3}^{(\Vsig,~\pi^{PPA})} &= \beta_{\sigma_3}^{(\Vsig,~\pi^{PPA})} = \frac{1}{\mu_D + \mu_{\sigma_2} + \mu_{\sigma_3}}c \cdot \frac{\mu_{\sigma_2} + \mu_{\sigma_3}}{d_2 + \mu_{\sigma_3}} \cdot \frac{\mu_{\sigma_3}}{d_3} = \frac{1}{\mu_D + \mu_{\sigma_2} + \mu_{\sigma_3}}c \cdot \frac{\mu_{\sigma_3}}{d_3}\\
    & \overset{(i)}{\leq} \frac{1}{\mu_D + \mu_{\sigma_2} + \mu_{\sigma_3}}c \cdot \frac{\mu_{\sigma_3} + \mu_D}{d_3 + \mu_D} = \frac{1}{d_2 + \mu_D + \mu_{\sigma_3}}c \cdot \frac{\mu_{\sigma_3} + \mu_D}{d_3 + \mu_D} = \beta_{\Vsig'_2}^{(\Vsig',~\pi^{PPA})},
\end{align*}
where inequality $(i)$ is due to $d_3 > \mu_{\sigma_3}$ and $\mu_D > 0$. If $d_2 = \mu_{\sigma_2} - k$, we have
\begin{align*}
    \beta_{\sigma_3}^{(\Vsig,~\pi^{PPA})} &= \frac{1}{\mu_D + \mu_{\sigma_2} + \mu_{\sigma_3}}c \cdot \frac{\mu_{\sigma_2} + \mu_{\sigma_3}}{d_2 + \mu_{\sigma_3}} \cdot \frac{\mu_{\sigma_3}}{d_3} = \frac{1}{\mu_D + \mu_{\sigma_2} + \mu_{\sigma_3}}c \cdot \frac{\mu_{\sigma_2} + \mu_{\sigma_3}}{\mu_{\sigma_2} - k + \mu_{\sigma_3}} \cdot \frac{\mu_{\sigma_3}}{\mu_{\sigma_3} + k}\\
    &\overset{(i)}{\leq} \frac{1}{\mu_{\sigma_2} - k + \mu_D + \mu_{\sigma_3}}c \cdot \frac{\mu_{\sigma_3} + \mu_D}{\mu_{\sigma_3} + k + \mu_D} = \frac{1}{\mu_{\sigma_2} - k + \mu_D + \mu_{\sigma_3}}c \cdot \frac{\mu_{\sigma_3} + \mu_D}{d_3 + \mu_D} = \beta_{\Vsig'_2}^{(\Vsig',~\pi^{PPA})},
\end{align*}
where inequality $(i)$ can be shown to be equivalent to $(\mu_{\sigma_2} + \mu_{\sigma_3} + \mu_D)(\mu_{\sigma_2} k - k^2) + \mu_{\sigma_3}(\mu_{\sigma_2} k - k^2) \geq 0$, which is trivially true since $\mu_{\sigma_2} - k \geq 0$. Therefore, $\beta_{i_D}^{(\Vsig,~\pi^{PPA})} \land \beta_{\sigma_3}^{(\Vsig,~\pi^{PPA})} \leq \beta_{\Vsig'_2}^{(\Vsig',~\pi^{PPA})}$.

When $(\mu_D + \mu_{\sigma_2} + \mu_{\sigma_3})\frac{d_2 + \mu_{\sigma_3}}{\mu_{\sigma_2}+ \mu_{\sigma_3}} < c \leq d_2 + \mu_{\sigma_3} + \mu_D$ (note that when $d_2 = \mu_{\sigma_2}$, this interval does not exist), we have 
\begin{align*}
    \beta_{\sigma_3}^{(\Vsig,~\pi^{PPA})} &= \frac{\frac{\mu_{\sigma_2} + \mu_{\sigma_3}}{\mu_D + \mu_{\sigma_2} + \mu_{\sigma_3}}c - d_2}{d_3} \land 1 \overset{(i)}{\leq} \frac{1}{d_2 + \mu_{\sigma_3} + \mu_D}c \cdot \frac{\mu_{\sigma_3} + \mu_D}{d_3 + \mu_D} = \beta_{\Vsig'_2}^{(\Vsig',~\pi^{PPA})},
\end{align*}
where inequality $(i)$ comes from the fact that $$\frac{\frac{\mu_{\sigma_2} + \mu_{\sigma_3}}{\mu_D + \mu_{\sigma_2} + \mu_{\sigma_3}}c - d_2}{d_3} \leq \frac{1}{d_2 + \mu_{\sigma_3} + \mu_D}c \cdot \frac{\mu_{\sigma_3} + \mu_D}{d_3 + \mu_D},$$ which is equivalent to $$\left(\frac{\mu_{\sigma_2} + \mu_{\sigma_3}}{(\mu_{\sigma_2} + \mu_{\sigma_3} + \mu_D)(d_2 + \mu_{\sigma_3} + \mu_D)} d_2 + \frac{\mu_D(\mu_{\sigma_3} + \mu_D)}{(\mu_{\sigma_2} + \mu_{\sigma_3} + \mu_D)(d_2 + \mu_{\sigma_3} + \mu_D) (d_3 + \mu_D)}d_2 \right)c \leq d_2,$$
and we can find an upper bound of the left-hand side
\begin{align*}
    &\quad \left(\frac{\mu_{\sigma_2} + \mu_{\sigma_3}}{(\mu_{\sigma_2} + \mu_{\sigma_3} + \mu_D)(d_2 + \mu_{\sigma_3} + \mu_D)} d_2 + \frac{\mu_D(\mu_{\sigma_3} + \mu_D)}{(\mu_{\sigma_2} + \mu_{\sigma_3} + \mu_D)(d_2 + \mu_{\sigma_3} + \mu_D) (d_3 + \mu_D)}d_2 \right)c \\
    &\overset{(i)}{\leq}\left(\frac{\mu_{\sigma_2} + \mu_{\sigma_3}}{(\mu_{\sigma_2} + \mu_{\sigma_3} + \mu_D)(d_2 + \mu_{\sigma_3} + \mu_D)} d_2 + \frac{\mu_D(d_3 + \mu_D)}{(\mu_{\sigma_2} + \mu_{\sigma_3} + \mu_D)(d_2 + \mu_{\sigma_3} + \mu_D) (d_3 + \mu_D)}d_2 \right)c \\
    &= \frac{\mu_{\sigma_2} + \mu_{\sigma_3} + \mu_D}{(\mu_{\sigma_2} + \mu_{\sigma_3} + \mu_D)(d_2 + \mu_{\sigma_3} + \mu_D)}d_2 c \overset{(ii)}{\leq} d_2,
\end{align*}
where inequality $(i)$ is due to $\mu_{\sigma_3} \leq d_3$ and inequality $(ii)$ comes from $c \leq d_2 + \mu_{\sigma_3} + \mu_D$.

When $d_2 + \mu_{\sigma_3} + \mu_D < c \leq \mu_{\sigma_2} + \mu_{\sigma_3} + \mu_D$, we have 
\begin{align*}
    &\quad \frac{\frac{\mu_{\sigma_2} + \mu_{\sigma_3}}{\mu_D + \mu_{\sigma_2} + \mu_{\sigma_3}}c - d_2}{d_3} - \frac{c-d_2}{d_3 + \mu_D},\\
    &= \frac{1}{d_3+\mu_D} \left[ \left(\frac{\mu_{\sigma_2} + \mu_{\sigma_3}}{\mu_D + \mu_{\sigma_2} + \mu_{\sigma_3}}c - d_2 \right) \frac{d_3 + \mu_D}{d_3} -(c - d_2) \right],\\
    &= \frac{1}{d_3+\mu_D} \left[\frac{\mu_D(\mu_{\sigma_2}+\mu_{\sigma_3} - d_3)}{d_3(\mu_{\sigma_2} + \mu_{\sigma_3} + \mu_D)}c - \frac{\mu_D}{d_3} d_2 \right] \overset{(i)}{\leq} 0,
\end{align*}
where inequality $(i)$ is from $c \leq \mu_{\sigma_2} + \mu_{\sigma_3} + \mu_D$ and the fact that either $d_2 + d_3 = \mu_{\sigma_2} + \mu_{\sigma_3}$ or $d_2 = \mu_{\sigma_2}$, $d_3 = \mu_{\sigma_3} + k$.
Therefore, 
$$\beta_{\sigma_3}^{(\Vsig,~\pi^{PPA})} = \frac{\frac{\mu_{\sigma_2} + \mu_{\sigma_3}}{\mu_D + \mu_{\sigma_2} + \mu_{\sigma_3}}c - d_2}{d_3} \land 1 \leq \frac{c-d_2}{d_3 + \mu_D} \land 1 = \beta_{\Vsig'_2}^{(\Vsig',~\pi^{PPA})}.$$
When $c > \mu_{\sigma_2} + \mu_{\sigma_3} + \mu_D$, we have
\begin{align*}
    \beta_{i_D}^{(\Vsig,~\pi^{PPA})} \land \beta_{\sigma_3}^{(\Vsig,~\pi^{PPA})} & \leq \beta_{\sigma_3}^{(\Vsig,~\pi^{PPA})} = \frac{c - \mu_D - d_2}{d_3} \land 1,\\
    &\overset{(i)}{\leq} \frac{c - \mu_D - d_2 + \mu_D}{d_3 + \mu_D} \land 1 = \frac{c - d_2}{d_3 + \mu_D} \land 1 = \beta_{\Vsig'_2}^{(\Vsig',~\pi^{PPA})},
\end{align*}
where inequality $(i)$ is owing to $\mu_D > 0$ and when $\frac{c - \mu_D - d_2}{d_3} \land 1 = \frac{c - \mu_D - d_2}{d_3}$, we know $\frac{c - \mu_D - d_2}{d_3} \leq 1$.
\textbf{Case (d) ($d_2 > \mu_{\sigma_2}$, $d_3 > \mu_{\sigma_3}$):} In this case, fill rates decrease in stage, so the minimum fill rate under $\Vsig$ is $\beta_{\sigma_3}^{(\Vsig,~\pi^{PPA})}$, and the minimum fill rate under $\Vsig'$ is $\beta_{\Vsig'_2}^{(\Vsig',~\pi^{PPA})}$.

When $c \leq d_2 + \mu_{\sigma_3} + \mu_D$, 
\begin{align*}
    \beta_{\sigma_3}^{(\Vsig,~\pi^{PPA})} &= \frac{1}{\mu_D + \mu_{\sigma_2} + \mu_{\sigma_3}}c \cdot \frac{\mu_{\sigma_2} + \mu_{\sigma_3}}{d_2 + \mu_{\sigma_3}} \cdot \frac{\mu_{\sigma_3}}{d_3} \overset{(i)}{\leq} \frac{1}{\mu_D + \mu_{\sigma_2} + \mu_{\sigma_3}}c \cdot \frac{\mu_{\sigma_2} + \mu_{\sigma_3} + \mu_D}{d_2 + \mu_{\sigma_3} + \mu_D} \cdot \frac{\mu_{\sigma_3}}{d_3},\\
    &= \frac{1}{d_2 + \mu_{\sigma_3} + \mu_D}c \cdot \frac{\mu_{\sigma_3}}{d_3} \overset{(ii)}{\leq} \frac{1}{d_2 + \mu_{\sigma_3} + \mu_D}c \cdot \frac{\mu_{\sigma_3}+ \mu_D}{d_3 + \mu_D} = \beta_{\Vsig'_2}^{(\Vsig',~\pi^{PPA})}.
\end{align*}
where inequality $(i)$ is from $d_2 > \mu_{\sigma_2}$, inequality $(ii)$ is true when $d_3 > \mu_{\sigma_3}$ and $\mu_D > 0$.

When $d_2 + \mu_{\sigma_3} + \mu_D < c < d_2 + d_3 + \mu_D$,
$$\beta_{\Vsig'_2}^{(\Vsig',~\pi^{PPA})} = \frac{c - d_2}{d_3 + \mu_D} \overset{(i)}{\geq} \frac{c - \mu_D - d_2}{d_3} = \beta_{\sigma_3}^{(\Vsig,~\pi^{PPA})},$$
where inequality $(i)$ is because $c < d_2 + d_3 + \mu_D$.

When $c \geq d_2 + d_3 + \mu_D$, both are equal to one.
\end{proof}

\DecreasingVarTwoNode*
\begin{proof}
    Without loss of generality, we assume that node one has a higher variance, i.e., $p_1 < p_2$ (since the larger the central probability mass is, the smaller the variance is). Let $\Vsig = (1,2)$ and $\Vsig' = (2,1)$. We want to show that for any capacity $c$, $W_0^{(\Vpi^{PPA},~\Vsig)}(c) \geq W_0^{(\Vpi^{PPA},~\Vsig')}(c)$ and hence $\Vsig$ is optimal. To this end, for every capacity level and $d_1 \in \{\mu - k,~\mu, ~\mu + k\}$, we find the closed-form of $W_1^{(\Vpi^{PPA},~\Vsig)}(c,~d_1,~1)$. Then we take the expectation to derive $W_0^{(\Vpi^{PPA},~\Vsig)}(c)$ and show that the difference between $W_0^{(\Vpi^{PPA},~\Vsig)}(c)$ and $W_0^{(\Vpi^{PPA},~\Vsig')}(c)$ is always non-negative. 

    By \Cref{eqn:m_bellman} and \Cref{eqn:PPA}, when $d_1 = \mu - k$, we have
    \begin{equation*}
        W_1^{(\Vpi^{PPA},~\Vsig)}(c,~\mu-k,~1) = \E_{d_2} \left[\frac{1}{2\mu - k}c \land \frac{\frac{\mu}{2\mu - k}c}{d_2} \land 1 \right] =
        \begin{cases}
            \frac{1}{2\mu - k}c \cdot \frac{2\mu+ (1+p_2)k}{2(\mu+k)}, &\text{if $0 \leq c \leq 2\mu - k$,}\\
            \frac{1+p_2}{2} + \frac{c - \mu + k}{\mu + k} \cdot \frac{1-p_2}{2}, &\text{if $2\mu - k < c \leq 2\mu$,}\\
            1, &\text{if $c > 2\mu$}.
        \end{cases}
    \end{equation*}
    Similarly, when $d_1 = \mu$, we have 
    \begin{equation*}
        W_1^{(\Vpi^{PPA},~\Vsig)}(c,~\mu,~1) = \E_{d_2} \left[\frac{1}{2 \mu}c \land \frac{\frac{\mu}{2\mu}c}{d_2} \land 1 \right] =
        \begin{cases}
            \frac{1}{2\mu}c \cdot \frac{2\mu+ (1+p_2)k}{2(\mu+k)}, &\text{if $0 \leq c \leq 2\mu$,}\\
            \frac{1+p_2}{2} + \frac{c-\mu}{\mu + k} \cdot \frac{1 - p_2}{2}, &\text{if $2\mu < c \leq 2\mu + k$,}\\
            1, &\text{if $c > 2\mu + k$}.
        \end{cases}
    \end{equation*}
    Lastly, when $d_1 = \mu + k$, we have
    \begin{equation*}
        W_1^{(\Vpi^{PPA},~\Vsig)}(c,~\mu+k,~1) = \E_{d_2} \left[\frac{1}{2 \mu + k}c \land \frac{\frac{\mu}{2\mu + k}c}{d_2} \land 1 \right] =
        \begin{cases}
            \frac{1}{2\mu + k}c \cdot \frac{2\mu+ (1+p_2)k}{2(\mu+k)}, &\text{if $0 \leq c \leq 2\mu + k$,}\\
            \frac{1+p_2}{2} + \frac{c-\mu - k}{\mu + k} \cdot \frac{1 - p_2}{2}, &\text{if $2\mu + k < c \leq 2\mu + 2k$,}\\
            1, &\text{if $c > 2\mu + 2k$}.
        \end{cases}
    \end{equation*}
    Then by taking the expectation over $d_1$, we have
    {\small
    \begin{equation*}
        \begin{split}
            & \quad W_0^{(\Vpi^{PPA},~\Vsig)}(c) = \E_{d_1} \left[W_1^{(\Vpi^{PPA},~\Vsig)}(c,~d_2,~1) \right]\\
        &=
        \begin{cases}
            \frac{2\mu+ k(1+p_{2})}{2(\mu+k)} c \cdot \frac{4 \mu^2 - k^2 p_1}{2\mu (2\mu - k)(2\mu+k)}, &\text{if $0 \leq c \leq 2\mu - k$},\\
            \frac{c + 2k +1}{4(\mu+k)} + \frac{(2\mu + k) \left(\mu + k - \mu(c+2k) \right)}{4 \mu (\mu + k)(2\mu+ k)}p_1 + \frac{\mu(2\mu + k)(2\mu - c) + \mu k}{4 \mu (\mu + k)(2\mu+ k)}p_2 + \frac{k(\mu+k) - \mu(2\mu + k)(2\mu - c)}{4 \mu (\mu + k)(2\mu+ k)}p_1p_2, &\text{if $2\mu - k < c \leq 2\mu$},\\
            \frac{2(\mu + k) + c}{4(\mu + k)} + \frac{c - 2\mu}{4(\mu+k)}p_1 + \frac{ck}{4(\mu +k)(2\mu + k)}p_2 + \frac{2(2\mu + k -c)(2\mu + k) - ck}{4(\mu + k)(2\mu + k)}p_1 p_2, &\text{if $2\mu < c \leq 2\mu +k$},\\
            \frac{c+2(\mu+k)}{4(\mu+k)} + \frac{2(\mu+k) - c}{4(\mu+k)} p_1 + \frac{2(\mu+k) - c}{4(\mu+k)} p_2 + \frac{c - 2(\mu+k)}{4(\mu+k)} p_1 p_2, &\text{if $2\mu + k < c \leq 2\mu +2k$},\\
            1, &\text{if $c > 2\mu + 2k$}.
        \end{cases}
        \end{split}
    \end{equation*}}
    
    When $0 \leq c \leq 2\mu - k$, the decreasing variance is clearly optimal since the objective is increasing in $p_2$ and decreasing in $p_1$, which indicates that increasing the first node's variance and decreasing the second node's variance increases the objective value.

    When $2\mu - k < c \leq 2 \mu$, we calculate the objective difference between two \routing policy. Note that we obtain the expression of $W_0^{(\Vpi^{PPA},~\Vsig')}(c)$ by switching $p_1$ and $p_2$. Therefore, when calculating the difference, we can ignore the constant and the cross term $p_1 p_2$. Then we have
    \begin{align*}
        W_0^{(\Vpi^{PPA},~\Vsig)}(c) - W_0^{(\Vpi^{PPA},~\Vsig)'}(c) = \frac{(2\mu + k)(\mu + k)(2\mu -1) + \mu k}{4 \mu (\mu + k)(2\mu+ k)}(p_2 - p_1),
    \end{align*}
    which is positive when $\mu \geq \frac{1}{2}$. When $\mu < \frac{1}{2}$ this can be satisfied by multiplying the available capacity $c$ and the demands $d \in \text{supp}(\mathcal{D}_i)$ for all $i$ by a shared constant.  This does not affect the objective value.

    When $2\mu < c \leq 2 \mu + k$, the difference is 
    \begin{align*}
        W_0^{(\Vpi^{PPA},~\Vsig)}(c) - W_0^{(\Vpi^{PPA},~\Vsig')}(c) = \frac{2\mu (2\mu + k -c)}{4 \mu (\mu + k)(2\mu+ k)} (p_2 - p_1)\geq 0.
    \end{align*}
    Lastly, when $2\mu + k < c$, the two objectives obtain the same value and thus are both optimal.
\end{proof}

\IncreasingMeanTwoNode*
\begin{proof}
    Without loss of generality, we assume that the first node has a smaller mean, so $\mu_1 < \mu_2$.  Let $\Vsig = (1,2)$ denote the \routing policy that visits node one first before node two, and $\Vsig' = (2,1)$, the reverse order.
    We prove that for any capacity $c$, $W_0^{(\Vpi^{PPA},~\Vsig)}(c) \geq W_0^{(\Vpi^{PPA},~\Vsig')}(c)$ and hence $\Vsig$ is optimal. 

    First, note that by assumption, for any demand $d_1 \sim \D_1$, we can map it to a demand in the support of $\D_2$ by $f(d_1) = d_1 + \Delta$, where $\Delta = \mu_2 - \mu_1$. Since this mapping is one-to-one and onto, we can also map any demand $d_2 \sim \D_2$ to a demand in the support of $\D_1$ by $f^{-1}(d_2) = d_2 - \Delta$. We show that for any demand $d_1$, $$W_1^{(\Vpi^{PPA},~\Vsig)}(c,~d_1,~1) \geq W_1^{(\Vpi^{PPA},~\Vsig')}(c,~f(d_1), ~1).$$
    Then by taking the expectation on both sides, we prove that $W_0^{(\Vpi^{PPA},~\Vsig)}(c) \geq W_0^{(\Vpi^{PPA},~\Vsig')}(c)$.

    By \Cref{eqn:m_bellman} and \Cref{eqn:PPA}, we can rewrite the objective under $\Vsig$ as
    \begin{equation*}
        W_1^{(\Vpi^{PPA},~\Vsig)}(c,~d_1,~1) = \E_{d_2 \sim \D_2} \left[ W_2^{(\Vpi^{PPA},~\Vsig)}(c,~d_2,~\frac{\pi^{PPA}_1}{d_1} \land 1) \right] = \E_{d_2 \sim \D_2} \left[\frac{1}{d_1 + \mu_2}c \land \frac{1}{d_1 + \mu_2}c \cdot \frac{\mu_2}{d_2} \land 1 \right].
    \end{equation*}

    Similarly, we rewrite the objective under $\Vsig'$:
    \begin{equation*}
    \begin{split}
        W_1^{(\Vpi^{PPA},~\Vsig')}(c,~f(d_1),~1) &= \E_{d_2^{\prime} \sim \D_1} \left[ W_2^{(\Vpi^{PPA},~\Vsig')}(c,~d_2^{\prime},~\frac{\pi^{PPA}_1}{f(d_1)} \land 1) \right] = \E_{d_2^{\prime} \sim \D_1} \left[\frac{1}{f(d_1) + \mu_1}c \land \frac{\frac{\mu_1}{f(d_1) + \mu_1}c}{d_2^{\prime}} \land 1 \right],\\
        &= \E_{d_2^{\prime} \sim \D_1} \left[\frac{1}{d_1 + \Delta + \mu_1}c \land \frac{\frac{\mu_1}{d_1 + \Delta + \mu_1}c}{d_2^{\prime}} \land 1 \right] = \E_{d_2^{\prime} \sim \D_1} \left[\frac{1}{d_1 + \mu_2}c \land \frac{\frac{\mu_1}{d_1 + \mu_2}c}{d_2^{\prime}} \land 1 \right],\\
        &\overset{(i)}{=} \E_{f(d_2^{\prime}) \sim \D_2} \left[\frac{1}{d_1 + \mu_2}c \land \frac{\frac{\mu_1}{d_1 + \mu_2}c}{f^{-1}(f(d_2^{\prime}))} \land 1 \right], \\
        &= \E_{f(d_2^{\prime}) \sim \D_2} \left[\frac{1}{d_1 + \mu_2}c \land \frac{1}{d_1 + \mu_2}c \cdot \frac{\mu_1}{f(d_2^{\prime}) - \Delta} \land 1 \right],
    \end{split}
    \end{equation*}
    where equality $(i)$ is due to the change of variable.
    The difference between the two objectives is 
    \begin{align*}
        &\quad W_1^{(\Vpi^{PPA},~\Vsig)}(c,~d_1,~1) - W_1^{(\Vpi^{PPA},~\Vsig')}(c,~f(d_1),~1) \\
        &= \E_{d_2 \sim \D_2} \left[\frac{1}{d_1 + \mu_2}c \land \frac{1}{d_1 + \mu_2}c \cdot \frac{\mu_2}{d_2} \land 1 \right] - \E_{f(d_2^{\prime}) \sim \D_2} \left[\frac{1}{d_1 + \mu_2}c \land \frac{1}{d_1 + \mu_2}c \cdot \frac{\mu_1}{f(d_2^{\prime}) - \Delta} \land 1 \right],\\
        &\overset{(i)}{=} \E_{d_2 \sim \D_2} \left[ \left( \frac{1}{d_1 + \mu_2}c \cdot \frac{\mu_2}{d_2} \land 1 - \frac{1}{d_1 + \mu_2}c \cdot \frac{\mu_1}{d_2 - \Delta} \land 1 \right) \mathbf{1} \left\{ d_2 \geq \mu_2 \right\} \right] \overset{(ii)}{\geq} 0,
    \end{align*}
    where equality $(i)$ is from the fact that $f(d_2^{\prime})$ is a dummy variable and the minimum fill rates for both objectives are $\frac{1}{d_1 + \mu_2}c \land 1$ when $d_2 < \mu_2$, and equality $(ii)$ is because $\dfrac{\mu_2}{d_2} \geq \dfrac{\mu_2 - \Delta}{d_2 - \Delta} = \dfrac{\mu_1}{d_2 - \Delta}$ when $d_2 \geq \mu_2$.
\end{proof}

%% file: parts/appendix/distirbution_dict.tex
\section{List of Distributions} \label{appendix:distr_dict}

Here we list the demand support and probability mass for the distributions in \Cref{fig:distr_pmf}.

\begin{enumerate}
    \item Uniform: $D_1=\{1:1/5,~2:1/5,~3:1/5,~4:1/5,~5:1/5\}$.
    \item Symmetric concave: $D_2=\{1:1/10,~2:1/5,~3:2/5,~4:1/5,~5:1/10\}$.
    \item Symmetric convex: $D_3=\{1:2/5,~2:3/40,~3:1/20,~4:3/40,~5:2/5\}$.
    \item Decreasing: $D_4=\{1:2/5,~2:3/10,~3:1/5,~4:7/100,~5:3/100\}$.
    \item Increasing: $D_5=\{1:3/100,~2:7/100,~3:1/5,~4:3/10,~5:2/5\}$.
    \item Second peak: $D_6=\{1:3/10,~2:2/5,~3:1/5,~4:7/100,~5:3/100\}$.
    \item Second valley: $D_7=\{1:1/5,~2:7/100,~3:3/100,~4:3/10,~5:2/5\}$.
    \item W-shape: $D_8=\{1:2/5,~2:3/100,~3:3/10,~4:7/100,~5:1/5\}$.
\end{enumerate}